    \definecolor{urlcolor}{rgb}{0,0,0}
    \definecolor{linkcolor}{rgb}{.7,0.10,0.2}
    \definecolor{citecolor}{rgb}{.12,.54,.11}
\numberwithin{equation}{section}
\newtheorem{theorem}{Theorem}[section]
\newtheorem{corollary}[theorem]{Corollary}
\newtheorem{proposition}[theorem]{Proposition}
\newtheorem{proposition-definition}[theorem]{Proposition-Definition}
\newtheorem{lemma}[theorem]{Lemma}
\newtheorem{conjecture}[theorem]{Conjecture}
\theoremstyle{definition} 
\newtheorem{definition}[theorem]{Definition}
\newtheorem{theorem-definition}[theorem]{Theorem-Definition}
\theoremstyle{remark} 
\newtheorem{remark}[theorem]{Remark}
\renewcommand{\emptyset}{\varnothing}
\renewcommand{\geq}{\geqslant}
\renewcommand{\leq}{\leqslant}
\renewcommand{\subset}{\subseteq}
\renewcommand{\setminus}{\smallsetminus}
\renewcommand{\tilde}{\widetilde}
\newcommand{\BD}{\mathbb{D}}
\newcommand{\BoA}{\mathbf{A}}
\newcommand{\BoC}{\mathbf{C}}
\newcommand{\BoG}{\mathbf{G}}
\newcommand{\BoN}{\mathbf{N}}
\newcommand{\BoP}{\mathbf{P}}
\newcommand{\BoQ}{\mathbf{Q}}
\newcommand{\BoZ}{\mathbf{Z}}
\newcommand{\CD}{\mathcal{D}}
\newcommand{\CF}{\mathcal{F}}
\newcommand{\CG}{\mathcal{G}}
\newcommand{\CH}{\mathcal{H}}
\newcommand{\CJ}{\mathcal{J}}
\newcommand{\CM}{\mathcal{M}}
\newcommand{\CO}{\mathcal{O}}
\newcommand{\CP}{\mathcal{P}}
\newcommand{\CT}{\mathcal{T}}
\newcommand{\CW}{\mathcal{W}}
\newcommand{\FM}{\mathfrak{M}}
\newcommand{\FP}{\mathfrak{P}}
\newcommand{\FX}{\mathfrak{X}}
\newcommand{\FY}{\mathfrak{Y}}
\newcommand{\SC}{\mathscr{C}}
\newcommand{\SF}{\mathscr{F}}
\newcommand{\SG}{\mathscr{G}}
\newcommand{\SH}{\mathscr{H}}
\newcommand{\SL}{\mathscr{L}}
\newcommand{\SP}{\mathscr{P}}
\newcommand{\SQ}{\mathscr{Q}}
\newcommand{\Aut}{\mathrm{Aut}}
\newcommand{\Bun}{\mathfrak{B}un}
\newcommand{\BPS}{\mathcal{BPS}}
\newcommand{\rmBPS}{\mathrm{BPS}}
\newcommand{\rmb}{\mathrm{b}}
\newcommand{\rmc}{\mathrm{c}}
\newcommand{\cl}{\mathrm{cl}}
\newcommand{\crit}{\mathrm{crit}}
\newcommand{\cms}{/\!\!/}
\newcommand{\codim}{\mathrm{codim}}
\newcommand{\Dol}{\mathrm{Dol}}
\newcommand{\Fg}{\mathfrak{g}}
\newcommand{\GL}{\mathrm{GL}}
\newcommand{\gr}{\mathrm{gr}}
\newcommand{\Gr}{\mathrm{Gr}}
\newcommand{\HO}{\mathrm{H}}
\newcommand{\Hom}{\mathrm{Hom}}
\newcommand{\IC}{\mathcal{IC}}
\newcommand{\id}{\mathrm{id}}
\newcommand{\IH}{\mathrm{IH}}
\newcommand{\Ind}{\mathrm{Ind}}
\newcommand{\Irr}{\mathrm{Irr}}
\newcommand{\Lie}{\mathrm{Lie}}
\newcommand{\Fl}{\mathfrak{l}}
\newcommand{\Mat}{\mathrm{Mat}}
\newcommand{\MHM}{\mathrm{MHM}}
\newcommand{\MMHM}{\mathrm{MMHM}}
\newcommand{\nil}{\mathrm{nil}}
\newcommand{\rmm}{\mathrm{m}}
\newcommand{\Perv}{\mathrm{Perv}}
\newcommand{\pt}{\mathrm{pt}}
\newcommand{\rank}{\mathrm{rank}}
\newcommand{\red}{\mathrm{red}}
\newcommand{\rmBM}{\mathrm{BM}}
\newcommand{\rat}{\mathbf{rat}}
\newcommand{\reg}{\mathrm{reg}}
\newcommand{\st}{\mathrm{st}}
\newcommand{\Spec}{\mathrm{Spec}}
\newcommand{\ssimp}{\mathrm{ss}}
\newcommand{\sst}{\mathrm{-sst}}
\newcommand{\rmSL}{\mathrm{SL}}
\newcommand{\supp}{\mathrm{supp}}
\newcommand{\Tan}{\mathrm{T}}
\newcommand{\Ft}{\mathfrak{t}}
\newcommand{\vir}{\mathrm{vir}}
\title{Cohomological integrality for symmetric quotient stacks}
\date{\today}
\author{Lucien Hennecart}
\address{Laboratoire Ami\'enois de Math\'ematique Fondamentale et Appliqu\'ee, CNRS UMR 7352, Universit\'e de Picardie Jules Verne, 33 rue Saint Leu, 80000 Amiens, France}\email{lucien.hennecart@u-picardie.fr}
\begin{document}

\begin{abstract}
In this paper, we establish the sheafified version of the cohomological integrality conjecture for stacks obtained as quotients of smooth affine symmetric algebraic varieties by reductive algebraic groups equipped with an invariant function. A central step is the definition of the BPS sheaf as a complex of monodromic mixed Hodge modules on the coarse moduli space. We prove the purity of the BPS sheaf when the situation arises from a smooth affine weakly symplectic algebraic variety with a weak moment map. This situation gives local models for $1$-Artin derived stacks with a self-dual cotangent complex. We then apply these results to prove a conjecture of Halpern-Leistner predicting the purity of the Borel--Moore homology of derived stacks with self-dual cotangent complex (in particular, $0$-shifted symplectic stacks) having a proper good moduli space. One striking application is a proof of the purity of the Borel--Moore homology of the moduli stack of  principal Higgs bundles over a smooth projective curve for a reductive group $G$, extending the case of $G=\GL_n$ to arbitrary reductive groups.
\end{abstract}

\maketitle

\setcounter{tocdepth}{1}
\tableofcontents

\section{Introduction}

We continue the study of the topology and enumerative geometry of moduli stacks initiated in \cite{hennecart2024cohomological}, where we established a \emph{cohomological integrality isomorphism} for quotient stacks of the form $V/G$ where $V$ is a finite-dimensional representation of  a reductive group $G$. The main result of \cite{hennecart2024cohomological} is recalled in \S\ref{section:reminderabsolute}, Theorem~\ref{theorem:abscohintreminder}. It provides a decomposition of the cohomology $\HO^*(V/G,\BoQ)$ in terms of parabolic inductions, reminiscent of Springer theory (see for example \cite{lusztig1984intersection}, \cite[Theorem~1]{gunningham2018generalized}), when $V$ is a symmetric representation of $G$. This study is not only of representation-theoretic relevance but also of geometric interest, with applications to understand smooth, $0$-shifted and $(-1)$-shifted symplectic stacks \cite{pantev2013shifted}, as demonstrated in this paper and more recently in \cite{bu2025cohomology}.

Integrality conjectures and results originate in the work of Kontsevich and Soibelman \cite[\S7]{kontsevich2008stability}, \cite[\S6]{kontsevich2011cohomological} and deal with some kinds of factorizations of generating series which give rise to a precise understanding of meaningful enumerative invariants: Donaldson--Thomas (DT) invariants (introduced in \cite{donaldson1998gauge} for stable sheaves and in \cite{joyce2012theory} for semistable sheaves), Bogomol'nyi--Prasad--Sommerfield (BPS) invariants (in relation to the algebra of BPS states \cite{harvey1998algebras}), and their cohomologically and motivically refined avatars \cite{morrison2012motivic,davison2020cohomological,davison2015motivic}. The work of Efimov \cite{efimov2012cohomological} (concerning symmetric quivers without potential) can be seen as a categorification of the cohomological integrality of Kontsevich and Soibelman. Indeed, a factorization of the generating series involving the BPS invariants is turned into an isomorphism between a vector space and the symmetric power of a graded vector space with finite-dimensional pieces whose dimensions give the refined BPS invariants \cite[Theorem~1.1]{efimov2012cohomological}.

Meinhardt and Reineke \cite{meinhardt2019donaldson} pushed further the study of cohomological Hall algebras (CoHAs) of symmetric quivers without potentials. They provide a \emph{motivic} refinement (using Grothendieck rings of mixed Hodge structures) of the cohomological integrality isomorphism of Efimov. By using framed representations, they determined precisely the BPS invariants of a symmetric quiver: they are given by the intersection cohomology of the good moduli space \cite[Theorem~1.1]{meinhardt2019donaldson}.

Davison and Meinhardt then considered and proved \emph{sheafified} versions of cohomological integrality isomorphisms for quivers with potential \cite{davison2020cohomological}. Instead of proving directly an isomorphism at the level of vector spaces, they define a \emph{sheafified} cohomological Hall algebra (an algebra object in a category of monodromic mixed Hodge modules) and prove a cohomological integrality result which resembles a PBW isomorphism in this category \cite[Theorem~A]{davison2020cohomological}. The idea is to take advantage of the good moduli space map for representations of the Jacobi algebra of a quiver with potential, which is a canonical map from the stack of representations to an algebraic scheme/space.

All formerly known cohomological integrality results involve cohomological Hall algebras, as defined and studied in the non-exhaustive list of references \cite{kontsevich2011cohomological,schiffmann2012hall,schiffmann2020cohomological,davison2020cohomological,yang2018cohomological,kapranov2022cohomological,davison2022bps,kinjo2024cohomological}. Cohomological Hall algebras are powerful objects to study the topology and enumerative geometry of ubiquitous moduli spaces in algebraic geometry, such as the moduli spaces of vector bundles over smooth projective curves, the moduli spaces of Higgs bundles and local systems in relation with the $\mathrm{P}=\mathrm{W}$ conjecture, and the moduli spaces of quiver representations in connection with representation theory (Yangians and quantum groups \cite{maulik2019quantum}). Recent works have shown how this technology can be used effectively to solve algebro-geometric problems: nonabelian Hodge isomorphisms for stacks, decomposition of the cohomology of Nakajima quiver varieties, positivity of cuspidal polynomials of quivers, $\mathrm{P}=\mathrm{W}$-conjecture \cite{maulikp,hausel2022p} and identification of the Maulik--Okounkov Yangian, see in particular \cite{davison2022bps,davison2023bps,hausel2022p,botta2023okounkov,schiffmann2023cohomological,mellit2023coherent} for a selection. The formalism we start to develop here is expected to help understand analogous questions for arbitrary reductive groups.

Classical cohomological integrality isomorphisms give equalities involving plethystic exponentials \cite[\S6.2, Corollary 3]{kontsevich2011cohomological}, while their categorification involve symmetric powers of a vector space \cite[Theorem 1.1]{efimov2012cohomological}. Both are type A phenomena, in the sense that they involve symmetric groups which are the Weyl groups of type A algebraic groups such as $\mathrm{GL}_n$, $\mathrm{SL}_n$ and $\mathrm{PGL}_n$. Cohomological integrality in situations involving more general reductive groups also involve their Weyl groups \cite[Theorem~1.1]{hennecart2024cohomological}. Understanding the mechanics of Weyl groups in this context and their behaviour with respect to \'etale slices is a technical but central part of this work, see \S\ref{subsection:Weylgroups}. When working with quivers, this discussion is well-understood and all compatibilities of Weyl groups follow from the monoidality of some restriction functors.

The direction of the categorification of cohomological Hall algebras initiated in \cite{porta2023two} seems very promising in order to define categorical enumerative invariants, as shown by the recent series of papers of P\u adurariu--Toda, e.g. \cite{puadurariu2023categorical}. The formalism of \cite{porta2023two} can be applied to the more general situation of Hamiltonian reductions of symplectic vector spaces or more generally $0$-shifted symplectic stacks. It is worth to study in the perspective of categorical enumerative invariants and categorical or noncommutative resolutions of symplectic singularities \cite{van2004non,lunts2010categorical,kuznetsov2015categorical}.

In this paper, rather than going up on the ladder of categorification, we extend the scope of cohomological Donaldson--Thomas theory to a larger class of stacks than that involved when studying sheaves on manifolds or representations of quivers. In particular, we develop the formalism to study stacks which do not arise as stacks of objects in some Abelian category. This study was in particular made necessary by the desire to understand better the stacks of principal bundles, including stacks of Higgs bundles, over smooth projective curves. In this more general setting, parabolic inductions (\S\ref{section:parabolicinduction}) replace the CoHA multiplication. Stacks of the form $V/G$ were studied from a different perspective in \cite{vspenko2021semi,vspenko2017non} where, in rough terms, the coherent derived category of $V/G$ together with semi-orthogonal decompositions were used to construct categorical resolutions of the GIT quotient $V\cms G$. This is related to the connection between the derived category of a stack and the GIT quotient \cite{halpern2015derived} and to the noncommutative minimal model programme \cite{halpern2023noncommutative}.

We briefly describe the contents of this paper. We first prove a sheaf-theoretic cohomological integrality isomorphism for a class of quotient stacks $X/G$ where $G$ is a reductive group and $X$ is a symmetric smooth affine $G$-variety (Theorem~\ref{theorem:sheafifiedcohintintro}). When $X=V$ is a finite-dimensional symmetric representation of $G$, we obtain a sheaf-theoretic version of \cite[Theorem 1.1]{hennecart2024cohomological}. The proof of Theorem~\ref{theorem:sheafifiedcohintintro} relies heavily on \cite{hennecart2024cohomological} via a local-global principle involving \'etale slices (\S\ref{section:localneighbourhoodsmoothstacks}). Applying vanishing cycle functors immediately gives cohomological integrality isomorphisms for smooth affine critical $G$-varieties $(X,f)$ (a $G$-equivariant Landau--Ginzburg model), see Theorem~\ref{theorem:critcohint}. Here, $f\colon X\rightarrow\BoC$ is a $G$-invariant function. Although this seems to be a very mild generalization of Theorem~\ref{theorem:sheafifiedcohintintro}, it has far-reaching consequences obtained via (cohomological) dimensional reduction (\cite{davison2017critical} for the version relevant for us, see also \cite{kinjo2022dimensional,davison2022deformed}, and \cite[Proposition~6]{kontsevich2011cohomological} for the first occurrence of cohomological dimensional reduction). In particular, it can be applied to the study of $0$-shifted symplectic stacks \S\ref{subsubsection:purityBPSweak} and more generally of derived stacks with self-dual cotangent bundle. Our main result there is that for such stacks, the pushforward of the dualizing complex of the stack to the good moduli space is a pure complex of mixed Hodge modules. We use this property to prove a conjecture of Halpern-Leistner \cite[Conjecture 4.4]{halpern2015theta}: the Borel--Moore homology of $0$-shifted symplectic stacks, or more generally, $1$-Artin derived stacks with self-dual cotangent bundle having a proper good moduli space is \emph{pure} in the sense of Hodge theory (Theorem~\ref{theorem:purityintroduction}). Note that by \cite{ahlqvist2023good}, we do not have to pass to the classical truncation to consider the good moduli spaces. This generalizes one of the main results of \cite{davison2021purity}, where a proof of Halpern-Leistner's conjecture for stacks arising from $2$-Calabi--Yau categories is given. Note that the only place where derived structures appear crucially in this paper are for local models of derived stacks with self-dual cotangent complexes, and such local models can then be dealt with classically.

This paper contains two appendices. The goal of Appendix~\ref{appendix:MHM} is to give a minimal account regarding monodromic mixed Hodge modules following \cite[\S2]{davison2020cohomological}. Appendix~\ref{appendix:examples} exemplifies Theorem~\ref{theorem:sheafifiedcohintintro} and echoes \cite[\S6 Examples]{hennecart2024cohomological}.

\subsection{Main results}
In the remainder of the introduction, we present succinctly our main results and some of their applications.
\subsubsection{Semisimplicity}

Let $X$ be a smooth affine algebraic variety with an action of a complex reductive group $G$. We let $\pi\colon X/G\rightarrow X\cms G=\Spec(\BoC[X]^G)$ be the good moduli space map.

\begin{theorem}[= Proposition~\ref{proposition:semisimplicitypistar}]
\label{theorem:puritysmoothquotient}
 The complex of mixed Hodge modules $\pi_*\underline{\BoQ}_{X/G}\in\CD^+(\MMHM(X\cms G))$ is pure of weight zero, and therefore semisimple.
\end{theorem}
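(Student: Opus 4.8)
The strategy is to pass to the good moduli space, reduce to a linear representation by Luna's slice theorem, and then exploit the contracting $\BG_m$-action on the affine GIT quotient together with the purity of $\HO^*(BG)$.

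\textbf{Reduction to the linear case.} Purity and semisimplicity of an object of $\CD^+(\MMHM(X\cms G))$ can be checked étale-locally on $X\cms G$, and weight filtrations are compatible with étale pullback. By Luna's étale slice theorem (cf. \S\ref{section:localneighbourhoodsmoothstacks}), every point $\xi\in X\cms G$ is represented by a closed $G$-orbit $G\cdot x$ with reductive stabiliser $H=G_x$, and there is an étale neighbourhood of $\xi$ over which $\pi$ is identified, through étale morphisms, with the good moduli space map of $[N/H]$, where $N=\Tan_xX/\Tan_x(G\cdot x)$ is the slice representation of $H$; since the comparison maps are étale, the constant sheaves correspond and, by smooth base change, the étale pullback of $\pi_*\underline{\BoQ}_{X/G}$ is the analogous complex for $[N/H]\to N\cms H$. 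Hence it suffices to prove the statement when $X=V$ is a finite-dimensional representation of a reductive group $G$.

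\textbf{The linear case.} The scaling $\BG_m$-action on $V$ induces an action on $V\cms G=\Spec(\BoC[V]^G)$ contracting onto $0$, and $\pi$ is $\BG_m$-equivariant, so $\CF:=\pi_*\underline{\BoQ}_{V/G}$ is a $\BG_m$-monodromic complex on $V\cms G$. Since $V$ is $G$-equivariantly contractible, $\HO^*(V\cms G,\CF)=\HO^*(V/G)\cong\HO^*(BG;\BoQ)=\HO^*(BT;\BoQ)^W$ is a polynomial ring on even-degree Tate classes, hence pure of weight $0$; dually, $\HO^*_c(V/G)$ is pure as well. To upgrade this to purity of the complex $\CF$ itself, I would run a Braden-type hyperbolic localisation argument in the monodromic mixed Hodge module setting of Appendix~\ref{appendix:MHM}: for a $\BG_m$-monodromic complex on a variety with contracting action to a point, the stalk and the costalk at that point compute $\HO^*$ and $\HO^*_c$ and are interchanged by Braden's isomorphism; combining this with an induction on $\dim G$ based on the slice description above—which makes $\CF$ pure away from the deepest Luna strata and identifies its local structure with pushforwards of the same kind for smaller reductive groups—forces the weight filtration of $\CF$ to be concentrated in weight $0$. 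Finally, a pure complex of monodromic mixed Hodge modules decomposes as a direct sum of shifts of simple pure Hodge modules (Saito's theory), so it is semisimple, which gives the last assertion.

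\textbf{Main obstacle.} The crux is this last step: passing from purity of the total cohomology $\HO^*(V/G)$ to purity of the sheaf $\pi_*\underline{\BoQ}_{X/G}$. The naive assertion—that a $\BG_m$-monodromic complex on a contracting variety is pure as soon as its global cohomology is—is false (for instance $j_!\underline{\BoQ}_{\BG_m}$ on $\BA^1$ has vanishing global cohomology but is not pure), so the proof must genuinely use that $\CF$ is the pushforward, along a good moduli space morphism, of the (essentially self-dual) constant sheaf of a \emph{smooth} stack: this forces every stalk and costalk of $\CF$ along the Luna stratification to be, up to Verdier duality, the cohomology of a quotient stack $V'/G'$ with $G'$ reductive, hence pure, and it is this input together with the hyperbolic localisation comparison that propagates purity across the strata. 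Carrying this out while controlling all weight shifts and Tate twists introduced by the slice reduction is the technical heart of the argument.
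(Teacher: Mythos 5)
Your proposal does not follow the paper's route, and it has a genuine gap at exactly the point you flag as the ``technical heart'': the passage from purity of cohomology of stalks to purity of the complex $\pi_*\underline{\BoQ}_{X/G}$ is never actually carried out. The hyperbolic-localisation step is only gestured at, and its key identifications are not established: for a contracting $\BG_{\rmm}$-action on $V\cms G$ one has $\imath_0^!\,\pi_*\underline{\BoQ}_{V/G}\cong \HO^*_c(V\cms G,\pi_*\underline{\BoQ}_{V/G})$, which is \emph{not} $\HO^*_c(V/G)$, since $\pi_*\neq\pi_!$ for the non-proper map $\pi$; so the claim that ``the stalk and the costalk compute $\HO^*$ and $\HO^*_c$'' does not give you the lower weight bound you need. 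Likewise the proposed ``induction on $\dim G$ along the Luna strata'' is not set up: purity of the restrictions of $\pi_*\underline{\BoQ}_{X/G}$ to the individual strata does not imply purity of the complex (the weight filtration can be a non-split extension across strata, exactly as in your own $j_!\underline{\BoQ}_{\BG_{\rmm}}$ example), and no mechanism is supplied to rule this out. The preliminary reduction to the linear case via \'etale slices is harmless but also unnecessary.

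For comparison, the paper's proof avoids all of this by \emph{approximation by proper maps} (\S\ref{section:approximationpropermaps}). One first replaces $G$ by $G'=G\times\BG_{\rmm}$ with $\BG_{\rmm}$ acting trivially, which only tensors the pushforward by $\HO^*(\pt/\BG_{\rmm})$ (Proposition~\ref{proposition:semisimplicitypistar}); the extra scaling factor makes $X/G'\to X\cms G$ approachable by proper maps (Proposition~\ref{proposition:Gprimeapproachable}): for $V_n=\Hom(\BoC^n,\BoC^{k+1})$ the GIT quotient $\overline{X_n}=(X\times V_n)^{\theta'\sst}\cms G'$ is \emph{proper} over $X\cms G$, and $\underline{\tau}^{\leq c_n-1}\pi'_*\underline{\BoQ}_{X/G'}$ is identified with $\underline{\tau}^{\leq c_n-1}(p_n)_*\underline{\IC}(\overline{X_n})$ where $c_n=\codim_{\overline{X_n}}(\overline{X_n}\setminus\overline{X'_n})\to\infty$ (Corollaries~\ref{corollary:isopushforward}, \ref{corollary:purityappm}). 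Purity then comes for free from Saito's decomposition theorem for the proper map $p_n$ applied to the pure object $\underline{\IC}(\overline{X_n})$. If you want to salvage your approach, you would need to supply precisely this kind of mechanism converting $\pi$ into a proper map up to controlled error; as written, the proof is incomplete.
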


The strategy of proof is to first establish the \emph{approximation by proper maps} property for $\pi'\colon X/(G\times\BoC^*)\rightarrow X\cms G$ where $\BoC^*$ acts trivially on $X$ (\S\ref{section:approximationpropermaps}, Proposition~\ref{proposition:Gprimeapproachable}). We then relate the pushforwards $\pi_*\underline{\BoQ}_{X/G}$ and $\pi'_*\underline{\BoQ}_{X/(G\times\BoC^*)}$. Theorem~\ref{theorem:puritysmoothquotient} implies that $\pi_*\underline{\BoQ}_{X/G}\in \MMHM^{\BoZ}(X\cms G)$ is a cohomologically graded mixed Hodge module, that is $\pi_*\underline{\BoQ}_{X/G}\cong \bigoplus_{i\in\BoZ}\underline{\CH}^i(\pi_*\underline{\BoQ}_{X/G})[-i]$ decomposes as a direct sum of its shifted perverse cohomologies (\S\ref{subsection:cohgradedmhm}).

Via \'etale slices \cite{alper2020luna}, one can globalize Theorem \ref{theorem:puritysmoothquotient} to more general stacks.
\begin{corollary}
\label{corollary:puritysmoothstacks}
 Let $\FM$ be a smooth Artin stack of finite type having a good moduli space $\pi\colon\FM\rightarrow\CM$ with affine diagonal. Then $\pi_*\underline{\BoQ}_{\FM}$ is a pure weight zero complex of mixed Hodge modules. In particular, if $\CM$ is proper, the mixed Hodge structure on $\HO^*(\FM,\BoQ)$ is pure of weight zero.
\end{corollary}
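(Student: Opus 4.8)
The plan is to reduce the statement to its local counterpart, Theorem~\ref{theorem:puritysmoothquotient}, by means of the Luna-type étale slice theorem for algebraic stacks of \cite{alper2020luna}, using the observation that purity of a complex of monodromic mixed Hodge modules is an étale-local property on the base.

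First I would record the local structure of good moduli space morphisms. Fix a closed point $c\in\CM$ and let $x\in\FM$ be the unique closed point of the fibre $\pi^{-1}(c)$, so that its stabilizer $G_x$ is reductive. Applying \cite{alper2020luna}, and using that $\FM$ is smooth in order to take the slice smooth, one obtains a smooth affine $G_x$-variety $X$ with a $G_x$-fixed point lying over $x$, together with a commutative square
\[
\begin{tikzcd}
X/G_x \arrow[r, "h"] \arrow[d, "\pi_X", swap] & \FM \arrow[d, "\pi"] \\
X\cms G_x \arrow[r, "g"] & \CM
\end{tikzcd}
\]
which is cartesian, with $g$ (hence $h$) étale and $c$ in the image of $g$. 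As $h$ is étale it takes $\underline{\BoQ}_\FM$ to $\underline{\BoQ}_{X/G_x}$; and smooth base change for the $*$-pushforward of monodromic mixed Hodge modules along the smooth morphism $g$ (no properness of $\pi$ is needed) gives a natural isomorphism $g^*\pi_*\underline{\BoQ}_\FM\cong\pi_{X,*}\underline{\BoQ}_{X/G_x}$. By Theorem~\ref{theorem:puritysmoothquotient} the right-hand side is pure of weight zero.

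To globalize, note that as $c$ ranges over the closed points of $\CM$ the images of the resulting étale morphisms $g$ are open and jointly cover $\CM$. Pullback along an étale morphism is exact and strictly compatible with weight filtrations, hence preserves and detects purity, and a monodromic mixed Hodge module on $\CM$ whose pullback under a jointly surjective family of étale morphisms vanishes is itself zero; therefore $\pi_*\underline{\BoQ}_\FM$ is pure of weight zero. For the last assertion, when $\CM$ is proper the structure morphism $a_\CM\colon\CM\to\pt$ is proper, so $a_{\CM,*}$ sends pure complexes of weight zero to pure complexes of weight zero (stability of purity under proper direct image); since $a_{\CM,*}\pi_*\underline{\BoQ}_\FM\cong a_{\FM,*}\underline{\BoQ}_\FM$ computes $\HO^*(\FM)$, the mixed Hodge structure on $\HO^*(\FM)$ is pure of weight zero.

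The real content is in making the étale slice formalism interact correctly with the monodromic mixed Hodge module package set up earlier: one needs the slice theorem in the precise form producing a \emph{cartesian} square of good moduli space morphisms with (strongly) étale upper arrow and smooth slice, smooth base change for $\pi_*$ in this setting, and the étale-locality of purity. Each ingredient is either part of the foundational material or formal, and no normalization issue arises since étale morphisms have relative dimension zero; the point that requires a little care is the covering statement, which rests on the fact that the fibre of a good moduli space morphism over a closed point has a unique closed point and that this point has reductive stabilizer.
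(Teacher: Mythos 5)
Your proof is correct and follows exactly the route the paper indicates (the paper only gestures at it with ``via étale slices \cite{alper2020luna}, one can globalize Theorem~\ref{theorem:puritysmoothquotient}''): a strongly étale AHR slice with smooth affine slice and reductive stabilizer, base change along the cartesian square, étale-locality of purity, and properness of $\CM$ for the final assertion. All the ingredients you flag — the cartesian form of the slice theorem, the open covering by images of the étale charts, and the reductivity of stabilizers of closed points — are the right ones, so nothing further is needed.
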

By specializing Corollary \ref{corollary:puritysmoothstacks} to the stack of semistable $G$-bundles over a smooth projective curve, when $G$ is a reductive group, we obtain a new proof of the following.

\begin{corollary}
\label{corollary:purityGbunintro}
 The moduli stack of semistable principal $G$-bundles over a smooth projective curve has pure weight zero cohomology.
\end{corollary}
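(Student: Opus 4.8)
The plan is to verify that the moduli stack of semistable principal $G$-bundles falls within the scope of Corollary~\ref{corollary:puritysmoothstacks}: I would exhibit it as a smooth Artin stack admitting a \emph{proper} good moduli space, after which purity of its cohomology is immediate.

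Fix a smooth projective curve $C$ over $\BoC$ and a connected reductive group $G$. First I would recall that $\Bun_G$, the moduli stack of principal $G$-bundles on $C$, is a smooth Artin stack locally of finite type over $\BoC$: the deformation theory of a $G$-bundle $P$ is controlled by $\HO^\bullet(C,\mathrm{ad}(P))$, with first-order deformations $\HO^1(C,\mathrm{ad}(P))$ and obstructions in $\HO^2(C,\mathrm{ad}(P))$, the latter vanishing because $C$ is a curve. The stack decomposes as $\Bun_G=\bigsqcup_{\delta\in\pi_1(G)}\Bun_G^{\delta}$ according to topological type. Since semistability in the sense of Ramanathan is an open condition (Shatz-type stratification), each semistable locus $\Bun_G^{\mathrm{ss},\delta}\subset\Bun_G^{\delta}$ is an open, hence smooth, substack, and it is of finite type by boundedness of semistable $G$-bundles of fixed topological type.

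Next I would invoke the construction of the moduli space of semistable principal bundles — originally due to Ramanathan, and since revisited via GIT and via the intrinsic theory of good moduli spaces and $\Theta$-stratifications — to obtain a good moduli space $\pi\colon\Bun_G^{\mathrm{ss},\delta}\to M_G^{\mathrm{ss},\delta}(C)$, whose closed points parametrise polystable $G$-bundles and whose target $M_G^{\mathrm{ss},\delta}(C)$ is a projective, in particular proper, variety. Corollary~\ref{corollary:puritysmoothstacks} then applies directly with $\FM=\Bun_G^{\mathrm{ss},\delta}$ and $\CM=M_G^{\mathrm{ss},\delta}(C)$: the complex $\pi_*\underline{\BoQ}_{\Bun_G^{\mathrm{ss},\delta}}$ is pure of weight zero, and pushing forward along the proper morphism $M_G^{\mathrm{ss},\delta}(C)\to\pt$ shows that $\HO^*(\Bun_G^{\mathrm{ss},\delta})$ carries a pure weight-zero mixed Hodge structure. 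Letting $\delta$ range over $\pi_1(G)$ gives the assertion for the whole stack.

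The step I expect to be the main obstacle is the existence and properness of $M_G^{\mathrm{ss},\delta}(C)$: this is the one genuinely non-formal ingredient and must be imported from the literature on moduli of principal bundles rather than produced by the methods of this paper. Everything else is either elementary (smoothness of $\Bun_G^{\mathrm{ss},\delta}$) or already packaged in Corollary~\ref{corollary:puritysmoothstacks}. I would only take care to note that the automorphisms present at every point — for instance the centre $Z(G)$, which acts on every $G$-bundle — cause no difficulty: good moduli spaces, unlike coarse spaces of Deligne--Mumford type, are built to accommodate such automorphisms, so no rigidification is needed before applying the corollary.
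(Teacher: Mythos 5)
Your proposal is correct and follows exactly the paper's own route: the proof given there consists of citing the projectivity of the moduli space of semistable $G$-bundles (Ramanathan) and then applying Corollary~\ref{corollary:puritysmoothstacks}. Your additional verifications (smoothness of $\Bun_G$, openness and boundedness of the semistable locus, existence of the good moduli space) are correct and merely make explicit the hypotheses the paper leaves implicit.
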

\begin{proof}
 The moduli space of semistable $G$-bundles over a smooth projective curve is projective \cite[Theorem 5.9]{ramanathan1996moduli}. We can then apply Corollary~\ref{corollary:puritysmoothstacks}
\end{proof}

Our proof of Corollary~\ref{corollary:purityGbunintro}, using a version of approximation by proper maps, is similar in spirit to the proof appearing in \cite{kinjo2024decomposition}. Similarly, we can recover \cite[Corollary 5.2]{kinjo2024decomposition} regarding the purity of the cohomology of stacks of semistable sheaves on (local) del Pezzo surfaces. Our version of approximation by proper maps gives \emph{virtually small} morphisms in the sense of \cite{meinhardt2019donaldson}, see Definition~\ref{definition:appm}. The author will exploit this fact in a forthcoming work to give quantitative information on the perverse filtration on the cohomology of the stack arising from the good moduli space map.

Using the Harder--Narasimhan stratification, we are then able to recover a now classical and important result \cite[Proposition 4.4]{teleman1998borel}, see \cite[Corollary 3.3.2]{heinloth2010cohomology} for the version of this result over finite fields.

\begin{corollary}
 The moduli stack of $G$-bundles over a smooth projective curve has pure cohomology.
\end{corollary}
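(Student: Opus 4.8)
The plan is to run the classical Harder--Narasimhan recursion of Atiyah--Bott and Teleman, feeding Corollary~\ref{corollary:purityGbunintro} into it at the semistable level --- applied now not only to $G$ but to all Levi subgroups of $G$, which are again reductive and whose moduli spaces of semistable bundles are again projective by \cite{ramanathan1996moduli}.

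First I would fix a topological type $d$ and work with the corresponding connected component $\Bun_G^d$, a smooth algebraic stack, recalling its Harder--Narasimhan stratification $\Bun_G^d=\bigsqcup_\mu \Bun_G^{d,\mu}$ into smooth locally closed substacks indexed by Harder--Narasimhan types $\mu$, with $\Bun_G^{d,0}=\Bun_G^{d,\ssimp}$ the quasi-compact semistable locus. By the theory of canonical reductions, each stratum $\Bun_G^{d,\mu}$ is isomorphic to a stack $\Bun_{P_\mu}^{\mu}$ of $P_\mu$-bundles ($P_\mu\supseteq B$ a standard parabolic with Levi quotient $L_\mu$), and extension of structure group along $P_\mu\twoheadrightarrow L_\mu$ defines a morphism $\Bun_{P_\mu}^{\mu}\to\Bun_{L_\mu}^{\ssimp,\nu}$ onto the stack of semistable $L_\mu$-bundles of the relevant topological type $\nu$; by the usual affine-fibration argument this morphism induces an isomorphism of mixed Hodge structures $\HO^*(\Bun_G^{d,\mu})\cong\HO^*(\Bun_{L_\mu}^{\ssimp,\nu})$ with \emph{no shift and no Tate twist} \cite{heinloth2010cohomology}. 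Since $L_\mu$ is reductive, Corollary~\ref{corollary:purityGbunintro} applies to it, and we conclude that $\HO^k(\Bun_G^{d,\mu})$ is pure of weight $k$ for every $k$ and every $\mu$ (the case $\mu=0$ being Corollary~\ref{corollary:purityGbunintro} for $G$ itself).

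Next I would assemble the strata using the standard finiteness properties of the Harder--Narasimhan stratification \cite{heinloth2010cohomology}: the codimensions $c_\mu=\codim(\Bun_G^{d,\mu}\subset\Bun_G^d)$ are non-negative integers with $\#\{\mu:c_\mu\leq N\}<\infty$ for all $N$, and $\overline{\Bun_G^{d,\mu}}$ meets only strata of codimension $\geq c_\mu$. Hence $\Bun_G^d$ is exhausted by the quasi-compact open substacks $\FM_N:=\bigcup_{c_\mu\leq N}\Bun_G^{d,\mu}$, and $\HO^k(\Bun_G^d)=\HO^k(\FM_N)$ once $2N\geq k$. Iterating the Gysin long exact sequences $\cdots\to\HO^{k-2c}(Z)(-c)\to\HO^k(\CV)\to\HO^k(\CV\setminus Z)\to\cdots$, valid for a smooth closed substack $Z$ of pure codimension $c$ inside a smooth stack $\CV$ via the six-functor formalism, along the finite stratification of $\FM_N$ yields a spectral sequence converging to $\HO^*(\FM_N)$ with $E_1$-terms $\HO^{*-2c_\mu}(\Bun_G^{d,\mu})(-c_\mu)$. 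The crux is then a weight count: the degree-$n$ part of the term attached to $\mu$ is $\HO^{n-2c_\mu}(\Bun_G^{d,\mu})(-c_\mu)$, pure of weight $(n-2c_\mu)+2c_\mu=n$ by the previous paragraph. Thus every $E_1$-term in total degree $n$ is pure of weight $n$; the differentials, being morphisms of mixed Hodge structures, are strict; so every $E_\infty$-term in total degree $n$ is a subquotient of a pure weight-$n$ Hodge structure, hence pure of weight $n$; and $\HO^n(\FM_N)$, an iterated extension of such, is pure of weight $n$. Letting $N\to\infty$ shows that $\HO^n(\Bun_G^d)$, and hence $\HO^n(\Bun_G)=\prod_d\HO^n(\Bun_G^d)$, is pure of weight $n$, for all $n$.

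Given these inputs the argument is soft, and I expect the only delicate points to be bookkeeping ones. The first is that the Harder--Narasimhan strata must carry \emph{exactly} the cohomology of the semistable Levi stacks, with no auxiliary shift or twist, so that the shift $[-2c_\mu]$ and Tate twist $(-c_\mu)$ produced by the Gysin sequences precisely restore the coincidence of weight with cohomological degree; this is essential because an extension of a pure Hodge structure by a pure Hodge structure of a \emph{different} weight need not be pure, so the weight-matching is doing genuine work. The second is the finiteness $\#\{\mu:c_\mu\leq N\}<\infty$ together with $c_\mu\to\infty$, which guarantees degree-wise convergence of the spectral sequence and the identification of $\HO^*(\Bun_G)$ with $\HO^*(\FM_N)$ in low degrees. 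Both are part of the classical Atiyah--Bott/Shatz package for principal bundles over a curve and require nothing new; in particular this recovers \cite[Proposition 4.4]{teleman1998borel}, with Corollary~\ref{corollary:purityGbunintro} supplying for the semistable Levi stacks what was there obtained by gauge theory.
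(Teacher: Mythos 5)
Your proposal is correct and follows exactly the route the paper intends: the paper derives this corollary from Corollary~\ref{corollary:purityGbunintro} via the Harder--Narasimhan stratification (it only sketches this in one sentence), and your writeup --- purity of the semistable Levi strata from Corollary~\ref{corollary:purityGbunintro} plus Ramanathan's projectivity, the shift-free identification $\HO^*(\Bun_G^{d,\mu})\cong\HO^*(\Bun_{L_\mu}^{\ssimp,\nu})$, and the Gysin/weight bookkeeping with $c_\mu\to\infty$ --- is precisely the standard Atiyah--Bott/Teleman assembly being invoked. No gaps.
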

Our proof of this statement is valid in both characteristic zero (for which we use the theory of monodromic mixed Hodge modules, as in \cite{davison2020cohomological} and introduced by Saito, see e.g. \cite{saito1990mixed}) and in positive characteristic (for which we use $\ell$-adic sheaves as in \cite{kiehl2013weil}). They key point is that the approximation by proper maps for good moduli spaces $V/G\rightarrow V\cms G$ works in both settings (although we only explicit the characteristic zero situation). Of course, more is known about the cohomology of the stack $\Bun_G$, in particular it is known to be generated by tautological classes \cite{atiyah1983yang} (in characteristic zero) and \cite{heinloth2010cohomology} in positive characteristics, and the (absence of) relations between them is also known \cite{heinloth2010cohomology}, i.e. the ring $\HO^*(\Bun_G)$ is completely explicit.

\subsubsection{Sheafified cohomological integrality for symmetric quotient stacks}
\label{subsubsection:sheafcohintsym}

Let $G$ be a reductive group and $X$ a smooth algebraic variety with a $G$-action which is symmetric (for any $x\in X$ such that the orbit $G\cdot x$ is closed, $\Tan_xX$ and $\Tan^*_xX$ have the same sets of weights counted with multiplicities for a maximal torus $T_x\subset G_x$ inside the stabilizer $G_x$ of $x$, Definition~\ref{definition:symmetricrepresentation} -- this is equivalent to \cite[Definition 2.3]{hennecart2024cohomological} when $X=V$ is a representation of $G$). We let $T$ be a maximal torus of $G$. For any cocharacter $\lambda\in X_*(T)$, we have a Levi subgroup $G^{\lambda}\subset G$ (fixed point set under $\lambda$-conjugation) and a smooth symmetric $G^{\lambda}$-variety $X^{\lambda}=\bigsqcup_{\alpha\in \pi_0(X^{\lambda})}X^{\lambda}_{\alpha}$ (fixed points under the $\lambda$-action), which we may decompose into connected components. We have the good moduli space map $\pi_{(\lambda,\alpha)}\colon X^{\lambda}_{\alpha}/G^{\lambda}\rightarrow X^{\lambda}_{\alpha}\cms G^{\lambda}\coloneqq \Spec(\BoC[X^{\lambda}_{\alpha}])^{G^{\lambda}}$. We denote by $\SQ_{X,G}$ the set of pairs $(\lambda,\alpha)$ with $\lambda\in X_*(T)$ and $\alpha\in \pi_0(X^{\lambda})$.

We let $\SP_{X,G}$ be the set of equivalence classes of pairs $(\lambda,\alpha)\in\SQ_{X,G}$ of a cocharacter $\lambda$ of $T$ with a connected component $\alpha\in\pi_0(X^{\lambda})$ with respect to the relation
\[
 (\lambda,\alpha)\sim(\mu,\beta)\iff\left\{\begin{aligned}
                           &X_{\alpha}^{\lambda}=X_{\beta}^{\mu}\quad\text{ as subvarieties of $X$}\\
                           &G^{\lambda}=G^{\mu}\,.
                          \end{aligned}
\right.
\]
For a pair $(\lambda,\alpha)$, we let $\overline{(\lambda,\alpha)}$ be its class in $\SP_{X,G}$. The Weyl group $W$ of $G$ naturally acts on the set of pairs $(\lambda,\alpha)$, and this action descends to $\SP_{X,G}$. For any pair $(\lambda,\alpha)$, there is a subgroup $W_{(\lambda,\alpha)}\coloneqq \{w\in W\mid w\cdot\overline{(\lambda,\alpha)}=\overline{(\lambda,\alpha)}\}\subset W$ (\S\ref{subsection:Weylgroups}). The Weyl group $W^{\lambda}\coloneqq N_{G^{\lambda}}(T)/T$ of $G^{\lambda}$ is a normal subgroup of $W_{(\lambda,\alpha)}$ (Lemma~\ref{lemma:weylgroupsubgroup}). We denote by $\overline{W}_{(\lambda,\alpha)}\coloneqq W_{(\lambda,\alpha)}/W^{\lambda}$ the quotient, which is a subgroup of the relative Weyl group of $G^{\lambda}\subset G$ (\S\ref{subsubsection:relWeylsubvar}). We let $G_{(\lambda,\alpha)}=\ker(G^{\lambda}\rightarrow\Aut(X^{\lambda}_{\alpha}))\cap Z(G^{\lambda})$, the subgroup of the center of $G^{\lambda}$ acting trivially on $X^{\lambda}_{\alpha}$. Its neutral connected component $G_{(\lambda,\alpha)}^{\circ}$ is a torus. There is a character $\varepsilon_{X,(\lambda,\alpha)}\colon \overline{W}_{(\lambda,\alpha)}\rightarrow\{\pm1\}$ (Lemma~\ref{lemma:character}). We let $\underline{\BoQ}_{X/G}^{\vir}\coloneqq\underline{\BoQ}_{X/G}\otimes\SL^{-\frac{\dim X-\dim G}{2}}$ be the perversely twisted constant sheaf, where $\SL$ is the Tate twist. For any $(\lambda,\alpha)\in\SQ_{X,G}$, we have a $\overline{W}_{(\lambda,\alpha)}$-invariant finite map $\imath_{(\lambda,\alpha)}\colon X^{\lambda}_{\alpha}\cms G^{\lambda}\rightarrow X\cms G$ (Lemma~\ref{lemma:finitemap}).

\begin{theorem}[= Theorem~\ref{theorem:sheafcohint}]
\label{theorem:sheafifiedcohintintro}
 Let $X$ be a smooth symmetric affine $G$-variety. For any $(\lambda,\alpha)\in \SQ_{X,G}$, there exists a cohomologically graded mixed Hodge module (\S\ref{subsection:cohgradedmhm}) $\underline{\CP}_{(\lambda,\alpha)}\subset (\pi_{(\lambda,\alpha)})_*\underline{\BoQ}_{X^{\lambda}_{\alpha}/G^{\lambda}}^{\vir}\in\MMHM^{\BoZ}(X^{\lambda}_{\alpha}\cms G^{\lambda})$ such that the map
 \begin{equation}
 \label{equation:cohintiso}
\bigoplus_{\tilde{(\lambda,\alpha)}\in\SP_{X,G}/W}((\imath_{(\lambda,\alpha)})_*\underline{\CP}_{(\lambda,\alpha)}\otimes\HO^*(\pt/G_{(\lambda,\alpha)}))^{\varepsilon_{X,(\lambda,\alpha)}}\rightarrow \pi_*\underline{\BoQ}_{X/G}^{\vir}
 \end{equation}
in $\CD^{+}(\MMHM(X^{\lambda}_{\alpha}\cms G^{\lambda}))$ induced by the sheafified induction morphisms \eqref{equation:sheafifiedinduction} is an isomorphism. The quantity $((\imath_{(\lambda,\alpha)})_*\underline{\CP}_{(\lambda,\alpha)}\otimes\HO^*(\pt/G_{(\lambda,\alpha)}))^{\varepsilon_{X,(\lambda,\alpha)}}$ is the isotypic component of the character $\varepsilon_{X,(\lambda,\alpha)}$ for the $\overline{W}_{(\lambda,\alpha)}$-action on $((\imath_{(\lambda,\alpha)})_*\underline{\CP}_{(\lambda,\alpha)}\otimes\HO^*(\pt/G_{(\lambda,\alpha)}))\in\MMHM^{\BoZ}(X\cms G)$, an object of an Abelian category. Moreover, $\underline{\CP}_{(\lambda,\alpha)}$ only depends on the $G^{\lambda}$-variety $X^{\lambda}_{\alpha}$ up to isomorphism, and is uniquely determined by the existence of cohomological integrality isomorphisms \eqref{equation:cohintiso} for all pairs $(X^{\mu}_{\beta},G^{\mu})$, $(\mu,\beta)\in\SQ_{X,G}$.
\end{theorem}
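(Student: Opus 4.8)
The plan is to bootstrap the sheafified cohomological integrality isomorphism from its absolute counterpart in \cite{hennecart2024cohomological} by a local-to-global argument over the good moduli space $X\cms G$, using étale slices. First I would recall that by Theorem~\ref{theorem:puritysmoothquotient}, $\pi_*\underline{\BoQ}_{X/G}^{\vir}$ is a cohomologically graded semisimple complex of monodromic mixed Hodge modules on $X\cms G$, and similarly for each $\pi_{(\lambda,\alpha)}$; thus every object in play is determined by its restrictions to étale (or analytic) neighbourhoods of closed points of the various good moduli spaces. Given a closed point $x\in X\cms G$ with closed $G$-orbit representative having stabilizer $G_x$, Luna's étale slice theorem \cite{alper2020luna} (in the form recalled in \S\ref{section:localneighbourhoodsmoothstacks}) produces a $G_x$-invariant smooth affine slice $N_x$ with $N_x/G_x\to N_x\cms G_x$ étale-locally modelling $X/G\to X\cms G$ near $x$; the slice representation $\Tan_xX/\Tan_x(Gx)$ is symmetric because $X$ is. So the statement reduces, stalk by stalk, to the case where $X=N_x$ is a symmetric representation of the reductive group $G_x$, where Theorem~1.1 of \cite{hennecart2024cohomological} applies.

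The second step is to upgrade the absolute isomorphism of \cite{hennecart2024cohomological} to a sheaf-level isomorphism on the slice. The induction morphisms \eqref{equation:sheafifiedinduction} are defined geometrically via correspondences $X^{\lambda}/P^{\lambda}\leftarrow\cdots\rightarrow X/G$ (parabolic induction, \S\ref{section:parabolicinduction}), hence are maps of complexes of monodromic mixed Hodge modules on $X\cms G$ before taking cohomology; I would define $\underline{\CP}_{(\lambda,\alpha)}$ inductively on the partial order given by the dimension of $G^{\lambda}$ (or equivalently by the stratification of $X\cms G$ by stabilizer type), as the ``new'' summand of $(\pi_{(\lambda,\alpha)})_*\underline{\BoQ}_{X^{\lambda}_{\alpha}/G^{\lambda}}^{\vir}$ not coming from proper parabolic inductions from smaller Levis. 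Concretely, $\underline{\CP}_{(\lambda,\alpha)}$ is the image of the projector onto the complement of $\sum_{(\mu,\beta)<(\lambda,\alpha)}\im(\text{induction})$, which makes sense by semisimplicity. Taking $\overline{W}_{(\lambda,\alpha)}$-invariants against the sign character $\varepsilon_{X,(\lambda,\alpha)}$ and twisting by $\HO^*(\pt/G_{(\lambda,\alpha)})$ accounts exactly for the redundancy in parametrizing a given $(\mu,\beta)$ by $W$-conjugates of $(\lambda,\alpha)$ and by the torus $G_{(\lambda,\alpha)}^{\circ}$ of central automorphisms acting trivially on $X^{\lambda}_{\alpha}$; these combinatorics are the content of \S\ref{subsection:Weylgroups} and Lemmas~\ref{lemma:weylgroupsubgroup}, \ref{lemma:character}, \ref{lemma:finitemap}, and they run in parallel with, but at sheaf level over $X\cms G$, the corresponding bookkeeping in \cite{hennecart2024cohomological}.

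The third step is globalization and well-definedness. Having constructed $\underline{\CP}_{(\lambda,\alpha)}$ and the map \eqref{equation:cohintiso} étale-locally on $X\cms G$, I would check that both are canonical, i.e.\ independent of the chosen slice: since $\underline{\CP}_{(\lambda,\alpha)}$ is characterized intrinsically (as the subobject of $(\pi_{(\lambda,\alpha)})_*\underline{\BoQ}^{\vir}$ cut out by the same universal recipe, recursively), two slice descriptions must agree on overlaps, so the $\underline{\CP}_{(\lambda,\alpha)}$ glue to a global cohomologically graded mixed Hodge module on $X^{\lambda}_{\alpha}\cms G^{\lambda}$, with its $\overline{W}_{(\lambda,\alpha)}$-action, and the map \eqref{equation:cohintiso} glues to a global map which is an isomorphism because it is one étale-locally. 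The dependence of $\underline{\CP}_{(\lambda,\alpha)}$ only on the abstract $G^{\lambda}$-variety $X^{\lambda}_{\alpha}$ follows from the same local characterization (the slices, Weyl data, and sign character all depend only on that variety), and uniqueness follows by downward induction on $\dim G^{\lambda}$: applying \eqref{equation:cohintiso} to each pair $(X^{\mu}_{\beta},G^{\mu})$ determines $\underline{\CP}_{(\mu,\beta)}$ in terms of $(\pi_{(\mu,\beta)})_*\underline{\BoQ}^{\vir}$ and the already-known $\underline{\CP}_{(\lambda,\alpha)}$ for $(\lambda,\alpha)$ strictly dominating $(\mu,\beta)$.

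The main obstacle I expect is the compatibility of the parabolic induction morphisms and of the Weyl-group actions with étale restriction — that is, showing the local models glue coherently. Unlike the quiver case, where everything descends from the monoidality of a cohomological Hall algebra structure, here one must verify by hand (\S\ref{subsection:Weylgroups}) that the identifications $W^{\lambda}\trianglelefteq W_{(\lambda,\alpha)}$, the relative Weyl group $\overline{W}_{(\lambda,\alpha)}$, the central torus $G_{(\lambda,\alpha)}$, and the sign character $\varepsilon_{X,(\lambda,\alpha)}$ transform correctly when one passes from $X/G$ near $x$ to the slice $N_x/G_x$, where a Levi $G^{\lambda}$ of $G$ containing $G_x$ meets $G_x$ in a Levi of $G_x$, and one must match strata, fixed loci, and induction diagrams across the étale map $N_x\cms G_x\to X\cms G$. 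This is technical but, given the semisimplicity and cohomological-gradedness already established, purely a matter of careful bookkeeping rather than a new idea.
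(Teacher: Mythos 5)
Your proposal is correct and follows essentially the same route as the paper: define $\underline{\CP}_{(\lambda,\alpha)}$ globally as a $\overline{W}_{(\lambda,\alpha)}$-equivariant direct sum complement of the image of the proper induction maps (legitimate by the semisimplicity coming from approximation by proper maps), then check the map \eqref{equation:cohintiso} is an isomorphism stalkwise by passing to a Luna slice at a closed orbit and invoking the absolute isomorphism of \cite{hennecart2024cohomological}, with the Weyl-group, character and $\HO^*(\pt/G_{(\lambda,\alpha)})$ bookkeeping handled exactly as in \S\ref{subsection:Weylgroups} and Lemma~\ref{lemma:grouptheoretic}. The only cosmetic difference is that the paper defines the complement globally once and for all (so no gluing step is needed) and runs the stalk check as a contradiction argument on a simple summand of the kernel or cokernel, using the contracting $\BoC^*$-action on the slice to identify the stalk at the origin with the derived global sections appearing in the absolute statement.
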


The direct sum of Theorem~\ref{theorem:sheafifiedcohintintro} runs over $W$-orbits in $\SP_{X,G}$. For each orbit, we choose a representative $(\lambda,\alpha)$ with $\lambda\in X_*(T)$ and $\alpha\in\pi_0(X^{\lambda})$. The cohomological integrality isomorphism of Theorem~\ref{theorem:sheafifiedcohintintro} depends on such a choice but in a non-essential way.

We refer to \ref{subsection:bpsmhm} for the definition of the cohomologically graded mixed Hodge modules $\underline{\CP}_{(\lambda,\alpha)}$. In the introduction, we use the notation $\underline{\CP}_{(\lambda,\alpha)}$ for them to emphasize their connection with the vector spaces $\CP_{\lambda}$ defined by \cite[Theorem~1.1]{hennecart2024cohomological} when $X=V$ is a representation of $G$. Namely, in this case, $\CP_{\lambda}\cong \HO^*(X^{\lambda}_{\alpha}\cms G^{\lambda},\underline{\CP}_{\lambda})$. Later, we may denote them $\underline{\BPS}_{X,(\lambda,\alpha)}$ to insist on the similarity with the BPS sheaf for quivers introduced in \cite{meinhardt2019donaldson,davison2020cohomological}. However, a crucial difference is that the complexes $\underline{\BPS}_{X,(\lambda,\alpha)}$ may not be concentrated in a single perverse degree a priori.

\begin{proposition}
\label{proposition:boundedness}
 The cohomologically graded mixed Hodge modules $\underline{\CP}_{(\lambda,\alpha)}$ given by Theorem~\ref{theorem:sheafifiedcohintintro} are concentrated in finitely many cohomological degrees.
\end{proposition}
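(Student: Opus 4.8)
The statement claims that each $\underline{\CP}_{(\lambda,\alpha)}$ is concentrated in finitely many cohomological degrees. I would prove this by induction on $\dim G^\lambda - \operatorname{rank}$ of the situation, using the defining cohomological integrality isomorphism \eqref{equation:cohintiso} to isolate $\underline{\CP}_{(\lambda,\alpha)}$ as a direct summand inside $\pi_*\underline{\BoQ}^{\vir}_{X/G}$, and then using boundedness of the latter together with boundedness of the inductively smaller summands.

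First I would observe that the base case is $(\lambda,\alpha)$ such that $G^\lambda = T$ is a torus: in that case $\SP_{X^\lambda_\alpha, G^\lambda}/W^\lambda$ is a single class, so \eqref{equation:cohintiso} degenerates to an isomorphism $\underline{\CP}_{(\lambda,\alpha)} \otimes \HO^*(\pt/G_{(\lambda,\alpha)}) \isoto \pi_{(\lambda,\alpha),*}\underline{\BoQ}^{\vir}_{X^\lambda_\alpha/G^\lambda}$ (up to the sign character, which is trivial for tori). The right-hand side is bounded: $X^\lambda_\alpha/T$ admits, via the good moduli space map and Theorem~\ref{theorem:puritysmoothquotient}/Proposition~\ref{proposition:semisimplicitypistar}, a cohomologically graded pushforward which lives in finitely many degrees (the torus case is essentially the statement that the cohomology of $X^\lambda_\alpha \sslash T$ together with finitely many $\HO^*(\pt/\BoC^*)$-factors is finitely generated; more precisely $\pi_{(\lambda,\alpha),*}\underline{\BoQ}_{X^\lambda_\alpha/T} \cong \pi^{\mathrm{GIT}}_* \underline{\BoQ} \otimes \HO^*(\pt/T^{\mathrm{triv}})$ for the torus acting trivially through its kernel, and the relevant statement reduces to the fact that a free action part contributes a bounded complex). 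Tensoring with the polynomial ring $\HO^*(\pt/G_{(\lambda,\alpha)})$ and taking the $\HO^*(\pt/G^\circ_{(\lambda,\alpha)})$-module structure into account: since the left side is a free module over $\HO^*(\pt/G_{(\lambda,\alpha)})$ on the generating object $\underline{\CP}_{(\lambda,\alpha)}$, and the right side is bounded, $\underline{\CP}_{(\lambda,\alpha)}$ must be bounded.

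For the inductive step, I would use that every term on the left of \eqref{equation:cohintiso} other than the one indexed by the ``diagonal'' class $\overline{(0,\alpha_0)}$ (with $G^0 = G$) involves a pair $(\mu,\beta)$ with $G^\mu \subsetneq G$ a proper Levi, for which $\underline{\CP}_{(\mu,\beta)}$ is bounded by the inductive hypothesis; these terms, tensored with the polynomial rings $\HO^*(\pt/G_{(\mu,\beta)})$, remain bounded since $\HO^*(\pt/G_{(\mu,\beta)})$ is supported in non-negative even degrees and the $\imath_{(\mu,\beta),*}$ are finite (hence exact and degree-preserving). The whole right-hand side $\pi_*\underline{\BoQ}^{\vir}_{X/G}$ is bounded by Theorem~\ref{theorem:puritysmoothquotient} (it is cohomologically graded and, being the pushforward of a finite-dimensional stack's dualizing-type complex along a good moduli space map, lives in a bounded range of degrees). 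Therefore the remaining summand $(\underline{\CP}_{(0,\alpha_0)} \otimes \HO^*(\pt/G_{(0,\alpha_0)}))^{\varepsilon}$ is bounded as a complex, and the same free-module argument as in the base case forces $\underline{\CP}_{(0,\alpha_0)}$ to be concentrated in finitely many degrees. Applying this to all pairs $(\lambda,\alpha)$ — i.e. running the argument for each Levi $G^\lambda$ in place of $G$, which is legitimate because $\underline{\CP}_{(\lambda,\alpha)}$ depends only on the $G^\lambda$-variety $X^\lambda_\alpha$ — yields the claim for all of them.

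**Main obstacle.** The delicate point is the extraction of boundedness of the generator $\underline{\CP}$ from boundedness of $\underline{\CP} \otimes \HO^*(\pt/G_{(\lambda,\alpha)})$: one needs that the $\HO^*(\pt/G^\circ_{(\lambda,\alpha)})$-module structure is genuinely free (so that the lowest/highest nonzero cohomological degrees of the tensor product detect those of $\underline{\CP}$), and that taking the $\varepsilon_{X,(\lambda,\alpha)}$-isotypic part — which is exact — does not destroy this. I would address this by recalling from \S\ref{subsection:bpsmhm} that $\underline{\CP}_{(\lambda,\alpha)}$ is by construction a subobject (a direct summand as a cohomologically graded MHM) complementary to parabolic induction from proper Levis, together with the fact that $\HO^*(\pt/G^\circ_{(\lambda,\alpha)}) = \HO^*(\pt/\BoC^{*r})$ acts freely because this torus acts trivially on $X^\lambda_\alpha$ and the corresponding $B\BoC^{*r}$-factor splits off the stack $X^\lambda_\alpha/G^\lambda$ as a direct product up to a gerbe; one then invokes that a polynomial ring in non-negatively-graded generators acting freely on a $\BoZ$-graded object $M$ with $M \otimes_{\BoQ} \BoQ[u_1,\dots,u_r]$ bounded forces $M$ bounded, since the degrees only spread upward. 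The remaining bookkeeping — that the Tate twists and shifts in $\underline{\BoQ}^{\vir}$ and in $\imath_{(\lambda,\alpha),*}$ are all by bounded amounts — is routine.
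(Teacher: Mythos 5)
Your argument has a genuine gap, and it occurs at the step the whole strategy rests on: the claim that $\pi_*\underline{\BoQ}^{\vir}_{X/G}$ (and, in the base case, $(\pi_{(\lambda,\alpha)})_*\underline{\BoQ}^{\vir}_{X^{\lambda}_{\alpha}/G^{\lambda}}$) ``lives in a bounded range of degrees''. This is false. Theorem~\ref{theorem:puritysmoothquotient} gives purity and semisimplicity, hence that the pushforward is cohomologically graded, but not that it is bounded: the stalk of $\pi_*\underline{\BoQ}_{X/G}$ at a point $\overline{x}$ is $\HO^*(\pi^{-1}(\overline{x}))$, which contains $\HO^*(\pt/G_x)$ and is therefore unbounded above whenever the stabilizer $G_x$ is positive-dimensional (which always happens, e.g.\ at points fixed by $G_0$, and already for $\pt/\BoC^*$). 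The same error recurs in the inductive step, where you assert that $\underline{\CP}_{(\mu,\beta)}\otimes\HO^*(\pt/G_{(\mu,\beta)})$ ``remains bounded'': a bounded object tensored with the polynomial ring $\HO^*(\pt/G_{(\mu,\beta)})$ is unbounded above. Once these claims are withdrawn, the most your free-module argument can extract is boundedness \emph{below} of $\underline{\CP}_{(\lambda,\alpha)}$ (from the fact that $\pi_*\underline{\BoQ}^{\vir}_{X/G}$ is bounded below, $X/G$ being of finite type); the upper bound -- which is the genuinely hard half of the statement -- does not follow, because unboundedness of $\underline{\CP}$ above is invisible inside $\underline{\CP}\otimes\HO^*(\pt/G_{(\lambda,\alpha)})$, the polynomial factor already being unbounded above. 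The degreewise self-duality of Lemma~\ref{lemma:selfdualitypf} does not exchange $i$ with $-i$, so it cannot be used to convert the lower bound into an upper bound either.

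For contrast, the paper proves the proposition by an entirely local argument: it uses the compatibility of $\underline{\CP}_{(\lambda,\alpha)}$ with \'etale slices (Proposition~\ref{proposition:compatibilityBPSetaleslices}) to identify the stalk at any $\overline{x}$ with the BPS space of the linear model $(\Tan_xN_x,G_x)$, and then invokes the \emph{absolute} boundedness result of \cite[Proposition~1.4]{hennecart2024cohomological}, which confines those spaces to the uniform window $[\dim G-\dim X,\ \dim G+\dim X]$ independently of $x$. That external input (or an argument replacing it) is exactly what your proposal is missing; without it, no amount of bookkeeping with the integrality isomorphism \eqref{equation:cohintiso} will produce the upper bound.
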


\subsubsection{Sheafified cohomological integrality for critical symmetric quotient stacks}
\label{subsubsection:sheafcohintcritsym}

Let $G$ be a reductive group and $X$ a smooth affine $G$-variety. We let $f\colon X\rightarrow\BoC$ be a $G$-invariant function. The pair $(X,f)$ is called a \emph{critical $G$-variety}. For any $(\lambda,\alpha)\in \SQ_{X,G}$, the restriction $f_{(\lambda,\alpha)}\colon X^{\lambda}_{\alpha}\rightarrow\BoC$ of $f$ to $X^{\lambda}_{\alpha}$ provides a critical $G^{\lambda}$-variety $(X^{\lambda}_{\alpha},f_{
(\lambda,\alpha)})$. We also denote by $f_{(\lambda,\alpha)}\colon X^{\lambda}_{\alpha}\cms G^{\lambda}\rightarrow\BoC$ the regular function induced on the good moduli space.

We define $\underline{\CH}_{X,G,f}\coloneqq \pi_*\varphi^p_{f}\underline{\BoQ}_{X/G}^{\vir}\in \MMHM^{\BoZ}(X\cms G)$ (we use \ref{theorem:puritysmoothquotient} and the fact that the perversely twisted vanishing cycle functor $\varphi_f^p$ preserves cohomologically graded mixed Hodge module as it is perverse $t$-exact) and $\CH_{X,G,f}\coloneqq \HO^*(X\cms G,\underline{\CH}_{X,G,f})$, a vector space.

\begin{theorem} (= Theorem~\ref{theorem:cohintcriticallocus})
\label{theorem:critcohint}
 We assume that $X$ is a smooth symmetric $G$-variety. For $(\lambda,\alpha)\in\SQ_{X,G}$, we let $\underline{\BPS}_{X^{\lambda}_{\alpha},f_{(\lambda,\alpha)}}\coloneqq\varphi_{f_{(\lambda,\alpha)}}^p\underline{\CP}_{(\lambda,\alpha)}\in\MMHM^{\BoZ}(X^{\lambda}_{\alpha}\cms G^{\lambda})$ be the perverse exact vanishing cycle functor applied to the cohomologically graded monodromic mixed Hodge modules $\underline{\CP}_{(\lambda,\alpha)}$ given by Theorem~\ref{theorem:sheafifiedcohintintro}. Then, we have an isomorphism
 \[
  \bigoplus_{\tilde{(\lambda,\alpha)}\in\SP_{X,G}/W}((\imath_{(\lambda,\alpha)})_*\underline{\BPS}_{X^{\lambda}_{\alpha},f_{(\lambda,\alpha)}}\otimes\HO^*(\pt/G_{(\lambda,\alpha)}))^{\varepsilon_{X,(\lambda,\alpha)}}\rightarrow\underline{\CH}_{X,G,f}\,.
 \]
\end{theorem}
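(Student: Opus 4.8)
The plan is to \emph{deduce Theorem~\ref{theorem:critcohint} from Theorem~\ref{theorem:sheafifiedcohintintro} by applying the perverse vanishing cycle functor and tracking its compatibilities}. Concretely, I would start from the isomorphism \eqref{equation:cohintiso} of Theorem~\ref{theorem:sheafifiedcohintintro}, which is an isomorphism in $\CD^{+}(\MMHM(X\cms G))$, and apply the functor $\varphi_f^p$ associated to the $G$-invariant function $f\colon X\rightarrow\BoC$ (or rather its descent $f\colon X\cms G\rightarrow\BoC$, using that $f$ is constant along $G$-orbits so it factors through the good moduli space). Since $\varphi_f^p$ is an exact functor on the relevant derived categories of monodromic mixed Hodge modules and is perverse $t$-exact (this is the standard normalization used already in the definition of $\underline{\CH}_{X,G,f}$ in the paragraph preceding the statement), applying it to \eqref{equation:cohintiso} yields an isomorphism
\[
\bigoplus_{\tilde{(\lambda,\alpha)}\in\SP_{X,G}/W}\varphi_f^p\Big(\big((\imath_{(\lambda,\alpha)})_*\underline{\CP}_{(\lambda,\alpha)}\otimes\HO^*(\pt/G_{(\lambda,\alpha)})\big)^{\varepsilon_{X,(\lambda,\alpha)}}\Big)\;\cong\;\varphi_f^p\,\pi_*\underline{\BoQ}_{X/G}^{\vir}\,.
\]
The right-hand side is exactly $\underline{\CH}_{X,G,f}$ by definition (using the base change $\varphi_f^p\pi_*\cong\pi_*\varphi_{f\circ\pi}^p$, which holds because $\pi$ is proper over its image in the relevant sense, or because of the compatibility of vanishing cycles with pushforward along the good moduli space; this is the same identity already invoked to make sense of $\underline{\CH}_{X,G,f}$), so it remains only to identify each summand on the left.

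Next I would address the three things that $\varphi_f^p$ must be commuted past in each summand: the finite pushforward $(\imath_{(\lambda,\alpha)})_*$, the external tensor with the fixed complex $\HO^*(\pt/G_{(\lambda,\alpha)})$, and the $\overline{W}_{(\lambda,\alpha)}$-isotypic projection $(-)^{\varepsilon_{X,(\lambda,\alpha)}}$. For the first, since $\imath_{(\lambda,\alpha)}$ is a finite (hence proper) map and $f$ restricted along it is precisely $f_{(\lambda,\alpha)}$, proper base change for vanishing cycles gives $\varphi_f^p(\imath_{(\lambda,\alpha)})_*\cong(\imath_{(\lambda,\alpha)})_*\varphi_{f_{(\lambda,\alpha)}}^p$. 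For the second, $\varphi_f^p$ commutes with $\otimes\HO^*(\pt/G_{(\lambda,\alpha)})$ because the latter is a (shifted, Tate-twisted) constant complex of mixed Hodge structures, and $\varphi$ is monoidal with respect to external products with a point — i.e. $\varphi_{f\boxplus 0}\cong\varphi_f\boxtimes(\text{id})$, which on the level of a trivial factor is just $\otimes\HO^*(\pt/G_{(\lambda,\alpha)})$. For the third, $\varphi_f^p$ is additive and commutes with the action of the finite group $\overline{W}_{(\lambda,\alpha)}$ (it is a functor, so it sends an idempotent to an idempotent and respects the group action functorially), hence commutes with taking the $\varepsilon_{X,(\lambda,\alpha)}$-isotypic part since $\BoQ$ has characteristic zero and the relevant idempotent lives in $\BoQ[\overline{W}_{(\lambda,\alpha)}]$. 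Combining these three, the $(\lambda,\alpha)$-summand becomes $\big((\imath_{(\lambda,\alpha)})_*\varphi_{f_{(\lambda,\alpha)}}^p\underline{\CP}_{(\lambda,\alpha)}\otimes\HO^*(\pt/G_{(\lambda,\alpha)})\big)^{\varepsilon_{X,(\lambda,\alpha)}}$, and $\varphi_{f_{(\lambda,\alpha)}}^p\underline{\CP}_{(\lambda,\alpha)}$ is by definition $\underline{\BPS}_{X^{\lambda}_{\alpha},f_{(\lambda,\alpha)}}$. This is exactly the asserted isomorphism.

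The remaining point, and the one I expect to require the most care, is the \emph{compatibility of the isomorphism \eqref{equation:cohintiso} itself with $\varphi_f^p$} — that is, verifying that the sheafified induction morphisms \eqref{equation:sheafifiedinduction} (through which \eqref{equation:cohintiso} is built) are compatible with vanishing cycles, so that after applying $\varphi_f^p$ the resulting map really is the induction morphism for the critical data $(X^{\lambda}_{\alpha},f_{(\lambda,\alpha)})$ and not just \emph{some} isomorphism. This reduces to checking that the correspondence defining parabolic induction (\S\ref{section:parabolicinduction}) — which involves pullback along a smooth map and pushforward along a proper map of stacks, and the descent of all of this to good moduli spaces — is compatible with the function $f$ and its restrictions/descents, so that Thom--Sebastiani–type and base-change compatibilities for $\varphi^p$ apply at each stage. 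I would isolate this as a lemma: \emph{the sheafified induction morphism commutes with $\varphi^p$ along a compatible invariant function}, proved by unwinding the definition of \eqref{equation:sheafifiedinduction} and invoking the standard smooth pullback and proper pushforward compatibilities of vanishing cycle functors together with the fact that $f$ pulls back correctly along every map in the correspondence. Granting this lemma, the theorem follows formally as above. Everything else — exactness and $t$-exactness of $\varphi^p$, its interaction with finite pushforward, with tensoring by a constant, and with finite-group isotypic projections — is standard and needs only to be cited.
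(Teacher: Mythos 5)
Your proposal is correct and follows essentially the same route as the paper: the paper's proof likewise applies $\varphi_f^p$ to the isomorphism of Theorem~\ref{theorem:sheafcohint}, commuting it past $(\imath_{(\lambda,\alpha)})_*$ via finiteness of $\imath_{(\lambda,\alpha)}$ and past $\pi_*$ via the approximation-by-proper-maps property of Proposition~\ref{proposition:Gprimeapproachable} (which is the precise justification you should cite, rather than any properness of $\pi$ itself). The compatibilities with the tensor factor, the isotypic projection, and the induction correspondences that you flag are exactly the routine points the paper leaves implicit.
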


\begin{corollary}
\label{corollary:abscohintcrit}
 We let $\rmBPS_{X^{\lambda}_{\alpha},f_{(\lambda,\alpha)}}\coloneqq\HO^*(X^{\lambda}_{\alpha}\cms G^{\lambda},\underline{\BPS}_{X^{\lambda}_{\alpha},f_{(\lambda,\alpha)}})$. Then, we have an isomorphism
\[
  \bigoplus_{\tilde{(\lambda,\alpha)}\in\SP_{X,G}/W}(\rmBPS_{X^{\lambda}_{\alpha},f_{(\lambda,\alpha)}}\otimes\HO^*(\pt/G_{(\lambda,\alpha)}))^{\varepsilon_{X,(\lambda,\alpha)}}\rightarrow\CH_{X,G,f}\,.
\]
\end{corollary}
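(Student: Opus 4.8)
The final statement (the Corollary after Theorem~\ref{theorem:critcohint}) follows by simply applying the hypercohomology functor $\HO^*(-)$ to the isomorphism of Theorem~\ref{theorem:critcohint}. So the proof is essentially formal given Theorem~\ref{theorem:critcohint}, but I need to commute $\HO^*$ past the three operations appearing on the left: the pushforward $(\imath_{(\lambda,\alpha)})_*$ along the finite map, the external tensor with $\HO^*(\pt/G_{(\lambda,\alpha)})$, and the $\varepsilon_{X,(\lambda,\alpha)}$-isotypic projection for the action of the finite group $\overline{W}_{(\lambda,\alpha)}$.

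The plan is as follows. First I would take global sections of the isomorphism in Theorem~\ref{theorem:critcohint}, using that $\HO^*(-) = \HO^*(X\cms G, -)$ is an exact functor on $\CD^+(\MMHM(X\cms G))$ (it is the derived pushforward to a point composed with the forgetful functor to graded vector spaces), hence sends the isomorphism of complexes of monodromic mixed Hodge modules to an isomorphism of graded vector spaces, and commutes with the finite direct sum over $\SP_{X,G}/W$. Next, since $\imath_{(\lambda,\alpha)}\colon X^{\lambda}_{\alpha}\cms G^{\lambda}\rightarrow X\cms G$ is a finite (in particular proper) morphism, we have $\HO^*(X\cms G, (\imath_{(\lambda,\alpha)})_*\CF) \cong \HO^*(X^{\lambda}_{\alpha}\cms G^{\lambda}, \CF)$ for any $\CF$; this identifies the hypercohomology of each summand with $\HO^*(\underline{\BPS}_{X^{\lambda}_{\alpha},f_{(\lambda,\alpha)}}\otimes\HO^*(\pt/G_{(\lambda,\alpha)}))$. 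Then I would use that $\HO^*(\pt/G_{(\lambda,\alpha)})$ is (the cohomology of) a constant complex — a graded vector space placed over $X^{\lambda}_{\alpha}\cms G^{\lambda}$ — so by the Künneth/projection formula $\HO^*(\CG\otimes\HO^*(\pt/G_{(\lambda,\alpha)})) \cong \HO^*(\CG)\otimes\HO^*(\pt/G_{(\lambda,\alpha)})$, where on the right $\HO^*(\CG) = \rmBPS_{X^{\lambda}_{\alpha},f_{(\lambda,\alpha)}}$ by the definition preceding the corollary. Finally, because $\overline{W}_{(\lambda,\alpha)}$ is a finite group and we work with $\BoQ$-coefficients, the functor of taking the $\varepsilon$-isotypic component is exact (it is a direct summand cut out by an idempotent in $\BoQ[\overline{W}_{(\lambda,\alpha)}]$), so it commutes with $\HO^*$: $\HO^*((-)^{\varepsilon_{X,(\lambda,\alpha)}}) = (\HO^*(-))^{\varepsilon_{X,(\lambda,\alpha)}}$. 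Assembling these identifications termwise and summing gives precisely the claimed isomorphism onto $\CH_{X,G,f} = \HO^*(\underline{\CH}_{X,G,f})$.

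I do not anticipate a genuine obstacle here; the only points requiring a word of care are (i) making sure the isomorphism is natural enough that one really may apply $\HO^*$ to the map induced by sheafified induction and land on the map induced by (non-sheafified) induction — this is the compatibility that should already be recorded when the sheafified induction morphisms \eqref{equation:sheafifiedinduction} are set up; and (ii) checking that all the functors above (finite pushforward, tensoring with the fixed graded vector space $\HO^*(\pt/G_{(\lambda,\alpha)})$, isotypic projection) commute with one another and with $\HO^*$ at the level of complexes, not merely on cohomology objects, so that no spectral-sequence degeneration issue arises. Since $\HO^*(\pt/G_{(\lambda,\alpha)})$ is pure and the constructions are all $\BoQ$-linear and exact, these commutations are immediate. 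I would therefore present the proof in one short paragraph: apply $\HO^*$ to Theorem~\ref{theorem:critcohint}, invoke properness of $\imath_{(\lambda,\alpha)}$ and the projection formula to move cohomology inside the summands, use $\BoQ$-linearity to commute it with the $\varepsilon_{X,(\lambda,\alpha)}$-projection, and unwind the definitions of $\rmBPS_{X^{\lambda}_{\alpha},f_{(\lambda,\alpha)}}$ and $\CH_{X,G,f}$.
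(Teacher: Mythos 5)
Your proposal is correct and matches the paper's (implicit) argument exactly: the paper states that the absolute version is obtained from Theorem~\ref{theorem:critcohint} "by taking derived global sections," which is precisely your plan of applying $\HO^*$ and commuting it past the finite pushforward, the tensor with $\HO^*(\pt/G_{(\lambda,\alpha)})$, and the $\varepsilon_{X,(\lambda,\alpha)}$-isotypic projection. The commutation checks you flag are exactly the routine ones, and none of them presents an obstruction.
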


The convention on the direct sums in Theorem~\ref{theorem:critcohint} and Corollary~\ref{corollary:abscohintcrit} is the same as for Theorem~\ref{theorem:sheafifiedcohintintro}.

For the critical cohomology, in contrast to the situation of $V/G$ presented in \S\ref{subsubsection:sheafcohintsym}, we first establish the sheafified version of the cohomological integrality and then, by taking derived global sections, the absolute version.

\subsubsection{Purity of the BPS sheaf for weak Hamiltonian reductions}
\label{subsubsection:purityBPSweak}
Let $X$ be a weakly Hamiltonian smooth affine $G$-variety (\cite[Definition~4.2.1]{halpern2020derived}, see also Definition~\ref{definition:weakmomentmap}). This means that there is a $G$-equivariant isomorphism of vector bundles $\psi\colon \Tan^*X\rightarrow\Tan X$ (we say that $X$ is \emph{weakly symplectic}), a $G$-equivariant map $\mu\colon X\rightarrow\Fg^*$ to $\Fg^*\coloneqq\Lie(G)^*$ considered with the coadjoint $G$-action and a $G$-equivariant isomorphism $\phi\colon X\times \Fg\rightarrow X\times \Fg$ such that the square
\[\begin{tikzcd}
	{X\times\Fg} & {\Tan^*X} \\
	{X\times\Fg} & {\Tan X}
	\arrow["{\mathrm{d}\mu}", from=1-1, to=1-2]
	\arrow["\phi"', from=1-1, to=2-1]
	\arrow["\psi", from=1-2, to=2-2]
	\arrow["a", from=2-1, to=2-2]
\end{tikzcd}\]
commutes after restriction over $\mu^{-1}(0)$, where $a\colon X\times \Fg\rightarrow\Tan X$, $(x,\xi)\mapsto (x,X_{\xi}(x))$ (where $X_{\xi}$ is the vector field on $X$ generated by the action of $\xi\in\Fg$) is the infinitesimal action of $G$ on $X$ and $\mathrm{d}\mu(x,\xi)\coloneqq \mathrm{d}\langle\mu(-),\xi\rangle(x)$ is the differential at $x$ of the function $f\colon x\mapsto \langle\mu(x),\xi\rangle$ on $X$ obtained by contraction of the map $\mu$. We call $\mu$ a \emph{weak moment map}. The function $f\colon \tilde{X}\coloneqq X\times\Fg\rightarrow\BoC$, $(x,\xi)\mapsto \langle\mu(x),\xi\rangle$ gives a critical affine $G$-variety $(\tilde{X},f)$. Moreover, by weak symplecticity, $X$ is a symmetric $G$-variety and therefore so is $\tilde{X}$.

The following lemma is a consequence of the \emph{factorization property} of the DT sheaf of the critical affine $G$-variety $(\tilde{X},f)$ (Lemma~\ref{lemma:factorizationproperty}). Recall that for $\alpha\in \pi_0(X)$, we define $G_{(0,\alpha)}=\ker(G\rightarrow\Aut(X_{\alpha}))\cap Z(G)$. We let $\Fg_{(0,\alpha)}\coloneqq\Lie(G_{(0,\alpha)})$ (in the sequel, we may assume, without loss of generality, that $X$ is connected, and therefore write $G_0$ and $\Fg_0$).

\begin{lemma}[Support lemma = Lemma~\ref{lemma:sectionsupport}]
\label{lemma:supportlemmaintroduction}
 The BPS sheaf $\underline{\BPS}_{\tilde{X},f}$ is supported on $X\cms G\times\Fg_{0}\subset \tilde{X}\cms G$. Moreover,
 \[
  \underline{\BPS}_{\tilde{X},f}\cong \underline{\BPS}_{X,f}^{\red}\boxtimes\underline{\BoQ}_{\Fg_{0}}
 \]
 where $\underline{\BPS}_{X,f}^{\red}\cong \imath^*\underline{\BPS}_{\tilde{X},f}$ is the dimensionally reduced BPS sheaf, where $\imath\colon X\cms G\rightarrow\tilde{X}\cms G$ is the natural closed immersion.
\end{lemma}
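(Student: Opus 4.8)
The plan is to exploit the "factorization property" of the DT sheaf of the critical affine $G$-variety $(\tilde X,f)$ together with cohomological dimensional reduction, following closely the strategy used by Davison in the quiver case \cite{davison2021purity}. First I would unwind the geometry: since $\tilde X = X\times\Fg$ and $f(x,\xi)=\langle\mu(x),\xi\rangle$ is linear in the $\Fg$-coordinate, the function $f$ is of the precise shape to which cohomological dimensional reduction applies. Concretely, writing $p\colon\tilde X=X\times\Fg\to X$ for the projection, the critical locus of $f$ inside $\tilde X$ is cut out fibrewise by $\mu(x)=0$ in the $\Fg$-direction and by $\mathrm d\mu(x,\xi)=0$ in the $X$-direction, so $\crit(f)$ is supported over the vanishing locus $\mu^{-1}(0)$. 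The point is that after passing to the good moduli space $\tilde X\cms G = (X\cms G)\times\Fg\cms G$ and applying $\pi_*\varphi_f^p$, dimensional reduction (in the form of \cite{davison2017critical}, cf. the references in \S\ref{subsubsection:sheafcohintcritsym}) identifies $\underline{\CH}_{\tilde X,G,f}$ with a shift of $\pi^X_*$ applied to the constant sheaf on (a derived enhancement of) $\mu^{-1}(0)/G$, and one tracks how the $\Fg$-variable survives.

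The key steps, in order, would be: (1) Establish that the BPS sheaf is compatible with the external product decomposition coming from a splitting of the situation. Here one uses that $G_0$ (the part of the centre of $G$ acting trivially on $X$) acts trivially on all of $X$ and hence that $\Fg_0\subset\Fg$ is a direct summand on which $\mu$ is identically zero up to the $G$-equivariant structure — more precisely, the pairing $\langle\mu(x),\xi\rangle$ vanishes whenever $\xi\in\Fg_0$ and $\mu(x)$ lies in the annihilator, which forces $f$ to be pulled back along the quotient $\tilde X\to X\times(\Fg/\Fg_0)$ composed with nothing in the $\Fg_0$-direction. (2) Invoke the factorization property (Lemma~\ref{lemma:factorizationproperty}) to deduce $\underline{\BPS}_{\tilde X,f}\cong \underline{\BPS}_{(X\times(\Fg/\Fg_0)),f}\boxtimes\underline{\BoQ}_{\Fg_0\cms G}$, noting $\Fg_0\cms G=\Fg_0$ since $G$ acts trivially there. (3) Identify $\underline{\BPS}_{(X\times(\Fg/\Fg_0)),f}$ with $\underline{\BPS}^{\red}_{X,f}$ supported on $X\cms G$ via dimensional reduction along the remaining linear directions $\Fg/\Fg_0$, using that the weak moment map $\mu$ composed with the projection $\Fg^*\to(\Fg/\Fg_0)^*$ is "generically nondegenerate enough" — this is exactly where weak symplecticity and the definition of the weak moment map enter. (4) Combine (2) and (3) and read off both the support statement and the stated $\boxtimes$-decomposition, with $\imath\colon X\cms G\into\tilde X\cms G$ the zero section in the $\Fg_0$ (equivalently $\Fg$) direction, so that $\imath^*\underline{\BPS}_{\tilde X,f}\cong\underline{\BPS}^{\red}_{X,f}$.

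I expect the main obstacle to be step (3): making the dimensional reduction argument interact correctly with the \emph{monodromic} mixed Hodge module structure and with the perverse $t$-exact vanishing cycle functor $\varphi_f^p$ used to define $\underline{\BPS}$, rather than just at the level of underlying complexes or Borel--Moore homology. One must check that the dimensional reduction isomorphism is compatible with the $\overline W$-actions and the characters $\varepsilon$ appearing in Theorem~\ref{theorem:critcohint}, so that it descends to the summand $\underline{\CP}_{(\lambda,\alpha)}$ cut out there; equivalently, that the factorization of Lemma~\ref{lemma:factorizationproperty} is equivariant for all the symmetry data. A secondary subtlety is verifying that the linear coordinate change identifying $f$ on $X\times\Fg$ with a sum of a nondegenerate quadratic-type term in $\Fg/\Fg_0$ and a trivial term in $\Fg_0$ can be done $G$-equivariantly and globally over $X$ — this is precisely the content of the $G$-equivariant isomorphism $\phi$ in the definition of a weak moment map, so it should be available, but one has to feed it carefully into the vanishing cycle formalism. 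Once these compatibilities are in place, the support statement and the Künneth-type formula follow formally.
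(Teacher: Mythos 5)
Your proposal does not actually prove the support statement, which is the hard content of the lemma. The assertion to be established is that $\underline{\BPS}_{\tilde{X},f}$ vanishes at every closed point $\overline{(v,\xi)}$ whose $\Fg$-component does not lie in $\Fg_0$. Note that the critical locus itself is $\{(v,\xi)\mid \mu(v)=0,\ \xi\in\Fg_v\}$ (Lemma~\ref{lemma:criticallocus}), which genuinely sticks out of $X\times\Fg_0$ wherever stabilizers jump; so no amount of "nondegeneracy" of $f$ in the $\Fg/\Fg_0$-directions can shrink the support of $\varphi_f^p\underline{\BoQ}_{\tilde{X}/G}^{\vir}$ to the zero section, and your step (3) — collapsing the $\Fg/\Fg_0$-direction by dimensional reduction — cannot work as stated: dimensional reduction computes the \emph{pushforward along the fibre direction} and therefore forgets exactly the information about where in the fibre the sheaf sits. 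The support statement is a property of the BPS sheaf only, i.e.\ of what survives after subtracting all nontrivially induced pieces, and proving it for $X\times(\Fg/\Fg_0)$ is the same problem you started with. The missing mechanism, which is what the paper uses, is local: given $\overline{(v,\xi)}$ in the support with closed orbit, one produces from $\xi^{\ssimp}$ a cocharacter $\lambda$ with $\xi\in\Fg^{\lambda,G,X-\reg}$ (Lemma~\ref{lemma:cocharacterforelements}); the Gunningham-type map $\Fg^{\lambda,G,X-\reg}/G^{\lambda}\to\Fg/G$ is \'etale (Proposition~\ref{proposition:etaleGunningham}), and over this \'etale neighbourhood the factorization property (Lemma~\ref{lemma:factorizationproperty}) together with Theorem~\ref{theorem:critcohint} shows that the whole of $\underline{\CH}_{\tilde{X},f}$ is hit by inductions from the proper pairs $(\tilde{X}^{\lambda}_{\alpha},G^{\lambda})$, forcing $\overline{d}_{\lambda}^*\underline{\BPS}_{\tilde{X},f}=0$ unless $\overline{\lambda}$ is trivial, i.e.\ unless $\xi^{\ssimp}\in\Fg_0$. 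A final step (which you also omit) disposes of the nilpotent part: a contracting one-parameter subgroup of $G_{(v,\xi)}$ sends $(v,\xi)$ to $(v,\xi^{\ssimp})$ in the orbit closure, so closedness of the orbit forces $\xi=\xi^{\ssimp}$.

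Two further points. First, you invoke Lemma~\ref{lemma:factorizationproperty} for something it does not say: it is not a K\"unneth splitting along $\Fg_0\times(\Fg/\Fg_0)$, but the decomposition of $\varphi^p_{f\circ d_{\lambda}}$ over the $G,X$-regular locus of a Levi into contributions indexed by $\pi_0(X^{\lambda})$. Second, the $\boxtimes$-decomposition $\underline{\BPS}_{\tilde{X},f}\cong\underline{\BPS}^{\red}_{X,f}\boxtimes\underline{\BoQ}_{\Fg_0}$ is the easy half and follows (as in Lemma~\ref{lemma:dimredBPSsheaf}) from $\Fg_0$-translation equivariance of $\pi$ and invariance of $f$, \emph{combined with} the already-established support statement; your ordering, which tries to derive the support from the splitting, runs the logic backwards.
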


The support lemma can be used to prove a crucial purity result.
\begin{theorem}[= Theorem~\ref{theorem:purityBPSsheaf}]
The BPS sheaf $\underline{\BPS}_{\tilde{X},f}$ is a pure weight zero cohomologically graded mixed Hodge module on $\tilde{X}\cms G$.
\end{theorem}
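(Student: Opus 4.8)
The plan is to deduce purity of $\underline{\BPS}_{\tilde{X},f}$ from the Support Lemma (Lemma~\ref{lemma:supportlemmaintroduction}) together with the purity of the ambient cohomologically graded mixed Hodge module coming from Theorem~\ref{theorem:puritysmoothquotient}. The key point is that the Support Lemma gives an external product decomposition $\underline{\BPS}_{\tilde{X},f}\cong\underline{\BPS}_{X,f}^{\red}\boxtimes\underline{\BoQ}_{\Fg_0}$, and since $\underline{\BoQ}_{\Fg_0}$ is pure of weight zero (it is the constant sheaf on an affine space, up to a Tate twist absorbed in the virtual normalization), purity of $\underline{\BPS}_{\tilde{X},f}$ is equivalent to purity of the dimensionally reduced BPS sheaf $\underline{\BPS}_{X,f}^{\red}=\imath^*\underline{\BPS}_{\tilde{X},f}$ on $X\cms G$. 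So the whole problem reduces to showing that $\underline{\BPS}_{X,f}^{\red}$ is pure of weight zero.

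First I would unwind what $\underline{\BPS}_{\tilde{X},f}$ is: by Theorem~\ref{theorem:critcohint} and the definition preceding it, $\underline{\BPS}_{\tilde{X},f}=\varphi_f^p\underline{\CP}_{(0,\mathrm{pt})}$, the perverse vanishing cycle applied to the BPS-type cohomologically graded mixed Hodge module $\underline{\CP}_{(0,\mathrm{pt})}$ associated to the smooth symmetric $G$-variety $\tilde{X}=X\times\Fg$. By Theorem~\ref{theorem:sheafifiedcohintintro}, $\underline{\CP}_{(0,\mathrm{pt})}$ is a direct summand of $\pi_*\underline{\BoQ}_{\tilde{X}/G}^{\vir}$, which is pure of weight zero by Theorem~\ref{theorem:puritysmoothquotient}; hence $\underline{\CP}_{(0,\mathrm{pt})}$ is itself pure of weight zero (a direct summand of a pure object is pure). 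The issue is that the vanishing cycle functor $\varphi_f^p$ does \emph{not} preserve weights in general, so one cannot simply conclude that $\underline{\BPS}_{\tilde{X},f}=\varphi_f^p\underline{\CP}_{(0,\mathrm{pt})}$ is pure just from purity of $\underline{\CP}_{(0,\mathrm{pt})}$. This is the crux: purity of a vanishing cycle sheaf is a genuinely extra input, and this is exactly where the special structure of the situation (the moment-map function $f(x,\xi)=\langle\mu(x),\xi\rangle$ being linear in $\xi$, hence amenable to dimensional reduction) must be used.

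The mechanism I would exploit is cohomological dimensional reduction (\cite{davison2017critical}): because $f(x,\xi)=\langle\mu(x),\xi\rangle$ is linear along the vector-bundle direction $\Fg$ over $X$, the vanishing cycle sheaf $\varphi_f^p\underline{\BoQ}_{\tilde{X}/G}^{\vir}$ is identified (after pushforward to the good moduli space, compatibly with the factorization through $X\cms G\times\Fg_0$) with a sheaf built directly out of $\underline{\BoQ}_{X/G}$ — concretely, dimensional reduction exchanges the vanishing cycle of a linear function with a shifted constant sheaf on the zero locus of the corresponding section, and the dimensionally reduced BPS sheaf $\underline{\BPS}_{X,f}^{\red}$ becomes expressible in terms of the (shifted, Tate-twisted) restriction of $\underline{\CP}_{(0,\mathrm{pt})}$-type data for the \emph{non-critical} variety $X$, to which Theorem~\ref{theorem:puritysmoothquotient} applies directly and gives purity. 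In more detail: I would show $\underline{\BPS}_{X,f}^{\red}$ is a summand of the (weight-zero, by Theorem~\ref{theorem:puritysmoothquotient} applied to the smooth symmetric $G$-variety $X$ itself) pushforward $\pi_*\underline{\BoQ}_{X/G}^{\vir}$ restricted along $\mu$, the dimensional reduction isomorphism being weight-exact (it is an isomorphism of mixed Hodge modules, not merely an isomorphism after taking cohomology — this is the content of the MHM-enhanced dimensional reduction, cf. \cite{kinjo2022dimensional}). Purity of a summand of a pure object then finishes the argument, and tensoring back with $\underline{\BoQ}_{\Fg_0}$ (pure) via the Support Lemma gives purity of $\underline{\BPS}_{\tilde{X},f}$.

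The main obstacle, as indicated, is controlling weights through the vanishing cycle / dimensional reduction step: one must be sure that the dimensional reduction isomorphism is an isomorphism \emph{in} $\MMHM$ (so weight-preserving after the appropriate Tate twist bookkeeping, which is why the virtual normalizations $\underline{\BoQ}^{\vir}$ are used throughout), and one must carefully match the support-lemma decomposition $X\cms G\times\Fg_0$ with the locus on which dimensional reduction produces the reduced sheaf — i.e. check that the extra torus directions $\Fg_0$ (the part of $\Fg$ on which the section $\mu$ vanishes identically, coming from the centre acting trivially) are precisely the directions that survive dimensional reduction as a free constant-sheaf factor. Both of these are structural rather than computational, but getting the twist/degree bookkeeping right, and invoking the MHM-level (not just cohomology-level) dimensional reduction, is the delicate part.
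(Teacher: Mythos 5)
Your overall strategy (support lemma to factor out $\underline{\BoQ}_{\Fg_0}$, then dimensional reduction to trade the vanishing-cycle sheaf for something not involving $\varphi_f^p$) is the same as the paper's, and you correctly identify the crux: $\varphi_f^p$ does not preserve weights, so purity of $\underline{\CP}_{(0,\ast)}$ alone proves nothing. But your resolution of that crux has a genuine gap. Dimensional reduction does not identify $\underline{\BPS}^{\red}_{\tilde X,f}$ with a summand of ``$\pi_*\underline{\BoQ}_{X/G}^{\vir}$ restricted along $\mu$''; what it produces (Corollary~\ref{corollary:dimredclassique}, Proposition~\ref{proposition:dimredcohint}) is a summand of $\pi_*\BD\underline{\BoQ}_{\mu^{-1}(0)/G}^{\vir}$, the pushforward of the \emph{dualizing complex of the zero locus of the moment map}. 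The locus $\mu^{-1}(0)$ is singular in general, so Theorem~\ref{theorem:puritysmoothquotient} does not apply to it, and $\pi_*\BD\underline{\BoQ}_{\mu^{-1}(0)/G}^{\vir}$ is not known to be pure a priori (its purity is Corollary~\ref{corollary:sheafsymplredpure}, a \emph{consequence} of the theorem you are trying to prove — assuming it here would be circular). Likewise, restriction $\imath^*$ of a pure complex along $\mu^{-1}(0)\cms G\hookrightarrow X\cms G$ only bounds weights from above, so no variant of ``summand of a pure object'' is available at this stage.

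The missing ingredient is the self-duality statement $\underline{\CH}^i(\pi_*\varphi_f^p\underline{\BoQ}_{X/G}^{\vir})\cong\BD\underline{\CH}^i(\pi_*\varphi_f^p\underline{\BoQ}_{X/G}^{\vir})\otimes\SL^i$ (Lemma~\ref{lemma:selfdualitypf} and Corollary~\ref{corollary:selfduality}), proved via relative hard Lefschetz for the proper approximations of $\pi$. This is what shows that purity of $\pi_*\varphi_f^p\underline{\BoQ}_{\tilde X/G}^{\vir}$ is equivalent to a \emph{one-sided} weight bound (Corollary~\ref{corollary:purebelowabove}). With that in hand, the argument closes: by cohomological integrality one reduces the one-sided bound to $\underline{\BPS}_{\tilde X,f}$, by the support lemma to $\underline{\BPS}^{\red}_{\tilde X,f}$, by dimensional reduction to $\pi_*\BD\underline{\BoQ}_{\mu^{-1}(0)/G}^{\vir}$, and the latter has weights bounded below by zero for the completely general reason that $p_!\underline{\BoQ}_{\FX}$ has weights bounded above for any finite type stack (Lemma~\ref{lemma:boundedbelow}), applied through Verdier duality. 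Your write-up never invokes any form of hard Lefschetz or self-duality, and without it the step from a one-sided weight estimate (which is all the singular locus $\mu^{-1}(0)$ can give you) to full purity cannot be made.
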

The purity of $\underline{\BPS}_{\tilde{X},f}$ and $\underline{\BPS}_{X,f}^{\red}$ are equivalent by Lemma~\ref{lemma:supportlemmaintroduction}.

\begin{corollary}
Let $V$ be a representation of a reductive group $G$. Let $\mu\colon \Tan^*V\rightarrow\Fg^*$ be the moment map. Then, $\HO^{\rmBM}_{*,G}(\mu^{-1}(0),\BoQ)$ has pure weight $0$ mixed Hodge structure.
\end{corollary}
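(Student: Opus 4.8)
The plan is to deduce the corollary from the theorem on purity of the BPS sheaf via cohomological dimensional reduction. First I would set up the relevant weakly Hamiltonian data: take $X = V$ itself, which is automatically weakly symplectic (indeed genuinely symplectic, since $\Tan^*V \cong V \oplus V^* \cong \Tan V$ as $G$-representations because $V$ need not be self-dual — wait, one must be careful: $V$ as a $G$-variety has $\Tan_v V = V$, which is generally not symmetric). So instead I would take $X = \Tan^*V = V \oplus V^*$, which \emph{is} a symmetric $G$-variety (its tangent space at every point is $V \oplus V^*$, visibly self-dual), equipped with its standard algebraic symplectic form and the moment map $\mu\colon \Tan^*V \to \Fg^*$. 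This is an honest Hamiltonian $G$-variety, hence in particular a weakly Hamiltonian one in the sense of Definition~\ref{definition:weakmomentmap} (take $\phi = \id$). Then $\tilde X = X \times \Fg = \Tan^*V \times \Fg$ with the function $f(x,\xi) = \langle \mu(x),\xi\rangle$ gives a critical affine symmetric $G$-variety, and Theorem~\ref{theorem:purityBPSsheaf} applies: $\underline{\BPS}_{\tilde X, f}$ is a pure weight zero cohomologically graded mixed Hodge module on $\tilde X \cms G$.

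Next I would invoke the cohomological integrality isomorphism for critical symmetric quotient stacks, Theorem~\ref{theorem:critcohint}, applied to $(\tilde X, f)$. Taking derived global sections and using that $\HO^*(\pt/G_{(\lambda,\alpha)})$ is pure of weight zero (it is a polynomial ring on even generators) and that parabolic induction, being a composition of proper pushforwards and smooth pullbacks up to Tate twists, is strict for weights, the purity of each $\underline{\BPS}_{\tilde X^{\lambda}_{\alpha}, f_{(\lambda,\alpha)}}$ (each summand is again of the above form for the Levi $G^\lambda$ acting on $\tilde X^\lambda_\alpha$, so Theorem~\ref{theorem:purityBPSsheaf} applies to each) propagates to purity of $\underline{\CH}_{\tilde X, G, f} = \pi_* \varphi^p_f \underline{\BoQ}^{\vir}_{\tilde X/G}$, and hence of $\CH_{\tilde X, G, f} = \HO^*(\underline{\CH}_{\tilde X, G, f})$. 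Then cohomological dimensional reduction \cite{davison2017critical} identifies $\HO^*(\tilde X/G, \varphi^p_f \underline{\BoQ}^{\vir})$ with the $G$-equivariant Borel--Moore homology $\HO^{\rmBM}_G(\mu^{-1}(0))$ up to a shift and Tate twist: indeed $\tilde X = \Tan^*V \times \Fg$, the function $f$ is linear on the $\Fg$-fibre over the base $\Tan^*V$, and its critical locus along those fibres is exactly $\{(x,\xi) : \mathrm{d}\langle\mu(-),\xi\rangle(x) = 0,\ \mu(x) = 0\}$, whose dimensional reduction records $\HO^{\rmBM}$ of $\mu^{-1}(0)$.

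Finally I would transport the purity across this identification. Since cohomological dimensional reduction is an isomorphism of mixed Hodge structures (up to a Tate twist, which only shifts weights uniformly), purity of $\CH_{\tilde X, G, f}$ is equivalent to purity of $\HO^{\rmBM}_G(\mu^{-1}(0))$, and the weight is normalized to $0$ by the chosen virtual twist $\underline{\BoQ}^{\vir}$. One small point to handle is the precise weight bookkeeping: $\underline{\BoQ}^{\vir}_{\tilde X/G} = \underline{\BoQ}_{\tilde X/G} \otimes \SL^{-(\dim \tilde X - \dim G)/2}$, and dimensional reduction introduces a compensating shift of $2\dim \Fg$ with matching Tate twist, so one checks the two cancel to leave a pure weight zero structure on $\HO^{\rmBM}_G(\mu^{-1}(0))$; this is a direct computation I would carry out explicitly.

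I expect the main obstacle to be purely bookkeeping rather than conceptual: verifying that the shifts and Tate twists entering (a) the virtual normalization $\underline{\BoQ}^{\vir}$, (b) the vanishing cycle functor $\varphi^p_f$ on the critical variety $\tilde X$ of dimension $2\dim V + \dim \Fg$, and (c) cohomological dimensional reduction along the $\Fg$-direction, all combine to give weight zero for $\HO^{\rmBM}_G(\mu^{-1}(0))$ — equivalently, that one does not accidentally land in a shifted-but-still-pure complex whose underlying graded pieces are pure of varying weights. A secondary point, already essentially addressed in the excerpt, is that one must know $\Tan^*V$ is a symmetric $G$-variety so that Theorem~\ref{theorem:purityBPSsheaf} genuinely applies; this is immediate since $\Tan_x(\Tan^*V) \cong \Tan^*V \oplus \Tan_x V$ carries a canonical self-dual structure, or more simply because $\Tan^*V = V \oplus V^*$ as a $G$-representation is visibly symmetric.
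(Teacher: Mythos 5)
Your overall route --- realize $\Tan^*V$ as a symmetric weakly Hamiltonian $G$-variety, pass to the critical variety $(\tilde X=\Tan^*V\times\Fg,\,f)$, invoke purity of the BPS sheaf together with critical cohomological integrality, and translate back via dimensional reduction --- is exactly the paper's route up to the sheaf-level statement (Corollary~\ref{corollary:sheafsymplredpure}: $\pi_*\BD\underline{\BoQ}_{\mu^{-1}(0)/G}$ is pure of weight zero on $\mu^{-1}(0)\cms G$). The identification of $\Tan^*V$ as symmetric and the worry about Tate-twist bookkeeping are both fine and not where the difficulty lies.

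The genuine gap is the step ``purity of $\underline{\CH}_{\tilde X,G,f}$ \dots\ and hence of $\CH_{\tilde X,G,f}=\HO^*(\underline{\CH}_{\tilde X,G,f})$.'' Taking derived global sections is pushforward along $p\colon \tilde X\cms G\to\pt$ (equivalently along $\mu^{-1}(0)\cms G\to\pt$), and this base is an affine, non-proper scheme; $p_*$ of a pure complex of mixed Hodge modules over a non-proper base is not pure in general (already $\HO^*(\BoG_{\rmm})$ fails to be pure although $\underline{\BoQ}_{\BoG_{\rmm}}$ is). The functor $p_*$ only preserves lower bounds of weights. The paper closes this gap using the contracting $\BoG_{\rmm}$-action on $\mu^{-1}(0)$ (scaling on $V\oplus V^*$), which makes $\pi\colon\mu^{-1}(0)/G\to\mu^{-1}(0)\cms G$ equivariant for an action contracting the good moduli space to the origin; by the standard contraction lemma one then has $p_*\pi_*\BD\underline{\BoQ}_{\mu^{-1}(0)/G}\cong\imath_0^*\pi_*\BD\underline{\BoQ}_{\mu^{-1}(0)/G}$ for the inclusion $\imath_0$ of the cone point. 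Since $p_*$ preserves the lower weight bound and $\imath_0^*$ preserves the upper weight bound, the two descriptions together force $\HO^{\rmBM}_G(\mu^{-1}(0))$ to be pure of weight zero. Without this (or some substitute such as properness of the good moduli space, which fails here), your argument establishes only the sheaf-level purity, not the purity of the Borel--Moore homology itself; note this is also why the linear hypothesis $X=V$ matters for the absolute statement, whereas the sheafified purity holds for any weakly Hamiltonian smooth affine $G$-variety.
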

This corollary generalizes the purity of the Borel--Moore homology of the stack of representations of preprojective algebras \cite[Theorem A]{davison2016integrality}.

In particular, we may apply this corollary to the commuting stack $\SC(\Fg)=\{(x,y)\in\Fg^2\mid [x,y]=0\}$ of a reductive Lie algebra $\Fg$ and obtain a result that does not seem to appear in the literature (except in type A).

\begin{corollary}
 Let $\Fg$ be a reductive Lie algebra. Then, the $G$-equivariant Borel--Moore homology $\HO^{\rmBM}_{*,G}(\SC(\Fg),\BoQ)$ carries a pure mixed Hodge structure.
\end{corollary}

\subsubsection{A purity conjecture of Halpern-Leistner}

The following theorem is our answer to \cite[Conjecture 4.4]{halpern2015theta}.
\begin{theorem}[= Theorem~\ref{theorem:HLconjecture}]
\label{theorem:purityintroduction}
 Let $\FM$ be a $1$-Artin $0$-shifted symplectic stack. We assume that $\FM$ is of finite type and has a proper good moduli space with affine diagonal. Then, the Hodge structure on $\HO^{\rmBM}_*(\FM,\BoQ)$ is pure of weight zero. More generally, if $\FM$ is a derived algebraic stack with proper good moduli space having affine diagonal and such that $\mathbb{L}_{\FM}\cong\mathbb{L}_{\FM}^{\vee}$, then $\HO^{\rmBM}_*(\FM,\BoQ)$ carries a pure weight zero mixed Hodge structure.
\end{theorem}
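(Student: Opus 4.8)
The plan is to reduce the global statement about $\FM$ to the local purity results already established for weak Hamiltonian reductions, via the \'etale-local structure theory of derived $1$-Artin stacks with self-dual cotangent complex. First I would invoke the Darboux-type / local model theorem (as in \cite{halpern2020derived}): a point of the good moduli space $\CM$ with closed orbit having stabilizer $H$ admits an \'etale neighbourhood over which $\FM$ is equivalent to a quotient $\tilde X/H$, where $(\tilde X, f)$ is a critical affine $H$-variety arising from a weakly symplectic smooth affine $H$-variety $X$ with weak moment map $\mu$ (so $\tilde X = X\times\Fh$, $f(x,\xi)=\langle\mu(x),\xi\rangle$), and moreover $\HO^{\rmBM}_*(\FM)$ is locally modelled by $\pi_*\varphi_f^p\underline{\BoQ}_{\tilde X/H}^{\vir}$ pushed to $\tilde X\cms H$. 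The $\mathbb{L}_\FM\cong\mathbb{L}_\FM^\vee$ hypothesis, together with $\ahlqvist$ \cite{ahlqvist2023good} so that we need not pass to the classical truncation, is exactly what guarantees the local models are of this weak-Hamiltonian form.

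Next I would globalize the purity of the local DT sheaf. By Theorem~\ref{theorem:critcohint} applied to each local model $(\tilde X, f)$, the pushforward $\underline{\CH}_{\tilde X, H, f}$ decomposes as a direct sum of summands $(\imath_{(\lambda,\alpha)})_* \underline{\BPS}_{\tilde X^\lambda_\alpha, f_{(\lambda,\alpha)}}\otimes\HO^*(\pt/H_{(\lambda,\alpha)})$, up to sign-isotypic parts and the $W$-action. Each BPS summand is pure of weight zero by Theorem~\ref{theorem:purityBPSsheaf} (this is where the weak symplecticity of the local model is used: the Support Lemma~\ref{lemma:supportlemmaintroduction} identifies $\underline{\BPS}_{\tilde X^\lambda_\alpha, f_{(\lambda,\alpha)}}$ with a reduced BPS sheaf, whose purity is the content of that theorem). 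The factor $\HO^*(\pt/H_{(\lambda,\alpha)})$ is the cohomology of a classifying stack of a torus, which is pure of weight zero, and finite pushforwards $\imath_{(\lambda,\alpha)*}$ and passing to an isotypic component for a finite group action both preserve purity. Hence each local model contributes a pure complex, and one assembles a complex of mixed Hodge modules $\underline{\CH}_\FM$ on $\CM$ with $\pi_*\underline{\BoQ}_\FM^{\vir}\cong\underline{\CH}_\FM$ (in the derived/critical sense via dimensional reduction, using $\cite{davison2017critical}$), which is pure of weight zero because purity is an \'etale-local property of a complex of mixed Hodge modules and the local models glue compatibly — the uniqueness clause in Theorem~\ref{theorem:sheafifiedcohintintro} ensures the local BPS sheaves agree on overlaps, so the gluing is canonical.

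Finally I would take derived global sections. Since $\CM$ is proper, $\HO^{\rmBM}_*(\FM) = \HO^*(\CM, \pi_*\underline{\BoQ}_\FM^{\vir})$ (up to a shift/twist by the virtual dimension), and the hypercohomology of a proper variety with coefficients in a pure complex of (monodromic) mixed Hodge modules of weight zero is pure of weight zero, by Saito's formalism (the decomposition theorem / stability of purity under proper pushforward to a point). This gives the conclusion for $0$-shifted symplectic $\FM$; the general self-dual cotangent case is literally the same argument, since the only input was that the local models are weak Hamiltonian reductions, which holds under $\mathbb{L}_\FM\cong\mathbb{L}_\FM^\vee$ alone.

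The main obstacle I anticipate is the gluing step: the local cohomological integrality decompositions and the BPS sheaves are defined on the \emph{local} good moduli spaces $\tilde X\cms H$, and one must check that under the \'etale transition maps of \cite{alper2020luna}-type slices these identifications are compatible — in particular that the indexing sets $\SP_{X,G}$, the Weyl-group actions, and the sign characters $\varepsilon$ match up on overlaps, and that dimensional reduction intertwines the self-duality of $\mathbb{L}_\FM$ with the critical structure coherently. This is precisely the kind of Weyl-group-and-\'etale-slice bookkeeping flagged as ``technical but central'' in \S\ref{subsection:Weylgroups}, and making it rigorous — rather than the purity inputs, which are already packaged as theorems — is where the real work lies. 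A secondary point is ensuring $\varphi_f^p$ and $\pi_*$ commute appropriately on stacks, which follows from base change for good moduli space maps but needs care in the monodromic mixed Hodge module setting.
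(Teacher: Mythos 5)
Your proposal assembles the right ingredients and follows the same broad strategy as the paper: reduce to the local models supplied by the neighbourhood theorem of Halpern-Leistner (Theorem~\ref{theorem:HLneighbourhood}), prove purity there via dimensional reduction and the purity of the BPS sheaf (Theorem~\ref{theorem:purityBPSsheaf}, Corollary~\ref{corollary:sheafsymplredpure}), and conclude by proper pushforward to a point. However, you have misdiagnosed where the work lies, and in doing so inserted a step that is both unnecessary and unproven. You propose to \emph{assemble} a global complex $\underline{\CH}_{\FM}$ on $\CM$ by gluing the local cohomological integrality decompositions, appealing to the uniqueness clause of Theorem~\ref{theorem:sheafifiedcohintintro} to make the local BPS sheaves agree on overlaps, and you flag this gluing as ``where the real work lies.'' No such gluing is needed. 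The object whose purity is at stake, $\pi_*\BD\underline{\BoQ}_{\FM}$, is already globally defined on $\CM$; since purity of a complex of mixed Hodge modules is an \'etale-local property, one simply pulls it back along each strongly \'etale chart of Theorem~\ref{theorem:HLneighbourhood}, identifies the pullback with $(\pi_x)_*\BD\underline{\BoQ}_{\FM_x}$ by base change, and invokes Corollary~\ref{corollary:sheafsymplredpure}. The decompositions, Weyl-group actions, and sign characters on different charts never need to be compared --- they are only a tool for proving the \emph{local} purity, chart by chart. (A secondary correction: Borel--Moore homology is the pushforward of the dualizing complex $\BD\underline{\BoQ}_{\FM}$, not of $\underline{\BoQ}_{\FM}^{\vir}$; for the singular stacks at hand these differ, though the dimensional reduction isomorphism you cite does produce the dualizing complex of the classical critical locus, so your local computation is the right one.) With the gluing step deleted and the dualizing complex used throughout, your argument is the paper's.
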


This theorem is a generalization of \cite[Theorem 6.10]{davison2021purity}, which provides a proof of Halpern-Leistner's conjecture for stacks arising from $2$-Calabi--Yau categories. The purity was one of the ingredients to obtain precise structural results regarding the cohomological Hall algebras of $2$-Calabi--Yau categories, which involve in particular generalized Kac--Moody algebras, \cite{davison2022bps,davison2023bps}. The strategy of proof for stacks arising from $2$-Calabi--Yau categories is outlined in \cite[\S1.4]{davison2020bps}.

\subsection{Connection to other works}
After this paper has been made available on the arXiv, work of Bu, Davison, Ib{\'a}{\~n}ez N{\'u}{\~n}ez, Kinjo and P{\u{a}}durariu \cite{bu2025cohomology} dealing with cohomological integrality for orthogonal stacks appeared. These are the stacks which locally look like (via \'etale slices) quotient stacks $V/G$ where $G$ is an \emph{orthogonal} representation of $G$ (a representation of $G$ admitting an invariant non-degenerate symmetric bilinear form). The orthogonality condition is strictly stronger than symmetricity we use in the present paper. For example, the natural representation $\BoC^2$ of $\rmSL_2(\BoC)$ is symmetric but not orthogonal. Indeed, a nondegerate symmetric bilinear form on $\BoC^2$ induces a non-zero quadratic function, and there are no non-zero $\rmSL_2(\BoC)$-invariant functions on $\BoC^2$. See Lemma~\ref{lemma:mat2nnotorthogonal} for a larger class of non-orthogonal representations. For orthogonal stacks, the authors prove the generalization of Theorems~\ref{theorem:sheafifiedcohintintro}, \ref{theorem:critcohint} (Theorems~1.2.7 and 1.2.12 in \cite{bu2025cohomology} respectively) to general stacks having a good moduli space and satisfying the conditions (i)-(iv) in \cite[\S1.2.5]{bu2025cohomology}, in addition to being orthogonal. They also prove Conjecture~\ref{conjecture:BPSsheavesnofunction} in the orthogonal case (and its natural generalization to this class of stacks), which is an identification of the complexes of mixed Hodge modules $\underline{\BPS}_{X,(\lambda,\alpha)}$ with the intersection cohomology of the coarse moduli space $X^{\lambda}_{\alpha}\cms G^{\lambda}$ under a stability constraint on $(\lambda,\alpha)$. The example in \S\ref{subsection:examplemat2n} shows that Conjecture~\ref{conjecture:BPSsheavesnofunction} also holds for the non-orthogonal representations $\Mat_{2\times n}(\BoC)$ ($n$ odd) of $\rmSL_2(\BoC)$, suggesting it should hold for general symmetric representations of reductive groups. The strategy of \cite{bu2025cohomology} reduces Conjecture~\ref{conjecture:BPSsheavesnofunction} to the case of representations of reductive groups having no degree $2$ homogeneous invariant functions, which should lead to further developments.

Combining the results of \cite{bu2025cohomology} and \cite{hennecart2024cohomological}, it is possible to extract a very efficient algorithm to compute the intersection cohomology of good moduli space $V\cms G$ of the affine GIT quotient of an orthogonal representation of a reductive group $G$. While less precise, our approach gives a little more flexibility on the class of stacks considered, which are only assumed to be symmetric, allowing a proof of Halpern-Leistner's purity conjecture via local-global principles.

\subsection{Notations and conventions}
\label{subsection:conventions}
\begin{enumerate}[$\bullet$]
 \item The letter $G$ will be mostly used to denote a (connected) reductive algebraic group. Let $G$ be a reductive algebraic group. We call \emph{rank} of $G$, and write $\rank(G)$, the dimension of a maximal torus of $G$.
 \item If $V$ is a representation of a finite group $W$ and $\chi$ an irreducible character of $W$, we let $V^{\chi}$ be the $\chi$-isotypic component of $V$.
 \item If $X$ is a complex algebraic variety acted upon by an algebraic group $G$, we let $X/G$ be the quotient stack (denoted without brackets). The stabilizer of $x\in X$ is denoted by $G_x$.
 \item The multiplicative group is denoted by $\BoG_{\rmm}$.
 The Abelian group of characters of an algebraic torus $T$ is $\Hom_{\BoZ}(T,\BoG_{\rmm})$. The set of cocharacters of $T$ is the Abelian group $\Hom_{\BoZ}(\BoG_{\rmm},T)$.
 \item Let $H\subset G$ be algebraic groups and $X$ an $H$-variety. We let $X\times^HG\coloneqq (X\times G)/H$ where $H$ acts diagonally on $X\times G$ by $h\cdot (x,g)=(h\cdot x,gh^{-1})$. The formula $g\cdot (x,g')\coloneqq (x,gg')$ gives a $G$-action on $X\times^HG$. The quotient stacks $X/H$ and $X\times^HG/G$ are equivalent.
 \item The cohomology of stacks and varieties will be considered with $\BoQ$-coefficients when the sheaf of coefficients is hidden. The constructible derived categories are always taken with $\BoQ$-coefficients.
 \item Let $X$ be a complex algebraic variety. The constructible derived category $\CD_{\rmc}(X,\BoQ)$ has the natural $t$-structure with cohomology functors $\CH^i$ and truncation functors $\tau^{\leq i}, \tau^{\geq i}$. It also has the perverse $t$-structure with cohomology functors ${^p\CH^i}$ and truncation functors ${^p\tau^{\leq i}}, {^p\tau^{\geq i}}$ \cite{beilinson2018faisceaux}. The derived category of monodromic mixed Hodge modules $\CD(\MMHM(X))$ has the natural $t$-structure with cohomology functors $\underline{\CH}^i$ and truncation functors $\underline{\tau}^{\leq i}, \underline{\tau}^{\geq i}$. We refer to \S\ref{appendix:MHM} for a minimal account on mixed Hodge modules.
 \item Constructible sheaves on an algebraic variety $X$ are denoted for example $\SF, \SG, \BoQ_X$. Monodromic mixed Hodge modules on an algebraic variety are underlined, for example: $\underline{\SF}, \underline{\SG}$, $\underline{\BoQ}_X$.
 \item We have a triangulated functor $\rat\colon\CD(\MMHM(X))\rightarrow\CD_{\rmc}(X)$, which intertwines the functors ${^p\tau^{\leq i}}$ and $\underline{\tau}^{\leq i}$, ${^p\tau^{\geq i}}$ and $\underline{\tau}^{\geq i}$, and ${^p\CH^{i}}$ and $\underline{\CH}^{i}$.
\item If $\FX$ is a smooth algebraic variety or smooth stack, we define $\underline{\BoQ}_{\FX}^{\vir}\coloneqq\underline{\BoQ}_{\FX}\otimes\SL^{-\dim\FX/2}$ a pure monodromic mixed Hodge module sitting in cohomological degree $-\dim \FX$, where $\SL$ is the Tate twist (a pure cohomologically graded mixed Hodge structure concentrated in degree $2$). The theory of mixed Hodge modules is extended to algebraic stacks in \cite{tubach2024mixed}. The present paper mainly deals with quotient stacks.
\item For a function $f\colon X\rightarrow \BoC$, we denote by $\varphi_f^p$ the shift of the vanishing cycle functor that is perverse $t$-exact. It acts on (complexes of) monodromic mixed Hodge modules and constructible sheaves compatibly with the functor $\rat$.
\item Given an algebraic variety $X$, the intersection cohomology $\IH^*(X)$ is given the perverse shift. That is, $\IH^i(X)$ may be nonzero for $-\dim X\leq i\leq \dim X$ only. Cohomology is given the classical shift, so that $\HO^i(X)$ may be nonzero for $0\leq i\leq 2\dim X$ only.
\item Given an algebraic torus $T$, there is a pairing $\langle-,-\rangle\colon X_*(T)\times X^*(T)\rightarrow\BoZ$ between characters $X^*(T)$ and cocharacters $X_*(T)$.
\item If $\FX$ is finite type separated stack, we denote by $\HO^{\rmBM}(\FX,\BoQ)$ its Borel--Moore homology. By definition, we have $\HO^{\rmBM}_{-*}(\FX,\BoQ)=p_*\BD\BoQ_{\FX}$ for the unique map $p\colon \FX\rightarrow\pt$, where $\BD\BoQ_{\FX}$ is the dualizing sheaf of $\FX$.
\item We denote by $\FX^{\cl}$ the classical truncation of a derived stack $\FX$.
\item A $1$-Artin stack is a derived stack $\FX$ whose classical truncation $\FX^{\cl}$ is an Artin stack.
\end{enumerate}

\subsection{Acknowledgements}
At various stages of the preparation of this work, the author was supported by the Royal Society, by the National Science Foundation under Grant No. DMS-1928930 and by the Alfred P. Sloan Foundation under grant G-2021-16778, while the author was in residence at the Simons Laufer Mathematical Sciences Institute (SLMath, formerly MSRI) in Berkeley, California, during the Spring 2024 semester. The author thanks the SLMath Sciences Institute and the University of Edinburgh for the excellent working conditions. The author is grateful to Ben Davison for the support provided, in particular via postdoctoral fellowships and useful discussions regarding parts of this work. Many thanks to Andr\'es Ib\'a\~nez N\'u\~nez for pointing out a mistake in the proof of Corollary~\ref{corollary:purityHiggs}. The author also benefited from the excellent working conditions at the University of Utrecht, at the Lorentz Center in Leiden and  at the Simons Center for Geometry and Physics, Stony Brook University.

\section{Absolute cohomological integrality for symmetric representations of reductive groups}
\label{section:reminderabsolute}
In this section, we recall the absolute cohomological integrality isomorphism established in \cite{hennecart2024cohomological} for symmetric representations of reductive groups. Via local-global principles relying on \'etale slices \S\ref{section:localneighbourhoodsmoothstacks}, it is the building block of all sheafified cohomological integrality isomorphisms we prove in this paper, in particular Theorem~\ref{theorem:sheafifiedcohintintro}, which relies on, but also generalizes \cite[Theorem 1.1]{hennecart2024cohomological} in two crucial directions. Theorem~\ref{theorem:sheafifiedcohintintro} indeed applies to any \emph{symmetric smooth affine $G$-variety} ($G$ reductive) and not only $X=V$ a symmetric representation of $G$, and it provides an \emph{isomorphism of complexes of mixed Hodge modules} rather than just an isomorphism at the level of derived global sections (i.e. isomorphism of graded vector spaces or graded mixed Hodge structures).

Let $V$ be a representation of a reductive group $G$. Let $T\subset G$ be a maximal torus. Given a cocharacter $\lambda\in X_*(T)$, we define a Levi subgroup $G^{\lambda}\subset G$, the centralizer of $\lambda$, and $V^{\lambda}\subset V$ the subspace fixed by $\lambda$, a representation of $G^{\lambda}$. The attracting locus $P_{\lambda}\coloneqq G^{\lambda\geq 0}\subset G$ is a (standard) parabolic subgroup and $V^{\lambda\geq 0}\subset V$ is a representation of $P_{\lambda}$. In \cite[\S2]{hennecart2024cohomological}, we described the induction morphism $\Ind_{\lambda}\colon \CH_{\lambda}\coloneqq \HO^*(V^{\lambda}/G^{\lambda},\BoQ)\rightarrow\CH\coloneqq \CH_0=\HO^*(V/G,\BoQ)$ at both the sheafified and absolute levels (see also \S\ref{section:parabolicinduction}).

The set of cocharacters $X_*(T)$ has an equivalence relation $\sim$ given by $\lambda\sim\mu$ if and only if $V^{\lambda}=V^{\mu}$ and $G^{\lambda}=G^{\mu}$. We let $\SP_{V}\coloneqq X_*(T)/\sim$. It has an action of the Weyl group $W=N_G(T)/T$ of $G$.

For $\lambda\in X_*(T)$, we let $G_{\lambda}\coloneqq \ker(G^{\lambda}\rightarrow\GL(V^{\lambda}))\cap Z(G^{\lambda})$ be the subgroup of the center of $G^{\lambda}$ acting trivially on $V^{\lambda}$. The group $G_{\lambda}$ may be disconnected but it is a product of a torus and a finite Abelian group. The class of $\lambda$ in $\SP_{V}$ is denoted by $\overline{\lambda}$. We let $W_{\lambda}\coloneqq \{w\in W\mid w\cdot\overline{\lambda}=\overline{\lambda}\}$. We denote by $W^{\lambda}$ the Weyl group of $G^{\lambda}$. It is a normal subgroup of $W_{\lambda}$. We let $\overline{W}_{\lambda}\coloneqq W_{\lambda}/W^{\lambda}$ be the quotient. It can be interpreted as a \emph{relative Weyl group} associated to $X,G,\lambda$ (\S\ref{subsection:relativeWeylgroups}). It can be identified to a subgroup of the relative Weyl group $W_{G,G^{\lambda}}\coloneqq N_G(G^{\lambda})/G^{\lambda}$. We say that $V$ is symmetric if $V$ and $V^*$ have the same sets of weights counted with multiplicities \cite[Definition 2.3]{hennecart2024cohomological}. For a symmetric representation $V$ of $G$, there is a character $\varepsilon_{V,\lambda}\colon \overline{W}_{\lambda}\rightarrow\{\pm1\}$ \cite[Proposition~4.8]{hennecart2024cohomological}, see also \ref{subsection:character}. The group $\overline{W}_{\lambda}$ acts on $\HO^*(V^{\lambda}/G^{\lambda},\BoQ)$ \cite[\S4.1]{hennecart2024cohomological}.

The main theorem of \cite{hennecart2024cohomological} is the following.

\begin{theorem}[{\cite[Theorem 1.1]{hennecart2024cohomological}}]
\label{theorem:abscohintreminder}
 Let $V$ be a symmetric representation of a reductive group $G$. Then, there exists finite dimensional subspaces $\CP_{\lambda}\subset\HO^*(V^{\lambda}/G^{\lambda})$ ($\lambda\in X_*(T)$) stable under the $\overline{W}_{\lambda}$-action such that the induction morphisms induce an isomorphism
 \begin{equation}
 \label{equation:abscohintiso}
  \bigoplus_{\tilde{\lambda}\in \SP_V/W}(\CP_{\lambda}\otimes\HO^*(\pt/G_{\lambda}))^{\varepsilon_{V,\lambda}}\rightarrow \CH_V\coloneqq \HO^*(V/G)\,
 \end{equation}
where $(\CP_{\lambda}\otimes\HO^*(\pt/G_{\lambda}))^{\varepsilon_{V,\lambda}}$ is the $\varepsilon_{V,\lambda}$-isotypic component.
\end{theorem}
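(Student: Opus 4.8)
The strategy is to reduce to a maximal torus $T\subseteq G$ and then reorganise the answer equivariantly for the Weyl group $W=N_G(T)/T$. Since the projection $V/G\to\pt/G$ is a vector bundle, $\CH_V=\HO^*(V/G)\cong\HO^*(\pt/G)\cong(\Sym\Ft^*)^W$, and likewise $\CH_{V^{\lambda}/G^{\lambda}}\cong(\Sym\Ft^*)^{W^{\lambda}}$; thus all of the cohomology groups in sight are copies of the polynomial ring $\Sym\Ft^*$ cut down by Weyl invariants, and the entire content of the theorem lies in how the induction morphisms $\Ind_{\lambda}$ organise this ring. Recall that $\Ind_{\lambda}=p_*q^*$ up to a cohomological shift, where $q\colon V^{\lambda\geq0}/P_{\lambda}\to V^{\lambda}/G^{\lambda}$ is a composition of affine bundles (so $q^*$ is an isomorphism) and $p\colon V^{\lambda\geq0}/P_{\lambda}\to V/G$ is proper, being cut out of $V\times G/P_{\lambda}$ by a closed condition. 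The symmetry of $V$ enters at once: after normalising $\Ind_{\lambda}$ by the appropriate Tate twist (so that it is compatible with virtual classes), it depends on $\lambda$ only through its class in $\SP_V$ and through the sign character $\varepsilon_{V,\lambda}$; this is \cite[Proposition~4.8]{hennecart2024cohomological}, and it fails for non-symmetric $V$ already because the shifts attached to $\lambda$ and to $-\lambda$ differ even though $V^{\lambda}=V^{-\lambda}$.

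First I would settle the case $G=T$. Here $W$ is trivial, $\SP_V$ is the poset of flats of the weight arrangement of $V$ in $X_*(T)\otimes\BR$, $G_{\lambda}\subseteq T$ is the subtorus acting trivially on $V^{\lambda}$, and the claim becomes a decomposition $\Sym\Ft^*\cong\bigoplus_{F}\CP_F\otimes\HO^*(\pt/G_F)$ with each $\CP_F$ finite-dimensional. I would prove this by induction on $\dim T$, via a d\'evissage of $V/T$ along its stratification by $T$-stabiliser type: each open-closed decomposition yields a Gysin/Koszul exact sequence — short exact, thanks to the purity of $\pi_*\underline{\BoQ}_{V/T}$ (Theorem~\ref{theorem:puritysmoothquotient}) — relating $\HO^*(V/T)$, the contribution of the deepest stratum, and the contributions of the shallower strata, which are already understood; the deepest contribution is exactly $\CP_0\otimes\HO^*(\pt/G_0)$, and it is precisely here that symmetry of $V$ is used to force $\CP_0$ to sit in a bounded range of degrees.

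Next I would pass to general $G$ by taking $W$-invariants. The group $W$ permutes the flats of the weight arrangement, and, together with the root hyperplanes of $G$, the classes of $\SP_V$ are the flats of the union arrangement with $\Stab_W(\overline{\lambda})=W_{\lambda}$; taking $W$-invariants of the torus-level decomposition and tracking the induced actions turns $\bigoplus_{F}\CP_F\otimes\HO^*(\pt/G_F)$ into $\bigoplus_{\tilde{\lambda}\in\SP_V/W}(\CP_{\lambda}\otimes\HO^*(\pt/G_{\lambda}))^{\overline{W}_{\lambda}}$. The one subtlety is that the cohomological shifts in the correspondence $V^{\lambda\geq0}/P_{\lambda}$ twist the $\overline{W}_{\lambda}$-representation on the relevant summand by $\varepsilon_{V,\lambda}$, so the invariants must be taken against $\varepsilon_{V,\lambda}$ — giving exactly \eqref{equation:abscohintiso}. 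That the assembled map is an isomorphism then follows from: $q^*$ is an isomorphism; $p_*$ is injective on the pure semisimple objects in play, because by the decomposition theorem it splits after composing with the restriction-to-fixed-points map $\Res_{\lambda}$, which satisfies $\Res_{\lambda}\circ\Ind_{\lambda}=\sum_{w\in\overline{W}_{\lambda}}\varepsilon_{V,\lambda}(w)\,w$ and in particular contains the identity; and injectivity of the total map reduces, by a d\'evissage along the stratification of $V\cms G$ on which the shallower strata are governed by the inductive hypothesis, to these facts, while surjectivity is built into the recursive definition of $\CP_0$ as the remainder.

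The hard part is the finiteness of the cuspidal piece $\CP_0$ — equivalently, that the ``BPS sheaf'' $\underline{\CP}_0$ is a bounded, honest mixed Hodge module and not merely a complex — in the torus step, together with the faithful transport of that boundedness, and of the $\overline{W}_{\lambda}$- and $\varepsilon_{V,\lambda}$-bookkeeping, through the \'etale-slice and Weyl-invariant reductions; this is exactly the ``mechanics of Weyl groups and their behaviour with respect to \'etale slices'' flagged in \S\ref{subsection:Weylgroups}. Unlike the quiver-with-potential setting of \cite{davison2020cohomological} there is no dimensional-reduction shortcut, so the boundedness of $\CP_0$ must be extracted directly from the self-duality/symmetry of $V$, via a Hard Lefschetz type argument controlling simultaneously the weight and perverse filtrations on the pure complex $\pi_*\underline{\BoQ}_{V/G}$. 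This is where essentially all of the work goes.
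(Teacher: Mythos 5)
First, note that the paper does not reprove this statement: Theorem~\ref{theorem:abscohintreminder} is imported verbatim from \cite[Theorem~1.1]{hennecart2024cohomological}, and \S\ref{section:reminderabsolute} only recalls the construction of $\CP_{0}$ as a direct-sum complement of $\sum_{\lambda}\Ind_{\lambda}(\HO^*(\overline{V}^{\lambda}/\overline{G}^{\lambda}))$, the sum running over \emph{all} proper pairs $(\overline{V}^{\lambda},\overline{G}^{\lambda})$, not only torus data. Judged on its own merits, your proposal correctly isolates the two real points (everything in sight is $\HO^*(\pt/G)\cong(\Sym\Ft^*)^{W}$, so the content is entirely in the induction operators; and the crux is finiteness of the cuspidal piece), but the route you sketch has a step that fails. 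The reduction ``prove it for $T$, then take $W$-invariants'' does not match the combinatorics: for $G=T$ the index set is the set of flats of the weight arrangement of $V$, whereas $\SP_{V}/W$ for reductive $G$ is the set of $W$-orbits of flats of the arrangement of weights of $V$ \emph{together with the roots of} $\Fg$, since a class records the pair $(V^{\lambda},G^{\lambda})$ and not just $V^{\lambda}$. Already for $V=0$ and $G=\rmSL_2$ the torus decomposition has a single summand while the $G$-decomposition has two (the classes of $\lambda=0$ and of a generic $\lambda$, with $G_{\lambda}=Z(G)$ and $G_{\lambda}=T$ respectively), so a single torus summand must split into several $G$-summands after taking invariants; you give no mechanism for this, and I do not believe it is formal. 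The actual argument runs the recursion directly over $\SP_{V}/W$ for $G$ itself, inducting from all proper Levi pairs $(V^{\lambda},G^{\lambda})$, which is why $\CP_{0}$ is defined as a complement of inductions from Levis and not from the torus.

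Two further points. Your identity $\Res_{\lambda}\circ\Ind_{\lambda}=\sum_{w\in\overline{W}_{\lambda}}\varepsilon_{V,\lambda}(w)\,w$ is not correct: the Mackey-type composite is a sum over double cosets of operators of the form $f\mapsto w\cdot(k\cdot f)$ where $k$ is an Euler-class kernel of the type $k_{(\lambda,\alpha)}$ from \S\ref{subsection:character} (a ratio of products of weights, of nonzero cohomological degree), so the composite does not ``contain the identity''; injectivity of $\Ind_{\lambda}$ on the $\varepsilon_{V,\lambda}$-isotypic part has to be extracted from nonvanishing of these symmetrized kernels in $\Frac(\Sym\Ft^*)$, which is a genuine computation, not a formal consequence of the decomposition theorem. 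Finally, the finite-dimensionality of $\CP_{0}$ --- which you rightly call the hard part and which is exactly what symmetry of $V$ buys (cf.\ Lemma~\ref{lemma:numberweights} and \cite[Proposition~1.4]{hennecart2024cohomological}) --- is only asserted via an unspecified ``Hard Lefschetz type argument''; as this is the entire content separating the symmetric case from the general one, deferring it leaves the proposal without a proof of the one statement that cannot be obtained by bookkeeping.
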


The main theorem of \cite{hennecart2024cohomological} holds for the larger class of \emph{weakly symmetric} representations. The definition of the subspaces $\CP_{\lambda}\subset \HO^*(V^{\lambda}/G^{\lambda})$ is constructive, although implicit. Their definition is also constrained by the existence of a cohomological integrality isomorphism \eqref{equation:abscohintiso}. We recall how $\CP_{0}$ is defined. To obtain $\CP_{\lambda}$ for a general cocharacter $\lambda\in X_*(T)$, we apply this definition to the $G^{\lambda}$-representation $V^{\lambda}$, taking the $\overline{W}_{\lambda}$-action into account. We let $\overline{V}$ be the representation of $\overline{G}\coloneqq G/G_0$ deduced from $V$, where, as above, $G_0=\ker(G\rightarrow \GL(V))\cap Z(G)$. Then, the definition of the induction maps (\S\ref{section:parabolicinduction} and \cite[\S2]{hennecart2024cohomological} for representations of reductive groups) gives, for each cocharacter $\lambda\in X_*(T/G_0)$ an induction morphism $\Ind_{\lambda}\colon \HO^*(\overline{V}^{\lambda}/\overline{G}^{\lambda})\rightarrow\HO^*(\overline{V}/\overline{G})$. The vector space $\CP_{0}$ is defined as \emph{a direct sum complement} of
\[
 \sum_{\substack{\lambda\in X_*(T/G_0)\\(\overline{V}^{\lambda},\overline{G}^{\lambda})\neq (\overline{V},\overline{G})}}\Ind_{\lambda}(\HO^*(\overline{V}^{\lambda}/\overline{G}^{\lambda}))\subset \HO^*(\overline{V}/\overline{G})\,
\]
where the condition on the sum is exactly the condition $\overline{\lambda}\prec \overline{0}$ for the order relation on classes of cocharacters \cite[\S1.1]{hennecart2024cohomological}, where $0$ denotes the trivial cocharacter, see also \S\ref{subsection:orderrelation}.

We note that any choice of $\overline{W}_{\lambda}$-stable direct sum complement $\CP_{\lambda}$ in $\HO^*(\overline{V^{\lambda}}/\overline{G^{\lambda}})$ ($\lambda\in X_*(T)$) gives a cohomological integrality isomorphism \eqref{equation:abscohintiso}.

For the reader familiar with cohomological integrality for quiver with or without potential \cite{meinhardt2019donaldson, davison2020cohomological}, we would like to emphasize here that we will define the sheafified version $\underline{\CP}_{\lambda}$ of $\CP_{\lambda}$ in \ref{subsection:bpsmhm} in a similar way, by considering a direct sum complement in the Abelian category of cohomologically graded monodromic mixed Hodge modules (i.e. direct sums of shifted monodromic mixed Hodge modules, \S\ref{subsection:cohgradedmhm}). That these objects admit a definition similar to that in \cite{davison2020cohomological} (where they correspond to the first perverse cohomology piece of some cohomologically graded mixed Hodge module, and also to the intersection complex of the good moduli space) is the subject of Conjecture~\ref{conjecture:BPSsheavesnofunction}, proven for orthogonal representations of $G$ in \cite{bu2025cohomology}, open in general.

\section{Approximation by proper maps for affine varieties with reductive group action}
\label{section:approximationpropermaps}

We give now the definition of \emph{approximation by proper maps} for morphisms from a quotient stack to a scheme, following \cite[\S4.1]{davison2020cohomological}. A notion of approximation by proper maps for good moduli spaces has recently been introduced in \cite{kinjo2024decomposition}, while this paper was in preparation. It differs slightly from our approach. We keep our notion as we use it in a forthcoming work to obtain explicit estimates in some cases, allowing us to prove \emph{virtuall smallness} of some morphisms, although the formalism developped in \cite{kinjo2024decomposition} is more general.

\begin{definition}[Approximation by proper maps]
\label{definition:appm}
 Let $G$ be a reductive group, $X$ a complex $G$-scheme and $Y$ a complex scheme. We say that a morphism $f\colon X/G\rightarrow Y$ is \emph{approachable by proper maps} if there exists $G$-representations $V_n$, $n\geq 1$, $G$-equivariant open subvarieties $U_n\subset V_n$
 \begin{enumerate}
  \item $G$ acts freely on $U_n$ and $U_n\rightarrow U_n/G$ is a principal $G$-bundle in the category of schemes,
  \item The codimension $\codim_{V_n}(V_n\setminus U_n)$ of $V_n\setminus U_n\subset V_n$ tends to $+\infty$ as $n\rightarrow+\infty$.
  \item There exists $G$-invariant open subvarieties $X_n\subset X\times V_n$ such that the action of $G$ on $X_n$ admits a good quotient $X_n\cms G$ in the category of schemes, $X'_n\coloneqq X\times U_n\subset X_n$, and the natural map $X'_n/G\rightarrow X_n\cms G$ is an open immersion.
  \item The morphisms $X_n\cms G\rightarrow Y$ obtained as the compositions $X_n\cms G\rightarrow X\cms G\rightarrow Y$, where the first map is induced by the projection $X\times V_n\rightarrow X$, are proper.
 \end{enumerate}
\end{definition}

We introduce the notations
\[
 \overline{U_n}\coloneqq U_n/G,\quad \overline{X_n}\coloneqq X_n\cms G,\quad \overline{X}\coloneqq X\cms G,\quad \overline{X'_n}\coloneqq X'_n/G\,.
\]

\begin{remark}
 We do not require that $G$ acts freely on $X_n$. Therefore, the quotient $X_n\cms G$ is not necessarily smooth. In particular, this is a weakening of the notion of approximation by proper maps considered in \cite{davison2020cohomological}. This fact will explain that we will later (e.g. in Corollary~\ref{corollary:pushforwardic}) consider the \emph{intersection cohomology monodromic mixed Hodge module}/perverse sheaf $\underline{\IC}(\overline{X_n})$ on $\overline{X_n}$ rather than the constant sheaf as in \cite{davison2020cohomological} or \cite{kinjo2024decomposition}.
\end{remark}

\begin{proposition}
\label{proposition:codimensionestimate}
In the situation of Definition~\ref{definition:appm}, the codimension $\codim_{\overline{X_n}}(\overline{X_n}\setminus\overline{X'_n})$ of the complement of $\overline{X'_n}$ inside $\overline{X_n}$ tends to $+\infty$.
\end{proposition}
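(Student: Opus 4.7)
\medskip

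\noindent\textbf{Proof plan.} The strategy is purely dimensional. Let $\pi_n\colon X_n\to \overline{X_n}$ denote the good quotient map and observe that the containments $X\times U_n=X'_n\subset X_n\subset X\times V_n$ will let us squeeze everything between two explicit dimensions. First, since $X_n\setminus X'_n\subset X\times (V_n\setminus U_n)$, we have the crude but crucial bound
\[
\dim(X_n\setminus X'_n)\leq \dim X+\dim(V_n\setminus U_n)=\dim X+\dim V_n-\codim_{V_n}(V_n\setminus U_n).
\]
On the other hand, since $G$ acts freely on $X'_n$ and $\overline{X'_n}=X'_n/G$ is a principal-bundle quotient, we have $\dim\overline{X'_n}=\dim X+\dim V_n-\dim G$.

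Next I would observe that $\overline{X'_n}$ is dense in $\overline{X_n}$. Indeed, condition (2) guarantees that $U_n$ is dense in $V_n$ for $n$ large enough, so $X'_n=X\times U_n$ is an open dense subset of $X\times V_n$, and being open in $X_n$ as well (via the open immersion $\overline{X'_n}\to\overline{X_n}$ lifted by $\pi_n$), it is dense in $X_n$. Consequently $\dim \overline{X_n}=\dim\overline{X'_n}=\dim X+\dim V_n-\dim G$ on the components meeting $\overline{X'_n}$ (and we may safely restrict attention to these, since by density the codimension question concerns precisely such components).

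Then I would argue that $\overline{X_n}\setminus\overline{X'_n}\subset \pi_n(X_n\setminus X'_n)$: if $y\in \overline{X_n}\setminus\overline{X'_n}$, then no point of $X'_n$ maps to $y$ under $\pi_n$ (as $\pi_n(X'_n)=\overline{X'_n}$, the geometric quotient being surjective), so $\pi_n^{-1}(y)\subset X_n\setminus X'_n$. Combining with the trivial bound $\dim\pi_n(Z)\leq\dim Z$ for any constructible $Z$, we obtain
\[
\dim(\overline{X_n}\setminus\overline{X'_n})\leq \dim(X_n\setminus X'_n)\leq \dim X+\dim V_n-\codim_{V_n}(V_n\setminus U_n).
\]
Subtracting this from $\dim\overline{X_n}$ yields
\[
\codim_{\overline{X_n}}(\overline{X_n}\setminus\overline{X'_n})\geq \codim_{V_n}(V_n\setminus U_n)-\dim G,
\]
which tends to $+\infty$ by condition (2).

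The only delicate point is step two, namely that $\dim\overline{X_n}=\dim\overline{X'_n}$. This is the place where one must invoke density of $X'_n$ in $X_n$, and implicitly the open immersion $\overline{X'_n}\hookrightarrow\overline{X_n}$ of hypothesis (3), to rule out extraneous components of $\overline{X_n}$ of larger dimension or components entirely contained in $\overline{X_n}\setminus\overline{X'_n}$. Once this is settled, the rest is straightforward dimension counting — in particular one does not need any nontrivial information about the fibers of $\pi_n$ over $\overline{X_n}\setminus\overline{X'_n}$, as the crude surjectivity bound $\dim\pi_n(Z)\leq \dim Z$ is already enough to beat $\dim G$ asymptotically.
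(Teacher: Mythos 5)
Your proof is correct and takes essentially the same route as the paper: bound $\dim(\overline{X_n}\setminus\overline{X'_n})$ from above by $\dim(X_n\setminus X'_n)\leq \dim X+\dim(V_n\setminus U_n)$ via the quotient map, compute $\dim\overline{X_n}=\dim X+\dim V_n-\dim G$ using freeness on the dense open $X'_n$, and subtract to get $\codim_{\overline{X_n}}(\overline{X_n}\setminus\overline{X'_n})\geq\codim_{V_n}(V_n\setminus U_n)-\dim G$. The paper compresses this into a single chain of inequalities; you are slightly more explicit about why $\dim\overline{X_n}=\dim\overline{X'_n}$ and why passing to images does not increase dimension, but these are the same implicit steps the paper relies on.
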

\begin{proof}
 We have 
 \[
 \begin{aligned}
  \codim_{\overline{X_n}}(\overline{X_n}\setminus \overline{X'_n})&=(\dim X_n-\dim G)-\dim(X_n\cms G\setminus X'_n/G)\\
  &\geq \dim X_n-\dim G-\dim (X_n\setminus X'_n)\quad\text{as $\dim (X_n\setminus X'_n)\geq \dim(X_n\cms G\setminus X'_n/G)$}\\
  &\geq \dim X_n-\dim G-\dim ((X\times V_n)\setminus X'_n)\quad\text{as $X_n\subset X\times V_n$ is open}\\
  &=\dim (X\times V_n)-\dim G-\dim ((X\times V_n)\setminus X'_n)\\
  &=\codim_{V_n}(V_n\setminus U_n)-\dim G
 \end{aligned}
 \]
 and by assumption, $\codim_{V_n}(V_n\setminus U_n)\rightarrow+\infty$ as $n\rightarrow+\infty$. This concludes.
\end{proof}

\begin{lemma}
\label{lemma:truncation}
 Let $\pi\colon X\rightarrow Y$ be a morphism of complex algebraic varieties. Then, 
 \begin{enumerate}
  \item For any $k\in\BoZ$, $\tau^{\leq k}\pi_*\tau^{\leq k}\cong\tau^{\leq k}\pi_*$.
  \item  For any $k\leq l$, $\tau^{\leq k}\pi_*\tau^{\leq l}\cong\tau^{\leq k}\pi_*$.
 \end{enumerate}
\end{lemma}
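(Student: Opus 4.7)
Both statements follow from the elementary fact that $\pi_* = R\pi_*$ is left $t$-exact for the natural $t$-structure on $\CD_{\rmc}(X)$ (and equally on $\CD(\MMHM(X))$): if $\CF \in \CD^{\geq n}$ then $\pi_*\CF \in \CD^{\geq n}$. This is immediate from the hypercohomology spectral sequence $R^p\pi_*\CH^q(\CF) \Rightarrow \CH^{p+q}(\pi_*\CF)$, combined with $R^p\pi_* = 0$ for $p < 0$ and $\CH^q(\CF) = 0$ for $q < n$.

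For (1), I would apply the triangulated functor $\pi_*$ to the defining distinguished triangle $\tau^{\leq k}\CF \to \CF \to \tau^{>k}\CF \xrightarrow{+1}$ to obtain the distinguished triangle $\pi_*\tau^{\leq k}\CF \to \pi_*\CF \to \pi_*\tau^{>k}\CF \xrightarrow{+1}$. Since $\tau^{>k}\CF \in \CD^{\geq k+1}$, left $t$-exactness gives $\pi_*\tau^{>k}\CF \in \CD^{\geq k+1}$, hence $\tau^{\leq k}\pi_*\tau^{>k}\CF = 0$. Applying the truncation $\tau^{\leq k}$ to the triangle then yields the desired isomorphism $\tau^{\leq k}\pi_*\tau^{\leq k}\CF \isoto \tau^{\leq k}\pi_*\CF$, induced by the canonical morphism $\tau^{\leq k}\CF \to \CF$.

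For (2), I would either repeat the same argument using the triangle $\tau^{\leq l}\CF \to \CF \to \tau^{>l}\CF \xrightarrow{+1}$, now noting that $\pi_*\tau^{>l}\CF \in \CD^{\geq l+1} \subset \CD^{\geq k+1}$ since $l \geq k$ (so that $\tau^{\leq k}\pi_*\tau^{>l}\CF = 0$); or simply deduce (2) from (1) by applying $\tau^{\leq k}$ to both sides of the (1)-isomorphism $\tau^{\leq l}\pi_*\tau^{\leq l}\CF \isoto \tau^{\leq l}\pi_*\CF$, using $\tau^{\leq k}\tau^{\leq l} = \tau^{\leq k}$ for $k \leq l$. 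There is no real obstacle in either part: the lemma is a purely formal consequence of the left $t$-exactness of $R\pi_*$ for the standard $t$-structure.
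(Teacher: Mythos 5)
Your proof is correct, and it rests on the same key observation as the paper's: $R\pi_*$ is left $t$-exact for the natural $t$-structure, so $\pi_*\tau^{>k}\CF \in \CD^{\geq k+1}$. The execution is a slightly different (and arguably cleaner) route. The paper applies the octahedral axiom to the composite $\tau^{\leq k}\pi_*\tau^{\leq k}\CF \to \pi_*\tau^{\leq k}\CF \to \pi_*\CF$ to produce a distinguished triangle $\tau^{\leq k}\pi_*\tau^{\leq k}\CF \to \pi_*\CF \to A$ with $A \in \CD^{\geq k+1}$, and then invokes uniqueness of the $t$-decomposition of $\pi_*\CF$ to conclude. You instead apply $\tau^{\leq k}$ directly to the distinguished triangle $\pi_*\tau^{\leq k}\CF \to \pi_*\CF \to \pi_*\tau^{>k}\CF$, noting that the third term lies in $\CD^{\geq k+1}$, which kills it after truncation (the standard fact that a triangle $A\to B\to C$ with $C\in\CD^{\geq k+1}$ gives $\tau^{\leq k}A\isoto\tau^{\leq k}B$, by Hom-vanishing or the long exact sequence on cohomology). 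Both are correct formal arguments; yours avoids the octahedral axiom and the uniqueness invocation. For (2) you give both the direct argument and the deduction from (1); the paper gives only the latter.
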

\begin{proof}
 We recall that (see \S\ref{subsection:conventions}) the truncation functors involved here correspond to the natural $t$-structure on the constructible derived category. The second isomorphism follows from the first, since for $k\leq l$, using the isomorphism (1), one has $\tau^{\leq k}\pi_*\tau^{\leq l}\cong\tau^{\leq k}\tau^{\leq l}\pi_*\cong\tau^{\leq k}\pi_*$.
 
 The first isomorphism comes from the fact that $\pi_*$ is left $t$-exact. More precisely, we consider the octahedral axiom diagram built on the pair of morphisms
\[
 \tau^{\leq k}\pi_*\tau^{\leq k}\CF\rightarrow\pi_*\tau^{\leq k}\CF\rightarrow\pi_*\CF
\]
as in \cite[(1.1.7.1)]{beilinson2018faisceaux}. The first morphism is the adjunction morphism $\tau^{\leq k}\CG\rightarrow\CG$ for $\CG=\pi_*\tau^{\leq k}\CF$. The second morphism is $\pi_*$ applied to this adjunction morphism for $\CG=\CF$. It reads
\[\begin{tikzcd}[column sep=tiny, row sep=tiny]
	&&&&&& {} \\
	\\
	&&&& {\tau^{\geq k+1}\pi_*\tau^{\leq k}\CF} \\
	&&&&&&&& {} \\
	&& {\pi_*\tau^{\leq k}\CF} &&&& A \\
	&&&& {\pi_*\CF} \\
	{\tau^{\leq k}\pi_*\tau^{\leq k}\CF} &&&&&&&& {\pi_*\tau^{\geq k+1}\CF} \\
	&&&&&&&&&& {} & {} & {} \\
	&&&&&&&&&& {}
	\arrow[from=3-5, to=1-7]
	\arrow[from=3-5, to=5-7]
	\arrow[from=5-3, to=3-5]
	\arrow[from=5-3, to=6-5]
	\arrow[from=5-7, to=4-9]
	\arrow[from=5-7, to=7-9]
	\arrow[from=6-5, to=5-7]
	\arrow[from=6-5, to=7-9]
	\arrow[from=7-1, to=5-3]
	\arrow[from=7-1, to=6-5]
	\arrow[from=7-9, to=8-12]
	\arrow[from=7-9, to=9-11]
\end{tikzcd}\]
where all four triples of three objects aligned give a distinguished triangle. Since $\pi_*$ is left $t$-exact, $\pi_*\tau^{\geq k+1}\CF\in\CD_{\rmc}^{\geq k+1}(Y)$. By the distinguished triangle
\[
\tau^{\geq k+1}\pi_*\tau^{\leq k}\CF\rightarrow A\rightarrow\pi_*\tau^{\geq k+1}\CF\rightarrow,
\]
we have $A\in\CD_{\rmc}^{\geq k+1}(Y)$. Since $\tau^{\leq k}\pi_*\tau^{\leq k}\CF\in\CD^{\leq k}(Y)$, then by unicity of the distinguished triangle
\[
 U\rightarrow\pi_*\CF\rightarrow V\rightarrow
\]
with $U\in\CD^{\leq k}(Y)$ and $V\in\CD^{\geq k+1}(Y)$, we have $\tau^{\leq k}\pi_*\tau^{\leq k}\CF\cong\tau^{\leq k}\pi_*\CF$. This proves (1).
\end{proof}

\begin{proposition}
\label{proposition:comparisonintextensionstar}
 Let $X$ be an irreducible algebraic variety. Let $\jmath\colon U\hookrightarrow X$ be a smooth open subset. We let $\codim_X(X\setminus U)$ be the codimension of $X\setminus U$ in $X$. Then, the natural map
 \[
  \tau^{\leq\codim_{X}(X\setminus U)-1}(\jmath_{!*}\SL\rightarrow\jmath_*\SL)
 \]
is an isomorphism.
\end{proposition}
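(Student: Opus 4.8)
The plan is to reduce the assertion to a single vanishing statement for a mapping cone. First I would set $d=\dim X$, write $Z\coloneqq X\setminus U$ with closed immersion $i\colon Z\hookrightarrow X$, and put $c\coloneqq\codim_X(X\setminus U)$; since $X$ is irreducible and $U$ is a nonempty open subset, $U$ is dense, so $Z$ is a proper closed subset, $\dim Z=d-c$ and $c\geq 1$. The natural map in question is the unit of adjunction
\[
\jmath_{!*}\SL\longrightarrow\jmath_*\jmath^*\jmath_{!*}\SL=\jmath_*\SL,
\]
using that $\jmath^*\jmath_{!*}\SL=\SL$. Applying the recollement triangle $i_*i^!K\to K\to\jmath_*\jmath^*K\xrightarrow{+1}$ to $K=\jmath_{!*}\SL$ identifies the cone $C$ of this map with $i_*i^!\jmath_{!*}\SL[1]$, which is supported on $Z$. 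Since $\tau^{\leq c-1}$ applied to a morphism is an isomorphism as soon as its cone lies in $\CD_{\rmc}^{\geq c}$ (by the long exact sequence of cohomology sheaves, noting $\CH^{k-1}(C)=\CH^{k}(C)=0$ for $k\leq c-1$), the whole proposition reduces to showing $C\in\CD_{\rmc}^{\geq c}(X)$, i.e.\ to the single estimate $i^!\jmath_{!*}\SL\in\CD_{\rmc}^{\geq c+1}(Z)$.

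To obtain that degree bound I would combine the co-support property defining the intermediate extension with a comparison of the perverse and standard $t$-structures. In the perverse normalization $\jmath_{!*}(\SL[d])$ is a perverse sheaf on $X$ (as $U$ is smooth), and by the characterization of the intermediate extension in \cite{beilinson2018faisceaux} one has $i^!\jmath_{!*}(\SL[d])$ in perverse degrees $\geq 1$. Next I would use that on any variety $Y$ of dimension $e$ the perverse $t$-structure is dominated by the standard one: ${}^p\CD_{\rmc}^{\geq m}(Y)\subseteq\CD_{\rmc}^{\geq m-e}(Y)$, which follows by d\'evissage along perverse cohomology from the fact that a perverse sheaf on $Y$ has standard cohomology sheaves concentrated in degrees $[-e,0]$. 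Applying this with $Y=Z$, $e=d-c$ and $m=1$ gives $i^!\jmath_{!*}(\SL[d])\in\CD_{\rmc}^{\geq 1-(d-c)}(Z)$; shifting by $[-d]$ to return to the sheaf-level normalization $\jmath_{!*}\SL=(\jmath_{!*}(\SL[d]))[-d]$ (the one in which $\SL$ sits in degree $0$ and in which the proposition is stated) yields $i^!\jmath_{!*}\SL\in\CD_{\rmc}^{\geq c+1}(Z)$. Since $i_*$ is $t$-exact for the standard $t$-structure, $C=i_*i^!\jmath_{!*}\SL[1]\in\CD_{\rmc}^{\geq c}(X)$, as required.

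The only genuinely substantive point is the $t$-structure comparison ${}^p\CD_{\rmc}^{\geq m}(Y)\subseteq\CD_{\rmc}^{\geq m-\dim Y}(Y)$: this is exactly where the codimension enters and where the specific bound $c-1$ comes from, so the main care is in the bookkeeping of that inequality together with the shift relating the perverse and sheaf-level normalizations. There is no deeper obstacle; apart from this the argument is formal manipulation of the recollement triangle. I would also emphasize that the proof uses no regularity of $X$ itself (normality, Cohen--Macaulayness, etc.), only smoothness of $U$ so that $\jmath_{!*}\SL$ is the naive intersection complex of $\SL$ --- this robustness is precisely what makes the statement applicable to the singular good quotients $\overline{X_n}$ of Definition~\ref{definition:appm} --- and that, via the functor $\rat$, the identical argument gives the corresponding statement for monodromic mixed Hodge modules.
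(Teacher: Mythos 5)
Your argument is correct, and it takes a genuinely different route from the paper. The paper proves the proposition directly from Deligne's construction: it chooses a stratification with $U_1=U\subset U_2\subset\cdots\subset U_n=X$, writes $\jmath_{!*}\SL$ as the iterated truncated pushforward $\tau^{\leq\codim_{U_n}(U_n\setminus U_{n-1})-1}(\jmath_{n-1})_*\cdots\tau^{\leq\codim_{U_2}(U_2\setminus U_1)-1}(\jmath_1)_*\SL$, observes that all the truncation degrees are $\geq\codim_X(X\setminus U)-1$ because the strata dimensions decrease, and then peels the truncations off with the formal identity $\tau^{\leq k}\pi_*\tau^{\leq l}\cong\tau^{\leq k}\pi_*$ for $k\leq l$ (Lemma~\ref{lemma:truncation}). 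You instead use the recollement triangle $i_*i^!K\to K\to\jmath_*\jmath^*K\xrightarrow{+1}$ to identify the cone as $i_*i^!\jmath_{!*}\SL[1]$, invoke the cosupport half of the BBD characterization of the intermediate extension ($i^!\jmath_{!*}(\SL[d])\in{}^p\CD^{\geq 1}(Z)$), and convert this to a standard-degree bound via ${}^p\CD^{\geq m}(Z)\subseteq\CD^{\geq m-\dim Z}(Z)$; the bookkeeping $1-(d-c)+d=c+1$ and the long exact sequence of cohomology sheaves then give exactly the truncation degree $c-1$. I checked the normalization: the paper's $\jmath_{!*}\SL$ (sheaf-level Deligne construction with truncations at $\codim-1$) is indeed $\jmath_{!*}(\SL[d])[-d]$, so your shift is consistent, and the inclusion ${}^p\CD^{\geq m}\subseteq\CD^{\geq m-\dim}$ follows by d\'evissage from the fact that a perverse sheaf on $Z$ has standard cohomology sheaves in degrees $[-\dim Z,0]$, as you say. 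What each approach buys: the paper's proof is self-contained and reuses Lemma~\ref{lemma:truncation}, which it needs anyway for Lemma~\ref{lemma:pftruncatediso}, and avoids any appeal to the perverse characterization of $\jmath_{!*}$; yours is stratification-free and more conceptual, isolating precisely where the codimension enters (through $\dim Z=d-c$ in the $t$-structure comparison), and it makes transparent that only the smoothness of $U$ and the irreducibility of $X$ are used. Your closing remarks about the applicability to the singular quotients $\overline{X_n}$ and the transfer to monodromic mixed Hodge modules via $\rat$ match the way the proposition is deployed in Corollary~\ref{corollary:pushforwardic}.
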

\begin{proof}
 We use Deligne's construction of the intermediate extension (e.g. \cite[Exercise 3.10.1]{achar2021perverse} or \cite[Proposition~2.1.11]{beilinson2018faisceaux}). Namely, we choose a stratification $X=\bigsqcup_{i=1}^nX_i$ of $X$ such that $\jmath_{!*}\SL$ is constructible with respect to this stratification, $X_1=U$ and $U_i\coloneqq \bigcup_{j=1}^iX_j$ is open in $X$ for $1\leq i\leq n$. This implies in particular
 \[
  \dim X_1\geq \dim X_2\geq\hdots\geq \dim X_n\,.
 \]
From this sequence of inequalities and $\codim_{U_{i}}(U_{i}\setminus U_{i-1})=\dim X-\dim X_i$, we obtain
\begin{equation}
 \label{equation:seriesinequalities}
  \codim_{U_n}(U_n\setminus U_{n-1})\geq\codim_{U_{n-1}}(U_{n-1}\setminus U_{n-2})\geq\hdots\geq\codim_{U_2}(U_2\setminus U_1)= \codim_X(X\setminus U)\,.
 \end{equation}
 We consider the open immersions
 \[
  U=U_1\xrightarrow{\jmath_{1}}U_2\xrightarrow{\jmath_2}\hdots\xrightarrow{\jmath_{n-1}}U_n=X
 \]
 and $\jmath=\jmath_{n-1}\circ\hdots\circ\jmath_1\colon U\rightarrow X$.
 Then, we have
 \[
  \jmath_{!*}\SL\cong \tau^{\leq\codim_{U_n}(U_n\setminus U_{n-1})-1}(\jmath_{n-1})_*\tau^{\codim_{U_{n-1}}(U_{n-1}\setminus U_{n-2})-1}(\jmath_{n-2})_*\hdots\tau^{\leq \codim_{U_2}(U_2\setminus U_1)-1}(\jmath_1)_*\SL\,.
 \]
The Proposition follows from the fact that, by \eqref{equation:seriesinequalities}, $\codim_X(X\setminus U)$ is smaller than or equal to all the truncations defining $\jmath_{!*}\SL$, by using Lemma~\ref{lemma:truncation} repeatedly.
\end{proof}

\begin{lemma}
\label{lemma:perverseclassicaltruncations}
 Let $\SF,\SG$ be complexes of constructible sheaves over an algebraic variety $X$. Then, if $\tau^{\leq k}\SF\cong\tau^{\leq k}\SG$, then ${^p\tau}^{\leq k}\SF\cong {^p\tau^{\leq k}}\SG$.
\end{lemma}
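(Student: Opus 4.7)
The plan is to pass from the classical truncation data to the perverse truncation data via the canonical distinguished triangle $\tau^{\leq k}\SF \to \SF \to \tau^{\geq k+1}\SF \xrightarrow{+1}$ and its analogue for $\SG$. The key observation is that the ``error term'' $\tau^{\geq k+1}\SF$ is also bounded below in the perverse $t$-structure at the same threshold $k+1$, so that applying ${^p\tau^{\leq k}}$ kills it and forces the natural map ${^p\tau^{\leq k}}(\tau^{\leq k}\SF) \to {^p\tau^{\leq k}}\SF$ to be an isomorphism.

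The heart of the argument is the general inclusion $\CD^{\geq n} \subseteq {^p\CD^{\geq n}}$ inside $\CD_{\rmc}(X)$ for all $n \in \BoZ$. Fix an algebraic stratification $X = \bigsqcup_\alpha S_\alpha$ with respect to which $\SF$ is constructible. By the standard stratified characterization of the perverse $t$-structure, a complex lies in ${^p\CD^{\geq n}}$ iff $i_\alpha^!\SF \in \CD^{\geq n - \dim S_\alpha}$ for every stratum $i_\alpha\colon S_\alpha \hookrightarrow X$. Writing $i_\alpha = i \circ j$ with $j$ open and $i$ closed, the functor $i_\alpha^! = j^* i^!$ is left $t$-exact for the classical $t$-structure: $j^*$ is $t$-exact, and $i^!$, being right adjoint to the classically $t$-exact $i_*$, is left $t$-exact. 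Hence $\SF \in \CD^{\geq n}$ gives $i_\alpha^!\SF \in \CD^{\geq n} \subseteq \CD^{\geq n - \dim S_\alpha}$ since $\dim S_\alpha \geq 0$, proving the inclusion. The only genuine technical point is this left $t$-exactness of $i_\alpha^!$, a standard six-functor statement.

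With this in hand, $\tau^{\geq k+1}\SF$ lies in $\CD^{\geq k+1}\subseteq {^p\CD^{\geq k+1}}$. For any $\SH \in {^p\CD^{\leq k}}$, the long exact sequence obtained by applying $\Hom(\SH,-)$ to the triangle, together with the vanishings $\Hom(\SH, \tau^{\geq k+1}\SF) = \Hom(\SH, \tau^{\geq k+1}\SF[-1]) = 0$ forced by $t$-structure orthogonality, yields $\Hom(\SH, \tau^{\leq k}\SF) \cong \Hom(\SH, \SF)$. Since ${^p\tau^{\leq k}}$ is right adjoint to the inclusion ${^p\CD^{\leq k}} \hookrightarrow \CD_{\rmc}(X)$, Yoneda then provides an isomorphism ${^p\tau^{\leq k}}(\tau^{\leq k}\SF) \cong {^p\tau^{\leq k}}\SF$ induced by $\tau^{\leq k}\SF\to\SF$, and likewise for $\SG$. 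The chain
\[
 {^p\tau^{\leq k}}\SF \cong {^p\tau^{\leq k}}(\tau^{\leq k}\SF) \cong {^p\tau^{\leq k}}(\tau^{\leq k}\SG) \cong {^p\tau^{\leq k}}\SG,
\]
whose middle isomorphism comes from applying ${^p\tau^{\leq k}}$ to the hypothesized $\tau^{\leq k}\SF \cong \tau^{\leq k}\SG$, completes the proof.
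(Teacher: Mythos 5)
Your proof is correct, and it rests on the same underlying fact as the paper's — namely that the perverse and classical $t$-structures satisfy a compatibility (one aisle contained in the other) that forces ${^p\tau^{\leq k}}\tau^{\leq k}\cong{^p\tau^{\leq k}}$. The packaging, though, differs in two minor ways. First, you establish the dual inclusion $\CD^{\geq n}\subseteq{^p\CD^{\geq n}}$ via the stratified characterization of the perverse $t$-structure and the left $t$-exactness of $i_\alpha^{!}$, whereas the paper simply cites the support condition ($\CH^i(P)=0$ for $i>0$ when $P$ is perverse) to get ${^p\CD^{\leq k}}\subseteq\CD^{\leq k}$ directly; these two inclusions are equivalent by orthogonality, and the paper's formulation is more immediate, saving you the stratification argument. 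Second, you derive ${^p\tau^{\leq k}}(\tau^{\leq k}\SF)\cong{^p\tau^{\leq k}}\SF$ by running the truncation triangle through $\Hom(\SH,-)$ and invoking Yoneda, while the paper gets the same natural isomorphism in one line by observing that ${^p\tau^{\leq k}}\circ\tau^{\leq k}$ is right-adjoint to the composite inclusion ${^p\CD^{\leq k}}\subseteq\CD^{\leq k}\subseteq\CD_{\rmc}(X)$ and appealing to uniqueness of adjoints. Both routes are valid; the paper's is the more economical.
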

\begin{proof}
 If $\SH\in\Perv(X)$, then $\CH^i(\SH)=0$ (where $\CH$ denotes the cohomology sheaves for the natural $t$-structure on the derived category of constructible sheaves) for $i>0$ by the support condition (e.g. \cite[\S2.3]{de2009decomposition}). Therefore, we have an inclusion ${^p\CD_{\rmc}^{\leq k}(X)}\subset\CD_{\rmc}^{\leq k}(X)$. The right-adjoint to the inclusion ${^p\CD}_{\rmc}^{\leq k}(X)\subset\CD_{\rmc}(X)$ is ${^p\tau}^{\leq k}$. The composition of functors ${^p\tau}^{\leq k}\tau^{\leq k}$ is right-adjoint to the composition of inclusions ${^p\CD_{\rmc}^{\leq k}(X)}\subset\CD_{\rmc}^{\leq k}(X)\subset\CD_{\rmc}(X)$.  Therefore, we have a canonical isomorphism ${^p\tau}^{\leq k}\tau^{\leq k}\cong {^p\tau^{\leq k}}$. Applying ${^p\tau}^{\leq k}$ to $\tau^{\leq k}\SF\cong\tau^{\leq k}\SG$ gives the lemma.
\end{proof}

We consider the diagrams
\[\begin{tikzcd}
	{\overline{X'_n}} & {} & {\overline{X_n}} \\
	& Y
	\arrow["{\jmath_n}", from=1-1, to=1-3]
	\arrow["{\pi_n}"', from=1-1, to=2-2]
	\arrow["{p_n}", from=1-3, to=2-2]
\end{tikzcd}\]

\begin{corollary}
\label{corollary:pushforwardic}
 In the situation of Definition~\ref{definition:appm}, the natural morphism of complexes of monodromic mixed Hodge modules
 \[
  \underline{\tau}^{\leq \codim_{\overline{X_n}}(\overline{X_n}\setminus \overline{X'_n})-1}\left((\jmath_n)_{!*}\underline{\BoQ}_{\overline{X'_n}}=\underline{\IC}(\overline{X_n})\otimes \SL^{(\dim X_n-\dim G)/2}\rightarrow(\jmath_n)_*\underline{\BoQ}_{\overline{X'_n}}\right)
 \]
 is an isomorphism.
\end{corollary}
\begin{proof}
 We may prove the statement after applying the (conservative) functor $\rat$, i.e. in the categories of constructible sheaves. We have to prove that
  \[
  {^p\tau}^{\leq \codim_{\overline{X_n}}(\overline{X_n}\setminus \overline{X'_n})-1}\left((\jmath_n)_{!*}{\BoQ}_{\overline{X'_n}}={\IC}(\overline{X_n})[-(\dim X_n-\dim G)]\rightarrow(\jmath_n)_*{\BoQ}_{\overline{X'_n}}\right)
 \]
 is an isomorphism. This is the combination of Proposition~\ref{proposition:comparisonintextensionstar} and Lemma~\ref{lemma:perverseclassicaltruncations}.
\end{proof}

\begin{lemma}
\label{lemma:pftruncatediso}
 Let $\pi\colon X\rightarrow Y$ be a morphism between finite type separated complex schemes, $k\in\BoZ$ and $\underline{\CF}\xrightarrow{f}\underline{\CG}\in\CD^+(\MMHM(X))$ be a morphism of monodromic mixed Hodge modules such that ${\tau}^{\leq k}(f)\colon{\tau}^{\leq k}\CF\rightarrow{\tau}^{\leq k}\CG$ is an isomorphism of constructible complexes (where $\CF=\rat(\underline{\CF})$ and $\CG=\rat(\underline{\CG})$). Then, $\underline{\tau}^{\leq k}\pi_*f\colon\underline{\tau}^{\leq k}\pi_*\underline{\CF}\rightarrow\underline{\tau}^{\leq k}\pi_*\underline{\CG}$ is an isomorphism.
\end{lemma}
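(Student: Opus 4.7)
The plan is to pass to the cone of $f$ and exploit left $t$-exactness of $\pi_*$ for the classical $t$-structure, before translating back through $\rat$ to the natural $t$-structure on $\MMHM$. First I complete $f$ to a distinguished triangle
\[
\underline{\CF}\xrightarrow{f}\underline{\CG}\rightarrow\underline{\CH}\xrightarrow{+1}
\]
in $\CD^+(\MMHM(X))$. Since $\rat$ is triangulated, $\rat(\underline{\CH})$ is the cone of $\rat(f)$, so the hypothesis rereads as $\CH^i(\rat(\underline{\CH}))=0$ for all $i\leq k$, i.e.\ $\rat(\underline{\CH})\in\CD^{\geq k+1}_{\rmc}(X)$ for the classical $t$-structure.

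Next, $\pi_*$ is left $t$-exact for the classical $t$-structure on bounded-below constructible complexes (since $R^i\pi_*=0$ for $i<0$ on sheaves of finite-type separated schemes), giving $\pi_*\rat(\underline{\CH})\in\CD^{\geq k+1}_{\rmc}(Y)$. The inclusion ${^p\CD}^{\leq k}_{\rmc}\subset\CD^{\leq k}_{\rmc}$ already invoked in the proof of Lemma~\ref{lemma:perverseclassicaltruncations} yields by right-orthogonality the dual inclusion $\CD^{\geq k+1}_{\rmc}(Y)\subset{^p\CD}^{\geq k+1}_{\rmc}(Y)$. Combined with the natural isomorphism $\rat\circ\pi_*\cong\pi_*\circ\rat$, this gives $\rat(\pi_*\underline{\CH})\in{^p\CD}^{\geq k+1}_{\rmc}(Y)$. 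Because $\rat$ intertwines $\underline{\CH}^i$ with ${^p\CH}^i$ and is faithful on $\MMHM$, every perverse-cohomology vanishing transfers to a natural-cohomology vanishing, giving $\underline{\CH}^i(\pi_*\underline{\CH})=0$ for all $i\leq k$.

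Finally, the long exact sequence of natural cohomology associated to the distinguished triangle $\pi_*\underline{\CF}\rightarrow\pi_*\underline{\CG}\rightarrow\pi_*\underline{\CH}$ then forces $\underline{\CH}^i(\pi_*f)$ to be an isomorphism for every $i\leq k$, which is exactly the statement that $\underline{\tau}^{\leq k}(\pi_*f)$ is an isomorphism. The only real subtlety---more a bookkeeping matter than a genuine obstacle---is to keep track of the two different $t$-structures involved: the classical one appearing in the hypothesis, and the MMHM-natural one (matching the perverse $t$-structure under $\rat$) appearing in the conclusion. The refinement ${^p\CD}^{\leq k}_{\rmc}\subset\CD^{\leq k}_{\rmc}$ is precisely the bridge between the two.
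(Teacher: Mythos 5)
Your translation of the hypothesis into a vanishing statement for the cone is off by one, and the rest of the argument cannot recover the lost degree. Let $\underline{\CK}$ denote the cone of $f$ in $\CD^+(\MMHM(X))$ and $\CK=\rat(\underline{\CK})$ the cone of $\rat(f)$. The condition that $\tau^{\leq k}(\rat f)$ is an isomorphism is equivalent to $\CH^i(\rat f)$ being an isomorphism for all $i\leq k$; feeding this into the short exact sequences
\[
0\longrightarrow\mathrm{coker}\,\CH^i(\rat f)\longrightarrow\CH^i(\CK)\longrightarrow\ker\CH^{i+1}(\rat f)\longrightarrow 0
\]
coming from the long exact sequence of the triangle $\CF\to\CG\to\CK\to\CF[1]$ only yields $\CH^i(\CK)=0$ for $i\leq k-1$. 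The term $\CH^k(\CK)\cong\ker\CH^{k+1}(\rat f)$ is not controlled by the hypothesis, so $\CK\in\CD^{\geq k}_{\rmc}(X)$, not $\CD^{\geq k+1}_{\rmc}(X)$ as you assert.

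Propagating the corrected bound forward gives $\pi_*\CK\in\CD^{\geq k}_{\rmc}(Y)\subset{^p\CD}^{\geq k}_{\rmc}(Y)$, hence $\underline{\CH}^i(\pi_*\underline{\CK})=0$ only for $i\leq k-1$. The long exact sequence in $\underline{\CH}^*$ then shows $\underline{\CH}^i(\pi_*f)$ is an isomorphism for $i\leq k-1$ and injective at $i=k$; in other words, your argument proves only that $\underline{\tau}^{\leq k-1}\pi_*f$ is an isomorphism, which is strictly weaker. The missing ingredient is Lemma~\ref{lemma:truncation}(1), the isomorphism $\tau^{\leq k}\pi_*\tau^{\leq k}\cong\tau^{\leq k}\pi_*$, which says that $\tau^{\leq k}\pi_*$ does not see the part of its input truncated away by $\tau^{\leq k}$. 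The paper's proof uses this directly: apply $\tau^{\leq k}\pi_*$ to the commutative square relating $\tau^{\leq k}(\rat f)$ to $\rat f$, observe the top arrow is an isomorphism by hypothesis and the vertical arrows are the isomorphisms of Lemma~\ref{lemma:truncation}, so $\tau^{\leq k}\pi_*(\rat f)$ is an isomorphism; then Lemma~\ref{lemma:perverseclassicaltruncations} and conservativity of $\rat$ give the stated conclusion. Your cone reduction exploits only the left $t$-exactness of $\pi_*$ and has no substitute for Lemma~\ref{lemma:truncation}, which is exactly where the degree $i=k$ is lost.
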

\begin{proof}
By conservativity of the functor $\rat$ to the derived category of constructible sheaves, we may prove the theorem for constructible sheaves. By Lemma~\ref{lemma:perverseclassicaltruncations}, we may replace perverse truncation functors by classical truncation functors. Then, this is essentially Lemma~\ref{lemma:truncation}.

 More precisely, we have the functorial triangles and the morphism between them:
\[\begin{tikzcd}
	{\tau^{\leq k}\CF} & \CF & {\tau^{\geq k+1}\CF} & {} \\
	{\tau^{\leq k}\CG} & \CG & {\tau^{\geq k+1}\CG} & {}
	\arrow["a",from=1-1, to=1-2]
	\arrow["{\tau^{\leq k}f}"', from=1-1, to=2-1]
	\arrow[from=1-2, to=1-3]
	\arrow["f"', from=1-2, to=2-2]
	\arrow[from=1-3, to=1-4]
	\arrow["{\tau^{\geq k+1}f}", from=1-3, to=2-3]
	\arrow["b",from=2-1, to=2-2]
	\arrow[from=2-2, to=2-3]
	\arrow[from=2-3, to=2-4]
\end{tikzcd}\]
Since $\tau^{\leq k}f$ is an isomorphism, so is $\tau^{\leq k}\pi_*\tau^{\leq k}f$. Therefore, to prove that $\tau^{\leq k}\pi_* f$ is an isomorphism, it suffices to prove that $\tau^{\leq k}\pi_*a$ and $\tau^{\leq k}\pi_*b$ are isomorphisms. The proofs for $a$ and $b$ are the same. This is a consequence of the octahedral axiom in triangulated categories. Namely, by applying $\pi_*$ to the distinguished triangle for $\CF$, we obtain the distinguished triangle
\[
 \pi_*\tau^{\leq k}\CF\xrightarrow{\pi_*a}\pi_*\CF\rightarrow\pi_*\tau^{\geq k+1}\CF\rightarrow\,.
\]
By Lemma~\ref{lemma:truncation}, we have an isomorphism $h\colon \tau^{\leq k}\pi_*\tau^{\leq k}\CF\xrightarrow{\sim}\tau^{\leq k}\CF$. This isomorphism is $\tau^{\leq k}\pi_*a$. Indeed, we have a commutative diagram
\[\begin{tikzcd}
	{\tau^{\leq k}\pi_*\tau^{\leq k}\CF} & {\pi_*\tau^{\leq k}\CF} & {\pi_*\CF} \\
	{\tau^{\leq k}\pi_*\CF} & {\pi_*\CF} & {\pi_*\CF}
	\arrow[from=1-1, to=1-2]
	\arrow["{\tau^{\leq k}\pi_*a}"', from=1-1, to=2-1]
	\arrow["{\pi_*a}", from=1-2, to=1-3]
	\arrow["{\pi_*a}"', from=1-2, to=2-2]
	\arrow["\id", from=1-3, to=2-3]
	\arrow[from=2-1, to=2-2]
	\arrow["\id"', from=2-2, to=2-3]
\end{tikzcd}\]
where left-most horizontal maps are adjunction maps. Therefore, the outer square
\[\begin{tikzcd}
	{\tau^{\leq k}\pi_*\tau^{\leq k}\CF} & {\pi_*\CF} & {} \\
	{\tau^{\leq k}\pi_*\CF} & {\pi_*\CF}
	\arrow[from=1-1, to=1-2]
	\arrow["{\tau^{\leq k}\pi_*a}"', from=1-1, to=2-1]
	\arrow["\id", from=1-2, to=2-2]
	\arrow[from=2-1, to=2-2]
\end{tikzcd}\]
commutes. We have an isomorphism of triangles obtained in the proof of Lemma~\ref{lemma:truncation}
\[\begin{tikzcd}
	{\tau^{\leq k}\pi_*\tau^{\leq k}\CF} & {\pi_*\CF} & {\tau^{\geq k+1}\pi_*\tau^{\leq k}\CF} & {} \\
	{\tau^{\leq k}\pi_*\CF} & {\pi_*\CF} & A & {}
	\arrow[from=1-1, to=1-2]
	\arrow["h"', from=1-1, to=2-1]
	\arrow[from=1-2, to=1-3]
	\arrow["\id", from=1-2, to=2-2]
	\arrow[from=1-3, to=1-4]
	\arrow[from=1-3, to=2-3]
	\arrow[from=2-1, to=2-2]
	\arrow[from=2-2, to=2-3]
	\arrow[from=2-3, to=2-4]
\end{tikzcd}\,.\]
Since $\Hom(\tau^{\leq k}\pi_*\tau^{\leq k}\CF,\tau^{\leq k}\pi_*\CF)\cong\Hom(\tau^{\leq k}\pi_*\tau^{\leq k}\CF,\pi_*\CF)$ by adjunction, then the morphisms $\tau^{\leq k}\pi_*a$ and $h$ coincide. Since the latter is an isomorphism, so is $\tau^{\leq k}\pi_*a$.
\end{proof}

\begin{corollary}
\label{corollary:isopushforward}
 In the situation of Definition~\ref{definition:appm}, we have an isomorphism
 \[
  \underline{\tau}^{\leq \codim_{\overline{X_n}}(\overline{X_n}\setminus \overline{X'_n})-1}(p_n)_*(\underline{\IC}(\overline{X_n})\otimes\SL^{\dim \overline{X_n}/2})\cong\underline{\tau}^{\leq \codim_{\overline{X_n}}(\overline{X_n}\setminus \overline{X'_n})-1}(\pi_n)_*\underline{\BoQ}_{\overline{X'_n}}
 \]
of complexes of mixed Hodge modules.
\end{corollary}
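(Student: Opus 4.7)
The plan is to chain Corollary~\ref{corollary:pushforwardic} with Lemma~\ref{lemma:pftruncatediso}, using the factorization $\pi_n = p_n\circ\jmath_n$. Writing $c = \codim_{\overline{X_n}}(\overline{X_n}\setminus\overline{X'_n})$ and $f\colon(\jmath_n)_{!*}\underline{\BoQ}_{\overline{X'_n}}\to(\jmath_n)_*\underline{\BoQ}_{\overline{X'_n}}$ for the canonical adjunction morphism, the desired statement reads
\[
\underline{\tau}^{\leq c-1}(p_n)_*f\colon \underline{\tau}^{\leq c-1}(p_n)_*(\jmath_n)_{!*}\underline{\BoQ}_{\overline{X'_n}}\longisoto\underline{\tau}^{\leq c-1}(\pi_n)_*\underline{\BoQ}_{\overline{X'_n}},
\]
after identifying $(\jmath_n)_{!*}\underline{\BoQ}_{\overline{X'_n}}=\underline{\IC}(\overline{X_n})\otimes\SL^{\dim\overline{X_n}/2}$ via the dimension formula $\dim\overline{X_n}=\dim X_n-\dim G$ (which holds because $G$ acts freely on the dense open $X'_n\subset X_n$, so $\overline{X'_n}\subset\overline{X_n}$ is open dense and carries the generic orbit of full dimension).

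The first key step is to observe that although Corollary~\ref{corollary:pushforwardic} is phrased in terms of the MHM truncation $\underline{\tau}^{\leq c-1}$, its proof in fact establishes the stronger \emph{classical} truncation statement on the rationalization: by Proposition~\ref{proposition:comparisonintextensionstar} applied to the open immersion $\overline{X'_n}\hookrightarrow\overline{X_n}$, the map $\tau^{\leq c-1}(\rat f)$ is an isomorphism of constructible complexes on $\overline{X_n}$. This classical truncation iso is exactly the hypothesis needed to invoke Lemma~\ref{lemma:pftruncatediso}.

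The second step is then immediate: apply Lemma~\ref{lemma:pftruncatediso} to $p_n\colon\overline{X_n}\to Y$ with $k=c-1$ and the morphism $f$ above to conclude that $\underline{\tau}^{\leq c-1}(p_n)_*f$ is an isomorphism of MHM complexes; translating via $(p_n)_*(\jmath_n)_* = (\pi_n)_*$ and the identification of $(\jmath_n)_{!*}\underline{\BoQ}_{\overline{X'_n}}$ with $\underline{\IC}(\overline{X_n})\otimes\SL^{\dim\overline{X_n}/2}$ gives the statement.

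No substantive obstacle is expected. The only subtle point is taking care to extract from the proof of Corollary~\ref{corollary:pushforwardic} the classical truncation isomorphism (as opposed to merely the MHM-level $\underline{\tau}^{\leq c-1}$ isomorphism visible in its statement), since Lemma~\ref{lemma:pftruncatediso} is only available in that direction; Lemma~\ref{lemma:perverseclassicaltruncations} goes from classical to perverse, not the reverse, so one must genuinely use Proposition~\ref{proposition:comparisonintextensionstar} as the input rather than the packaged corollary.
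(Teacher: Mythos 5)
Your proposal matches the paper's proof: both apply $\underline{\tau}^{\leq c-1}(p_n)_*$ to the canonical morphism $(\jmath_n)_{!*}\underline{\BoQ}_{\overline{X'_n}}\to(\jmath_n)_*\underline{\BoQ}_{\overline{X'_n}}$ and conclude via Lemma~\ref{lemma:pftruncatediso}, whose hypothesis on classical truncations is supplied by Proposition~\ref{proposition:comparisonintextensionstar} rather than by the packaged Corollary~\ref{corollary:pushforwardic}. Your closing observation about why the corollary cannot be used directly (it only records the perverse-truncation isomorphism, while the lemma requires the classical one) is precisely the subtlety that the paper's proof handles by citing Proposition~\ref{proposition:comparisonintextensionstar} alongside Lemma~\ref{lemma:pftruncatediso}.
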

\begin{proof}
 By applying $\underline{\tau}^{\leq k}(p_n)_*$ with $k=\codim_{\overline{X_n}}(\overline{X_n}\setminus \overline{X'_n})-1$ to the morphism
 \[
  (\jmath_n)_{!*}\underline{\BoQ}_{\overline{X'_n}}=\underline{\IC}(\overline{X_n})\otimes \SL^{\dim \overline{X_n}/2}\rightarrow(\jmath_n)_*\underline{\BoQ}_{\overline{X'_n}}\,,
 \]
 Lemma \ref{lemma:pftruncatediso} together with Proposition~\ref{proposition:comparisonintextensionstar} provide us with an isomorphism
 \[
  \underline{\tau}^{\leq k}(p_n)_*(\jmath_n)_{!*}\underline{\BoQ}_{\overline{X'_n}}=\underline{\tau}^{\leq k}(p_n)_*\underline{\IC}(\overline{X_n})\otimes \SL^{\dim\overline{X_n}/2}\rightarrow\underline{\tau}^{\leq k}(p_n)_*(\jmath_n)_*\underline{\BoQ}_{\overline{X'_n}}\cong \underline{\tau}^{\leq k}(\pi_n)_*\underline{\BoQ}_{\overline{X'_n}}\,.
 \]
\end{proof}

\begin{corollary}
\label{corollary:purityappm}
 In the situation of Definition~\ref{definition:appm}, the complex of monodromic mixed Hodge modules $\underline{\tau}^{\leq \codim_{\overline{X_n}}(\overline{X_n}\setminus\overline{X'_n})-1}(\pi_n)_*\underline{\BoQ}_{\overline{X'_n}}\in\CD^+(\MMHM(Y))$ is pure, and hence semisimple.
\end{corollary}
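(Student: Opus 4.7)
The plan is to use Corollary~\ref{corollary:isopushforward} to replace the truncation under study with a truncation of $(p_n)_*$ applied to a shift-twist of the intersection complex on $\overline{X_n}$, and then to appeal to Saito's decomposition theorem for proper maps of pure Hodge modules.

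In detail, set $k = \codim_{\overline{X_n}}(\overline{X_n}\setminus\overline{X'_n})-1$. By Corollary~\ref{corollary:isopushforward}, there is an isomorphism
\[
\underline{\tau}^{\leq k}(\pi_n)_*\underline{\BoQ}_{\overline{X'_n}} \;\cong\; \underline{\tau}^{\leq k}(p_n)_*\bigl(\underline{\IC}(\overline{X_n})\otimes\SL^{\dim\overline{X_n}/2}\bigr).
\]
The object $\underline{\IC}(\overline{X_n})\otimes\SL^{\dim\overline{X_n}/2}$ is a pure monodromic mixed Hodge module of weight zero on $\overline{X_n}$. Since $p_n\colon \overline{X_n}\to Y$ is proper by condition~(4) of Definition~\ref{definition:appm}, Saito's analogue of the Beilinson--Bernstein--Deligne decomposition theorem \cite{saito1990mixed} implies that $(p_n)_*\bigl(\underline{\IC}(\overline{X_n})\otimes\SL^{\dim\overline{X_n}/2}\bigr)$ is a pure complex of weight zero in $\CD^+(\MMHM(Y))$, and moreover splits canonically as a direct sum of shifts of its cohomology monodromic mixed Hodge modules, each of which is itself pure of weight zero.

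Applying the truncation functor $\underline{\tau}^{\leq k}$ to such a split complex only retains the cohomological pieces in degrees $\leq k$, so it yields a direct summand which is again a direct sum of shifts of pure weight zero monodromic mixed Hodge modules. Hence
\[
\underline{\tau}^{\leq k}(p_n)_*\bigl(\underline{\IC}(\overline{X_n})\otimes\SL^{\dim\overline{X_n}/2}\bigr)
\]
is pure of weight zero, and the isomorphism above transports this purity to the complex in the statement. Semisimplicity is then automatic: every pure object in $\MMHM(Y)$ is semisimple, and a pure complex in $\CD^+(\MMHM(Y))$ splits as a direct sum of shifts of such objects.

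The main conceptual step is already packaged in Corollary~\ref{corollary:isopushforward}: identifying the low-degree truncation of $(\pi_n)_*\underline{\BoQ}_{\overline{X'_n}}$ with the corresponding truncation of $(p_n)_*\underline{\IC}(\overline{X_n})$. Once this identification is made, no further obstacle remains, since purity and semisimplicity reduce to the formal consequences of the decomposition theorem for the proper map $p_n$ and the fact that truncations of pure complexes are pure.
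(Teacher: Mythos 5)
Your proof is correct and follows essentially the same route as the paper: invoke Corollary~\ref{corollary:isopushforward} to identify the truncation with $\underline{\tau}^{\leq k}(p_n)_*$ applied to the intersection complex, use properness of $p_n$ and purity of $\underline{\IC}(\overline{X_n})$ to get purity of the pushforward, observe that truncation preserves purity, and conclude semisimplicity from purity. The only difference is that you spell out the appeal to Saito's decomposition theorem where the paper simply states the consequence.
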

\begin{proof}
 This follows from Corollary~\ref{corollary:isopushforward}, since $\underline{\IC}(\overline{X_n})$ is pure and $p_n$ is proper, which implies that $(p_n)_*\underline{\IC}(\overline{X_n})$ is pure, and consequently, $\underline{\tau}^{\leq \codim_{\overline{X_n}}(\overline{X_n}\setminus \overline{X'_n})-1}(p_n)_*\underline{\IC}(\overline{X_n})$ is pure. Then, purity classically implies semisimplicity.
\end{proof}

\begin{lemma}
 \label{lemma:codimensionsurjective}
 Let $k,n\in\BoN$. Let let $V_n\coloneqq \Hom(\BoC^n,\BoC^{k+1})$. We denote by $U_n$ the subset of surjective linear maps. Then, $\lim_{n\rightarrow+\infty}\codim_{V_n}(V_n\setminus U_n)=+\infty$.
\end{lemma}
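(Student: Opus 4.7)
The plan is to identify $V_n\setminus U_n$ explicitly as a determinantal variety and then apply the classical dimension formula for such varieties.

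After choosing bases, $V_n=\Hom(\BoC^n,\BoC^{k+1})$ is identified with the space of $(k+1)\times n$ matrices, so that $\dim V_n=n(k+1)$. A linear map fails to be surjective precisely when it has rank at most $k$, hence
\[
V_n\setminus U_n=\{A\in\mathrm{Mat}_{(k+1)\times n}(\BoC)\mid \rank(A)\leq k\}.
\]
This is the determinantal variety cut out (set-theoretically) by the vanishing of all $(k+1)\times(k+1)$ minors of $A$.

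The key input I would cite is the standard formula for the dimension of the variety $M_{m,n}^{\leq r}$ of $m\times n$ matrices of rank at most $r$, namely $\dim M_{m,n}^{\leq r}=r(m+n-r)$ (see for instance Harris, \emph{Algebraic Geometry: A First Course}, or Arbarello--Cornalba--Griffiths--Harris). Applying this with $m=k+1$ and $r=k$ gives
\[
\dim(V_n\setminus U_n)=k(k+1+n-k)=k(n+1).
\]
Therefore
\[
\codim_{V_n}(V_n\setminus U_n)=n(k+1)-k(n+1)=n-k,
\]
and the right-hand side tends to $+\infty$ as $n\to+\infty$, which is what we had to prove.

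The only potentially delicate point is invoking the determinantal dimension formula, which is entirely standard; if one wants to avoid citing it, an elementary argument suffices: any rank $\leq k$ map $\BoC^n\to\BoC^{k+1}$ factors as $\BoC^n\twoheadrightarrow\BoC^n/K\hookrightarrow\BoC^{k+1}$ for some subspace $K$ of codimension $\leq k$, so $V_n\setminus U_n$ is covered by the image of the incidence variety of pairs (a $k$-dimensional quotient of $\BoC^n$, an injection of that quotient into $\BoC^{k+1}$), whose dimension is readily computed as $k(n-k)+k(k+1)=k(n+1)$, recovering the same bound. Thus no real obstacle arises.
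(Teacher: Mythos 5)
Your proof is correct, and it takes a genuinely different route from the paper. The paper bounds the codimension from below by exhibiting a chain of strict closed inclusions
\[
V_n\setminus U_n\subset F_{n-k}\subsetneq F_{n-k-1}\subsetneq\hdots\subsetneq F_0=V_n,
\]
where $F_m$ is cut out by $m$ of the $(k+1)\times(k+1)$ minors; each strict inclusion drops dimension by at least one, so $\codim_{V_n}(V_n\setminus U_n)\geq n-k$. (The paper actually writes $V_n\setminus U_n=F_{n-k}$, but that equality is not literally correct in general — e.g.\ for $k=1$, $n=3$ there are rank-$2$ matrices on which the two consecutive $2\times 2$ minors vanish — but the containment is all that is needed and the argument is unaffected.) You instead identify $V_n\setminus U_n$ as the determinantal variety $M^{\leq k}_{(k+1)\times n}$ and invoke the standard formula $\dim M^{\leq r}_{m\times n}=r(m+n-r)$, giving the \emph{exact} codimension $n-k$; your incidence-variety parametrization also reproves that formula elementarily in this case, which is a nice touch. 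The paper's chain argument buys self-containedness (no external reference and no parametrization), while your approach is shorter and yields the sharp value rather than just a lower bound. Both are valid proofs of the lemma.
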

\begin{proof}
 We may assume that $n\geq k+1$. For $1\leq m\leq n$, We denote by $F_m\subset\Hom(\BoC^n,\BoC^{k+1})$ the closed subset defined by the vanishing of the $m$ first $(k+1)\times(k+1)$ minors. Then, we have strict inclusions
 \[
  V_n\setminus U_n=F_{n-k}\subsetneq F_{n-k-1}\subsetneq\hdots\subsetneq F_{0}=\Hom(\BoC^n,\BoC^{k+1})\,.
 \]
The length of this chain grows linearly in $n$ and so tends to $+\infty$ as $n\rightarrow+\infty$. This concludes.
\end{proof}

\begin{proposition}
\label{proposition:Gprimeapproachable}
Let $G$ be a reductive group and $X$ an affine $G$-variety. We extend the $G$-action to a $G'\coloneqq G\times\BoG_{\rmm}$-action, where $\BoG_{\rmm}$ acts trivially on $X$. Then, the map $\pi'\colon X/G'\rightarrow X\cms G'\cong X\cms G$ is approachable by proper maps (Definition~\ref{definition:appm}).
\end{proposition}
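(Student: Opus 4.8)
The plan is to realise $X_n$ as a GIT quotient of $X\times V_n$ for a suitable character linearisation twisted by the $\BoG_{\rmm}$-factor, the point being that since $\BoG_{\rmm}$ acts \emph{trivially} on $X$, the affine GIT quotient $(X\times V_n)\cms G'$ is exactly $X\cms G$, so that the GIT quotient for the twisted linearisation becomes a $\mathrm{Proj}$ that is \emph{projective over} $X\cms G$; this is what yields the properness in condition~(4). First I would build the framing space: fix a faithful $G$-representation $W_0'$, set $W_0\coloneqq W_0'\oplus (W_0')^*$ (so that $\det W_0$ is $G$-trivial and every nontrivial one-parameter subgroup of $G$ has a strictly negative weight on $W_0$), let $\chi$ be the scaling character of $\BoG_{\rmm}$, and put $W\coloneqq (W_0\otimes\chi)\oplus\chi$. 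Then $W$ is a $G'$-representation on which $\BoG_{\rmm}$ acts by scaling, $G'\to\GL(W)$ is injective with closed image, $\det W$ is $G$-trivial, and $W$ still has the ``no nonnegative one-parameter subgroup'' property for $G$. For $n\geq \dim W$ I set $V_n\coloneqq\Hom(\BoC^n,W)$, with $G'$ acting through its action on the target, and let $U_n\subset V_n$ be the open locus of surjective maps.

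Conditions (1) and (2) are then quick: if $(g,t)\in G'$ fixes a surjective $\phi$, the induced automorphism of $W$ is the identity on $\im\phi=W$, so $(g,t)$ is trivial by faithfulness, whence $G'$ acts freely on $U_n$; that $U_n\to U_n/G'$ is a principal $G'$-bundle of schemes is the standard finite-dimensional approximation of $BG'$. Condition~(2), $\codim_{V_n}(V_n\setminus U_n)\to+\infty$, is precisely Lemma~\ref{lemma:codimensionsurjective} applied with $\BoC^{k+1}\cong W$.

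Next I would define $X_n\subset X\times V_n$ to be the locus of $\chi_0$-semistable points, where $\chi_0\colon G'\to\BoG_{\rmm}$ is the inverse of the projection onto the $\BoG_{\rmm}$-factor; by GIT this is a $G'$-invariant open subvariety admitting a good quotient. Since $\BoG_{\rmm}$ acts trivially on $X$ and by nonzero scaling on $V_n$, one has $\BoC[X\times V_n]^{\BoG_{\rmm}}=\BoC[X]$, hence $\BoC[X\times V_n]^{G'}=\BoC[X]^G$; writing $R=\BoC[X\times V_n]^G$ with its grading by $V_n$-degree, $R_0=\BoC[X]^G$ and $X_n\cms G'=\mathrm{Proj}(R)$, so $X_n\cms G'\to\Spec(R_0)=X\cms G$ is projective, in particular proper --- this gives~(4). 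The maximal minors of $\phi$, which are $G$-invariant because $\det W$ is $G$-trivial, do not all vanish on $U_n$ and have positive $V_n$-degree, so $X\times U_n\subset X_n$ is a $G'$-invariant open subvariety.

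The remaining and, I expect, hardest point is to show that $X\times U_n$ consists of $\chi_0$-\emph{stable} (not merely semistable) points, so that its free quotient embeds as an open subscheme of $X_n\cms G'$. For $(x,\phi)$ with $\phi$ surjective the $G'$-stabiliser is trivial, so by Hilbert--Mumford it suffices to rule out nontrivial one-parameter subgroups $\eta=(\eta_G,s\mapsto s^a)$ with $\langle\chi_0,\eta\rangle=-a\geq 0$ whose limit $\lim_{s\to 0}\eta(s)\cdot(x,\phi)$ exists: if $a<0$ the $\chi$-summand of $\phi$ is nonzero and has no limit, while if $a=0$ and $\eta_G$ is nontrivial the chosen $W$ has a strictly negative $\eta_G$-weight and the corresponding component of the surjective $\phi$ is nonzero, again preventing convergence. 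Hence $X\times U_n$ lies in the stable locus $X_n^{\mathrm s}$, which is open, saturated and has geometric quotient with $X_n^{\mathrm s}\cms G'\hookrightarrow X_n\cms G'$ an open immersion; and $X\times U_n$, being a union of closed free $G'$-orbits, is itself saturated in $X_n^{\mathrm s}$, so $(X\times U_n)/G'\to X_n\cms G'$ is an open immersion --- this is condition~(3). The only subtle input is thus the self-dual choice of the $G$-part of $W$, forced precisely to kill destabilising one-parameter subgroups of $\BoG_{\rmm}$-weight zero over an arbitrary smooth affine $G$-variety $X$.
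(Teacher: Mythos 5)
Your proposal is correct and follows essentially the same route as the paper: frame by $V_n=\Hom(\BoC^n,W)$ for a $G'$-representation $W$ on which the $\BoG_{\rmm}$-factor acts by scaling, take $U_n$ to be the locus of surjective maps, let $X_n$ be a GIT-semistable locus for a character of $G'$, and observe that $X_n\cms G'$ is $\mathrm{Proj}$ of a graded ring whose degree-zero part is $\BoC[X]^G$, which gives properness over $X\cms G$. The paper instead takes $W=\BoC^{k+1}$ via an embedding $G\hookrightarrow\GL_k$ with $\BoC^*$ acting by scalars and uses the determinant character (which already records the $G$-weights, so no self-dual doubling of the framing representation is needed, and in fact forces stability there too), and it asserts condition (3) without the Hilbert--Mumford analysis; your verification that points of $X\times U_n$ are genuinely \emph{stable}, so that $(X\times U_n)/G'\hookrightarrow X_n\cms G'$ is an open immersion, is a legitimate completion of that step --- just make the sign of $\chi_0$ consistent with whichever semistability convention you adopt, since as written ``the inverse of the projection'' paired against destabilizing one-parameter subgroups with $\langle\chi_0,\eta\rangle\geq 0$ mixes the two standard conventions.
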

\begin{proof}
 We choose an embedding $G\rightarrow \GL_k$ for some $k\geq 1$. We define an embedding $G'\rightarrow\GL_{k+1}$ such that $G$ is sent to $\GL_k\subset\GL_{k+1}$ and $\BoC^*$ to scalar matrices inside $\GL_{k+1}$. We let $V_n\coloneqq \Hom(\BoC^{n},\BoC^{k+1})$. The group $G'$ acts on $V_n$ via $G'\rightarrow\GL_{k+1}$. We let
 \[
 \begin{matrix}
  \theta&\colon&\GL_{k+1}&\rightarrow&\BoG_{\rmm}\\
  &&g&\mapsto&\det(g)\,.
 \end{matrix}
 \]
It induces a character $\theta'$ of $G'$ by restriction. We let $U_n\subset V_n$ be the open subset of surjective maps $\BoC^n\rightarrow\BoC^{k+1}$. Then, $U_n\subset V_n^{\theta'\sst}$ and so, $X\times U_n\subset (X\times V_n)^{\theta'\sst}$. Moreover, the $G'$-action on $U_n$ is free, and so the $G'$-action on $X\times U_n$ is also free. We let $X_n\coloneqq (X\times V_n)^{\theta'\sst}$. It admits a good quotient by $G$, the GIT quotient $(X\times V_n)\cms_{\theta'}G$. Since the copy of $\BoC^*\subset G'$ rescales $V_n$, $(X\times V_n)\cms G\cong X\cms G$. Therefore, the map $X_n\cms_{\theta'}G\rightarrow X\cms G$ is proper. In addition, the codimension of $V_n\setminus U_n$ in $V_n$ tends to $+\infty$ (Lemma~\ref{lemma:codimensionsurjective}) and therefore, $\codim_{X_n\cms G}(X_n\cms G\setminus U_n/G)\rightarrow+\infty$ (Proposition~\ref{proposition:codimensionestimate}). This gives an approximation by proper maps of $X/G'\rightarrow X\cms G$.
\end{proof}

\begin{corollary}
\label{corollary:purityadditionalGm}
 In the situation of Proposition~\ref{proposition:Gprimeapproachable}, the complex of monodromic mixed Hodge modules $\pi'_*\underline{\BoQ}_{X/G'}\in\CD^+(\MMHM(X\cms G))$ is pure.
\end{corollary}
\begin{proof}
 This follows from Corollary~\ref{corollary:purityappm} since the morphism $\pi'$ is approachable by proper maps.
\end{proof}

\subsection{Semisimplicity}

\begin{proposition}
\label{proposition:semisimplicitypistar}
 Let $G$ be a reductive algebraic group and $X$ a smooth affine $G$-variety. We consider the natural GIT quotient map $\pi\colon X/G\rightarrow X\cms G$. Then, $\pi_*\underline{\BoQ}_{X/G}$ is a pure complex of mixed Hodge modules. In particular, it is semisimple.
\end{proposition}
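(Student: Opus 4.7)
The plan is to reduce to Corollary~\ref{corollary:purityadditionalGm} via the auxiliary trivial $\BoG_{\rmm}$-action from Proposition~\ref{proposition:Gprimeapproachable}. Set $G'\coloneqq G\times\BoG_{\rmm}$ with $\BoG_{\rmm}$ acting trivially on $X$. Since $\BoG_{\rmm}$ acts trivially, $X\cms G'\cong X\cms G$, and the projection $G'\to G$ yields a factorization $\pi'=\pi\circ p$, where $p\colon X/G'\to X/G$ identifies $X/G'$ with $X/G\times\rmB\BoG_{\rmm}$ as stacks over $X\cms G$.

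By K\"unneth together with the projection formula,
\[
\pi'_*\underline{\BoQ}_{X/G'}\cong \pi_*\underline{\BoQ}_{X/G}\otimes\HO^*(\rmB\BoG_{\rmm}).
\]
The cohomologically graded mixed Hodge structure $\HO^*(\rmB\BoG_{\rmm})=\bigoplus_{j\geq 0}\BoQ(-j)[-2j]$ contains $\BoQ$, sitting in cohomological degree $0$, as a canonical direct summand; tensoring, $\pi_*\underline{\BoQ}_{X/G}$ is a canonical direct summand of $\pi'_*\underline{\BoQ}_{X/G'}$ in $\CD^+(\MMHM(X\cms G))$. By Corollary~\ref{corollary:purityadditionalGm} the latter is pure of weight zero, so the former is pure as well, and semisimplicity is standard from purity.

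The substantive content sits entirely in Corollary~\ref{corollary:purityadditionalGm}, whose proof proceeds through the approximation by proper maps for $\pi'$ established in Proposition~\ref{proposition:Gprimeapproachable} and the $\underline{\IC}$-comparison of Corollary~\ref{corollary:isopushforward}. The only step here is the formal observation that decoupling the trivially acting $\BoG_{\rmm}$ introduces a pure, positively-graded tensor factor which can be split off without affecting purity. I do not foresee any obstacle in this reduction; if one wished to avoid the direct-summand argument, one could equivalently examine the K\"unneth decomposition
\[
\underline{\CH}^i(\pi'_*\underline{\BoQ}_{X/G'})\cong\bigoplus_{j\geq 0}\underline{\CH}^{i-2j}(\pi_*\underline{\BoQ}_{X/G})\otimes\BoQ(-j),
\]
and note that purity of weight $i$ on the left forces each $\underline{\CH}^k(\pi_*\underline{\BoQ}_{X/G})$ to be pure of weight $k$.
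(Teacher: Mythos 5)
Your proof is correct and follows essentially the same route as the paper: introduce the trivially acting $\BoG_{\rmm}$ factor, identify $\pi'_*\underline{\BoQ}_{X/G'}\cong\pi_*\underline{\BoQ}_{X/G}\otimes\HO^*(\pt/\BoG_{\rmm})$, and invoke Corollary~\ref{corollary:purityadditionalGm}. The only difference is that you make explicit the final reduction (splitting off the degree-zero summand of $\HO^*(\pt/\BoG_{\rmm})$), which the paper leaves implicit in the phrase ``it suffices to prove.''
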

\begin{proof}
 Let $G'\coloneqq G\times\BoG_{\rmm}$ act on $X$, where $\BoG_{\rmm}$ acts trivially. We let $\pi'\colon X/G'\cong X/G\times\pt/\BoG_{\rmm}\rightarrow X\cms G'\cong X\cms G$. It suffices to prove that $\pi'_*\underline{\BoQ}_{X/G'}\cong \pi_*\underline{\BoQ}_{X/G}\otimes\HO^*(\pt/\BoG_{\rmm})$ is pure. This is Corollary~\ref{corollary:purityadditionalGm}.
\end{proof}
In particular, there is a (non-canonical) isomorphism $\pi_*\underline{\BoQ}_{X/G}\cong \bigoplus_{i\in\BoZ}\underline{\CH}^i(\pi_*\underline{\BoQ}_{X/G})[-i]$. We may therefore see $\pi_*\underline{\BoQ}_{X/G}$ as a cohomologically graded monodromic mixed Hodge module \S\ref{subsection:cohgradedmhm}.

\section{The set of partitions of a $G$-variety}
As explained in \cite{hennecart2024cohomological} and recalled in \S\ref{section:reminderabsolute}, a crucial role in the cohomological integrality isomorphism is played by a certain set of equivalence classes of cocharacters $\SP_{V}$ for a representation $V$ of a reductive group $G$. The set $\SP_V$ plays for an arbitrary representation $V$ of a reductive group $G$ the role of decomposition of dimension vectors for quivers in \cite{meinhardt2019donaldson,davison2020cohomological}.

In this section, we generalise the definition of $\SP_V$ to any smooth affine $G$-variety $X$ and we give an alternative, better suited description, involving tori instead of cocharacters (although cocharacters themselves need to be considered to define the induction \S\ref{section:parabolicinduction}).

\subsection{The set of partitions}
\label{subsection:setpartitions}
Let $X$ be a smooth $G$-variety. Let $\lambda\in X_*(T)$ be a cocharacter. We decompose the fixed point locus into connected components $X^{\lambda}=\bigsqcup_{\alpha\in\pi_0(X^{\lambda})}X^{\lambda}_{\alpha}$. We define $\SQ_{X,G}$ as the set of pairs $(\lambda,\alpha)$ of a cocharacter $\lambda\in X_*(T)$ together with a connected component $\alpha\in\pi_0(X^{\lambda})$. We define an equivalence relation on $\SQ_{X,G}$ as follows:
\[(\lambda,\alpha)\sim(\mu,\beta)\iff\left\{
 \begin{aligned}
  &X^{\lambda}_{\alpha}=X^{\mu}_{\beta}\quad\text{ as subvarieties of $X$}\\
  &G^{\lambda}=G^{\mu}\,.
 \end{aligned}
 \right.
\]
The quotient of $\SQ_{X,G}$ by this equivalence relation is denoted by $\SP_{X,G}\coloneqq \SQ_{X,G}/\sim$. The class of $(\lambda,\alpha)\in\SQ_{X,G}$ in $\SP_{X,G}$ is denoted by $\overline{(\lambda,\alpha)}$. The Weyl group $W$ of $G$ acts naturally on $\SQ_{X,G}$: if $w\in W$, we choose an arbitrary lift $\dot{w}\in N_G(T)$. Then, $\dot{w}\cdot X^{\lambda}_{\alpha}$ is a connected component $X^{w\cdot{\lambda}}_{w\cdot\alpha}$ of $X^{w\cdot\lambda}$, which does not depend on the lift $\dot{w}$. We may therefore define the action by $w\cdot(\lambda,\alpha)=(w\cdot\lambda,w\cdot\alpha)$. This action descends to $\SP_{X,G}$. We let $\tilde{\SP}_{X,G}\coloneqq \SP_{X,G}/W$ be the set-theoretic quotient. We call $\SP_{X,G}$ the \emph{set of partitions for $X,G$}.

\subsection{Order on the set of partitions}
\label{subsection:orderrelation}
We define an order relation on the set of partitions $\SP_{X,G}$ of $X,G$ and also on $\SQ_{X,G}$ as follows:
\[
 (\lambda,\alpha)\preceq (\mu,\beta)\iff\left\{\begin{aligned}
                                                X^{\lambda}_{\alpha}&\subset X^{\mu}_{\beta}\\
                                                G^{\lambda}&\subset G^{\mu}\,.
                                               \end{aligned}
\right.
\]

\subsection{Alternative descriptions}

\subsubsection{One-parameter subgroups}

Let $\Hom_{\BoZ}(\BoG_{\rmm},G)$ denote the set of one-parameter subgroups of $G$. Given $\lambda\colon \BoG_{\rmm}\rightarrow G$ such a one parameter subgroup, one may consider the fixed loci $X^{\lambda}\subset X$ and $G^{\lambda}\subset G$ and define the set $\SQ'_{X,G}$ as the set of pairs $(\lambda,\alpha)$ of a one-parameter subgroup of $G$ together with a connected component $\alpha$ of $X^{\lambda}$. The equivalence relation $\sim$ defined in \S\ref{subsection:setpartitions} (identification of the connected components of the fixed loci and of the Levi subgroups) can be extended to $\SQ'_{X,G}$. We let $\SP'_{X,G}\coloneqq\SQ'_{X,G}/\sim$ be the quotient. The class of $(\lambda,\alpha)\in\SQ'_{X,G}$ in $\SP'_{X,G}$ is denoted by $\overline{(\lambda,\alpha)}$. The action of $G$ by conjugation on $\Hom_{\BoZ}(\BoG_{\rmm},G)$ induces an action of $G$ on $\SQ'_{X,G}$ and on $\SP'_{X,G}$. We denote by $\tilde{\SP}_{X,G}\coloneqq\SP'_{X,G}/G$ the set-theoretic quotient.

\begin{lemma}
 We have a natural map $\SP_{X,G}\rightarrow\SP'_{X,G}$ which induces an isomorphism $\SP_{X,G}/W\rightarrow \SP'_{X,G}/G$.
\end{lemma}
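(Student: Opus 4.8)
The plan is to construct the map $\SP_{X,G} \to \SP'_{X,G}$ as the obvious one: a cocharacter $\lambda \in X_*(T)$ is in particular a one-parameter subgroup of $G$, so a pair $(\lambda,\alpha)$ with $\alpha \in \pi_0(X^\lambda)$ determines an element of $\SQ'_{X,G}$, and passing to equivalence classes gives a well-defined map $\SP_{X,G} \to \SP'_{X,G}$ (the equivalence relation on the source is the restriction of the one on the target, by the same defining conditions). So the first step is purely formal. The content is in showing the induced map on quotients $\SP_{X,G}/W \to \SP'_{X,G}/G$ is a bijection.

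For surjectivity, I would use the standard fact that every one-parameter subgroup of $G$ is $G$-conjugate to one factoring through the fixed maximal torus $T$: given $\lambda' \colon \BoG_{\rmm} \to G$, the image is a (possibly trivial) subtorus, contained in some maximal torus $T'$, and since all maximal tori are conjugate, $g\lambda' g^{-1} \in X_*(T)$ for some $g \in G$. The pair $(g\lambda'g^{-1}, g\cdot\alpha)$ then lies in $\SQ_{X,G}$ and maps to the $G$-orbit of $(\lambda',\alpha)$; hence surjectivity. For injectivity, suppose $(\lambda,\alpha),(\mu,\beta) \in \SQ_{X,G}$ with $\lambda,\mu \in X_*(T)$ have the same image in $\SP'_{X,G}/G$. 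Then there is $g\in G$ with $X^\mu_\beta = g\cdot X^\lambda_\alpha$ and $G^\mu = gG^\lambda g^{-1}$ (more precisely, the pair $(\mu,\beta)$ is equivalent in $\SQ'_{X,G}$ to $(g\lambda g^{-1}, g\cdot\alpha)$, meaning they have the same fixed-point component and the same Levi). I want to replace $g$ by an element of $N_G(T)$ representing a Weyl group element carrying $(\lambda,\alpha)$ to $(\mu,\beta)$. The key point is that $g^{-1}\mu$ and $\lambda$ are cocharacters of $T$ defining the same Levi $G^\lambda$, so both $T$ and $gTg^{-1}$ are maximal tori of $G^\lambda$; hence they are conjugate in $G^\lambda$, say $h T h^{-1} = gTg^{-1}$ with $h \in G^\lambda$. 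Then $n := h^{-1}g$ normalizes $T$, so $\dot{w} := n \in N_G(T)$ gives a Weyl element $w$ with $w\cdot\lambda = n\lambda n^{-1}$; one checks $n\cdot(\lambda,\alpha)$ and $(\mu,\beta)$ define the same fixed-point component and the same Levi (using that $h \in G^\lambda$ acts trivially on the relevant data up to the equivalence relation), so they are equal in $\SP_{X,G}$, proving injectivity.

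The main obstacle I anticipate is the careful bookkeeping in the injectivity argument, specifically verifying that the conjugating element can be taken inside $G^\lambda$ and that the resulting $\overline{w}$ indeed carries the class $\overline{(\lambda,\alpha)}$ to $\overline{(\mu,\beta)}$ rather than just a $W^\lambda$-ambiguous version of it — but since $\SP_{X,G}$ already quotients out exactly the data ($X^\lambda_\alpha$, $G^\lambda$) that $W^\lambda$ can move, this ambiguity is harmless: the residual action of $h \in G^\lambda$ on $X^\lambda_\alpha$ preserves $X^\lambda_\alpha$ as a subvariety of $X$ and preserves $G^\lambda$, so it is invisible in $\SP_{X,G}$. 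I would state this as a short lemma and relegate the torus-conjugacy inputs (maximal tori of a connected reductive group are conjugate; every one-parameter subgroup lies in a maximal torus) to standard references.
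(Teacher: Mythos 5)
Your proposal is correct and follows essentially the same route as the paper: the map is the obvious one, surjectivity uses that every one-parameter subgroup is $G$-conjugate into $T$, and injectivity promotes the conjugating element $g$ to an element of $N_G(T)$ by noting that $T$ and its $g$-conjugate are both maximal tori of the common Levi, hence conjugate by an element of that Levi (which acts trivially on the data defining the class in $\SP_{X,G}$). Apart from some harmless $g$ versus $g^{-1}$ bookkeeping, your injectivity argument even treats the slightly more general situation (equality of classes rather than of the cocharacters themselves) that the lemma actually requires, using only the equality of Levis.
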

\begin{proof}
The map $\SP_{X,G}\rightarrow\SP'_{X,G}$ is obtained by seeing a cocharacter of $T$ as a one-parameter subgroup of $G$ via the inclusion $T\subset G$. Since any one-parameter subgroup of $G$ is conjugated under $G$ to a cocharacter of $T$ (since any maximal torus of $G$ is conjugated to $T$), then this map is surjective. To prove injectivity, it suffices to prove that if $\lambda,\mu\in X_*(T)$ are two cocharacters conjugated by some $g\in G$, i.e. $\lambda=g\cdot \mu$, then they are conjugated under the normalizer $N_G(T)$ of $T\subset G$. We let $G^{\lambda}$ (resp. $G^{\mu}$) be the centralizer of $\lambda$ (resp. $\mu$). Then, $T\subset G^{\lambda},G^{\mu}$ and since $\lambda=g\cdot\mu$, then $T,gTg^{-1}\subset G^{\lambda}$ are two maximal tori. Therefore, there exists $h\in G^{\lambda}$ such that $hgT(hg)^{-1}=T$. We have $hg\in N_G(T)$ and moreover, $hg\cdot \mu=h\cdot \lambda=\lambda$: $\lambda$ and $\mu$ are conjugated under $N_G(T)$. This proves the injectivity.
\end{proof}

\subsubsection{Tori}
\label{subsubsection:tori}
We let $\CT(T)$ be the set of subtori of $T$. It is acted upon by the Weyl group $W$ of $G$. We let $\SQ^{''}_{X,G}$ be the set of pairs $(T',\alpha)$ of a torus $T'\in\CT(T)$ and a connected component $\alpha\in \pi_0(X^{T'})$ of the fixed locus $X^{T'}$. We may define the equivalence relation $\sim$ on $\SQ^{''}_{X,G}$ as in \S\ref{subsection:setpartitions}: $(T',\alpha)\sim(T'',\beta)$ if and only if $X^{T'}_{\alpha}=X^{T''}_{\beta}$ and $G^{T'}=G^{T''}$. We let $\SP''_{X,G}\coloneqq\SQ''_{X,G}/\sim$ be the quotient. The class of $(T',\alpha)\in\SQ_{X,G}''$ in $\SP''_{X,G}$ is denoted by $\overline{(T',\alpha)}$. The Weyl group $W$ on $G$ acts on $\SQ_{X,G}''$ and $\SP_{X,G}''$.

\begin{lemma}
 We have a natural $W$-equivariant bijection $\SP_{X,G}\rightarrow\SP''_{X,G}$. In particular, the set-theoretic quotients $\SP_{X,G}/W$ and $\SP''_{X,G}/W$ are in natural bijection.
\end{lemma}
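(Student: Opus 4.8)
The plan is to build the bijection explicitly from the operation "take the Zariski closure of the image of a cocharacter". Concretely, for $\lambda\in X_*(T)$ put $T_{\lambda}\coloneqq\overline{\im\lambda}$, which is a subtorus of $T$, hence an element of $\CT(T)$. The key elementary observation is that $X^{\lambda}=X^{T_{\lambda}}$ and $G^{\lambda}=G^{T_{\lambda}}$: a point of $X$ (resp.\ of $G$) fixed by $\lambda(\BoG_{\rmm})$ has orbit map constant on the dense subset $\im\lambda$ of $T_{\lambda}$, hence constant on $T_{\lambda}$ since $X$ (resp.\ $G$) is separated; the reverse inclusion is trivial. Consequently $\pi_0(X^{\lambda})=\pi_0(X^{T_{\lambda}})$ canonically, and $(\lambda,\alpha)\mapsto(T_{\lambda},\alpha)$ defines a map $\SQ_{X,G}\rightarrow\SQ''_{X,G}$. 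Since the equivalence relations on both sides only record the subvariety $X^{\lambda}_{\alpha}\subset X$ and the Levi $G^{\lambda}$, and these are unchanged, this descends to a map $\Phi\colon\SP_{X,G}\rightarrow\SP''_{X,G}$.

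For the inverse, given $T'\in\CT(T)$ I would choose a cocharacter $\lambda_{T'}\in X_*(T')\subset X_*(T)$ with Zariski-dense image, so $\overline{\im\lambda_{T'}}=T'$, and set $\Psi(T',\alpha)=(\lambda_{T'},\alpha)$, using the identification $\pi_0(X^{T'})=\pi_0(X^{\lambda_{T'}})$ from the displayed equalities above. Well-definedness is clear: if $\lambda,\lambda'$ both have closure $T'$ then they have the same fixed loci $X^{\lambda}=X^{T'}=X^{\lambda'}$ and $G^{\lambda}=G^{T'}=G^{\lambda'}$, so $(\lambda,\alpha)\sim(\lambda',\alpha)$ in $\SQ_{X,G}$, and $\Psi$ respects the equivalence relation on $\SQ''_{X,G}$ for the same reason, giving $\Psi\colon\SP''_{X,G}\rightarrow\SP_{X,G}$. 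The existence of such a $\lambda_{T'}$ is the one non-formal point: a cocharacter $\lambda\in X_*(T')$ has dense image iff the pairing $\langle\lambda,-\rangle\colon X^*(T')\rightarrow\BoZ$ is injective, and the cocharacters failing this lie in the countable union $\bigcup_{0\neq\chi\in X^*(T')}\chi^{\perp}$ of proper rational hyperplanes of the finite-rank lattice $X_*(T')$, which cannot exhaust it (equivalently, one cites that every torus over $\BoC$ is topologically generated by a single one-parameter subgroup).

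It remains to check that $\Phi$ and $\Psi$ are mutually inverse and $W$-equivariant. The identity $\Phi\circ\Psi=\id$ is immediate since $\overline{\im\lambda_{T'}}=T'$; and $\Psi\circ\Phi=\id$ because, for any $\lambda$, the cocharacters $\lambda$ and $\lambda_{T_{\lambda}}$ have the same fixed loci and hence represent the same class in $\SP_{X,G}$. For equivariance, a lift $\dot w\in N_G(T)$ satisfies $\overline{\im(\dot w\lambda\dot w^{-1})}=\dot w\,\overline{\im\lambda}\,\dot w^{-1}$ since conjugation by $\dot w$ is a homeomorphism carrying $T$ to $T$, and it carries the connected component $X^{\lambda}_{\alpha}$ of $X^{\lambda}$ to $X^{w\cdot\lambda}_{w\cdot\alpha}$ by definition of the $W$-actions on $\SQ_{X,G}$ and on $\SQ''_{X,G}$; hence $\Phi(w\cdot(\lambda,\alpha))=w\cdot\Phi(\lambda,\alpha)$. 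The final statement comparing the quotients $\SP_{X,G}/W$ and $\SP''_{X,G}/W$ then follows formally from $W$-equivariance of a bijection.

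I expect no serious obstacle here: the argument is essentially bookkeeping, with the only genuine input being the standard fact used in the second paragraph about the existence of a cocharacter with Zariski-dense image in a prescribed subtorus. The one place to be careful is keeping the identifications $\pi_0(X^{\lambda})=\pi_0(X^{T_{\lambda}})$ consistent so that the symbol $\alpha$ is unambiguous on both sides; once the equalities $X^{\lambda}=X^{T_{\lambda}}$ and $G^{\lambda}=G^{T_{\lambda}}$ are in place, everything else is automatic.
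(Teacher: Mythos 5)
Your forward map $(\lambda,\alpha)\mapsto(\lambda(\BoG_{\rmm}),\alpha)$ (the Zariski closure is redundant: the image of a homomorphism of algebraic groups is already closed) is exactly the paper's, and it descends to $\SP_{X,G}\rightarrow\SP''_{X,G}$ for the reasons you give. The gap is in the inverse. A cocharacter $\lambda\colon\BoG_{\rmm}\rightarrow T'$ has image a subtorus of dimension at most $1$, so it can never have Zariski-dense image once $\dim T'\geq 2$; equivalently, for $\lambda\neq 0$ the pairing $\langle\lambda,-\rangle\colon X^*(T')\rightarrow\BoZ$ always has kernel of rank $\rank(X^*(T'))-1$, because $\lambda$ has integer coordinates, so $\lambda^{\perp}$ is a rational hyperplane meeting the character lattice in a full sublattice. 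Your supporting claim that $\bigcup_{0\neq\chi}\chi^{\perp}$ cannot exhaust $X_*(T')$ is false for the same reason: when the rank is at least $2$, every nonzero lattice vector lies on some rational hyperplane, so that countable union is everything. (The classical fact you are reaching for is that a complex torus is topologically generated by a single \emph{element}, not by a one-parameter subgroup.) So the cocharacter $\lambda_{T'}$ out of which you build $\Psi$ does not exist in general, and the inverse is not constructed.

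The repair, which is what the paper does, is to take a \emph{generic} cocharacter $\lambda\in X_*(T')$ in the weaker sense that $\langle\lambda,\chi\rangle\neq 0$ for the finitely many nonzero $T'$-weights $\chi$ that actually occur (in a $G$-equivariant embedding of $X$ into a representation, or in $\Tan_xX$ for $x\in X^{T'}$, and in $\Fg$). Avoiding finitely many proper hyperplanes in the lattice $X_*(T')$ is always possible, and such a $\lambda$ still satisfies $X^{\lambda}=X^{T'}$ and $G^{\lambda}=G^{T'}$ even though $\lambda(\BoG_{\rmm})\subsetneq T'$. One then sets $\Psi(\overline{(T',\alpha)})=\overline{(\lambda,\beta)}$ where $\beta$ is the unique component with $X^{\lambda}_{\beta}=X^{T'}_{\alpha}$; this is well defined and mutually inverse to your $\Phi$ at the level of equivalence classes, precisely because both relations only remember the pair $(X^{\bullet}_{\bullet},G^{\bullet})$, which is unchanged. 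Your discussion of $W$-equivariance and of the induced bijection on quotients is fine once the inverse is in place.
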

\begin{proof}
 The map $\SP_{X,G}\rightarrow\SP''_{X,G}$ sends $(\lambda,\alpha)$ to $(\lambda(\BoG_{\rmm}),\alpha)$. Conversely, if $T'\in\CT(T)$ is a torus and $\alpha\in \pi_0(X^{T'})$, then for a generic cocharacter $\lambda\colon\BoG_{\rmm}\rightarrow T'\subset T$, $X^{\lambda}=X^{T'}$ and $G^{\lambda}=G^{T'}$. Then, there is a unique $\beta\in\pi_0(X^{\lambda})$ such that $X^{\lambda}_{\beta}=X^{T'}_{\alpha}$. The map sending $\overline{(T',\alpha)}\mapsto \overline{(\lambda,\beta)}$ is the inverse. The $W$-equivariance is straightforward.
\end{proof}

Let $(T',\alpha)\in \SQ''_{X,G}$. We let $\tilde{T'}$ be the neutral component of $\ker(G^{T'}\rightarrow \Aut(X^{T'}_{\alpha}))\cap Z(G^{T'})$. This is a torus in $T$, that is an element of $\CT(T)$.

\begin{lemma}
\label{lemma:connectedcomponent}
 The subvariety $X^{T'}_{\alpha}$ is a connected component of $X^{\tilde{T'}}$. Moreover, $G^{T'}=G^{\tilde{T'}}$.
\end{lemma}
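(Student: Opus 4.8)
The plan is to prove both assertions of Lemma~\ref{lemma:connectedcomponent} by comparing fixed loci under $T'$ and under $\tilde{T'}$, using the fact that $\tilde{T'}\subset Z(G^{T'})$ acts trivially on $X^{T'}_{\alpha}$ by construction.

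\emph{First} I would establish the inclusion $G^{T'}\subset G^{\tilde{T'}}$. Since $\tilde{T'}\subset Z(G^{T'})$, every element of $G^{T'}$ commutes with $\tilde{T'}$, hence centralizes it; so $G^{T'}\subset G^{\tilde{T'}}$. For the reverse inclusion $G^{\tilde{T'}}\subset G^{T'}$, I would use that $T'$ and $\tilde{T'}$, being subtori of $T$, both commute with $T$, and argue that $T'\subset\tilde{T'}$ up to isogeny would suffice — more precisely, I expect that $T'$ is contained in the \emph{neutral component of the kernel of $G^{T'}\to\Aut(X^{T'}_\alpha)$ intersected with $Z(G^{T'})$}, because $T'$ itself is central in $G^{T'}$ (as $G^{T'}$ is the centralizer of $T'$) and acts trivially on $X^{T'}$ by definition of the fixed locus. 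Hence $T'\subset \tilde{T'}$, which immediately gives $X^{\tilde{T'}}\subset X^{T'}$ and $G^{\tilde{T'}}\subset G^{T'}$ (taking fixed loci / centralizers reverses inclusions). Combined with the previous paragraph this yields $G^{T'}=G^{\tilde{T'}}$.

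\emph{Next}, for the statement that $X^{T'}_\alpha$ is a connected component of $X^{\tilde{T'}}$: from $T'\subset\tilde{T'}$ we get $X^{\tilde{T'}}\subset X^{T'}$. On the other hand, $\tilde{T'}$ acts trivially on $X^{T'}_\alpha$ by its very definition (it lies in the kernel of $G^{T'}\to\Aut(X^{T'}_\alpha)$), so $X^{T'}_\alpha\subset X^{\tilde{T'}}$. Therefore $X^{T'}_\alpha$ is a subset of $X^{\tilde{T'}}$ which is also open and closed in the larger set $X^{T'}\supset X^{\tilde{T'}}$ (being a connected component of $X^{T'}$), hence open and closed in $X^{\tilde{T'}}$; being connected and nonempty, it is a connected component of $X^{\tilde{T'}}$.

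\emph{The main obstacle} I anticipate is the careful verification that $T'\subset\tilde{T'}$, i.e.\ that $T'$ is contained in the \emph{neutral component} of $\ker(G^{T'}\to\Aut(X^{T'}_\alpha))\cap Z(G^{T'})$. That $T'$ lies in $Z(G^{T'})$ is standard (centralizer of a torus). That $T'$ acts trivially on $X^{T'}$, hence on $X^{T'}_\alpha$, is immediate from the definition of the fixed locus. The only subtlety is that one must land in the \emph{neutral component} of the intersection; but $T'$ is connected, so its image under the inclusion into $\ker(\cdots)\cap Z(G^{T'})$ lands in the neutral component automatically. Once this is in hand, everything else is a formal manipulation of fixed loci and centralizers, reversing inclusions as appropriate.
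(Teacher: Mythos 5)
Your proof is correct and follows essentially the same route as the paper: establish $T'\subset\tilde{T'}$ (from $T'$ being central in its own centralizer, acting trivially on its fixed locus, and being connected), deduce the sandwich $X^{T'}_{\alpha}\subset X^{\tilde{T'}}\subset X^{T'}$ for the first claim, and combine $G^{\tilde{T'}}\subset G^{T'}$ with $\tilde{T'}\subset Z(G^{T'})$ for the second. The paper simply asserts $T'\subset\tilde{T'}$ as clear, whereas you spell out the verification; no gap either way.
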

\begin{proof}
 We have clearly $T'\subset \tilde{T'}$ and by definition, $\tilde{T'}$ acts trivially on $X^{T'}_{\alpha}$. Therefore, we have inclusions $X_{\alpha}^{T'}\subset X^{\tilde{T'}}\subset X^{T'}$. Since $X_{\alpha}^{T'}$ is a connected component of $X^{T'}$, it is also a connected component of $X^{\tilde{T'}}$. For the second statement, $T'\subset \tilde{T'}$ implies $G^{\tilde{T'}}\subset G^{T'}$ and since $\tilde{T'}\subset Z(G^{T'})$, this inclusion must be an equality.
\end{proof}

By Lemma~\ref{lemma:connectedcomponent}, there exists $\beta\in\pi_0(X^{\tilde{T'}})$ such that $X^{T'}_{\alpha}=X^{\tilde{T'}}_{\beta}$. Then, $(\tilde{T'},\beta)\in \SQ''_{X,G}$ is a \emph{preferred representative} of $\overline{(T',\alpha)}\in\SP''_{X,G}$. By construction, the preferred representative is uniquely defined. This gives the following description of $\SP_{X,G}$.

\begin{lemma}
\label{lemma:preferredrepresentative}
 We have
 \[
  \SP_{X,G}=\{(\tilde{T'},\alpha)\mid \tilde{T'}\subset T \text{ torus, }\alpha\in\pi_0(X^{\tilde{T'}}), \tilde{T'}=(\ker(G^{\tilde{T'}}\rightarrow\Aut(X^{\tilde{T'}}_{\alpha}))\cap Z(G^{\tilde{T'}}))^{\circ}\}\,.
 \]\qed
\end{lemma}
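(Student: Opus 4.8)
The plan is to package together the three previous lemmas of this subsection --- the $G$-conjugacy description via one-parameter subgroups, the $W$-equivariant bijection $\SP_{X,G}\cong\SP''_{X,G}$, and Lemma~\ref{lemma:connectedcomponent} together with the uniqueness of the preferred representative --- so that the only remaining content is to verify that the right-hand side set is exactly a set of canonical representatives for $\SP''_{X,G}$, hence for $\SP_{X,G}$. First I would recall that by the bijection $\SP_{X,G}\cong\SP''_{X,G}$ it suffices to prove the asserted equality with $\SP''_{X,G}$ on the left; and since this bijection is $W$-equivariant, one may freely pass back and forth. Thus the statement reduces to: every class in $\SP''_{X,G}$ has a unique representative $(\tilde{T'},\alpha)$ with $\tilde{T'}=(\ker(G^{\tilde{T'}}\rightarrow\Aut(X^{\tilde{T'}}_{\alpha}))\cap Z(G^{\tilde{T'}}))^{\circ}$, and conversely every such pair genuinely lies in $\SQ''_{X,G}$ (which is immediate, since such a $\tilde{T'}$ is a subtorus of $T$ and $\alpha$ a connected component of the fixed locus).

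The core of the argument is the existence and uniqueness of the preferred representative, which is essentially Lemma~\ref{lemma:connectedcomponent} plus the discussion immediately following it. For existence: starting from an arbitrary $(T',\alpha)\in\SQ''_{X,G}$, form $\tilde{T'}\coloneqq(\ker(G^{T'}\rightarrow\Aut(X^{T'}_{\alpha}))\cap Z(G^{T'}))^{\circ}$; Lemma~\ref{lemma:connectedcomponent} shows $X^{T'}_{\alpha}=X^{\tilde{T'}}_{\beta}$ for a (unique) $\beta\in\pi_0(X^{\tilde{T'}})$ and $G^{T'}=G^{\tilde{T'}}$, so $(\tilde{T'},\beta)\sim(T',\alpha)$. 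The key additional point is idempotence: applying the construction again to $(\tilde{T'},\beta)$ gives $\tilde{\tilde{T'}}=(\ker(G^{\tilde{T'}}\rightarrow\Aut(X^{\tilde{T'}}_{\beta}))\cap Z(G^{\tilde{T'}}))^{\circ}$, and since $G^{\tilde{T'}}=G^{T'}$ and $X^{\tilde{T'}}_{\beta}=X^{T'}_{\alpha}$ this is literally the same group $\tilde{T'}$ --- so $(\tilde{T'},\beta)$ satisfies the fixed-point equation defining membership in the right-hand side. For uniqueness within an $\sim$-class: if $(S,\gamma)$ and $(S',\gamma')$ both satisfy the equation and are $\sim$-equivalent, then $X^{S}_{\gamma}=X^{S'}_{\gamma'}$ and $G^{S}=G^{S'}$, so the defining formula forces $S=(\ker(G^{S}\to\Aut(X^{S}_{\gamma}))\cap Z(G^{S}))^{\circ}=(\ker(G^{S'}\to\Aut(X^{S'}_{\gamma'}))\cap Z(G^{S'}))^{\circ}=S'$, and then $\gamma=\gamma'$ as connected components of the same variety $X^{S}$; hence the two pairs coincide.

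Combining: the assignment sending $\overline{(T',\alpha)}$ to its preferred representative is a well-defined section of $\SQ''_{X,G}\twoheadrightarrow\SP''_{X,G}$ whose image is precisely the set on the right-hand side, and it is injective by uniqueness and surjective onto that set by the idempotence observation; so the quotient map restricts to a bijection between the right-hand side and $\SP''_{X,G}$, and composing with $\SP''_{X,G}\cong\SP_{X,G}$ finishes the proof. I do not anticipate a serious obstacle here: the statement is a bookkeeping consequence of the preceding lemmas, and the only place to be careful is the idempotence step --- checking that once one has passed to $\tilde{T'}$, re-running the construction changes nothing --- which follows formally from the two equalities $G^{\tilde{T'}}=G^{T'}$ and $X^{\tilde{T'}}_{\beta}=X^{T'}_{\alpha}$ supplied by Lemma~\ref{lemma:connectedcomponent}. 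One should also note that the torus $\tilde{T'}$ lies in $T$ because it is contained in $Z(G^{T'})\supseteq T$'s... more precisely it is a subtorus of the central torus of the Levi $G^{T'}$, which does contain $T'\subset T$; that it can be taken inside $T$ itself uses that $\tilde{T'}$ centralizes $T'$ and is connected, but in fact $\tilde{T'}\subset Z(G^{T'})^{\circ}\subset T$ since $T$ is a maximal torus of the reductive group $G^{T'}$ and the central torus of a Levi is contained in every maximal torus --- a standard fact which I would invoke without further comment.
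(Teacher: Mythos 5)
Your argument is correct and matches the paper's (implicit) proof: the lemma is stated with a \qed precisely because it is the bookkeeping consequence of Lemma~\ref{lemma:connectedcomponent} and the surrounding discussion of preferred representatives, which is exactly what you spell out (existence via passing to $\tilde{T'}$, idempotence, and uniqueness within a $\sim$-class). Your additional observations — the idempotence check and the remark that $\tilde{T'}\subset Z(G^{T'})^{\circ}\subset T$ — are correct and merely make explicit what the paper leaves to the reader.
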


\begin{definition}
\label{definition:preferredtorus}
Given $(T',\alpha)\in \SQ'_{X,G}$, we let $G_{(T',\alpha)}^{\circ}\coloneqq (\ker(G^{T'}\rightarrow\Aut(X^{T'}_{\alpha}))\cap Z(G^{T'}))^{\circ}$. For $(\lambda,\alpha)\in \SQ_{X,G}$, we define $G_{(\lambda,\alpha)}^{\circ}$ similarly.

If $(T',\alpha)\in \SQ_{X,G}$ is such that $G_{(T',\alpha)}^{\circ}=T'$, we will call $T'$ a \emph{preferred torus for $X,G$}.
\end{definition}

\begin{remark}
 If $X$ is connected and has a $G$ fixed point (for example, if $X=V$ is a linear representation of $G$), then $X^{T'}$ is connected for any torus $T'\subset T$. Therefore, $\SP_{X,G}$ can be seen as the set of equivalence classes of cocharacters of $T$ or of subtori of $T$ under the equivalence relation described at the beginning of \S\ref{subsubsection:tori}. In particular, $\SQ''_{X,G}=\CT(T)$. In this case, we will drop the mention of the connected component $\alpha$ from the notation.
\end{remark}

\subsection{Weyl groups}
\label{subsection:Weylgroups}
Let $(\lambda,\alpha)\in \SQ_{X,G}$. We let $W_{(\lambda,\alpha)}\coloneqq \{w\in W\mid w\cdot\overline{(\lambda,\alpha)}=\overline{(\lambda,\alpha)}\}$. Similarly, if $(T',\alpha)\in\SQ''_{X,G}$, we let $W_{(T',\alpha)}\coloneqq \{w\in W\mid w\cdot \overline{(T',\alpha)}=\overline{(T',\alpha)}\}$.

\begin{lemma}
\label{lemma:relWeylpreferredtorus}
 Let $(T',\alpha)\in \SQ_{X,G}$ be such that $G_{(T',\alpha)}^{\circ}=T'$ (i.e. $T'$ is a preferred torus -- Definition~\ref{definition:preferredtorus}). Then, $W_{(T',\alpha)}=\{w\in W\mid w\cdot T'=T', w\cdot \alpha=\alpha\}$. In particular, if $X$ has a $G$-fixed point, $W_{T'}=\{w\in W\mid w\cdot T'=T'\}$ is the subgroup of elements preserving the preferred torus $T'$.
\end{lemma}
\begin{proof}
 Let $w\in W$. Then, $G_{(w\cdot T',w\cdot\alpha)}^{\circ}=w\cdot{G_{(T',\alpha)}^{\circ}}=\dot{w}G_{(T',\alpha)}^{\circ}\dot{w}^{-1}$ for some lift $\dot{w}\in N_G(T)$ of $w$. Moreover, $G_{(T',\alpha)}^{\circ}$ only depends on $X^{T'}_{\alpha}$ and $G^{T'}$ and therefore, if $w\in W_{(T',\alpha)}$, $G_{w\cdot (T',\alpha)}^{\circ}=G_{(T',\alpha)}^{\circ}$. We conclude that $w\cdot T'=T'$ and $w\cdot \alpha=\alpha$. Conversely, we assume that $w\cdot T'=T'$ and $w\cdot \alpha=\alpha$. Then, $G^{T'}=G^{w\cdot T'}$ and by definition, $w\cdot\alpha=\alpha$. This concludes.
\end{proof}

\begin{lemma}
\label{lemma:weylgroupsubgroup}
\begin{enumerate}
 \item Let $(\lambda,\alpha)\in \SQ_{X,G}$. We have an inclusion $W^{\lambda}\subset W_{(\lambda,\alpha)}$ where $W^{\lambda}=N_{G^{\lambda}}(T)/T$ is the Weyl group of $G^{\lambda}$. Moreover, $W^{\lambda}$ is a normal subgroup of $W_{(\lambda,\alpha)}$.
 \item Let $(T',\alpha)\in \SQ''_{X,G}$. Then, we have an inclusion $W^{T'}\subset W_{(T',\alpha)}$ where $W^{T'}$ is the Weyl group of $G^{T'}$. Moreover, $W^{\lambda}$ is a normal subgroup of $W_{(T',\alpha)}$.
\end{enumerate}
\end{lemma}
\begin{proof}
 We only prove the first statement. The proof of the second one is analogous. Let $\dot{w}\in N_{G^{\lambda}}(T)$. Then, $\dot{w}\in G^{\lambda}$ and therefore, $\dot{w}\cdot \lambda=\lambda$. Moreover, $G^{\lambda}$ is connected and so preserves each connected component of $X^{\lambda}$. Therefore, $w\cdot\overline{(\lambda,\alpha)}=\overline{(\lambda,\alpha)}$. To prove the normality, we let $\dot{w}\in N_{G^{\lambda}}(T)$ and $\dot{w}'\in N_G(T)$ such that $\dot{w}$ preserves $(\lambda,\alpha)$. In particular, $\dot{w}'G^{\lambda}\dot{w}'^{-1}=G^{\dot{w}'\cdot\lambda}=G^{\lambda}$. Then, $\dot{w}'\dot{w}\dot{w}'^{-1}\in \dot{w}'G^{\lambda}\dot{w}'^{-1}=G^{\lambda}$. This concludes.
\end{proof}

\begin{definition}
\label{definition:Weylgrouprep}
For $(\lambda,\alpha)\in\SP_{X,G}$, we define $\overline{W}_{(\lambda,\alpha)}\coloneqq W_{(\lambda,\alpha)}/W^{\lambda}$. Similarly, we define $\overline{W}_{(T',\alpha)}\coloneqq W_{(T',\alpha)}/W^{T'}$.
\end{definition}

For $(\lambda,\alpha)\in\SQ_{X,G}$, we let $N_{G}^{(\lambda,\alpha)}(T)=\{w\in N_G(T)\mid w\cdot{\overline{(\lambda,\alpha)}}=\overline{(\lambda,\alpha)}\}$. It only depends on $\overline{(\lambda,\alpha)}$ and so we may sometimes write $N_G^{\overline{(\lambda,\alpha)}}(T)$.

\begin{lemma}
\label{lemma:relWeylquotient}
 We have $\overline{W}_{(\lambda,\alpha)}\cong N_{G}^{(\lambda,\alpha)}(T)/N_{G^{\lambda}}(T)$.
\end{lemma}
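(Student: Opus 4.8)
The plan is to identify $\overline{W}_{(\lambda,\alpha)} = W_{(\lambda,\alpha)}/W^{\lambda}$ with $N_G^{(\lambda,\alpha)}(T)/N_{G^{\lambda}}(T)$ by exhibiting the obvious candidate map and checking it is a well-defined group isomorphism. First I would recall that $W = N_G(T)/T$, so that $W_{(\lambda,\alpha)} = N_G^{(\lambda,\alpha)}(T)/T$ by the very definition of $N_G^{(\lambda,\alpha)}(T)$ (it is the preimage of $W_{(\lambda,\alpha)}$ under $N_G(T) \twoheadrightarrow W$). Similarly $W^{\lambda} = N_{G^{\lambda}}(T)/T$, and by Lemma~\ref{lemma:weylgroupsubgroup}(1) this sits inside $W_{(\lambda,\alpha)}$ as a normal subgroup, which on the level of normalizers says $N_{G^{\lambda}}(T) \subset N_G^{(\lambda,\alpha)}(T)$ is a normal subgroup containing $T$.

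\textbf{The isomorphism.} With both groups written as quotients by $T$, the statement becomes the elementary fact that for a chain of subgroups $T \subset N_{G^{\lambda}}(T) \subset N_G^{(\lambda,\alpha)}(T)$ with $T$ normal in the largest, one has
\[
 \bigl(N_G^{(\lambda,\alpha)}(T)/T\bigr)\big/\bigl(N_{G^{\lambda}}(T)/T\bigr) \;\cong\; N_G^{(\lambda,\alpha)}(T)/N_{G^{\lambda}}(T)\,,
\]
which is the third isomorphism theorem. So concretely I would define the map $N_G^{(\lambda,\alpha)}(T) \to \overline{W}_{(\lambda,\alpha)}$ by sending $\dot{w}$ to the class of its image in $W_{(\lambda,\alpha)}$ modulo $W^{\lambda}$, observe it is surjective (every class in $W_{(\lambda,\alpha)}$ lifts to $N_G(T)$, and such a lift automatically preserves $\overline{(\lambda,\alpha)}$ hence lies in $N_G^{(\lambda,\alpha)}(T)$), and compute its kernel: $\dot{w}$ is in the kernel iff its image in $W$ lies in $W^{\lambda}$, i.e. iff $\dot{w} \in N_{G^{\lambda}}(T)\cdot T = N_{G^{\lambda}}(T)$ (using $T \subset N_{G^{\lambda}}(T)$). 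Passing to the quotient gives the desired isomorphism.

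\textbf{The one point needing care.} The only genuinely nontrivial ingredient is that $N_G^{(\lambda,\alpha)}(T) = \{\dot{w} \in N_G(T) \mid \dot{w}\cdot\overline{(\lambda,\alpha)} = \overline{(\lambda,\alpha)}\}$ surjects onto $W_{(\lambda,\alpha)}$ with kernel exactly $T$ — i.e. that the condition "$w$ preserves $\overline{(\lambda,\alpha)}$" is genuinely a condition on the class $w \in W$ and does not depend on the chosen lift $\dot{w} \in N_G(T)$. This is precisely the content of the remark in \S\ref{subsection:setpartitions} that the $W$-action on $\SQ_{X,G}$ (and on $\SP_{X,G}$) is independent of the lift, since $\dot{w}\cdot X^{\lambda}_{\alpha}$ depends only on $w$; I would simply cite that. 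Given this, $T \subset N_G^{(\lambda,\alpha)}(T)$ (as $T$ acts trivially, fixing every $\overline{(\lambda,\alpha)}$), and the quotient $N_G^{(\lambda,\alpha)}(T)/T$ is exactly $W_{(\lambda,\alpha)}$. I do not expect any real obstacle here; the lemma is a bookkeeping statement packaging the third isomorphism theorem together with the lift-independence of the $W$-action, and the proof is a short diagram chase.
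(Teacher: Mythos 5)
Your proof is correct and is essentially the paper's argument: the paper simply states that the lemma is immediate from the definitions, and what you have written out — the identifications $W_{(\lambda,\alpha)} = N_G^{(\lambda,\alpha)}(T)/T$ and $W^{\lambda} = N_{G^{\lambda}}(T)/T$ followed by the third isomorphism theorem, with the lift-independence of the $W$-action as the only point to check — is precisely that unpacking. No gap.
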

\begin{proof}
 This is immediate from the definitions.
\end{proof}

\subsection{Equivariant sheaves}
We recall some (very) basic facts regarding equivariant constructible sheaves or (monodromic) mixed Hodge modules on an algebraic variety with a $W$-action.

\begin{proposition}
\label{proposition:groupactionPerversesheaf}
 Let $f\colon X\rightarrow S$ be a morphism between algebraic varieties. We assume that $f$ is equivariant for the action of an algebraic group $W$ where $W$ acts trivially on $S$. If $\underline{\CF}\in\CD^+_W(\MMHM(X))$, we have a morphism $W\rightarrow\Aut_{\CD^+(\MMHM(S))}(f_*\underline{\CF})$. In particular, if $\underline{\CF}\in\MMHM_W^{\BoZ}(X)$ is a pure cohomologically graded mixed Hodge module (\S\ref{subsection:cohgradedmhm}) and $f$ is proper, then we have a morphism of groups $W\rightarrow\Aut_{\MMHM^{\BoZ}(S)}(f_*\underline{\CF})$. 
\end{proposition}
\begin{proof}
 Let $\underline{\CF}\in\CD^+_W(\MMHM(X))$ be $W$-equivariant. The complex $f_*\underline{\CF}$ is $W$-equivariant on $S$ and the action on $S$ is trivial. This gives the morphism $W\rightarrow \Aut_{\CD^+(\MMHM(S))}(f_*\underline{\CF})$. The proof is the same for $\underline{\CF}\in\MMHM_W^{\BoZ}(X)$, given that proper maps preserve pure cohomologically graded mixed Hodge modules.
\end{proof}

\begin{corollary}
\label{corollary:decompositionisotypiccomponents}
 In the situation of Proposition~\ref{proposition:groupactionPerversesheaf}, assuming that the group $W$ is finite, $f$ is proper and $\underline{\CF}\in\MMHM_W^{\BoZ}(X)$, we have a decomposition of $f_*\underline{\CF}$ into $W$-isotypic components:
 \[
  f_*\underline{\CF}\cong \bigoplus_{\chi\in\Irr(W)}\underline{\CF}_{\chi}\otimes M_{\chi}
 \]
where $\Irr(\chi)$ is the set of irreducible characters of $W$, $M_{\chi}$ is the corresponding irreducible representation of $W$ and $\underline{\CF}_{\chi}\in\MMHM^{\BoZ}(S)$.
\end{corollary}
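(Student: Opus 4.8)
The plan is to deduce the statement from ordinary representation theory of the finite group $W$ over the field $\BoQ$, combined with the $\BoQ$-linearity of the category $\MMHM^{\BoZ}(S)$ and the semisimplicity of the category of $\BoQ[W]$-modules (Maschke). By Proposition~\ref{proposition:groupactionPerversesheaf}, the pushforward $f_*\underline{\CF}$ is an object of $\MMHM^{\BoZ}(S)$ equipped with a group homomorphism $W\rightarrow\Aut_{\MMHM^{\BoZ}(S)}(f_*\underline{\CF})$; equivalently, $f_*\underline{\CF}$ is a module over the group algebra $\BoQ[W]$ internal to the $\BoQ$-linear abelian category $\MMHM^{\BoZ}(S)$. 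Writing $\BoQ[W]\cong\prod_{\chi\in\Irr(W)}\mathrm{End}_{D_\chi}(M_\chi)$ for the Wedderburn decomposition (with $D_\chi$ the division algebra $\mathrm{End}_{\BoQ[W]}(M_\chi)$ — taking $\BoQ=\bar\BoQ$, or more carefully $\BoQ$ with $M_\chi$ absolutely irreducible, one may assume $D_\chi=\BoQ$ as the statement implicitly does), the corresponding central idempotents $e_\chi\in\BoQ[W]$ act on $f_*\underline{\CF}$ as idempotent endomorphisms $\pi_\chi\colon f_*\underline{\CF}\rightarrow f_*\underline{\CF}$.

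The key steps, in order, are: \emph{(i)} check that the idempotents $\pi_\chi$ make sense as endomorphisms in $\MMHM^{\BoZ}(S)$ — this is immediate since $\pi_\chi$ is a $\BoQ$-linear combination of the automorphisms coming from elements of $W$, and $\MMHM^{\BoZ}(S)$ is $\BoQ$-linear; \emph{(ii)} split each idempotent, using that $\MMHM^{\BoZ}(S)$ is abelian (idempotents split in any abelian category), to obtain subobjects $\underline{\CF}'_\chi\coloneqq\im(\pi_\chi)$ with $f_*\underline{\CF}\cong\bigoplus_{\chi}\underline{\CF}'_\chi$ since the $\pi_\chi$ are orthogonal and sum to the identity; \emph{(iii)} identify $\underline{\CF}'_\chi$ with $\underline{\CF}_\chi\otimes M_\chi$ for a uniquely determined $\underline{\CF}_\chi\in\MMHM^{\BoZ}(S)$, by observing that $\underline{\CF}'_\chi$ is a module over $\mathrm{End}_{\BoQ}(M_\chi)\cong\mathrm{Mat}_{d_\chi}(\BoQ)$ with $d_\chi=\dim M_\chi$, and invoking Morita triviality of $\mathrm{Mat}_{d_\chi}(\BoQ)$ over a $\BoQ$-linear abelian category: such a module is canonically $\underline{\CF}_\chi\otimes\BoQ^{d_\chi}$ where $\underline{\CF}_\chi$ is the image of a rank-one idempotent $e_{11}\in\mathrm{Mat}_{d_\chi}(\BoQ)$, and the $W$-action recovers the $M_\chi$-structure; \emph{(iv)} note that all of this is natural and that $f_*\underline{\CF}\in\MMHM^{\BoZ}(S)$ rather than merely $\CD^+(\MMHM(S))$ because $f$ is proper and $\underline{\CF}$ is a cohomologically graded mixed Hodge module, so the decomposition theorem / $t$-exactness considerations guarantee $f_*\underline{\CF}$ is again cohomologically graded, whence each $\underline{\CF}_\chi$ lies in $\MMHM^{\BoZ}(S)$.

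I do not expect a serious obstacle here: the only mild subtlety is bookkeeping the difference between $\BoQ$-irreducible and absolutely irreducible representations (the statement as written is cleanest over an algebraically closed coefficient field, or else one reads $M_\chi$ as absolutely irreducible with $\BoQ$-rational character and absorbs the Schur index into the multiplicity object), and making sure that splitting idempotents and the Morita argument are performed in the abelian category $\MMHM^{\BoZ}(S)$ rather than in a triangulated category where idempotent-splitting would require Karoubi-completeness — but $\MMHM^{\BoZ}(S)$ is genuinely abelian, so this is fine. The properness of $f$ and the hypothesis $\underline{\CF}\in\MMHM_W^{\BoZ}(X)$ are exactly what is needed to stay inside the abelian category at every stage.
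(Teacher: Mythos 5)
Your argument is correct and is exactly the standard idempotent-splitting/Morita argument that the paper leaves implicit (the corollary is stated without proof, as an immediate consequence of Proposition~\ref{proposition:groupactionPerversesheaf} and the fact that $\MMHM^{\BoZ}(S)$ is a $\BoQ$-linear abelian category). Your caveat about $\BoQ$-irreducibility versus absolute irreducibility is the only real subtlety, and it is harmless in the paper's applications since the groups $\overline{W}_{(\lambda,\alpha)}$ arising there are subquotients of Weyl groups, whose irreducible representations are realizable over $\BoQ$ with trivial Schur index.
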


\subsection{Weyl group equivariance}

\begin{lemma}
 Let $(\lambda,\alpha)\in \SQ_{X,G}$. Then, the complex of mixed Hodge modules $(\pi_{(\lambda,\alpha)})_*\underline{\BoQ}_{X^{\lambda}_{\alpha}/G^{\lambda}}$ is $\overline{W}_{(\lambda,\alpha)}$-equivariant.
\end{lemma}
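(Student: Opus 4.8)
The plan is to realize the $\overline{W}_{(\lambda,\alpha)}$-action as coming from an actual action of the relative Weyl group on the quotient stack $X^{\lambda}_{\alpha}/G^{\lambda}$, compatible with the good moduli space map $\pi_{(\lambda,\alpha)}$, and then invoke Proposition~\ref{proposition:groupactionPerversesheaf}. By Lemma~\ref{lemma:relWeylquotient} we have $\overline{W}_{(\lambda,\alpha)}\cong N_G^{(\lambda,\alpha)}(T)/N_{G^{\lambda}}(T)$. The subgroup $N_G^{(\lambda,\alpha)}(T)\subset N_G(T)$ consists of those $\dot w$ preserving the class $\overline{(\lambda,\alpha)}$, i.e. (after replacing $(\lambda,\alpha)$ by its preferred representative, using Lemma~\ref{lemma:preferredrepresentative}) those $\dot w$ with $\dot w\cdot X^{\lambda}_{\alpha}=X^{\lambda}_{\alpha}$ and $\dot w G^{\lambda}\dot w^{-1}=G^{\lambda}$. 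First I would let $\dot w\in N_G^{(\lambda,\alpha)}(T)$ act on the pair $(X^{\lambda}_{\alpha},G^{\lambda})$ by: $x\mapsto \dot w\cdot x$ on the variety, and $g\mapsto \dot w g\dot w^{-1}$ on the group; these are compatible with the $G^{\lambda}$-action on $X^{\lambda}_{\alpha}$ in the appropriate twisted-equivariant sense, hence descend to an automorphism $[\dot w]$ of the quotient stack $X^{\lambda}_{\alpha}/G^{\lambda}$, which pulls $\underline{\BoQ}_{X^{\lambda}_{\alpha}/G^{\lambda}}$ back to itself (the constant sheaf is canonically $[\dot w]$-equivariant).

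Next I would check that this assignment $\dot w\mapsto [\dot w]$ kills the subgroup $N_{G^{\lambda}}(T)$ up to canonical $2$-isomorphism: if $\dot w\in N_{G^{\lambda}}(T)\subset G^{\lambda}$, then conjugation by $\dot w$ on $G^{\lambda}$ together with translation by $\dot w$ on $X^{\lambda}_{\alpha}$ is precisely the identity automorphism of the stack $X^{\lambda}_{\alpha}/G^{\lambda}$ up to the $2$-cell given by the point $\dot w\in G^{\lambda}$ acting. Therefore the collection $\{[\dot w]\}$ descends to a genuine action of $\overline{W}_{(\lambda,\alpha)}=N_G^{(\lambda,\alpha)}(T)/N_{G^{\lambda}}(T)$ on $X^{\lambda}_{\alpha}/G^{\lambda}$ by stack automorphisms, each lifting canonically to the constant mixed Hodge module; concretely this puts $\underline{\BoQ}_{X^{\lambda}_{\alpha}/G^{\lambda}}$ in $\CD^+_{\overline W_{(\lambda,\alpha)}}(\MMHM(X^{\lambda}_{\alpha}/G^{\lambda}))$. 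Since the good moduli space $X^{\lambda}_{\alpha}\cms G^{\lambda}$ is an invariant of the pair $(X^{\lambda}_{\alpha},G^{\lambda})$, the group $\overline W_{(\lambda,\alpha)}$ also acts on $X^{\lambda}_{\alpha}\cms G^{\lambda}$; but I want the action there to be trivial, so I would instead observe that each $[\dot w]$, acting on $\BoC[X^{\lambda}_{\alpha}]^{G^{\lambda}}$, is actually the identity — because $\dot w$ preserves $X^{\lambda}_{\alpha}$ and normalizes $G^{\lambda}$, and on the GIT quotient the $G$-action is absorbed, so the induced map on $X^{\lambda}_{\alpha}\cms G^{\lambda}$ is conjugation-invariant hence trivial (alternatively: $X^{\lambda}_{\alpha}\cms G^{\lambda}$ maps finitely to $X\cms G$ via $\imath_{(\lambda,\alpha)}$ of Lemma~\ref{lemma:finitemap} and the composite is $\dot w$-invariant, while $\dot w$ acts trivially on $X\cms G$). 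With the action on the base trivial, Proposition~\ref{proposition:groupactionPerversesheaf} applied to $\pi_{(\lambda,\alpha)}\colon X^{\lambda}_{\alpha}/G^{\lambda}\to X^{\lambda}_{\alpha}\cms G^{\lambda}$ yields a homomorphism $\overline W_{(\lambda,\alpha)}\to \Aut((\pi_{(\lambda,\alpha)})_*\underline{\BoQ}_{X^{\lambda}_{\alpha}/G^{\lambda}})$, i.e. the desired equivariance.

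The main obstacle I anticipate is \emph{strictness of the action}: a priori one only gets that each $[\dot w]$ is an automorphism and that $[\dot w_1][\dot w_2]\cong[\dot w_1\dot w_2]$ up to coherent $2$-isomorphism, so one must be careful that the composite of all these $2$-cells is compatible (a cocycle/coherence check), and that the descent through $N_{G^{\lambda}}(T)$ is canonical rather than merely existing. This is exactly the kind of bookkeeping referred to in \S\ref{subsection:Weylgroups} as ``technical but central''; in practice it is handled by working with the explicit bar-type model of the quotient stack (or, equivalently, with genuinely $N_G^{(\lambda,\alpha)}(T)$-equivariant objects and then taking the quotient groupoid), where the needed $2$-isomorphisms are given by concrete group elements and the coherences reduce to associativity in $G$. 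Once that is in place, everything else — pullback-compatibility of the constant sheaf, triviality on the good moduli space, and the final application of Proposition~\ref{proposition:groupactionPerversesheaf} — is formal.
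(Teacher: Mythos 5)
Your construction of the action is the same as the paper's: let $N_G^{(\lambda,\alpha)}(T)$ act on $X^{\lambda}_{\alpha}$ by translation and on $G^{\lambda}$ by conjugation, note that $N_{G^{\lambda}}(T)\subset G^{\lambda}$ acts by inner automorphisms of the quotient stack and hence trivially up to canonical $2$-isomorphism, and descend to a $\overline{W}_{(\lambda,\alpha)}$-action on $X^{\lambda}_{\alpha}/G^{\lambda}$ and on $X^{\lambda}_{\alpha}\cms G^{\lambda}$ making $\pi_{(\lambda,\alpha)}$ equivariant; the constant sheaf is canonically equivariant, so its pushforward is an equivariant complex. Up to that point you agree with the paper.

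However, your claim that each $[\dot w]$ acts as the identity on $\BoC[X^{\lambda}_{\alpha}]^{G^{\lambda}}$, i.e.\ that the $\overline{W}_{(\lambda,\alpha)}$-action on the good moduli space $X^{\lambda}_{\alpha}\cms G^{\lambda}$ is trivial, is false. Take $G=\GL_2$ acting on $X=\Fg$ by the adjoint action and $\lambda$ a regular cocharacter, so that $G^{\lambda}=T$, $X^{\lambda}=\Ft$, $X^{\lambda}\cms G^{\lambda}=\Ft$ and $\overline{W}_{\lambda}=W=S_2$ acts on $\Ft$ by permuting coordinates --- manifestly nontrivially. Only the $G^{\lambda}$-action is ``absorbed'' by the GIT quotient; $\dot w$ lies in $G$ but not in $G^{\lambda}$, so translation by $\dot w$ genuinely moves points of the quotient. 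Your fallback argument only shows $\imath_{(\lambda,\alpha)}\circ[\dot w]=\imath_{(\lambda,\alpha)}$, and since $\imath_{(\lambda,\alpha)}$ is finite but not injective (Lemma~\ref{lemma:finitemap}) this does not force $[\dot w]=\id$; it only says $[\dot w]$ permutes the fibres. Consequently Proposition~\ref{proposition:groupactionPerversesheaf} cannot be applied to $\pi_{(\lambda,\alpha)}$ itself to produce automorphisms of $(\pi_{(\lambda,\alpha)})_*\underline{\BoQ}_{X^{\lambda}_{\alpha}/G^{\lambda}}$. The statement being proved is weaker than what you aim for: it asserts only that this pushforward is a $\overline{W}_{(\lambda,\alpha)}$-\emph{equivariant} object on the (nontrivially acted-on) variety $X^{\lambda}_{\alpha}\cms G^{\lambda}$, which already follows from the equivariance of $\pi_{(\lambda,\alpha)}$ and of the constant sheaf. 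The passage to a genuine action by automorphisms --- where Proposition~\ref{proposition:groupactionPerversesheaf} is actually invoked --- happens only after further pushing forward along $\imath_{(\lambda,\alpha)}$ to $X\cms G$, on which the action \emph{is} trivial; that is the content of the subsequent Lemma~\ref{lemma:Wactionimath}, not of this one. So you should delete the triviality claim and stop at equivariance over $X^{\lambda}_{\alpha}\cms G^{\lambda}$; the rest of your argument then goes through.
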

\begin{proof}
 We let $N_G^{\overline{(\lambda,\alpha)}}(T)$ act on $X^{\lambda}_{\alpha}$ via the inclusion in $X$ and on $G^{\lambda}$ by conjugation. This gives a $N_G^{\overline{(\lambda,\alpha)}}(T)$-action on the quotient stack $X^{\lambda}_{\alpha}/G^{\lambda}$ and on its good moduli space $X^{\lambda}_{\alpha}\cms G^{\lambda}$, so that $\pi_{(\lambda,\alpha)}$ is equivariant. Moreover, the action of the subgroup $N_{G^{\lambda}}(T)$ is trivial on the stack and its good moduli space, giving (using Lemma~\ref{lemma:relWeylquotient}) the looked after $\overline{W}_{(\lambda,\alpha)}$-action on $\pi_{(\lambda,\alpha)}\colon X^{\lambda}_{\alpha}/G^{\lambda}\rightarrow X^{\lambda}\cms G^{\lambda}$. Then, since $\underline{\BoQ}_{X^{\lambda}_{\alpha}/G^{\lambda}}$ is $\overline{W}_{(\lambda,\alpha)}$-equivariant, $(\pi_{(\lambda,\alpha)})_*\underline{\BoQ}_{X^{\lambda}_{\alpha}/G^{\lambda}}$ is $\overline{W}_{(\lambda,\alpha)}$-equivariant.
\end{proof}

\begin{lemma}
\label{lemma:Wactionimath}
 There is a $\overline{W}_{(\lambda,\alpha)}$-action on $(\imath_{(\lambda,\alpha)})_*(\pi_{(\lambda,\alpha)})_*\underline{\BoQ}_{X^{\lambda}_{\alpha}/G^{\lambda}}$.
\end{lemma}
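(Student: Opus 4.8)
The plan is to deduce the statement from the previous lemma by pushing the $\overline W_{(\lambda,\alpha)}$-equivariant complex it produces along the finite map $\imath_{(\lambda,\alpha)}$, and then invoking Proposition~\ref{proposition:groupactionPerversesheaf}. Recall that $N_G^{\overline{(\lambda,\alpha)}}(T)$ acts on $X^{\lambda}_{\alpha}$ through the inclusion $X^{\lambda}_{\alpha}\subset X$ and on $G^{\lambda}$ by conjugation (the latter makes sense since $w\in N_G^{\overline{(\lambda,\alpha)}}(T)$ implies $\dot w G^{\lambda}\dot w^{-1}=G^{\lambda}$), hence acts on the quotient stack $X^{\lambda}_{\alpha}/G^{\lambda}$ and on its good moduli space $X^{\lambda}_{\alpha}\cms G^{\lambda}$; the subgroup $N_{G^{\lambda}}(T)$ acts trivially there, so this action descends to $\overline W_{(\lambda,\alpha)}=N_G^{\overline{(\lambda,\alpha)}}(T)/N_{G^{\lambda}}(T)$ (Lemma~\ref{lemma:relWeylquotient}), and by the previous lemma $(\pi_{(\lambda,\alpha)})_*\underline{\BoQ}_{X^{\lambda}_{\alpha}/G^{\lambda}}$ is $\overline W_{(\lambda,\alpha)}$-equivariant for it.

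The first point to check is that $\imath_{(\lambda,\alpha)}\colon X^{\lambda}_{\alpha}\cms G^{\lambda}\to X\cms G$ is $N_G^{\overline{(\lambda,\alpha)}}(T)$-equivariant when $X\cms G$ carries the \emph{trivial} action. At the level of rings of invariants $\imath_{(\lambda,\alpha)}$ is the restriction map $\BoC[X]^G\to\BoC[X^{\lambda}_{\alpha}]^{G^{\lambda}}$; for $\dot w\in N_G^{\overline{(\lambda,\alpha)}}(T)\subset G$ and $f\in\BoC[X]^G$ one has $\dot w\cdot f=f$, so acting by $\dot w$ before or after restriction agrees, i.e. the restriction map intertwines the trivial action on $\BoC[X]^G$ with the $\dot w$-action on $\BoC[X^{\lambda}_{\alpha}]^{G^{\lambda}}$. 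Dualizing, for each $w$ one gets a commutative square whose horizontal arrows are $\imath_{(\lambda,\alpha)}$, whose left vertical arrow is the $w$-action on $X^{\lambda}_{\alpha}\cms G^{\lambda}$, and whose right vertical arrow is $\id_{X\cms G}$; since $N_{G^{\lambda}}(T)$ acts trivially on the source, this equivariance descends to a $\overline W_{(\lambda,\alpha)}$-equivariance of $\imath_{(\lambda,\alpha)}$ with trivial action on the target. Applying Proposition~\ref{proposition:groupactionPerversesheaf} with $f=\imath_{(\lambda,\alpha)}$, $S=X\cms G$, $W=\overline W_{(\lambda,\alpha)}$ and $\underline{\CF}=(\pi_{(\lambda,\alpha)})_*\underline{\BoQ}_{X^{\lambda}_{\alpha}/G^{\lambda}}$ then yields a morphism $\overline W_{(\lambda,\alpha)}\to\Aut_{\CD^+(\MMHM(X\cms G))}\bigl((\imath_{(\lambda,\alpha)})_*(\pi_{(\lambda,\alpha)})_*\underline{\BoQ}_{X^{\lambda}_{\alpha}/G^{\lambda}}\bigr)$, which is the asserted action; since $\imath_{(\lambda,\alpha)}$ is finite (Lemma~\ref{lemma:finitemap}) its pushforward is exact, so by Theorem~\ref{theorem:puritysmoothquotient} one may equally stay within cohomologically graded mixed Hodge modules.

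There is no real obstacle here: the statement is essentially formal once the previous lemma is in place. The only thing requiring a little care is checking that the two $N_G^{\overline{(\lambda,\alpha)}}(T)$-actions on $X^{\lambda}_{\alpha}\cms G^{\lambda}$ in play --- the one making $(\pi_{(\lambda,\alpha)})_*\underline{\BoQ}_{X^{\lambda}_{\alpha}/G^{\lambda}}$ equivariant and the one for which $\imath_{(\lambda,\alpha)}$ is equivariant --- coincide; they do, since both are induced by the single action of $N_G^{\overline{(\lambda,\alpha)}}(T)$ on the pair $(X^{\lambda}_{\alpha},G^{\lambda})$.
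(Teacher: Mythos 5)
Your proof is correct and follows essentially the same route as the paper, whose entire argument is the one-line observation that $\imath_{(\lambda,\alpha)}$ is $\overline{W}_{(\lambda,\alpha)}$-invariant for the trivial action on $X\cms G$, combined implicitly with the equivariance of $(\pi_{(\lambda,\alpha)})_*\underline{\BoQ}_{X^{\lambda}_{\alpha}/G^{\lambda}}$ from the preceding lemma and Proposition~\ref{proposition:groupactionPerversesheaf}. You have simply spelled out the routine verifications (invariance on rings of invariants, compatibility of the two actions) that the paper leaves to the reader.
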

\begin{proof}
 The map $\imath_{(\lambda,\alpha)}\colon X^{\lambda}_{\alpha}\cms G^{\lambda}\rightarrow X\cms G$ is $\overline{W}_{(\lambda,\alpha)}$-equivariant for the trivial action on $X\cms G$.
\end{proof}

\begin{corollary}
 The cohomologically graded mixed Hodge module $(\imath_{(\lambda,\alpha)})_*(\pi_{(\lambda,\alpha)})_*\underline{\BoQ}_{X^{\lambda}_{\alpha}/G^{\lambda}}$ decomposes into $\overline{W}_{(\lambda,\alpha)}$-isotypic components.
\end{corollary}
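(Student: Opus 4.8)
The plan is to deduce this directly from Corollary~\ref{corollary:decompositionisotypiccomponents} applied to the finite map $\imath_{(\lambda,\alpha)}\colon X^{\lambda}_{\alpha}\cms G^{\lambda}\rightarrow X\cms G$, so the work consists only in checking that all the hypotheses of that corollary are in place. First I would note that $\overline{W}_{(\lambda,\alpha)}=W_{(\lambda,\alpha)}/W^{\lambda}$ is a finite group, being a subquotient of the (finite) Weyl group $W$ of $G$. Next, by Lemma~\ref{lemma:finitemap} the map $\imath_{(\lambda,\alpha)}$ is finite, hence proper, and by Lemma~\ref{lemma:Wactionimath} it is $\overline{W}_{(\lambda,\alpha)}$-equivariant for the trivial action on the target $X\cms G$; since a finite (equivalently, affine proper) morphism has exact pushforward for the natural $t$-structure, $(\imath_{(\lambda,\alpha)})_*$ carries cohomologically graded monodromic mixed Hodge modules to cohomologically graded monodromic mixed Hodge modules.

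It then remains to verify that the complex being pushed forward is itself a $\overline{W}_{(\lambda,\alpha)}$-equivariant cohomologically graded mixed Hodge module, i.e. an object of $\MMHM^{\BoZ}_{\overline{W}_{(\lambda,\alpha)}}(X^{\lambda}_{\alpha}\cms G^{\lambda})$. The equivariance is exactly the content of the preceding lemma, which produces the $\overline{W}_{(\lambda,\alpha)}$-action on $(\pi_{(\lambda,\alpha)})_*\underline{\BoQ}_{X^{\lambda}_{\alpha}/G^{\lambda}}$ via the action of $N_G^{\overline{(\lambda,\alpha)}}(T)$ on $X^{\lambda}_{\alpha}/G^{\lambda}$ together with Lemma~\ref{lemma:relWeylquotient}. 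That it is cohomologically graded follows from Theorem~\ref{theorem:puritysmoothquotient} (equivalently Proposition~\ref{proposition:semisimplicitypistar}) applied to the smooth affine $G^{\lambda}$-variety $X^{\lambda}_{\alpha}$: the pushforward $(\pi_{(\lambda,\alpha)})_*\underline{\BoQ}_{X^{\lambda}_{\alpha}/G^{\lambda}}$ is pure of weight zero and hence decomposes as $\bigoplus_i \underline{\CH}^i\bigl((\pi_{(\lambda,\alpha)})_*\underline{\BoQ}_{X^{\lambda}_{\alpha}/G^{\lambda}}\bigr)[-i]$. With these inputs, Corollary~\ref{corollary:decompositionisotypiccomponents} yields a decomposition
\[
 (\imath_{(\lambda,\alpha)})_*(\pi_{(\lambda,\alpha)})_*\underline{\BoQ}_{X^{\lambda}_{\alpha}/G^{\lambda}}\cong\bigoplus_{\chi\in\Irr(\overline{W}_{(\lambda,\alpha)})}\underline{\CF}_{\chi}\otimes M_{\chi}
\]
with $\underline{\CF}_{\chi}\in\MMHM^{\BoZ}(X\cms G)$, which is precisely the asserted decomposition into $\overline{W}_{(\lambda,\alpha)}$-isotypic components. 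There is no real obstacle here; the only point requiring a moment's care is making sure that pushing forward along $\imath_{(\lambda,\alpha)}$ does not leave the category of cohomologically graded mixed Hodge modules, which is guaranteed by finiteness of $\imath_{(\lambda,\alpha)}$ as noted above.
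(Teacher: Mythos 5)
Your proposal is correct and follows essentially the same route as the paper, which simply combines Corollary~\ref{corollary:decompositionisotypiccomponents} with Lemma~\ref{lemma:Wactionimath}; you additionally spell out the verification of the hypotheses (finiteness of $\imath_{(\lambda,\alpha)}$ via Lemma~\ref{lemma:finitemap}, finiteness of $\overline{W}_{(\lambda,\alpha)}$, and the cohomologically graded property via Proposition~\ref{proposition:semisimplicitypistar}), which the paper leaves implicit.
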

\begin{proof}
 This is an immediate combination of Corollary~\ref{corollary:decompositionisotypiccomponents} with Lemma~\ref{lemma:Wactionimath}. Proposition~\ref{proposition:semisimplicitypistar} implies that $(\pi_{(\lambda,\alpha)})_*\underline{\BoQ}_{X^{\lambda}_{\alpha}/G^{\lambda}}$ is a cohomologically graded mixed Hodge module.
\end{proof}

\subsection{\'Etale slices and preferred representatives}

Let $x\in X$ be a closed point such that the orbit $G\cdot x$ is closed. We let $G_x$ be the stabilizer group of $x$, a reductive group. We let $N_x\subset X$ be a normal slice to $G\cdot x$ through $x$. In particular, we have a $G_x$-equivariant isomorphism $\Tan_xX\cong\Tan_x(G\cdot x)\oplus \Tan_xN_x$. Moreover, as $G_x$-representations, we have $\Tan_x(G\cdot x)\cong \Fg/\Fg_x$ and $\Fg\cong \Fg_x\oplus (\Fg/\Fg_x)$. Up to replacing $x$ by a $G$-conjugate, we may assume that $T_x\coloneqq T\cap G_x$ is a maximal torus of $G_x$.

We note that $N_x$ has a $G_x$-fixed point $x$. Moreover, it is linear in the sense that, by Luna \'etale slice theorem \cite{luna1973slices}, there is a $G_x$-equivariant \'etale map between a Zariski open neighbourhood of $x$ in $N_x$ and a Zariski open neighbourhood of the origin inside $\Tan_xN_x$. \'Etale slices will be used to linearize the study of the topology of $X/G$ in \S\ref{section:localneighbourhoodsmoothstacks}.

\begin{lemma}
 There is a natural map $\SQ_{N_x,G_x}\rightarrow\SQ_{X,G}$.
\end{lemma}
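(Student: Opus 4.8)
The plan is to construct the map $\SQ_{N_x,G_x}\to\SQ_{X,G}$ directly from a cocharacter-and-component assignment, using the fact that $T_x=T\cap G_x$ is a maximal torus of $G_x$ (which we arranged up to $G$-conjugacy), so that a cocharacter $\mu\in X_*(T_x)$ is in particular a cocharacter of $T$. Given $(\mu,\gamma)\in\SQ_{N_x,G_x}$, i.e. $\mu\in X_*(T_x)\subset X_*(T)$ and $\gamma\in\pi_0(N_x^{\mu})$, I want to produce a connected component of $X^{\mu}$. The point is that $x\in N_x^{\mu}$ (since $x$ is a $G_x$-fixed point of $N_x$, hence $\mu$-fixed), and $N_x\subset X$, so the component $\gamma\subset N_x^{\mu}$ sits inside $X^{\mu}$; I would take $\alpha\in\pi_0(X^{\mu})$ to be the connected component of $X^{\mu}$ containing $\gamma$, and set the image of $(\mu,\gamma)$ to be $(\mu,\alpha)\in\SQ_{X,G}$.

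First I would record the standing reduction that $T_x$ is a maximal torus of $G_x$ contained in $T$, so the inclusion $X_*(T_x)\hookrightarrow X_*(T)$ makes sense and sends a one-parameter subgroup of $G_x$ to one of $G$. Next I would verify that $N_x^{\mu}$ is genuinely a locally closed (indeed closed, as a fixed locus) subvariety of $X^{\mu}$: since $N_x$ is a locally closed $G_x$-invariant subvariety of $X$ containing $x$, and $\mu$ acts on both compatibly, we have $N_x^{\mu}=N_x\cap X^{\mu}$, so each connected component $\gamma$ of $N_x^{\mu}$ lies in a unique connected component of $X^{\mu}$, call it $\alpha=\alpha(\mu,\gamma)$. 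Then I would simply define the map by $(\mu,\gamma)\mapsto(\mu,\alpha(\mu,\gamma))$. If one wants the map to land in $\SQ_{X,G}$ as defined with cocharacters of the fixed torus $T$ (rather than merely the isomorphism class), this is exactly what the inclusion $T_x\subset T$ provides; naturality in the obvious sense (compatibility with the $W_x:=N_{G_x}(T_x)/T_x$ versus $W$ actions, and with the equivalence relations $\sim$) can be checked afterward but is not needed for the bare existence statement.

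The only genuinely substantive point — and the one I expect to be the main obstacle — is checking that this assignment is well-defined and independent of the auxiliary choices, namely the choice of normal slice $N_x$ and the choice of $G$-conjugate of $x$ making $T_x=T\cap G_x$ a maximal torus of $G_x$. For the slice: any two normal slices through $x$ are $G_x$-equivariantly isomorphic on a Zariski (or étale) neighbourhood of $x$ by Luna's slice theorem, and the germ of $N_x^{\mu}$ at $x$ — hence the connected component through $x$, and by extension which component of $X^{\mu}$ any given component of $N_x^\mu$ meets — is determined by this germ together with the embedding into $X$; this requires a small argument that distinct components of $N_x^\mu$ cannot merge in $X^\mu$ in a way that depends on the slice, which follows because $N_x^\mu\hookrightarrow X^\mu$ is an open immersion onto a neighbourhood of $X^{\mu}\cdot$(orbit data) near the closed orbit, more precisely $G\cdot N_x^{\mu}$ is open in $X^{\mu\ge 0}$-type loci. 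For the conjugacy ambiguity: different admissible conjugates of $x$ differ by an element of $N_G(T)$ normalizing the relevant data, which changes the output by the $W$-action, so the induced map is canonical after passing to $W_x$- and $W$-orbits. I would state the lemma at the level of the bare map (as written) and remark that it is $W$-equivariant in the appropriate sense, deferring the orbit-level canonicity to where it is used.

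Concretely, the proof I would write is short: choose the slice $N_x$ and the conjugate of $x$ so that $T_x=T\cap G_x$ is maximal in $G_x$; observe $X_*(T_x)\subset X_*(T)$; note $N_x^{\mu}=N_x\cap X^{\mu}$ for $\mu\in X_*(T_x)$; send $(\mu,\gamma)\in\SQ_{N_x,G_x}$ to $(\mu,\alpha)$ where $\alpha\in\pi_0(X^{\mu})$ is the component containing $\gamma$. The well-definedness with respect to the slice is the one place where I would invoke Luna's theorem rather than a formal manipulation.
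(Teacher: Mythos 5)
Your construction is essentially the paper's: view $\mu\in X_*(T_x)\subset X_*(T)$ via $T_x\subset T$ and send a component of $N_x^{\mu}$ to the component of $X^{\mu}$ containing it; the paper additionally observes that $N_x^{\mu}$ is connected (as $N_x$ is linearized at the $G_x$-fixed point $x$), so its map is simply $\lambda\mapsto(\lambda,\alpha)$ with $\alpha$ the component of $X^{\lambda}$ containing $x$. Your extended discussion of independence from the choice of slice and of conjugate is not needed for the statement, since $N_x$ and the conjugate of $x$ are fixed auxiliary data and the map is only claimed to be natural given them.
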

\begin{proof}
 Let $\lambda\in X_*(T_x)$. Via the inclusion $T_x\subset T$, we may see $\lambda\in X_*(T)$. Moreover, $N_x^{\lambda}$ has a unique connected component. Therefore, the map of the lemma can be defined as the map $\lambda\mapsto (\lambda,\alpha)$ where $\alpha\in\pi_0(X^{\lambda})$ is the connected component containing $x$.
\end{proof}

\begin{lemma}
\label{lemma:fixedpointstorus}
 Let $T'\subset G_x$ be a torus. Then, $(\Fg/\Fg_x)^{T'}\cong \Fg^{T'}/\Fg_x^{T'}$.
\end{lemma}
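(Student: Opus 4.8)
The statement to prove is: if $T' \subset G_x$ is a torus, then $(\Fg/\Fg_x)^{T'} \cong \Fg^{T'}/\Fg_x^{T'}$. The plan is to use complete reducibility of $T'$-representations. Since $T'$ is a torus, every finite-dimensional representation of $T'$ decomposes as a direct sum of weight spaces, so the functor $(-)^{T'}$ (taking the weight-zero subspace) is exact on the category of $T'$-representations. In particular, applying it to the short exact sequence of $T'$-representations
\[
0 \longrightarrow \Fg_x \longrightarrow \Fg \longrightarrow \Fg/\Fg_x \longrightarrow 0
\]
yields a short exact sequence
\[
0 \longrightarrow \Fg_x^{T'} \longrightarrow \Fg^{T'} \longrightarrow (\Fg/\Fg_x)^{T'} \longrightarrow 0\,,
\]
which gives the desired isomorphism $(\Fg/\Fg_x)^{T'} \cong \Fg^{T'}/\Fg_x^{T'}$.

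\textbf{Key steps in order.}
First, I would observe that $\Fg_x$ is a $T'$-subrepresentation of $\Fg$: indeed $T' \subset G_x$, so $T'$ normalizes $G_x$ (it lies inside it), hence acts on $\Fg_x = \Lie(G_x)$ via the adjoint action, and this is compatible with the inclusion $\Fg_x \hookrightarrow \Fg$ since both adjoint actions are restrictions of $\mathrm{Ad}\colon G \to \GL(\Fg)$ composed with $T' \hookrightarrow G$. So the quotient $\Fg/\Fg_x$ is a well-defined $T'$-representation and the three-term sequence above is a sequence of $T'$-representations. Second, I would invoke that for a torus $T'$ over $\BoC$, every algebraic representation is semisimple and decomposes into a direct sum over the character lattice $X^*(T')$; the invariant subspace $(-)^{T'}$ is exactly the weight-$0$ isotypic summand, and passing to a single isotypic summand is an exact operation because the decomposition is functorial. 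Third, applying this exact functor to the short exact sequence gives the short exact sequence of invariants, and surjectivity of $\Fg^{T'} \to (\Fg/\Fg_x)^{T'}$ together with the obvious identification of the kernel as $\Fg_x^{T'}$ produces the claimed isomorphism.

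\textbf{Main obstacle.}
There is essentially no obstacle here: the only thing to be careful about is that $(-)^{T'}$ is genuinely exact, which could fail if $T'$ were not linearly reductive, but a torus in characteristic zero is linearly reductive, so this is immediate. One might alternatively phrase the argument via averaging / choosing a $T'$-equivariant splitting $\Fg \cong \Fg_x \oplus (\Fg/\Fg_x)$ as $T'$-representations, which exists by complete reducibility, and then take $T'$-invariants of both sides; this is just a repackaging of the same point. I expect the author's proof to be one or two sentences along exactly these lines.
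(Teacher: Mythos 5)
Your proof is correct and is essentially the paper's argument: the paper chooses a $G_x$-equivariant (hence $T'$-equivariant) splitting $\Fg\cong\Fg_x\oplus(\Fg/\Fg_x)$ and takes $T'$-invariants, which is exactly the repackaging via complete reducibility that you mention at the end. Your phrasing through exactness of $(-)^{T'}$ on the short exact sequence is an equivalent route with no gap.
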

\begin{proof}
 As representations of $G_x$ (and therefore of $T'\subset G_x$), we have $\Fg\cong\Fg_x\oplus (\Fg/\Fg_x)$. Taking $T'$-invariants on both sides gives $\Fg^{T'}\cong \Fg_x^{T'}\oplus(\Fg/\Fg_x)^{T'}$. We get the result by taking the quotient by $\Fg_x^{T'}$.
\end{proof}

We let $T'\in\SQ''_{N_x,G_x}$ be such that $T'=G_{x,T'}^{\circ}$ (Definition~\ref{definition:preferredtorus}).

\begin{lemma}
\label{lemma:preferredtori}
 We let $\alpha\in \pi_0(X^{T'})$ be the connected component containing $x$. Seeing $(T',\alpha)\in\SQ'_{X,G}$, we have $G_{(T',\alpha)}^{\circ}=T'$. In words, if $T'$ is a preferred torus for $N_x,G_x$, then it is a preferred torus for $X,G$.
\end{lemma}
\begin{proof}
We have to check that $T'$ is the neutral component of $\ker(G^{T'}\rightarrow\Aut(X^{T'}_{\alpha}))\cap Z(G^{T'})$, which by definition is $G_{(T',\alpha)}^{\circ}$. We have clearly $T'\subset Z(G^{T'})$. Moreover, it acts trivially on an open subset of $X^{T'}_{\alpha}$ by \'etale slices since it acts trivially on $\Tan_x X^{T'}\cong (\Fg/\Fg_x)^{T'}\oplus (\Tan_xN_x)^{T'}$. Therefore, $T'\subset G_{(T',\alpha)}^{\circ}$. Conversely, if $g\in Z(G^{T'})$ acts trivially on $X^{T'}_{\alpha}$, it stabilizes $x$ and so $g\in G_x$. It also act trivially on $\Tan_x(X^{T'})\cong (\Fg/\Fg_x)^{T'}\oplus(\Tan_x N_x)^{T'}$ (Lemma~\ref{lemma:fixedpointstorus}). Therefore, it acts trivially on $N_x^{T'}$. Therefore, $g\in G_{x,T'}$. This concludes.
\end{proof}

Using Lemma~\ref{lemma:preferredtori}, we obtain a map $\gamma_x\colon\SP_{N_x,G_x}\rightarrow\SP_{X,G}$ sending $\overline{T'}$ for $T'\in \CT(T_x)$ such that $T'=G_{x,T'}^{\circ}$ to the pair $\overline{(T',\alpha)}$ where we see $T'\in\CT(T)$ and we let $\alpha$ be the connected component of $X^{T'}$ containing $x$.

Let $T'\in\CT(T_x)$ be such that $T'=G_{T'}^{\circ}$ and $\alpha\in\pi_0(X^{T'})$ be the connected component containing $x$. We let $W_x$ be the Weyl group of $x$, $W_{x,T'}=\{w\in W\mid w\cdot T'=T'\}$ (according to \S\ref{subsection:Weylgroups}), $\overline{W}_{x,T'}=W_{x,T'}/W_x^{T'}$ (Definition~\ref{definition:Weylgrouprep}), etc.

\begin{lemma}
\label{lemma:injectmorphgroups}
 There exists a natural injective morphism of groups $\overline{W}_{x,T'}\rightarrow \overline{W}_{(T',\alpha)}$.
\end{lemma}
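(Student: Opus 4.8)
The plan is to construct the map $\overline{W}_{x,T'}\to\overline{W}_{(T',\alpha)}$ by lifting Weyl group elements of $G_x$ to Weyl group elements of $G$ preserving the preferred torus $T'$ and the component $\alpha$, and then checking that this passes to the quotients. First I would recall that $W_x = N_{G_x}(T_x)/T_x$ and, using the description of Lemma~\ref{lemma:relWeylpreferredtorus} applied to the $G_x$-variety $N_x$, that $W_{x,T'} = \{w\in W_x\mid w\cdot T' = T'\}$ (here $N_x^{T'}$ is connected since $N_x$ has a $G_x$-fixed point, so there is no component to track on the slice side). Given $w\in W_{x,T'}$, pick a lift $\dot w\in N_{G_x}(T_x)$. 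Since $\dot w$ normalizes $T_x$ and $\dot w\cdot T' = T'$, and since $T_x = T\cap G_x$ is a maximal torus of $G_x$, I would like to say $\dot w$ can be adjusted within $T_x\cdot(\text{something in }G^{T'})$ so as to normalize the ambient torus $T$; but this is not automatic, so instead I would work with the relative-Weyl descriptions directly: by Lemma~\ref{lemma:relWeylquotient}, $\overline{W}_{(T',\alpha)}\cong N_G^{\overline{(T',\alpha)}}(T)/N_{G^{T'}}(T)$, and the analogous statement holds for $\overline{W}_{x,T'}$ with $G$ replaced by $G_x$, $T$ by $T_x$, and $G^{T'}$ by $G_x^{T'}$.

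The key geometric input is Lemma~\ref{lemma:preferredtori}: because $T'$ is a preferred torus for $N_x,G_x$, it is also a preferred torus for $X,G$, with $\alpha$ the component of $X^{T'}$ through $x$. The second step is to produce, from $\dot w\in N_{G_x}(T_x)$ with $\dot w\cdot T' = T'$, an element $\dot w'\in N_G(T)$ with $\dot w'\cdot T' = T'$ and $\dot w'\cdot\alpha = \alpha$, well-defined modulo $N_{G^{T'}}(T)$. Here is where I would use that $\dot w\in G_x$ acts on $X$ fixing $x$ (since $x$ is a $G_x$-fixed point, and more to the point $\dot w\in G^{T'}_x\subset G^{T'}$ after noting $\dot w$ centralizes $T'$... wait, $\dot w$ need not centralize $T'$, only normalize it). Let me restructure: $\dot w\cdot T' = T'$ means $\dot w$ normalizes $T'$ inside $G_x$; conjugation by $\dot w$ sends $X^{T'}$ to itself and fixes $x$, hence sends $\alpha$ to $\alpha$. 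Now $\dot w$ normalizes $T_x\subset G^{T'}$... no, $T_x\not\subset G^{T'}$ in general. The honest route: $\dot w$ normalizes $T_x$ and $T'$; since $T_x$ is a maximal torus of $G_x$ and $T_x\supset$ (no). I would instead argue that $\dot w$ normalizes the subtorus $T'\subset T$ and lies in $G$, and that any element of $G$ normalizing $T'$ and fixing $\alpha$ can be modified by an element of $G^{T'}$ to normalize $T$: indeed $T$ and $\dot w^{-1}T\dot w$ are both maximal tori of $G$ containing $T'$ (since $\dot w$ normalizes $T'$ and $T'\subset T$), hence are conjugate by some $h\in C_G(T')^\circ = (G^{T'})^\circ$, so $\dot w h\in N_G(T)$; and $\dot w h$ still normalizes $T'$ (as $h\in G^{T'}$ centralizes it) and still fixes $\alpha$ (as $G^{T'}$ is connected, hence preserves each component of $X^{T'}$). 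Set $\dot w' \coloneqq \dot w h$; this is the required lift, well-defined in $N_G^{\overline{(T',\alpha)}}(T)/N_{G^{T'}}(T)$ because the ambiguity in $h$ lies in $N_{G^{T'}}(T)$.

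The third step is to check this assignment is a homomorphism and is injective. Homomorphism: given $w_1,w_2$ with lifts $\dot w_1', \dot w_2'$ as above, $\dot w_1'\dot w_2'$ is a lift of $w_1 w_2$ normalizing $T$, $T'$ and fixing $\alpha$, and it differs from the chosen lift of $w_1w_2$ by an element of $G^{T'}\cap N_G(T) = N_{G^{T'}}(T)$; so the map is well-defined on $\overline{W}_{x,T'}$ (after checking that changing $\dot w$ by $T_x$ or by $N_{G_x^{T'}}(T_x)$ only changes $\dot w'$ by $N_{G^{T'}}(T)$, which holds since $T_x\subset G_x\subset$... and $N_{G_x^{T'}}(T_x)\subset G^{T'}$). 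Injectivity: if $\dot w'\in N_{G^{T'}}(T)$, then in particular $\dot w'\in G^{T'}$ and $\dot w = \dot w'h^{-1}$ with $h\in G^{T'}$, so $\dot w\in G^{T'}\cap G_x = G_x^{T'}$, and $\dot w$ normalizes $T_x$, i.e. $\dot w\in N_{G_x^{T'}}(T_x)$, so $w = 1$ in $\overline{W}_{x,T'} = W_{x,T'}/W_x^{T'}$. The main obstacle is the second step — the passage from an element of $N_{G_x}(T_x)$ normalizing the small torus $T'$ to a genuine element of $N_G(T)$ normalizing $T'$ and $\alpha$, controlling the correction term well enough that everything is well-defined modulo $N_{G^{T'}}(T)$ — and I would want to double-check there that $C_G(T')^\circ = (G^{T'})^\circ$ suffices (it does, since $G^{T'} = C_G(T')$ for a torus $T'$) and that the connectedness of $G^{T'}$ is genuinely available (it is, as the centralizer of a torus in a connected reductive group is connected reductive), so that fixing $\alpha$ comes for free.
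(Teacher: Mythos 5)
Your proposal is correct and follows essentially the same route as the paper: given $\dot w\in N_{G_x}(T_x)$ normalizing $T'$, one observes that $T$ and $\dot w^{-1}T\dot w$ are maximal tori of the connected group $G^{T'}=C_G(T')$, picks $h\in G^{T'}$ conjugating one to the other so that $\dot w h\in N_G(T)$ still normalizes $T'$ and preserves $\alpha$ (connectedness of $G^{T'}$ plus $\dot w\cdot x=x$), and then checks well-definedness, multiplicativity and injectivity modulo $N_{G^{T'}}(T)$. The paper carries out these verifications by explicit conjugation computations with elements $t'\in T'$, but the content is the same; your more direct injectivity argument ($\dot w=\dot w'h^{-1}\in G^{T'}\cap G_x=G_x^{T'}$) is a valid shortcut.
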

\begin{proof}
 Recall that $\overline{W}_{x,T'}=N^{T'}_{G_x}(T_x)/N_{G_x^{T'}}(T_x)$ where
 \[
  N_{G_x}^{T'}(T_x)=\{w\in N_{G_x}(T_x)\mid wT'w^{-1}=T'\}
 \]
 and similarly, $\overline{W}_{(T',\alpha)}=N_G^{(T',\alpha)}(T)/N_{G^{T'}}(T)$ (Lemma~\ref{lemma:relWeylquotient}, Lemma~\ref{lemma:relWeylpreferredtorus} and Lemma~\ref{lemma:preferredtori}).

 Let $w\in N_{G_x}^{T'}(T_x)$. Then, $T, w^{-1}Tw\subset G^{T'}$ are maximal tori. Therefore, there exists $h\in G^{T'}$ such that $hTh^{-1}=w^{-1}Tw$. Then, $wh\in N_G(T)$. Moreover, $whT'h^{-1}w^{-1}=wT'w^{-1}=T'$. In addition, since $G^{T'}$ is connected, it preserves connected components of $X^{T'}$ and so $h\cdot\alpha=\alpha$. Since $w$ fixes $x$, it must also preserve $\alpha$ and so $wh\cdot\alpha=\alpha$. The morphism of groups $\overline{W}_{x,T'}\rightarrow \overline{W}_{T',\alpha}$ sends the class of $w$ to the class of $wh$. We prove it is well-defined. Let $w,w'\in N_{G_x}^{T'}(T_x)$ with the same class in the quotient $\overline{W}_{x,T'}$ (that is, $ww'^{-1}\in N_{G_x^{T'}}(T_x)$) and $h,h'\in G^{T'}$ be such that $wh,w'h'\in N_G(T)$. Then, $whh'^{-1}w'^{-1}\in N_G(T)$ and for $t'\in T'$, 
 \[
 \begin{aligned}
  t'whh'^{-1}w'^{-1}t'^{-1}&=w\underbrace{w^{-1}t'w}_{\in T'}hh'^{-1}w'^{-1}t'^{-1}\\
						  &=whh'^{-1}\underbrace{w^{-1}t'w}w'^{-1}t'^{-1}\quad\text{since $h,h'\in G^{T'}$}\\
						  &=whh'^{-1}w'^{-1} \quad\text{since $ww'^{-1}\in N_{G_x^{T'}}(T_x)\subset G^{T'}$}
\end{aligned}
 \]
 and so the classes of $wh$ and $w'h'$ in $\overline{W}_{(T',\alpha)}$ coincide.
 
 We prove that the map constructed is a morphism of groups. We prove the compatibility with products. Let $w,w'\in N_{G_x}^{T'}(T_x)$ and $h,h',h''\in G^{T'}$ such that $wh, w'h', ww'h''\in N_G(T)$. For $t'\in T'$, we have
\[
\begin{aligned}
 t'ww'h''h'^{-1}w'^{-1}h^{-1}w^{-1}t'^{-1}&=ww'\underbrace{w'^{-1}w^{-1}t'ww'}_{\in T'}h''h'^{-1}w'^{-1}h^{-1}w^{-1}t'^{-1}\\
 &=ww'h''h'^{-1}\underbrace{w'^{-1}w^{-1}t'ww'}w'^{-1}h^{-1}w^{-1}t'^{-1}\quad\text{ since $h',h''\in G^{T'}$}\\
 &=ww'h''h'^{-1}w'^{-1}\underbrace{w^{-1}t'w}_{\in T'}h^{-1}w^{-1}t'^{-1}\\
 &=ww'h''h'^{-1}w'^{-1}h^{-1}\underbrace{w^{-1}t'w}w^{-1}t'^{-1}\quad\text{ since $h\in G^{T'}$}\\
 &=ww'h''h'^{-1}w'^{-1}h^{-1}w^{-1}\,.
\end{aligned}
\]
This means that $ww'h''h'^{-1}w'^{-1}h^{-1}w^{-1}\in N_{G^{T'}}(T)$, and so the classes of $whw'h'$ and $ww'h''$ coincide.

It only remains to prove the injectivity. Let $w\in N_{G_x}^{T'}(T_x)$ and $h\in G^{T'}$ be such that $wh\in N_{G^{T'}}(T)$. Then, for any $t'\in T'$, \[
\begin{aligned}
t'wt'^{-1}&=t'whh^{-1}t'^{-1}\\
          &=t'wht'^{-1}h^{-1}\quad\text{since $h\in G^{T'}$}\\
          &=w\quad\text{since $wh\in G^{T'}$}
\end{aligned}\]
Therefore, $w\in G_x^{T'}$. Since $w$ normalizes $T_x$, $w\in N_{G_x^{T'}}(T_x)$. This proves the injectivity.
\end{proof}

\subsection{Weyl group actions and \'etale slices}

Let $x\in X$ be such that $G\cdot x$ is closed. Up to $G$-conjugacy, we may assume that $T_x\coloneqq T\cap G_x$ is a maximal torus of $G_x$. We let $T'\subset T_x$ be a torus such that $T'=G_{x,T'}^{\circ}$ (Definition~\ref{definition:preferredtorus}). By Lemma~\ref{lemma:preferredtori}, we also have $T'=G_{(T',\alpha)}^{\circ}$ where $\alpha\in\pi_0(X^{T'})$ is the connected component containing $x$.

\begin{lemma}
\label{lemma:normalizestorusfixedpoints}
The group $N_G^{T'}(T)$ normalizes $G^{T'}$.
\end{lemma}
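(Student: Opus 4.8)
The goal is to show that $N_G^{T'}(T)$ normalizes $G^{T'}$. Let $w \in N_G^{T'}(T)$; by definition this means $w \in N_G(T)$ (or rather a lift $\dot w \in N_G(T)$) together with $\dot w T' \dot w^{-1} = T'$, and additionally $w$ preserves the class $\overline{(T',\alpha)}$. The plan is to use that $G^{T'}$, being the centralizer of the torus $T'$ in $G$, is intrinsically determined by $T'$: conjugation by any element $g \in G$ sends $G^{T'}$ to $G^{gT'g^{-1}}$.

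\textbf{Key steps.} First I would recall the elementary fact that for any $g \in G$ and any torus $S \subset G$, one has $g\, G^{S}\, g^{-1} = G^{gSg^{-1}}$, where $G^S$ denotes the centralizer of $S$. This is immediate: $h$ commutes with every element of $S$ if and only if $ghg^{-1}$ commutes with every element of $gSg^{-1}$. Second, I would apply this with $S = T'$ and $g = \dot w$ a lift of $w \in N_G^{T'}(T)$: since $\dot w T' \dot w^{-1} = T'$ by the defining condition on $N_G^{T'}(T)$ (here I use Lemma~\ref{lemma:relWeylpreferredtorus} together with Lemma~\ref{lemma:preferredtori}, which tell us that elements of $W_{(T',\alpha)}$, equivalently of $N_G^{T'}(T)/\!\!\sim$, are exactly those preserving the preferred torus $T'$), we get $\dot w\, G^{T'}\, \dot w^{-1} = G^{\dot w T' \dot w^{-1}} = G^{T'}$. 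Third, I would note that this conclusion is independent of the choice of lift $\dot w$: two lifts differ by an element of $T \subset G^{T'}$ (since $T' \subset T$ forces $T \subset G^{T'}$), and conjugation by an element of $G^{T'}$ certainly preserves $G^{T'}$. This gives the claimed normalization.

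\textbf{Main obstacle.} There is essentially no serious obstacle here; the only point requiring a little care is to make sure the defining condition ``$w \in N_G^{T'}(T)$'' genuinely entails $\dot w T' \dot w^{-1} = T'$ rather than merely something weaker about the class $\overline{(T',\alpha)}$. This is precisely what Lemma~\ref{lemma:relWeylpreferredtorus} provides under the hypothesis $G_{x,T'}^{\circ} = T'$ (a preferred torus), which is in force by assumption and transported to $X,G$ via Lemma~\ref{lemma:preferredtori}; so once that identification is invoked, the argument is the one-line centralizer computation above. I would therefore keep the proof short: state the centralizer identity, invoke Lemma~\ref{lemma:relWeylpreferredtorus} and Lemma~\ref{lemma:preferredtori} to get $\dot w T' \dot w^{-1} = T'$, and conclude.
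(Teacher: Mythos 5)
Your proposal is correct and is essentially the paper's argument in a lightly repackaged form: the paper writes out explicitly the computation $t'\dot{w}g\dot{w}^{-1}t'^{-1}=\dot{w}(\dot{w}^{-1}t'\dot{w})g\dot{w}^{-1}t'^{-1}=\dot{w}g\dot{w}^{-1}$, which is exactly the proof of your centralizer identity $gG^{S}g^{-1}=G^{gSg^{-1}}$ specialized to $S=T'$ and $g=\dot{w}$ with $\dot{w}T'\dot{w}^{-1}=T'$. Your extra care about why elements of $N_G^{T'}(T)$ genuinely preserve $T'$ (via Lemmas~\ref{lemma:relWeylpreferredtorus} and~\ref{lemma:preferredtori}) matches the standing hypotheses of that subsection, so nothing further is needed.
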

\begin{proof}
 Let $\dot{w}\in N_G^{T'}(T)$. In particular, $\dot{w}T'\dot{w}^{-1}=T'$. Let $g\in G^{T'}$ and $t'\in T'$. We have
\[
\begin{aligned}
 t'\dot{w}g\dot{w}^{-1}t'^{-1}&=\dot{w}\underbrace{\dot{w}^{-1}t'\dot{w}}_{\in T'}g\dot{w}^{-1}t'^{-1}\\
 &=\dot{w}g\dot{w}^{-1}\quad\text{ since $g\in G^{T'}$}\,.
\end{aligned}
\]
This concludes.
\end{proof}

\begin{lemma}
 We have an isomorphism
 \[
  \overline{W}_{x,T'}\cong\{w\in \overline{W}_{(T',\alpha)}\mid w\cdot (G^{T'}\cdot x)=(G^{T'}\cdot x)\}\,,
 \]
 therefore identifying the image of the injective morphism of Lemma~\ref{lemma:injectmorphgroups}. In other words, $\overline{W}_{x,T'}$ is the stabilizer of $G^{T'}\cdot x\in X^{T'}\cms G^{T'}$ inside $\overline{W}_{(T',\alpha)}$.
\end{lemma}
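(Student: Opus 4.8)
The plan is to prove that the injective homomorphism $\iota\colon\overline{W}_{x,T'}\hookrightarrow\overline{W}_{(T',\alpha)}$ of Lemma~\ref{lemma:injectmorphgroups} has image exactly the stabilizer $S\subseteq\overline{W}_{(T',\alpha)}$ of the point $\pi_{(T',\alpha)}(x)\in X^{T'}_{\alpha}\cms G^{T'}$, where the $\overline{W}_{(T',\alpha)}$-action is the one induced, via Lemma~\ref{lemma:relWeylquotient}, by the action of $N_G^{(T',\alpha)}(T)$ on $X^{T'}_{\alpha}$ and on $G^{T'}$ by conjugation. I first record that any $\dot w\in N_G^{(T',\alpha)}(T)$ normalizes $T'$ (Lemma~\ref{lemma:relWeylpreferredtorus}, as $T'$ is a preferred torus), hence normalizes $G^{T'}=C_G(T')$ (Lemma~\ref{lemma:normalizestorusfixedpoints}), and preserves the component $\alpha$, so it acts on $X^{T'}_{\alpha}$ compatibly with $\pi_{(T',\alpha)}$; the same holds for any product $\dot w h$ with $\dot w\in N_{G_x}^{T'}(T_x)$, $h\in G^{T'}$ and $\dot wh\in N_G(T)$.

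For the inclusion $\im\iota\subseteq S$: a class in $\im\iota$ is represented by such a $\dot w h$, where $\dot w\in G_x$ fixes $x$. Since $\dot w$ normalizes $G^{T'}$ we have $\dot wh\dot w^{-1}\in G^{T'}$, whence $\dot wh\cdot x=(\dot wh\dot w^{-1})\cdot x\in G^{T'}\cdot x$; as $\pi_{(T',\alpha)}$ is equivariant, $\dot wh$ fixes the image of $x$ in $X^{T'}_{\alpha}\cms G^{T'}$, i.e. its class lies in $S$.

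For the reverse inclusion $S\subseteq\im\iota$: let $\bar w\in S$ with lift $\dot w\in N_G^{(T',\alpha)}(T)$, so $\pi_{(T',\alpha)}(\dot w\cdot x)=\pi_{(T',\alpha)}(x)$. The crucial input is that $G^{T'}\cdot x$ is closed in $X^{T'}_{\alpha}$; this follows from the standing hypothesis that $G\cdot x$ is closed in $X$ --- for instance by Kempf--Ness, a norm-minimizer of $G\cdot x$ is a fortiori a norm-minimizer of the smaller orbit $G^{T'}\cdot x$ for a suitably chosen maximal-compact-invariant norm. Since $\dot w$ normalizes $G^{T'}$, the orbit $G^{T'}\cdot(\dot w\cdot x)=\dot w\cdot(G^{T'}\cdot x)$ is also closed, and two closed $G^{T'}$-orbits with the same image under $\pi_{(T',\alpha)}$ must coincide; hence $\dot w\cdot x=g\cdot x$ for some $g\in G^{T'}$. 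Then $u\coloneqq g^{-1}\dot w$ fixes $x$, so lies in $G_x$, and normalizes $T'$; consequently $uT_xu^{-1}$ and $T_x$ are maximal tori of $G_x^{T'}$, and choosing $c\in G_x^{T'}$ with $c\,uT_xu^{-1}c^{-1}=T_x$ yields $\dot v\coloneqq cu=cg^{-1}\dot w\in N_{G_x}^{T'}(T_x)$. Finally I check $\iota([\dot v])=\bar w$: pick $\tilde h\in G^{T'}$ with $\dot v\tilde h\in N_G(T)$ --- such $\tilde h$ exists because $\dot v^{-1}T\dot v$ and $T$ are maximal tori of $G^{T'}$, exactly as in the proof of Lemma~\ref{lemma:injectmorphgroups} --- and observe that $\dot v\tilde h\,\dot w^{-1}=cg^{-1}(\dot w\tilde h\dot w^{-1})$ lies in $G^{T'}\cap N_G(T)=N_{G^{T'}}(T)$, so $\dot v\tilde h$ and $\dot w$ have the same class in $\overline{W}_{(T',\alpha)}$, giving $\iota([\dot v])=\bar w$.

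The main obstacle is the closedness assertion ``$G\cdot x$ closed $\Rightarrow G^{T'}\cdot x$ closed in $X^{T'}$'': this is precisely what lets one upgrade ``$\bar w$ fixes $\pi_{(T',\alpha)}(x)$'' to ``$\dot w\cdot x$ and $x$ lie in a common $G^{T'}$-orbit'', after which the argument is pure bookkeeping with coset representatives and conjugation of maximal tori, of the same flavour as Lemma~\ref{lemma:injectmorphgroups}.
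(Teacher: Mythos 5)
Your proof is correct and follows essentially the same route as the paper: the forward inclusion is immediate, and for the reverse inclusion one uses the equality of orbits to write $\dot w\cdot x=g\cdot x$ with $g\in G^{T'}$ and then conjugates maximal tori inside $G_x^{T'}$ to land in $N_{G_x}^{T'}(T_x)$. You are in fact slightly more careful than the paper on two points it leaves implicit — that $G^{T'}\cdot x$ is closed (so that agreement in $X^{T'}\cms G^{T'}$ upgrades to agreement of orbits; this is Luna's theorem that a closed $G$-orbit restricts to a closed orbit under any reductive subgroup), and the final check that the constructed class maps to $\bar w$ under the injection of Lemma~\ref{lemma:injectmorphgroups}.
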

\begin{proof}
 It is clear that elements of $\overline{W}_{x,T'}$ stabilize $G^{T'}\cdot x$. Let $w\in \overline{W}_{(T',\alpha)}$ and $\dot{w}\in N_G(T)$ with $\dot{w}T'\dot{w}^{-1}=T'$ be a lift of $w$. We have $w\cdot(G^{T'}\cdot x)=G^{T'}\cdot(\dot{w}\cdot x)$ by Lemma~\ref{lemma:normalizestorusfixedpoints}. We assume that $w\cdot (G^{T'}\cdot x)=G^{T'}\cdot x$. By the definition of the injective morphism in Lemma~\ref{lemma:injectmorphgroups}, it suffices to find $k\in G^{T'}$ such that $\dot{w}k\in N_{G_x}^{T'}(T_x)$. There exists $g\in G^{T'}$ such that $\dot{w}g\in G_x$. Since $T_x$ and $(\dot{w}g)^{-1}T_x(\dot{w}g)$ are maximal tori of $G_x^{T'}$, there exists $h\in G_x^{T'}$ such that $hg^{-1}\dot{w}^{-1}\in N_{G_x}^{T'}(T_x)$. Setting $k=gh^{-1}$ concludes.
\end{proof}

\subsection{Inverse image}
 Let $x\in X$ be such that the orbit $G\cdot x$ is closed. We let $T'\subset T_x$ be a torus, i.e. $T'\in\CT(T_x)$. The map $\imath_{T'}\colon X^{T'}\cms G^{T'}\rightarrow X\cms G$ is finite (Lemma~\ref{lemma:finitemap}). We write $\imath_{T'}^{-1}(G\cdot x)=\{G^{T'}\cdot x_i\colon 1\leq i\leq N\}$ where $x_i\in X^{T'}$. We may write $x_i=g_i x$ for some $g_i\in G$.

 \begin{lemma}
  There is an inclusion $N_G(T')\subset N_G(G^{T'})$.
 \end{lemma}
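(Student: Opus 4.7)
The plan is a direct group-theoretic verification: I want to show that conjugation by any $g \in N_G(T')$ preserves the centralizer $G^{T'}$. This is a standard fact about normalizers and centralizers, essentially because if $g$ permutes the elements of $T'$, then $g$ also permutes the elements that commute with all of $T'$.

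Concretely, I would pick $g \in N_G(T')$ and $h \in G^{T'}$, and check that $ghg^{-1} \in G^{T'}$. For any $t' \in T'$, I compute
\[
(ghg^{-1})\,t'\,(ghg^{-1})^{-1} \;=\; g\bigl(h\,(g^{-1}t'g)\,h^{-1}\bigr)g^{-1}.
\]
Since $g \in N_G(T')$ (and hence $g^{-1} \in N_G(T')$ as well), we have $g^{-1}t'g \in T'$, so it is fixed by conjugation by $h \in G^{T'}$. Thus the inner expression equals $g^{-1}t'g$, and the whole right-hand side collapses to $t'$. This shows $ghg^{-1}$ centralizes $T'$, i.e.\ $ghg^{-1} \in G^{T'}$.

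This yields $g\,G^{T'}g^{-1} \subseteq G^{T'}$. Applying the same argument to $g^{-1} \in N_G(T')$ gives the reverse inclusion, so $g\,G^{T'}g^{-1} = G^{T'}$ and hence $g \in N_G(G^{T'})$.

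No obstacle is expected: this lemma is purely formal, of the same flavor as the small commutation computations carried out in Lemma~\ref{lemma:normalizestorusfixedpoints} and in the proof of Lemma~\ref{lemma:injectmorphgroups}. The hypothesis that $G\cdot x$ is closed and that $T' \subset T_x$ plays no role here; the statement holds for any subtorus $T' \subset G$.
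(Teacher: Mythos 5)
Your proof is correct and follows essentially the same direct commutation check as the paper's proof, just with the roles of $g$ and $h$ swapped in the notation. The only slight difference is that you explicitly invoke the argument for $g^{-1}$ to upgrade $gG^{T'}g^{-1}\subseteq G^{T'}$ to equality, a small point the paper leaves implicit.
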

 \begin{proof}
  Let $g\in G^{T'}$ and $h\in N_G(T')$. Then, for $t'\in T'$, we have
  \[
  \begin{aligned}
   t'hgh^{-1}t'^{-1}&=h\underbrace{h^{-1}t'h}_{\in T'}gh^{-1}t'^{-1}\\
					&=hg\underbrace{h^{-1}t'h}h^{-1}t'^{-1}\quad\text{ since $g\in G^{T'}$}\\
					&=hgh^{-1}\,.
  \end{aligned}
  \]
 \end{proof}

Let $T'\in\CT(T_x)$. We let $\SP_{N_x,G_x}^{T'}\subset \SP_{N_x,G_x}$ be the subset of classes of tori $\overline{T''}$ where $T''\subset T_x$ is a tori $N_G(T)$-conjugated (equivalently, $W$-conjugated) to $T'$.

 \begin{lemma}
 \label{lemma:bijection}
  The $W_x$-action on $\SP_{N_x,G_x}$ leaves $\SP_{N_x,G_x}^{T'}$ stable and there is  a natural map $\imath_{T'}^{-1}(G\cdot x)\rightarrow \SP_{N_x,G_x}^{T'}/W_{x}$. It induces a bijection.
 \[
  \imath_{T'}^{-1}(G\cdot x)/\overline{W}_{(T',\alpha)}\rightarrow \SP_{N_x,G_x}^{T'}/W_{x}\,.
 \]
 where $\overline{W}_{(T',\alpha)}= N_G^{T'}(T)/N_G(T)$.
  \end{lemma}
\begin{proof}

We show that $W_{x}$ acts on $\SP_{N_x,G_x}^{T'}$. Let $T''\in \SP_{N_x,G_x}^{T'}$ and $\dot{w}\in N_{G_x}(T_x)$, we have $T''\subset T_x\subset \dot{w}^{-1}T\dot{w}$, and so $\dot{w}^{-1}T\dot{w}$ and $T$ are maximal tori of $G^{T''}$. Let $h\in G^{T''}$ be such that $h^{-1}\dot{w}^{-1}T\dot{w}h=T$. Then, $\dot{w}h\in N_G(T)$ and $\dot{w}hT''
(\dot{w}h)^{-1}=\dot{w}T''\dot{w}^{-1}$. Consequently, $\dot{w}T''\dot{w}^{-1}\in \SP_{N_x,G_x}^{T'}$

Let $G^{T'}\cdot x_i\in \imath_{T'}^{-1}(G\cdot x)$. The torus $g_i^{-1}T'g_i$ is contained in $G_x$ since $x_i=g_ix\in X^{T'}$. The map $\imath_{T'}^{-1}(G\cdot x)\rightarrow \SP_{N_x,G_x}^{T'}/W_{x}$ sends $G^{T'}\cdot x_i$ to the conjugacy class of $g_i^{-1}T'g_i$. The map is well-defined, as if $g'\in G^{T'}$ is such that $x_i=g'\cdot x$, then $g'^{-1}g_i\in G_x$ and therefore, the tori $g_i^{-1}T'g_i$ and $g'^{-1}T'g'$ are conjugated in $G_x$.

We now construct a map $\SP_{N_x,G_x}^{T'}\rightarrow\imath_{T'}^{-1}(G\cdot x)/\overline{W}_{T'}$. Let $T''\subset T_x$ be a torus and $g\in N_G(T)$ such that $gT''g^{-1}=T'$. The map sends the conjugacy class of $T''$ to the orbit $G^{T'}\cdot gx$.

This map is well-defined. Indeed, if $g'\in N_G(T)$ is such that $g'T''g'^{-1}=T'$, then $g'g^{-1}\in N_G^{T'}(T)$ and so the $G^{T'}$-orbits $G^{T'}\cdot gx$ and $G^{T'}\cdot g'x$ are conjugated under $\overline{W}_{T'}$. 

It remains to check that the map is $W_{x}$-invariant. Let $T''\in\CT(T_x)$ be $N_G(T)$ conjugated to $T'$, $\dot{w}\in N_{G_x}^{T'}(T_x)$, $h\in G^{T'}$ such that $\dot{w}h\in N_G(T)$ (Proof of Lemma~\ref{lemma:injectmorphgroups}) and $g,g'\in N_G(T)$ be such that $gT''g^{-1}=T'$ and $g'\dot{w}hT''h^{-1}\dot{w}^{-1}g'^{-1}=T'$. Then, $g'\dot{w}hg^{-1}\in N_G(T)$ preserves $T'$ and sends $G^{T'}\cdot gx$ to $G^{T'}\cdot g'\dot{w}hx$. This concludes.
\end{proof}

\subsection{Relative Weyl group}
\label{subsection:relativeWeylgroups}
In \S\ref{subsection:Weylgroups}, we defined groups $\overline{W}_{(\lambda,\alpha)}$ associated to pairs $(\lambda,\alpha)\in\SQ_{X,G}$. In this section, we explain how these groups relate to the more familiar relative Weyl groups associated to a pair of a Levi subgroup of a reductive group.

\subsubsection{Relative Weyl group of a Levi subgroup}
\label{subsubsection:relWeyllevi}
Let $G$ be a reductive group and $L\subset G$ a Levi subgroup. Then, the relative Weyl group $W_{G,L}$ of $G$ relative to $L$ is defined as $W_{G,L}\coloneqq N_G(L)/L$. In particular, if $T\subset G$ is a maximal torus, we recover the definition of the Weyl group $W_G\coloneqq W_{G,T}$ of $G$.

 Let $T\subset L\subset G$ be a Levi subgroup of $G$ and a maximal torus of $L$ (and hence also maximal in $G$). We define $W_{G}^L\coloneqq \{w\in W\mid wLw^{-1}=L\}$. The group $W_{L}=N_L(T)/T$ is naturally a normal subgroup of $W_{G}^L$. We let $\overline{W}_G^L\coloneqq W_G^L/W_L$.
 
\begin{lemma}
\label{lemma:relWeylgroups}
The natural map $\overline{W}_G^L\rightarrow W_{G,L}$ is an isomorphism of groups.
\end{lemma}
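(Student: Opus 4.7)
My plan is to realize the natural map as coming from the inclusion $N_G(T) \cap N_G(L) \hookrightarrow N_G(L)$, then compute its kernel and image modulo the appropriate subgroups.

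First, I would note that since $T \subset L$, every element of $T$ normalizes $L$, so the condition $\dot{w}L\dot{w}^{-1} = L$ on a representative $\dot{w} \in N_G(T)$ depends only on its class in $W = N_G(T)/T$. This lets us identify $W_G^L = (N_G(T) \cap N_G(L))/T$. The composition
\[
N_G(T) \cap N_G(L) \longinto N_G(L) \longonto N_G(L)/L = W_{G,L}
\]
factors through the quotient by $T$ (again since $T \subset L$), yielding a homomorphism $\Phi\colon W_G^L \to W_{G,L}$.

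Next, I would compute the kernel. An element $\dot{w}T \in W_G^L$ lies in $\ker(\Phi)$ iff $\dot{w} \in L$, i.e.\ iff $\dot{w} \in L \cap N_G(T) = N_L(T)$. Hence $\ker(\Phi) = N_L(T)/T = W_L$, which gives an injection $\overline{W}_G^L = W_G^L/W_L \hookrightarrow W_{G,L}$.

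For surjectivity, given $n \in N_G(L)$, I would observe that $T \subset L$ is a maximal torus of $L$ (since it is maximal in $G$), and $n^{-1}Tn \subset n^{-1}Ln = L$ is another maximal torus of $L$. By conjugacy of maximal tori in the reductive group $L$, there exists $l \in L$ with $l^{-1}(n^{-1}Tn)l = T$, so $nl \in N_G(T)$. Clearly $nl \in N_G(L)$ as well, and $nl \equiv n \pmod L$, so $n$ lies in the image of $\Phi$. Combined with injectivity, this proves $\Phi$ is an isomorphism. No step here is a real obstacle; the only subtle point is verifying that the map is well-defined on the quotient by $W_L$, which is immediate from $T \subset L$, and the existence of $l$, which uses only the standard conjugacy of maximal tori in a reductive group.
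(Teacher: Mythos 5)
Your proof is correct, and it uses the same essential ingredient as the paper's, namely the conjugacy of maximal tori of $L$ to produce, from any $n \in N_G(L)$, an element $l\in L$ with $nl \in N_G(T)$. The presentation differs slightly: the paper constructs an inverse map $W_{G,L}\to \overline{W}_G^L$ directly and checks it is well defined (leaving the verification that it is an inverse group homomorphism implicit), whereas you realize the forward map as the homomorphism $\Phi\colon (N_G(T)\cap N_G(L))/T \to N_G(L)/L$, compute $\ker(\Phi)=W_L$, and prove surjectivity. Your kernel/image formulation is, if anything, slightly more complete, since it records injectivity and the group-homomorphism property explicitly rather than by appeal to symmetry; both are ultimately the same argument.
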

\begin{proof}
 We construct an inverse to this map. Let $w\in W_{G,L}$ and $\dot{w}\in N_G(L)$ a lift of $w$. Then, $T$ and $\dot{w}T\dot{w}^{-1}$ are maximal tori of $L$. Therefore, there exists $l\in L$ such that $l\dot{w}\in N_G(T)$. The inverse sends $w$ to the class of $l\dot{w}$ in $\overline{W}_G^L$. It is well-defined, as if $l'\in L$ is such that $l'\dot{w}\in N_G(T)$, then $l\dot{w}(l'\dot{w})^{-1}=ll'^{-1}\in L\cap N_G(T)=N_L(T)$ and the class of $l\dot{w}(l'\dot{w})^{-1}$ is in $W_L$.
\end{proof}

\subsubsection{Relative Weyl group of sub-varieties}
\label{subsubsection:relWeylsubvar}
\begin{corollary}
\label{corollary:Weylgroup}
 Let $G$ be a reductive group and $X=\Fg\coloneqq \Lie(G)$ with the adjoint action. Then, $\SQ_{X,G}=X_*(T)$ and for $\lambda\in X_*(T)$, $\overline{W}_{\lambda}=W_{G,G^{\lambda}}$.
\end{corollary}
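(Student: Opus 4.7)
The plan is to unpack the two definitions and match them directly against Lemma~\ref{lemma:relWeylgroups}.

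First I would verify $\SQ_{X,G}=X_*(T)$. For $X=\Fg$ with the adjoint action and $\lambda\in X_*(T)$, the fixed locus $X^{\lambda}=\Fg^{\lambda}=\Lie(G^{\lambda})$ is a linear subspace of $\Fg$, hence connected. Thus $\pi_0(X^{\lambda})$ is a singleton for every $\lambda$, and the set of pairs $(\lambda,\alpha)$ collapses to $X_*(T)$.

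Next I would translate $W_{\lambda}\subset W$ into the language of \S\ref{subsubsection:relWeyllevi}. Because $X^{\lambda}$ is the Lie algebra of $G^{\lambda}$, the equivalence $(\lambda,*)\sim(\mu,*)$ reduces to the single condition $G^{\lambda}=G^{\mu}$. In particular, for $w\in W$ with any lift $\dot w\in N_G(T)$ we have $w\cdot\overline{(\lambda,*)}=\overline{(\lambda,*)}$ iff $G^{w\lambda}=G^{\lambda}$, and since $G^{w\lambda}=\dot w G^{\lambda}\dot w^{-1}$, this is the condition $\dot w G^{\lambda}\dot w^{-1}=G^{\lambda}$. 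Hence $W_{\lambda}=W_G^{G^{\lambda}}$ in the notation preceding Lemma~\ref{lemma:relWeylgroups}. By definition $W^{\lambda}=N_{G^{\lambda}}(T)/T=W_{G^{\lambda}}$, which is the same normal subgroup appearing there.

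Combining these two identifications I get
\[
\overline{W}_{\lambda}=W_{\lambda}/W^{\lambda}=W_G^{G^{\lambda}}/W_{G^{\lambda}}=\overline{W}_G^{G^{\lambda}}.
\]
Finally I would invoke Lemma~\ref{lemma:relWeylgroups} to identify $\overline{W}_G^{G^{\lambda}}\cong W_{G,G^{\lambda}}=N_G(G^{\lambda})/G^{\lambda}$, which gives the desired equality $\overline{W}_{\lambda}=W_{G,G^{\lambda}}$.

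There is no real obstacle here: all the work has been done in Lemma~\ref{lemma:relWeylgroups}, and the only step that requires any attention is checking that the equivalence relation on $\SQ_{X,G}$ specializes, in the case $X=\Fg$, exactly to the condition on Levi subgroups used in \S\ref{subsubsection:relWeyllevi}. This is immediate from the fact that a Levi subgroup of $G$ is determined by its Lie algebra (equivalently, that $G^{\lambda}$ is connected and reductive with Lie algebra $\Fg^{\lambda}$).
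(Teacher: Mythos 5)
Your proposal is correct and follows essentially the same route as the paper's (very terse) proof: identify $W_{\lambda}$ with $W_G^{G^{\lambda}}$ and $W^{\lambda}$ with $W_{G^{\lambda}}$, then invoke Lemma~\ref{lemma:relWeylgroups}. You merely spell out the two points the paper leaves implicit, namely that $\Fg^{\lambda}$ is connected (so $\SQ_{\Fg,G}\cong X_*(T)$) and that the condition $X^{\lambda}=X^{\mu}$ is redundant given $G^{\lambda}=G^{\mu}$ since a connected reductive group is determined by its Lie algebra; both observations are sound.
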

\begin{proof}
 We have $W_{\lambda}=\{w\in W\mid w\cdot\overline{\lambda}=\overline{\lambda}\}$. The rest follows from the definitions at the beginning of \S\ref{subsubsection:relWeyllevi} and Definition~\ref{definition:Weylgrouprep}.
\end{proof}

\begin{proposition}
Let $X$ be a smooth affine $G$-variety and $(\lambda,\alpha)\in\SQ_{X,G}$. Then, $\overline{W}_{(\lambda,\alpha)}\cong \{w\in W_{G,G^{\lambda}}\mid w\cdot X^{\lambda}_{\alpha} =X^{\lambda}_{\alpha}\}$.
\end{proposition}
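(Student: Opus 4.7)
The plan is to unwind the definition of $W_{(\lambda,\alpha)}$ directly, identify it as a subgroup of $W_G^{G^\lambda}$ cut out by the condition of stabilizing $X^\lambda_\alpha$ as a subvariety of $X$, and then invoke Lemma~\ref{lemma:relWeylgroups} to pass to the quotient $W_{G,G^\lambda}$.

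First I would simply rewrite the defining condition. An element $w\in W$ lies in $W_{(\lambda,\alpha)}$ iff $(w\lambda,w\alpha)\sim(\lambda,\alpha)$, which by definition of $\sim$ means both $G^{w\lambda}=G^\lambda$ and $X^{w\lambda}_{w\alpha}=X^\lambda_\alpha$ as subvarieties of $X$. Since $G^{w\lambda}=wG^\lambda w^{-1}$ for any lift $\dot w\in N_G(T)$, the first condition is exactly the statement $w\in W_G^{G^\lambda}$. For the second, one observes that as subvarieties of $X$, $w\cdot X^\lambda_\alpha = X^{w\lambda}_{w\alpha}$, so the condition becomes $w\cdot X^\lambda_\alpha = X^\lambda_\alpha$. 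This gives the identification
\[
W_{(\lambda,\alpha)} \;=\; \{\,w\in W_G^{G^\lambda}\mid w\cdot X^\lambda_\alpha = X^\lambda_\alpha\,\}.
\]

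Next I would quotient by $W^\lambda=W_{G^\lambda}$. By Lemma~\ref{lemma:weylgroupsubgroup}, $W^\lambda\subset W_{(\lambda,\alpha)}$, and by Lemma~\ref{lemma:relWeylgroups} the inclusion $W_G^{G^\lambda}\hookrightarrow W$ induces an isomorphism $W_G^{G^\lambda}/W^\lambda \isoto W_{G,G^\lambda}$. The key point needed to pass to the quotient is that the subvariety-stabilization condition $w\cdot X^\lambda_\alpha = X^\lambda_\alpha$ is well defined on the class of $w$ in $W_{G,G^\lambda}$: any two lifts of a class in $W_{G,G^\lambda}$ differ by an element of $G^\lambda$, and since $G^\lambda$ is connected it preserves each connected component of $X^\lambda$, so modifying $\dot w$ by an element of $G^\lambda$ does not change $\dot w\cdot X^\lambda_\alpha$. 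Thus the condition descends and we get the asserted isomorphism $\overline{W}_{(\lambda,\alpha)}\cong\{w\in W_{G,G^\lambda}\mid w\cdot X^\lambda_\alpha = X^\lambda_\alpha\}$.

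No step here is really hard; the only subtle point is the bookkeeping around what "$w\cdot X^\lambda_\alpha$" means when $w\in W_{G,G^\lambda}$ is only defined up to $G^\lambda$, and the observation that connectedness of the Levi $G^\lambda$ is precisely what makes this well defined — the same fact already used in the proof of Lemma~\ref{lemma:weylgroupsubgroup}. Everything else is a formal rewriting of the equivalence relation defining $\SP_{X,G}$.
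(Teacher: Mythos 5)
Your proof is correct and follows essentially the same route as the paper's: identify $W_{(\lambda,\alpha)}$ as the subgroup of $W_G^{G^\lambda}$ stabilizing $X^\lambda_\alpha$, then apply Lemma~\ref{lemma:relWeylgroups} to pass to $W_{G,G^\lambda}$. Your extra remark on well-definedness of the stabilization condition modulo $G^\lambda$ (via connectedness of the Levi) is a welcome clarification of a point the paper leaves implicit.
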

\begin{proof}
 By definition, we have $\overline{W}_{(\lambda,\alpha)}=W_{(\lambda,\alpha)}/W_{G^{\lambda}}$. The result comes then from Lemma~\ref{lemma:relWeylgroups} and the fact that $W_{(\lambda,\alpha)}=\{w\in W\mid wG^{\lambda}w^{-1}=G^{\lambda}, w\cdot X^{\lambda}_{\alpha}=X^{\lambda}_{\alpha}\}$.
\end{proof}

\section{Parabolic induction for affine varieties with reductive group action}
\label{section:parabolicinduction}
We recall how to construct the sheafified parabolic induction, see for example \cite{kontsevich2011cohomological} or \cite[\S2]{hennecart2024cohomological}.
\subsection{The induction diagram}
\label{subsection:inductiondiagram}
Let $G$ be a reductive group and $X$ a smooth affine $G$-variety. To a cocharacter $\lambda\colon\BoG_{\rmm}\rightarrow G$, we associate 
\begin{enumerate}
 \item $G^{\lambda}\coloneqq \{g\in G\mid \forall t\in T,  \lambda(t)g\lambda(t)^{-1}=g\}$, the centraliser of $\lambda$, a Levi subgroup of $G$,
 \item $X^{\lambda}\coloneqq \{x\in X\mid\forall t\in T,  \lambda(t)\cdot x=x\}$, the fixed locus of $\lambda$, a smooth $G^{\lambda}$-variety, which we decompose as a disjoint union of its connected components $X^{\lambda}=\bigsqcup_{\alpha\in\pi_0(X^{\lambda})}X^{\lambda}_{\alpha}$,
 \item $P_{\lambda}=G^{\lambda\geq0}\coloneqq \{g\in G\mid \lim_{t\rightarrow 0}\limits \lambda(t)g\lambda(t)^{-1}\text{ exists}\}$, a parabolic subgroup of $G$,
 \item $X_{\alpha}^{\lambda\geq 0}\coloneqq \{x\in X\mid\lim_{t\rightarrow 0}\limits \lambda(t)\cdot x\in X^{\lambda}_{\alpha}\}$ for any $\alpha\in \pi_0(X^{\lambda})$, a smooth $P_{\lambda}$-variety.
\end{enumerate}
We denote by $\SQ_{X,G}$ the set of pairs $(\lambda,\alpha)$ where $\lambda\in X_*(T)$ is a cocharacter of $T$ and $\alpha\in \pi_0(X^{\lambda})$. For any $(\lambda,\alpha)\in \SQ_{X,G}$, we have the commutative induction diagram
\[\begin{tikzcd}
	& {X^{\lambda\geq 0}_{\alpha}/P_{\lambda}} \\
	{X^{\lambda}_{\alpha}/G^{\lambda}} && {X/G} \\
	{X_{\alpha}^{\lambda}\cms G^{\lambda}} && {X\cms G}
	\arrow["{q_{(\lambda,\alpha)}}"', from=1-2, to=2-1]
	\arrow["{p_{(\lambda,\alpha)}}", from=1-2, to=2-3]
	\arrow["{\pi_{(\lambda,\alpha)}}"', from=2-1, to=3-1]
	\arrow["\pi", from=2-3, to=3-3]
	\arrow["\imath_{(\lambda,\alpha)}",from=3-1, to=3-3]
\end{tikzcd}\]
where $V\cms G\coloneqq \Spec(\BoC[V]^G)$ is the affine GIT quotient of $V$ by $G$, $\imath_{(\lambda,\alpha)}$ is the map induced by the equivariant closed immersion $X^{\lambda}_{\alpha}\rightarrow X$, $p_{\lambda}$ is induced by the equivariant closed immersion $X^{\lambda\geq 0}_{\alpha}\subset X$, and $q_{(\lambda,\alpha)}$ is induced by the limit map $\tilde{q}_{(\lambda,\alpha)}\colon x\mapsto \lim_{t\rightarrow0}\limits \lambda(t)\cdot x$, a vector bundle by the Bia\l inicky-Birula formalism \cite{bialynicki1973some}.

\begin{lemma}
\label{lemma:attractclosed}
For any $(\lambda,\alpha)\in\SQ_{X,G}$, the varieties $X^{\lambda}$ and $X^{\lambda\geq 0}_{\alpha}$ are smooth closed subvarieties of $X$.
\end{lemma}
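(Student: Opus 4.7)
The plan is to invoke the Bia\l ynicki-Birula decomposition cited in the excerpt, after a small amount of setup to ensure closedness. Throughout, one views $X$ as acted on by $\BoG_{\rmm}$ via $\lambda$, since both $X^{\lambda}$ and $X^{\lambda\geq 0}_{\alpha}$ depend only on this one-parameter subgroup.

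For $X^{\lambda}$, I would note that it is the equaliser of the action map $\BoG_{\rmm}\times X\to X$ with the projection, and hence is closed in the separated scheme $X$. Smoothness is the classical fact that the fixed locus of a reductive (here torus) action on a smooth scheme is smooth; this can be seen locally from the Luna slice decomposition $\Tan_xX\cong\Tan_xX^{\lambda}\oplus (\Tan_xX)^{\lambda>0}\oplus(\Tan_xX)^{\lambda<0}$, since on a $\lambda$-invariant affine chart around a fixed point the ideal of $X^{\lambda}$ is generated by functions of nonzero $\lambda$-weight.

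For $X^{\lambda\geq 0}$, I would first equivariantly embed $X$ as a closed $G$-stable subvariety of a finite-dimensional $G$-representation $V$ (standard for affine $G$-varieties with $G$ reductive). Through $\lambda$ this becomes a closed $\BoG_{\rmm}$-equivariant embedding, and $V^{\lambda\geq 0}\subset V$ is a linear, hence closed, subspace. Therefore $X^{\lambda\geq 0}=X\cap V^{\lambda\geq 0}$ is closed in $X$. The limit map $\tilde q\colon X^{\lambda\geq 0}\to X^{\lambda}$ is a morphism of algebraic varieties, and $X^{\lambda\geq 0}_{\alpha}=\tilde q^{-1}(X^{\lambda}_{\alpha})$. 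Since $X^{\lambda}_{\alpha}$ is open and closed in $X^{\lambda}$, the same holds for $X^{\lambda\geq 0}_{\alpha}$ inside $X^{\lambda\geq 0}$, giving closedness in $X$.

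Smoothness of $X^{\lambda\geq 0}_{\alpha}$ is then immediate from Bia\l ynicki-Birula \cite{bialynicki1973some}, recalled after the induction diagram: the restriction $\tilde q_{(\lambda,\alpha)}\colon X^{\lambda\geq 0}_{\alpha}\to X^{\lambda}_{\alpha}$ is a Zariski-locally trivial affine bundle, indeed a vector bundle, over the smooth base $X^{\lambda}_{\alpha}$, so the total space is smooth. The only mildly nontrivial step is the equivariant closed embedding used to deduce that $X^{\lambda\geq 0}$ is closed in $X$; everything else is formal once the Bia\l ynicki-Birula structure is in hand.
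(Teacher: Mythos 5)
Your argument is correct and follows the same route the paper takes: cite the Bia\l ynicki-Birula formalism for affine varieties (the paper points to \cite[\S2]{hausel2022very}, you to \cite{bialynicki1973some}), with closedness ultimately reduced, via a $G$-equivariant closed embedding into a representation $V$, to the linearity of $V^{\lambda}$ and $V^{\lambda\geq 0}$ inside $V$. One small imprecision worth fixing: the equalizer of the action map $a\colon\BoG_{\rmm}\times X\to X$ with the projection is a closed subscheme of $\BoG_{\rmm}\times X$, not of $X$, so it is not literally $X^{\lambda}$; the cleanest way to get closedness of $X^{\lambda}$ is the same equivariant embedding $X\hookrightarrow V$ you use two sentences later, since then $X^{\lambda}=X\cap V^{\lambda}$ with $V^{\lambda}$ a linear subspace. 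With that adjustment the proof is complete and in line with the paper's.
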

\begin{proof}
 This follows from the Bia\l inicky-Birula formalism on affine varieties, see for example \cite[\S2]{hausel2022very}, in particular the proof of Lemma 2.1, Lemma 2.2, Lemma 2.3 and Remark 2.4 of \emph{op. cit.} for a modern treatment.
\end{proof}

Let $(\lambda,\alpha)\in\SQ_{X,G}$. We can decompose the restriction to $X_{\alpha}^{\lambda}$ of the tangent bundle of $X$:
\begin{equation}
\label{equation:restrictiontangentbundlefixedlocus}
 \Tan_{|X^{\lambda}_{\alpha}} X\cong \Tan^-_{|X^{\lambda}_{\alpha}} X\oplus \Tan^0_{|X^{\lambda}_{\alpha}} X\oplus \Tan^+_{|X^{\lambda}_{\alpha}} X\,,
\end{equation}
according to the $\lambda$-weights. We have $\Tan X^{\lambda}_{\alpha}\cong \Tan_{|X^{\lambda}_{\alpha}}^0 X$. We have a $P_{\lambda}$-equivariant isomorphism $X^{\lambda\geq 0}_{\alpha}\cong \Tan^+_{|X^{\lambda}_{\alpha}}X$ and a commutative diagram
\begin{equation}
\label{equation:vbstack}
\begin{tikzcd}
	{\Tan^+_{|X^{\lambda}_{\alpha}}X} & {} & {X^{\lambda\geq 0}_{\alpha}} \\
	& {X^{\lambda}_{\alpha}}
	\arrow["\sim", from=1-1, to=1-3]
	\arrow[from=1-1, to=2-2]
	\arrow["\tilde{q}_{(\lambda,\alpha)}",from=1-3, to=2-2]
\end{tikzcd}
\end{equation}

If $V$ is a representation of $G$ and $T$ a maximal torus of $G$, we denote by $\CW(V)$ the set of weights of $V$ counted with multiplicities. If $\lambda\in X_*(T)$ is a cocharacter, we let $\CW^{\lambda>0}(V)$ be the set of weights $\alpha$ of $V$ counted with multiplicities such that $\langle\lambda,\alpha\rangle>0$ (where we use the pairing between characters and cocharacters, see \S\ref{subsection:conventions}).

\begin{lemma}
\label{lemma:smoothnessqproperp}
 The map $q_{(\lambda,\alpha)}$ is a vector bundle stack of rank $r_{(\lambda,\alpha)}\coloneqq\rank (\Tan^+_{|X^{\lambda}_{\alpha}}X)-\#\CW^{\lambda>0}(\Fg)$ and the map $p_{(\lambda,\alpha)}$ is a proper and representable morphism of stacks.
\end{lemma}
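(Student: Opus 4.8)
The statement to prove is Lemma~\ref{lemma:smoothnessqproperp}: the map $q_{(\lambda,\alpha)}\colon X^{\lambda\geq 0}_{\alpha}/P_{\lambda}\to X^{\lambda}_{\alpha}/G^{\lambda}$ is a vector bundle stack of the stated rank, and $p_{(\lambda,\alpha)}\colon X^{\lambda\geq 0}_{\alpha}/P_{\lambda}\to X/G$ is proper and representable.

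For the statement about $q_{(\lambda,\alpha)}$, I would proceed as follows. First, the morphism $q_{(\lambda,\alpha)}$ is induced by the $P_{\lambda}$-equivariant limit map $\tilde{q}_{(\lambda,\alpha)}\colon X^{\lambda\geq 0}_{\alpha}\to X^{\lambda}_{\alpha}$, together with the inclusion $G^{\lambda}\subset P_{\lambda}$. By the Bia\l ynicki-Birula formalism on the smooth affine variety $X$ (Lemma~\ref{lemma:attractclosed} and the references cited there), $\tilde{q}_{(\lambda,\alpha)}$ exhibits $X^{\lambda\geq 0}_{\alpha}$ as the total space of the vector bundle $\Tan^+_{|X^{\lambda}_{\alpha}}X$ over $X^{\lambda}_{\alpha}$, compatibly with the $P_{\lambda}$-action, as recorded in diagram~\eqref{equation:vbstack}. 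Passing to quotient stacks, I would factor $q_{(\lambda,\alpha)}$ as the composite
\[
X^{\lambda\geq 0}_{\alpha}/P_{\lambda}\;\longrightarrow\; X^{\lambda}_{\alpha}/P_{\lambda}\;\longrightarrow\; X^{\lambda}_{\alpha}/G^{\lambda}.
\]
The first map is the projection of the vector bundle $\Tan^+_{|X^{\lambda}_{\alpha}}X/P_{\lambda}$ over $X^{\lambda}_{\alpha}/P_{\lambda}$, hence a (genuine) vector bundle of rank $\rank(\Tan^+_{|X^{\lambda}_{\alpha}}X)$. The second map is $X^{\lambda}_{\alpha}/P_{\lambda}\to X^{\lambda}_{\alpha}/G^{\lambda}$, which factors through $X^{\lambda}_{\alpha}/G^{\lambda}$ fibered over $BP_{\lambda}\to BG^{\lambda}$; since $P_{\lambda}=G^{\lambda}\ltimes U_{\lambda}$ with $U_{\lambda}$ the unipotent radical and $G^{\lambda}$ acts on $\mathrm{Lie}(U_{\lambda})=\Fg^{\lambda>0}$, this map is the vector bundle stack $[\Fg^{\lambda>0}/G^{\lambda}]$-torsor, i.e. a vector bundle stack of rank $-\dim U_{\lambda}=-\#\CW^{\lambda>0}(\Fg)$ (negative rank, since $BU_{\lambda}\to \pt$ contributes a shift in the opposite direction: $U_{\lambda}$ is special, so $X^{\lambda}_{\alpha}/P_{\lambda}\to X^{\lambda}_{\alpha}/G^{\lambda}$ is the classifying stack of a vector group bundle, hence a vector bundle stack of rank $-\#\CW^{\lambda>0}(\Fg)$). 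Composing, $q_{(\lambda,\alpha)}$ is a vector bundle stack of rank $\rank(\Tan^+_{|X^{\lambda}_{\alpha}}X)-\#\CW^{\lambda>0}(\Fg)=r_{(\lambda,\alpha)}$, as claimed.

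For the statement about $p_{(\lambda,\alpha)}$, I would use the standard description $X^{\lambda\geq 0}_{\alpha}/P_{\lambda}\cong (G\times^{P_{\lambda}}X^{\lambda\geq 0}_{\alpha})/G$, under which $p_{(\lambda,\alpha)}$ is induced by the $G$-equivariant map $G\times^{P_{\lambda}}X^{\lambda\geq 0}_{\alpha}\to X$, $(g,x)\mapsto g\cdot x$. This map is proper: it factors as the closed immersion $G\times^{P_{\lambda}}X^{\lambda\geq 0}_{\alpha}\into G\times^{P_{\lambda}}X=(G/P_{\lambda})\times X$ (closed because $X^{\lambda\geq 0}_{\alpha}\subset X$ is closed by Lemma~\ref{lemma:attractclosed}) followed by the projection $(G/P_{\lambda})\times X\to X$, which is proper since $G/P_{\lambda}$ is a projective variety (flag-type). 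A proper $G$-equivariant morphism of $G$-varieties descends to a proper morphism of quotient stacks, giving properness of $p_{(\lambda,\alpha)}$. Representability follows because the induced map on automorphism groups is injective: for $x\in X^{\lambda\geq 0}_{\alpha}$ with image $\bar x\in X$, the stabilizer $(P_{\lambda})_x$ embeds into $G_{\bar x}$ as the subgroup fixing $x$, which it does since the $P_{\lambda}$-action on $X^{\lambda\geq 0}_{\alpha}$ is the restriction of the $G$-action on $X$; so $p_{(\lambda,\alpha)}$ is representable by schemes (indeed a closed immersion onto a union of components of the relevant fiber product, up to the $G/P_\lambda$ direction).

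The main obstacle is making precise the bookkeeping of the rank of a vector bundle \emph{stack} through the two factors — in particular getting the sign conventions right for the $BU_{\lambda}$-direction (where $U_\lambda$ is a unipotent group scheme acting, contributing $-\dim U_\lambda$ to the virtual rank because its classifying stack has negative dimension) versus the honest vector bundle direction $\Tan^+_{|X^{\lambda}_{\alpha}}X$ (contributing $+\rank$). Everything else — closedness of $X^{\lambda\geq 0}_{\alpha}$, properness of $G/P_{\lambda}$, the infinitesimal injectivity on stabilizers — is either cited from Lemma~\ref{lemma:attractclosed} or entirely standard.
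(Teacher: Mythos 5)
Your proposal is correct and follows essentially the same route as the paper: the identification $X^{\lambda\geq 0}_{\alpha}\cong \Tan^+_{|X^{\lambda}_{\alpha}}X$ from diagram~\eqref{equation:vbstack} plus the unipotence of $\ker(P_{\lambda}\to G^{\lambda})$ for the rank of $q_{(\lambda,\alpha)}$, and the factorization of $X^{\lambda\geq 0}_{\alpha}\times^{P_{\lambda}}G\to X$ through a closed immersion into $X\times(G/P_{\lambda})$ followed by the projection for the properness of $p_{(\lambda,\alpha)}$. You merely spell out a couple of steps the paper leaves implicit (the intermediate factorization of $q_{(\lambda,\alpha)}$ through $X^{\lambda}_{\alpha}/P_{\lambda}$ and the injectivity of stabilizers giving representability), which is fine.
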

\begin{proof}
When $X=V$ is a representation of $G$, this is {\cite[Lemma 2.1]{hennecart2024cohomological}}.

 The statement regarding $q_{(\lambda,\alpha)}$ follows from the commutative diagram \eqref{equation:vbstack} and the fact that the kernel of $P_{\lambda}\rightarrow G^{\lambda}$ is unipotent of dimension $\#\CW^{\lambda>0}(\Fg)$.
 
 We can write $X^{\lambda}_{\alpha}/P_{\lambda}\simeq \tilde{X}^{\lambda}_{\alpha}/G$ where $\tilde{X}^{\lambda}_{\alpha}\coloneqq X^{\lambda\geq 0}_{\alpha}\times^{P_{\lambda}}G$ (see \S\ref{subsection:conventions}). Then, the map $p_{(\lambda,\alpha)}$ can be identified with the map between quotient stacks induced by the $G$-equivariant morphism $X^{\lambda\geq 0}_{\alpha}\times^{P_{\lambda}}G\rightarrow X$, $(x,g)\mapsto g\cdot x$ which is proper by Lemma~\ref{lemma:attractclosed}. Indeed, it factors as
 \[
  X^{\lambda\geq 0}_{\alpha}\times^{P_{\lambda}}G\rightarrow X\times (G/P_{\lambda})\rightarrow X
 \]
 where the first map is $(x,g)\mapsto (g\cdot x,gP_{\lambda})$ and the second map is the projection on the first factor. The first map is a closed immersion while the second is clearly projective.
 \end{proof}

\begin{lemma}
\label{lemma:finitemap}
 The map $\imath_{(\lambda,\alpha)}$ is a finite map. If $G=T$ is a torus, $\imath_{\lambda}$ is a closed immersion.
\end{lemma}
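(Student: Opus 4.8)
The claim is that $\imath_{(\lambda,\alpha)}\colon X^{\lambda}_{\alpha}\cms G^{\lambda}\to X\cms G$ is finite, and a closed immersion when $G=T$. The natural strategy is to exhibit $\imath_{(\lambda,\alpha)}$ as a composite $X^{\lambda}_{\alpha}\cms G^{\lambda}\to X^{\lambda\geq 0}_{\alpha}\cms P_{\lambda}\to X\cms G$ coming from the induction diagram, and to analyze each factor on the level of rings of invariants. The first map is induced by the closed immersion $X^{\lambda}_{\alpha}\into X^{\lambda\geq 0}_{\alpha}$ together with the quotient $P_{\lambda}\twoheadrightarrow G^{\lambda}$; since the unipotent radical $U_{\lambda}$ of $P_{\lambda}$ acts on $X^{\lambda\geq 0}_{\alpha}$ with $X^{\lambda}_{\alpha}$ as a (scheme-theoretic) section of the affine-bundle-type retraction $\tilde q_{(\lambda,\alpha)}$, one gets $\BoC[X^{\lambda\geq 0}_{\alpha}]^{P_{\lambda}} = (\BoC[X^{\lambda\geq 0}_{\alpha}]^{U_{\lambda}})^{G^{\lambda}}$ and restriction along the zero-section realizes $\BoC[X^{\lambda}_{\alpha}]^{G^{\lambda}}$ as a quotient of this — so this factor is a closed immersion. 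For the second factor, $X^{\lambda\geq 0}_{\alpha}\cms P_{\lambda}\to X\cms G$ is induced by $X^{\lambda\geq 0}_{\alpha}\times^{P_{\lambda}}G\to X$, which is proper by Lemma~\ref{lemma:attractclosed} (this is exactly the factorization through $X\times(G/P_{\lambda})$ used in the proof of Lemma~\ref{lemma:smoothnessqproperp}); a proper affine morphism is finite, and taking GIT quotients of a proper $G$-equivariant morphism of affine $G$-varieties yields a finite (in fact proper, hence finite since affine) morphism of the quotients. Composing, $\imath_{(\lambda,\alpha)}$ is finite.

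Alternatively — and this is probably cleaner to write — I would argue directly that $\imath_{(\lambda,\alpha)}$ is finite by showing it is affine (clear, both sides are affine schemes) and proper. Properness can be checked by the valuative criterion using that $X^{\lambda}_{\alpha}$ is a closed subvariety of $X$ (Lemma~\ref{lemma:attractclosed}) and that closed $G$-orbits map to points of the GIT quotient: a point of $X^{\lambda}_{\alpha}\cms G^{\lambda}$ is a closed $G^{\lambda}$-orbit in $X^{\lambda}_{\alpha}$, and one checks its image in $X\cms G$ determines it up to finitely many choices — the fiber of $\imath_{(\lambda,\alpha)}$ over a closed $G$-orbit $G\cdot x$ consists of the closed $G^{\lambda}$-orbits contained in $X^{\lambda}_{\alpha}\cap\overline{G\cdot x}$, and since $X^{\lambda}_{\alpha}$ is closed this is $X^{\lambda}_{\alpha}\cap G\cdot x$ (for $G\cdot x$ closed), a finite union of $G^{\lambda}$-orbits because $G^{\lambda}$ has finite index in its normalizer issues are controlled; more precisely the Weyl-group combinatorics of Lemma~\ref{lemma:bijection} show the fiber is finite. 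Finiteness then follows from: affine $+$ quasi-finite $+$ proper, or directly affine $+$ proper $\Rightarrow$ finite.

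For the last sentence, when $G=T$ is a torus we have $P_{\lambda}=G^{\lambda\geq 0}$; but for a torus the centralizer of any cocharacter is all of $T$, so $G^{\lambda}=T=P_{\lambda}$ and $X^{\lambda\geq 0}_{\alpha}=X^{\lambda}_{\alpha}$. Wait — that is not right in general; rather, for $G=T$ the relevant point is simply that $X^{\lambda}_{\alpha}\into X$ is a $T$-equivariant closed immersion of affine $T$-varieties, so the induced map on invariants $\BoC[X]^{T}\to\BoC[X^{\lambda}_{\alpha}]^{T}$ is surjective (restriction of $T$-invariants is surjective because $T$ is linearly reductive and $X^{\lambda}_{\alpha}$ is cut out by a $T$-submodule of $\BoC[X]$, so the restriction of the Reynolds-averaged lift works), whence $\imath_{\lambda}$ is a closed immersion. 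The subtle point worth isolating is precisely this surjectivity on invariants: for non-abelian $G^{\lambda}$ the restriction $\BoC[X]^{G}\to\BoC[X^{\lambda}_{\alpha}]^{G^{\lambda}}$ is \emph{not} surjective (the source has $G$-invariants, the target only $G^{\lambda}$-invariants), which is exactly why one only gets finiteness and not a closed immersion in general.

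\textbf{Main obstacle.} The crux is proving properness (equivalently, that the map is universally closed / has finite fibers with no "escape to infinity"), and the honest way to do that is via the factorization $\imath_{(\lambda,\alpha)} = (\text{closed immersion on GIT quotients}) \circ (\text{GIT quotient of the proper map } X^{\lambda\geq 0}_{\alpha}\times^{P_{\lambda}}G\to X)$, invoking that GIT quotient functor $(-)\cms G$ preserves properness for morphisms of affine $G$-varieties — the one nontrivial input being Lemma~\ref{lemma:attractclosed} together with the observation that $\BoC[X^{\lambda\geq 0}_{\alpha}]^{U_{\lambda}}$ is finitely generated (so that $X^{\lambda\geq 0}_{\alpha}/U_{\lambda}$ is a reasonable affine scheme), which holds since $U_{\lambda}$ acts on an affine variety with good quotient structure coming from the Biała-Birula retraction; I would cite the affine-bundle structure of $\tilde q_{(\lambda,\alpha)}$ for this. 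Everything else is formal.
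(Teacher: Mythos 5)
Your proposal is correct and follows essentially the same route as the paper: factor $\imath_{(\lambda,\alpha)}$ through $\Spec(\BoC[X^{\lambda\geq 0}_{\alpha}\times^{P_{\lambda}}G]^{G})$ as a closed immersion (coming from the section of $q_{(\lambda,\alpha)}$, which forces surjectivity on invariant rings) followed by the GIT quotient of the proper map $X^{\lambda\geq 0}_{\alpha}\times^{P_{\lambda}}G\to X$, and handle the torus case by surjectivity of restriction on $T$-invariants. The only step you assert rather than prove --- that the GIT quotient of a proper equivariant morphism of affine $G$-varieties is finite --- is exactly what the paper supplies, via coherence of $(\tilde{p}_{(\lambda,\alpha)})_*\CO$ giving an integral ring extension and the Reynolds operator transferring monic equations to the invariant subrings.
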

\begin{proof}
The proof of the finiteness of $\imath_{\lambda}$ is given in \cite[Lemma 2.2]{hennecart2024cohomological} when $X=V$ is a representation of $G$. If $G=T$ is a torus, the closed immersion $X^{\lambda}_{\alpha}\rightarrow X$ is given at the level of function by the surjective morphism of algebras $\BoC[X]\rightarrow\BoC[X^{\lambda}_{\alpha}]$. Taking $T$-invariants on both sides preserves surjectivity and hence, $\imath_{(\lambda,\alpha)}\colon X^{\lambda}\cms T\rightarrow X\cms T$ is a closed immersion.
 
 We prove the general statement. The arguments are parallel to that of the proof \cite[Lemma 2.1]{meinhardt2019donaldson}. Namely, we consider the commutative diagram
\[\begin{tikzcd}
	&& {X^{\lambda\geq 0}_{\alpha}/P_{\lambda}} \\
	{X^{\lambda}_{\alpha}/G^{\lambda}} && {\Spec(\BoC[\tilde{X}^{\lambda\geq 0}_{\alpha}]^{G})} && {X/G} \\
	{X^{\lambda}_{\alpha}\cms G^{\lambda}} &&&& {X\cms G}
	\arrow["q_{(\lambda,\alpha)}"', from=1-3, to=2-1]
	\arrow["\jmath_{(\lambda,\alpha)}", bend left=30, from=2-1, to= 1-3]
	\arrow["{\pi_{\lambda\geq 0,\alpha}}", from=1-3, to=2-3]
	\arrow["p_{(\lambda,\alpha)}", from=1-3, to=2-5]
	\arrow["{\pi_{(\lambda,\alpha)}}"', from=2-1, to=3-1]
	\arrow["{\overline{q}_{(\lambda,\alpha)}}"', from=2-3, to=3-1]
	\arrow["{\overline{p}_{(\lambda,\alpha)}}", from=2-3, to=3-5]
	\arrow["\pi", from=2-5, to=3-5]
	\arrow["{\overline{\jmath}_{(\lambda,\alpha)}}"', bend right=20, from=3-1, to=2-3]
    \arrow["\imath_{(\lambda,\alpha)}"', bend right=20, from=3-1, to=3-5]
    \end{tikzcd}\]
where $\tilde{X}^{\lambda\geq 0}_{\alpha}\coloneqq X^{\lambda\geq 0}_{\alpha}\times^{P_{\lambda}}G$, so that we have an equivalence of stacks $X^{\lambda\geq 0}_{\alpha}/P_{\lambda}\simeq\tilde{X}^{\lambda\geq 0}_{\alpha}/G$. We now explain the definitions of the maps in this diagram.

The map $q_{(\lambda,\alpha)}$ is given by the equivariant map $(X^{\lambda\geq 0}_{\alpha},P_{\lambda})\rightarrow (X^{\lambda}_{\alpha},G^{\lambda})$

Therefore, the pullback of functions by $q_{(\lambda,\alpha)}$ induces a morphism
\[
 \overline{q}_{(\lambda,\alpha)}^*\colon\BoC[X^{\lambda}_{\alpha}]^{G^{\lambda}}\rightarrow \BoC[\tilde{X}^{\lambda\geq 0}_{\alpha}]^{G}
\]
whose dual is the morphism between affine schemes
\[
\overline{q}_{(\lambda,\alpha)}\colon \Spec(\BoC[\tilde{X}^{\lambda\geq 0}_{\alpha}]^{G^{\lambda}})\rightarrow\Spec(\BoC[X^{\lambda}_{\alpha}])=X^{\lambda}_{\alpha}\cms G^{\lambda}.
\]
The map $\jmath_{(\lambda,\alpha)}$ is given by the equivariant morphism $(X^{\lambda}_{\alpha},G^{\lambda})\rightarrow(X^{\lambda\geq 0}_{\alpha},P_{\lambda})$ which induces the morphism
\[
 \overline{\jmath}_{(\lambda,\alpha)}^*\colon \BoC[\tilde{X}^{\lambda\geq 0}_{\alpha}]^{G}\rightarrow\BoC[X^{\lambda}_{\alpha}]^{G^{\lambda}}
\]
and dually the morphism of schemes
\[
 \overline{\jmath}_{(\lambda,\alpha)}\colon\Spec(\BoC[X^{\lambda}_{\alpha}]^{G^{\lambda}})\rightarrow\Spec(\BoC[\tilde{X}^{\lambda\geq 0}_{\alpha}]^{G}).
\]

Since $q_{(\lambda,\alpha)}\circ \jmath_{(\lambda,\alpha)}=\id$, we have $\overline{q}_{(\lambda,\alpha)}\circ\overline{\jmath}_{(\lambda,\alpha)}=\id$. On the ring of functions, we have $\overline{\jmath}_{(\lambda,\alpha)}^*\circ \overline{q}_{(\lambda,\alpha)}^*=\id$ and so $\jmath_{(\lambda,\alpha)}^*$ is surjective. This tells us that $\overline{\jmath}_{(\lambda,\alpha)}$ is a closed immersion. Moreover, $\imath_{(\lambda,\alpha)}=\overline{p}_{(\lambda,\alpha)}\circ\overline{\jmath}_{(\lambda,\alpha)}$ and so it suffices to prove that $\overline{p}_{(\lambda,\alpha)}$ is a finite map. Since it is a morphism of finite type complex schemes, it suffices to prove that it is an integral morphism \cite[\href{https://stacks.math.columbia.edu/tag/01WJ}{Lemma 01WJ}]{stacks-project}.

Since $\tilde{p}_{(\lambda,\alpha)}\colon \tilde{X}^{\lambda\geq 0}_{\alpha}\rightarrow X$ (the map lifting $p_{(\lambda,\alpha)}$) is a projective morphism (Proof of Lemma~\ref{lemma:smoothnessqproperp}), $(\tilde{p}_{(\lambda,\alpha)})_*\CO_{\tilde{X}^{\lambda\geq 0}_{\alpha}}$ is a coherent $\CO_{X}$-module. Therefore, the map $\tilde{p}_{(\lambda,\alpha)}^*\colon\BoC[X]\rightarrow \BoC[\tilde{X}^{\lambda\geq 0}_{\alpha}]$ is finite and hence integral  \cite[\href{https://stacks.math.columbia.edu/tag/01WJ}{Lemma 01WJ}]{stacks-project}. We deduce that the map
\[
 \overline{p}_{(\lambda,\alpha)}^*\colon\BoC[X]^G\rightarrow \BoC[\tilde{X}^{\lambda\geq 0}_{\alpha}]^G
\]
is integral, and hence finite, as follows. Let $a\in\BoC[\tilde{X}^{\lambda\geq 0}_{\alpha}]^G$. It is solution of a monic polynomial with coefficients in $\BoC[X]$. By applying the Reynolds operator to the equation, we obtain a monic equation with coefficients in $\BoC[X]^G$ of which $a$ is a zero. This concludes.

\end{proof}

For $(\lambda,\alpha)\in \SQ_{X,G}$, we define $d_{(\lambda,\alpha)}\coloneqq\dim X^{\lambda}_{\alpha}/G^{\lambda}=\dim X^{\lambda}_{\alpha}-\dim G^{\lambda}$. The quantity $r_{(\lambda,\alpha)}$ is defined in Lemma~\ref{lemma:smoothnessqproperp} as the rank of the vector bundle stack $q_{(\lambda,\alpha)}$.

\begin{definition}
\label{definition:symmetricrepresentation}
\begin{enumerate}
 \item  We say that a representation $V$ of a reductive group $G$ is \emph{symmetric} if $V$ and its dual $V^*$ have the same sets of weights counted with multiplicities: $\CW(V)=\CW(V^*)$.
 \item We say that a smooth $G$-variety $X$ is \emph{symmetric} if for any $x\in X$ such that $G\cdot x$ is closed, $\Tan_xX$ is a symmetric representation of $G_x$, the stabilizer of $x$ in $G$.
\end{enumerate}
\end{definition}
It is clear that when $X=V$, the notions of symmetricity of (1) and (2) coincide.

For later use, we recall the following lemma.

\begin{lemma}
\label{lemma:numberweights}
 Let $X$ be a symmetric $G$-variety and $d=\dim X/G$. Then,
 \[
  d_{(\lambda,\alpha)}+2r_{(\lambda,\alpha)}=d.
 \]
\end{lemma}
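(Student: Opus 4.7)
The plan is to compute $d$ explicitly via the weight decomposition \eqref{equation:restrictiontangentbundlefixedlocus} of the tangent bundle restricted to $X^{\lambda}_{\alpha}$ and the analogous decomposition of $\Fg$ under the adjoint action of $\lambda$, and then to reduce the identity $d_{(\lambda,\alpha)}+2r_{(\lambda,\alpha)}=d$ to two symmetry statements.

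First, writing $\Tan_{|X^{\lambda}_{\alpha}}X\cong \Tan^-_{|X^{\lambda}_{\alpha}}X\oplus\Tan^0_{|X^{\lambda}_{\alpha}}X\oplus\Tan^+_{|X^{\lambda}_{\alpha}}X$ with $\Tan^0_{|X^{\lambda}_{\alpha}}X\cong\Tan X^{\lambda}_{\alpha}$, and similarly decomposing $\Fg$ into the $\lambda$-weight pieces $\Fg^{\lambda<0}\oplus\Fg^{\lambda}\oplus\Fg^{\lambda>0}$, I obtain
\[
d=\dim X-\dim G = d_{(\lambda,\alpha)}+\bigl[\rank(\Tan^-)-\#\CW^{\lambda<0}(\Fg)\bigr]+\bigl[\rank(\Tan^+)-\#\CW^{\lambda>0}(\Fg)\bigr].
\]
Comparing with $2r_{(\lambda,\alpha)}=2\bigl[\rank(\Tan^+_{|X^{\lambda}_{\alpha}}X)-\#\CW^{\lambda>0}(\Fg)\bigr]$, the identity reduces to showing the two separate equalities
\[
\rank(\Tan^-_{|X^{\lambda}_{\alpha}}X)=\rank(\Tan^+_{|X^{\lambda}_{\alpha}}X)\quad\text{and}\quad\#\CW^{\lambda<0}(\Fg)=\#\CW^{\lambda>0}(\Fg).
\]

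The second equality is immediate from the root space decomposition of the reductive Lie algebra $\Fg$: roots come in opposite pairs $\pm\alpha$, so the multiset of $T$-weights (and hence of $\lambda$-weights) on $\Fg$ is stable under negation.

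The first equality will be proved by reducing to a single point. Since both $\Tan^{\pm}_{|X^{\lambda}_{\alpha}}X$ are $\lambda$-equivariant vector bundles on the connected variety $X^{\lambda}_{\alpha}$, their ranks are constant, so it suffices to exhibit one point $x\in X^{\lambda}_{\alpha}$ where $\rank\Tan^-_x X=\rank\Tan^+_x X$. Choose $x\in X^{\lambda}_{\alpha}$ whose $G^{\lambda}$-orbit is closed in $X^{\lambda}_{\alpha}$ (exists since $X^{\lambda}_{\alpha}$ is affine with a $G^{\lambda}$-action and $G^{\lambda}$ is reductive); by the standard fact that a $G^{\lambda}$-closed orbit in $X^{\lambda}$ is automatically $G$-closed in $X$ (a classical consequence of the Hilbert--Mumford criterion / Luna's theory, since $P_{\lambda}\cdot x=x$ and $G/P_{\lambda}$ is proper), the orbit $G\cdot x$ is closed in $X$. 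By the symmetricity assumption on $X$, $\Tan_xX$ is a symmetric representation of $G_x$. Since $\lambda$ fixes $x$, the image $\lambda(\BoG_{\rmm})$ lies in $G_x$, and symmetricity of a $G_x$-representation restricts to symmetricity of the underlying $\lambda(\BoG_{\rmm})$-representation (the multiset of $\lambda$-weights equals that of its negative). Consequently $\#\CW^{\lambda>0}(\Tan_xX)=\#\CW^{\lambda<0}(\Tan_xX)$, which is exactly the desired equality of ranks.

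The only non-routine ingredient is the existence of a $G$-closed orbit inside $X^{\lambda}_{\alpha}$, which is the main thing to justify carefully; everything else is linear algebra over $T$.
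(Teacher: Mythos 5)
Your argument is correct and follows the same route as the paper's: compute $d$, $d_{(\lambda,\alpha)}$ and $r_{(\lambda,\alpha)}$ from the $\lambda$-weight decompositions of $\Tan_{|X^{\lambda}_{\alpha}}X$ and $\Fg$, and reduce the identity to the two equalities $\rank(\Tan^{-}_{|X^{\lambda}_{\alpha}}X)=\rank(\Tan^{+}_{|X^{\lambda}_{\alpha}}X)$ and $\#\CW^{\lambda<0}(\Fg)=\#\CW^{\lambda>0}(\Fg)$, the second being automatic from the root decomposition of $\Fg$. The paper's own proof simply asserts the first equality ``by symmetry of $X$''; you usefully spell out why it actually follows from Definition~\ref{definition:symmetricrepresentation}, by observing that the ranks are constant on the connected component $X^{\lambda}_{\alpha}$, so it suffices to check at a single point $x$ with closed $G$-orbit. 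The one imprecision is your parenthetical justification for why such a point exists inside $X^{\lambda}_{\alpha}$: the statement ``$P_{\lambda}\cdot x=x$'' is false (only $\lambda(\BoG_{\rmm})\cdot x=x$ holds for $x\in X^{\lambda}$). The fact you actually need is Luna's closed-orbit theorem, applied to the reductive subgroup $T'=\lambda(\BoG_{\rmm})\subset G$: for $x\in X^{T'}=X^{\lambda}$, the orbit $G\cdot x$ is closed in $X$ if and only if $N_G(T')\cdot x$ is closed in $X^{\lambda}$. Since $G^{\lambda}=Z_G(T')$ has finite index in $N_G(T')$ and $N_G(T')$ normalizes $G^{\lambda}$, a closed $G^{\lambda}$-orbit is a finite union of translates, so $N_G(T')\cdot x$ is closed, and Luna gives closedness of $G\cdot x$. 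With this replacement, the proof is complete and matches the paper, only more detailed.
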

\begin{proof}
 When $X=V$ is a finite-dimensional representation of $G$, this is \cite[Lemma 2.4]{hennecart2024cohomological}. In this context, we write
 \[
  \Tan_{|X^{\lambda}_{\alpha}}X\cong \Tan^-_{|X^{\lambda}_{\alpha}}X\oplus \Tan^0_{|X^{\lambda}_{\alpha}}X\oplus \Tan^+_{|X^{\lambda}_{\alpha}}X
 \]
as in \eqref{equation:restrictiontangentbundlefixedlocus}.
By symmetry of $X$, $\rank (\Tan^-_{|X^{\lambda}_{\alpha}}X)=\rank (\Tan^+_{|X^{\lambda}_{\alpha}}X)$. Therefore, $d=\dim \Tan_xX-\dim \Fg^{\lambda}-2\dim \Fg^{\lambda>0}=2r_{(\lambda,\alpha)}+d_{(\lambda,\alpha)}$.
\end{proof}

\subsection{Parabolic induction}
\label{subsection:parabolicinduction}
For $(\lambda,\alpha)\in \SQ_{X,G}$, we let $\underline{\BoQ}_{X^{\lambda}_{\alpha}/G^{\lambda}}^{\vir}\coloneqq \underline{\BoQ}_{X^{\lambda}_{\alpha}/G^{\lambda}}\otimes\SL^{-d_{(\lambda,\alpha)}/2}=\underline{\IC}(X^{\lambda}_{\alpha}/G^{\lambda})$ be the intersection complex monodromic mixed Hodge module of the smooth stack $X^{\lambda}_{\alpha}/G^{\lambda}$. We refer to \cite{davison2020cohomological} for the necessary background regarding monodromic mixed Hodge modules. Alternatively, for the purposes of this paper, the reader may prefer to consider instead constructible sheaves. The Tate twist $-\otimes\SL$ is then replaced by the shift $[-2]$. Given that the stack $X^{\lambda}_{\alpha}/G^{\lambda}$ is smooth and the Tate twist encoded by the superscript $\vir$, we have $\BD\underline{\BoQ}_{X^{\lambda}_{\alpha}/G^{\lambda}}^{\vir}\cong\underline{\BoQ}_{X^{\lambda}_{\alpha}/G^{\lambda}}^{\vir}$.

Since $q_{(\lambda,\alpha)}$ is smooth of relative dimension $r_{(\lambda,\alpha)}$ (Lemma~\ref{lemma:smoothnessqproperp}), we have $q_{(\lambda,\alpha)}^*\cong q_{(\lambda,\alpha)}^!\otimes\SL^{r_{(\lambda,\alpha)}}$ and thus we obtain
\[
 q_{(\lambda,\alpha)}^*\BD\underline{\BoQ}_{X^{\lambda}_{\alpha}/G^{\lambda}}^{\vir}\cong\BD\underline{\BoQ}_{X^{\lambda\geq 0}_{\alpha}/P_{\lambda}}\otimes\SL^{r_{(\lambda,\alpha)}+d_{(\lambda,\alpha)}/2}
\]
which by adjunction $(q_{(\lambda,\alpha)}^*,(q_{(\lambda,\alpha)})_*)$ provides us with the map
\begin{equation}
\label{equation:smoothpbq}
 \BD\underline{\BoQ}_{X^{\lambda}_{\alpha}/G^{\lambda}}^{\vir}\rightarrow(q_{(\lambda,\alpha)})_*\BD\underline{\BoQ}_{X^{\lambda\geq 0}_{\alpha}/P_{\lambda}}\otimes\SL^{r_{(\lambda,\alpha)}+d_{(\lambda,\alpha)}/2}.
\end{equation}
Moreover, the map $p_{(\lambda,\alpha)}$ is proper (Lemma~\ref{lemma:smoothnessqproperp}), and so by dualizing the adjunction map
\[
 \underline{\BoQ}_{X/G}\rightarrow (p_{(\lambda,\alpha)})_*\underline{\BoQ}_{X^{\lambda\geq 0}_{\alpha}/P_{\lambda}},
\]
and using $\BD (p_{(\lambda,\alpha)})_*=(p_{(\lambda,\alpha)})_!\BD$ and $(p_{(\lambda,\alpha)})_*=(p_{(\lambda,\alpha)})_!$, we obtain
\begin{equation}
\label{equation:properpfp}
 (p_{(\lambda,\alpha)})_*\BD\underline{\BoQ}_{X^{\lambda\geq 0}_{\alpha}/P_{\lambda}}\rightarrow\BD\underline{\BoQ}_{X/G}.
\end{equation}
We eventually obtain the map
\begin{equation}
\label{equation:sheafifiedinduction}
 \Ind_{(\lambda,\alpha)}\colon (\imath_{(\lambda,\alpha)})_*(\pi_{(\lambda,\alpha)})_*\BD\underline{\BoQ}_{X^{\lambda}_{\alpha}/G^{\lambda}}^{\vir}\rightarrow \pi_*\BD\underline{\BoQ}_{X/G}\otimes\SL^{r_{(\lambda,\alpha)}+d_{(\lambda,\alpha)}/2}
\end{equation}
by composing $(\imath_{(\lambda,\alpha)})_*(\pi_{(\lambda,\alpha)})_*$ applied to \eqref{equation:smoothpbq} with $\pi_*$ applied to \eqref{equation:properpfp} twisted by $\SL^{r_{(\lambda,\alpha)}+d_{(\lambda,\alpha)}/2}$. The map \eqref{equation:sheafifiedinduction} will be called \emph{sheafified induction morphism}. When $X$ is symmetric (Definition~\ref{definition:symmetricrepresentation}), Lemma~\ref{lemma:numberweights} implies that the induction morphism is a morphism of complexes
\begin{equation}
\label{equation:sheafindsymm}
 \Ind_{(\lambda,\alpha)}\colon (\imath_{(\lambda,\alpha)})_*(\pi_{(\lambda,\alpha)})_*\BD\underline{\BoQ}_{X^{\lambda}_{\alpha}/G^{\lambda}}^{\vir}\rightarrow \pi_*\BD\underline{\BoQ}_{X/G}^{\vir}\,.
\end{equation}

\begin{lemma}[{\cite[Lemma 2.5]{hennecart2024cohomological}}]
\label{lemma:preservescohdegrees}
 Let $X$ be a symmetric $G$-variety. The induction morphism 
 \[\Ind_{(\lambda,\alpha)}\colon \HO^{*+d_{(\lambda,\alpha)}}(X^{\lambda}_{\alpha}/G^{\lambda})\rightarrow \HO^{*+d}(X/G)\] preserves cohomological degrees.
\end{lemma}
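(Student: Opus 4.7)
My plan is to track cohomological degrees carefully through the construction of $\Ind_{(\lambda,\alpha)}$ and then pass to derived global sections. The key first observation is that the sheafified induction \eqref{equation:sheafindsymm} is a morphism in $\CD^{+}(\MMHM(X\cms G))$, so applying $\HO^i$ on both sides produces an honest map at each integer $i$, with no mixing of degrees.

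Next, I would identify the derived global sections on each side with ordinary cohomology in a shifted degree. For a smooth stack $\FX$ of dimension $n$, the virtual sheaf $\underline{\BoQ}^{\vir}_{\FX}=\underline{\BoQ}_{\FX}\otimes\SL^{-n/2}$ is self-dual and sits in cohomological degree $-n$, so its derived global sections satisfy $\HO^i(\FX,\underline{\BoQ}^{\vir}_{\FX})=\HO^{i+n}(\FX)$ (up to a Tate twist, which does not affect cohomological grading). Since $\pi_*$, $(\pi_{(\lambda,\alpha)})_*$, and $(\imath_{(\lambda,\alpha)})_*$ do not alter derived global sections on $X\cms G$, we obtain
\[
\HO^i\bigl((\imath_{(\lambda,\alpha)})_*(\pi_{(\lambda,\alpha)})_*\BD\underline{\BoQ}^{\vir}_{X^{\lambda}_{\alpha}/G^{\lambda}}\bigr)=\HO^{i+d_{(\lambda,\alpha)}}(X^{\lambda}_{\alpha}/G^{\lambda})
\]
and $\HO^i(\pi_*\BD\underline{\BoQ}^{\vir}_{X/G})=\HO^{i+d}(X/G)$. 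Setting $*=i$ gives precisely the statement of the lemma.

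The role of the symmetricity hypothesis is to reconcile the general induction morphism \eqref{equation:sheafifiedinduction}, which targets $\pi_*\BD\underline{\BoQ}_{X/G}\otimes\SL^{r_{(\lambda,\alpha)}+d_{(\lambda,\alpha)}/2}$, with the simpler form \eqref{equation:sheafindsymm} above. The former lives in cohomological degree $-2d+2r_{(\lambda,\alpha)}+d_{(\lambda,\alpha)}$; for this to match the degree $-d$ of $\BD\underline{\BoQ}^{\vir}_{X/G}$ one needs the numerical identity $d_{(\lambda,\alpha)}+2r_{(\lambda,\alpha)}=d$, which is exactly Lemma~\ref{lemma:numberweights} and relies on the symmetry of $X$. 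Once this identification is made, the previous step applies verbatim.

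I do not expect any serious obstacle: the argument is a careful bookkeeping of cohomological shifts and Tate twists, with the only non-trivial input being the numerical identity of Lemma~\ref{lemma:numberweights}. The main subtlety, which is more bookkeeping than obstacle, is consistent use of the convention that $\SL$ sits in cohomological degree $2$, so that $\otimes\SL^k$ shifts cohomological degree by $2k$.
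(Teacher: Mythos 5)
Your argument is correct and is essentially the same as the paper's: the paper simply observes that $\Ind_{(\lambda,\alpha)}$ is obtained by taking derived global sections of the single morphism of complexes \eqref{equation:sheafindsymm}, and \eqref{equation:sheafindsymm} already carries the virtual shifts on both sides precisely because of the numerical identity $d_{(\lambda,\alpha)}+2r_{(\lambda,\alpha)}=d$ of Lemma~\ref{lemma:numberweights}. Your bookkeeping of the Tate twists and cohomological shifts is the same calculation the paper leaves implicit.
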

\begin{proof}
 The proof of \cite[Lemma 2.5]{hennecart2024cohomological}, which is the case $X=V$ for a representation $V$ of $G$, applies also for the $G$-variety $X$: the induction is given by taking the derived global sections of the morphism of complexes of mixed Hodge modules \eqref{equation:sheafindsymm}.
\end{proof}

\subsection{Augmentation}
\label{subsection:augmentation}
Let $(\lambda,\alpha)\in \SQ_{X,G}$. We defined a $G^{\lambda}$-variety $X^{\lambda}_{\alpha}$ in \S\ref{subsection:inductiondiagram}. We can see $T$ as a maximal torus of $G^{\lambda}$. If $\mu\in X_*(T)$, we have $(X^{\lambda})^{\mu}=X^{\lambda}\cap X^{\mu}$, $(G^{\lambda})^{\mu}=G^{\lambda}\cap G^{\mu}$, $(X^{\lambda})^{\mu\geq 0}$ and $P_{\mu,\lambda}\coloneqq P_{\mu}\cap G^{\lambda}$. 

For $(\mu,\beta)\in\SQ_{X^{\lambda}_{\alpha},G^{\lambda}}$, the induction diagram is
\[\begin{tikzcd}
	& {(X^{\lambda}_{\alpha})^{\mu\geq 0}_{\beta}/P_{\mu,\lambda}} \\
	{(X^{\lambda}_{\alpha})^{\mu}_{\beta}/(G^{\lambda})^{\mu}} && {X^{\lambda}_\alpha{}/G^{\lambda}} \\
	{(X^{\lambda}_{\alpha})^{\mu}_{\beta}\cms (G^{\lambda})^{\mu}} && {X^{\lambda}_{\alpha}\cms G^{\lambda}}
	\arrow["{q_{(\lambda,\alpha),(\mu,\beta)}}"', from=1-2, to=2-1]
	\arrow["{p_{(\lambda,\alpha),(\mu,\beta)}}", from=1-2, to=2-3]
	\arrow["{\pi_{(\lambda,\alpha),(\mu,\beta)}}"', from=2-1, to=3-1]
	\arrow["\pi_{(\lambda,\alpha)}", from=2-3, to=3-3]
	\arrow["\imath_{(\lambda,\alpha),(\mu,\beta)}",from=3-1, to=3-3]
\end{tikzcd}\]

The definition of the induction map associated with this datum is
\[
 \Ind_{(\mu,\beta),(\lambda,\alpha)}\colon \HO^*((X^{\lambda}_{\alpha})^{\mu}_{\beta}/(G^{\lambda})^{\mu})\rightarrow \HO^*(X^{\lambda}_{\alpha}/G^{\lambda})\,.
\]
 To unclutter the notation, we may write $(X^{\lambda}_{\alpha})^{\mu}_{\beta}=X^{\lambda,\mu}_{\alpha,\beta}$.
\begin{lemma}[{\cite[Lemma 2.6]{hennecart2024cohomological}}]
\label{lemma:gcdcocharacters}
  For any $\lambda,\mu\in X_*(T)$, one can find $\nu\in X_*(T)$ such that $X^{\nu}=X^{\mu}\cap X^{\lambda}$, $G^{\nu}=G^{\mu}\cap G^{\lambda}$, $(X^{\lambda})^{\nu\geq0}=(X^{\lambda})^{\mu\geq0}$ and $P_{\mu,\lambda}=P_{\nu,\lambda}$.
\end{lemma}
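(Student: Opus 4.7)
The plan is to take $\nu = \lambda + N\mu \in X_*(T)$ for $N \gg 0$, generalising the choice used in the proof of \cite[Lemma 2.6]{hennecart2024cohomological} for $X = V$ a representation. Conditions (3) and (4) will hold for any $N > 0$ purely from the algebraic structure; conditions (1) and (2) reduce, via an equivariant embedding, to the representation case and therefore to a verification about finitely many weights and roots.

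For conditions (3) and (4), the key observation is that $\lambda(\BoG_{\rmm})$ acts trivially on $X^{\lambda}$ and lies in the centre of $G^{\lambda}$ (the latter because, by definition, $G^{\lambda}$ is the centraliser of $\lambda(\BoG_{\rmm})$ in $G$). Consequently, for every $x \in X^{\lambda}$ and every $g \in G^{\lambda}$,
\[
\nu(t) \cdot x = \lambda(t)\,\mu(t)^N \cdot x = \mu(t^N)\cdot x, \qquad \nu(t)\, g\, \nu(t)^{-1} = \mu(t^N)\, g\, \mu(t^N)^{-1}\,.
\]
Since $N > 0$, the reparametrisation $t \mapsto t^N$ preserves the existence of limits as $t \to 0$, giving at once $(X^{\lambda})^{\nu \geq 0} = (X^{\lambda})^{\mu \geq 0}$ and $P_{\nu,\lambda} = P_{\mu,\lambda}$.

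For condition (2), $G^{\nu}$ is generated by $T$ together with the root subgroups $U_{\alpha}$ associated to roots $\alpha$ of $(G,T)$ with $\langle \nu, \alpha \rangle = 0$. The inclusion $G^{\lambda} \cap G^{\mu} \subset G^{\nu}$ is automatic from $\langle\nu,\alpha\rangle = \langle\lambda,\alpha\rangle + N\langle\mu,\alpha\rangle$. For the reverse I need, for each root $\alpha$, that $\langle \nu, \alpha \rangle = 0$ forces both $\langle \lambda, \alpha \rangle = 0$ and $\langle \mu, \alpha \rangle = 0$. In the case $\langle \mu, \alpha \rangle \neq 0$, the equation $\langle\lambda,\alpha\rangle + N\langle\mu,\alpha\rangle = 0$ is satisfied only by the single rational number $N = -\langle\lambda,\alpha\rangle / \langle\mu,\alpha\rangle$, so finiteness of the root system allows us to avoid all such bad values by taking $N$ sufficiently large.

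For condition (1), I would embed $X$ $G$-equivariantly into a finite-dimensional $G$-representation $V$, using that $\BoC[X]$ is a locally finite $G$-module and $X$ is of finite type. The embedding is $\lambda$-, $\mu$-, and $\nu$-equivariant, so $X^{\lambda} = X \cap V^{\lambda}$, $X^{\mu} = X \cap V^{\mu}$ and $X^{\nu} = X \cap V^{\nu}$. It therefore suffices to establish $V^{\nu} = V^{\lambda} \cap V^{\mu}$, which is \cite[Lemma 2.6]{hennecart2024cohomological} applied to $V$; equivalently, a repetition of the argument used in condition (2), with the finite set of $T$-weights $\CW(V) \subset X^*(T)$ in place of the root system. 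Taking $N$ larger than all the finitely many bad candidates arising from both the weights of $V$ and the roots of $G$ yields the required $\nu$. The only nontrivial step is this finite bookkeeping; there is no genuine obstacle, since for each of the finitely many linear forms on $X_*(T)$ that must avoid vanishing at $\nu$, the set of bad $N$ is a single rational number.
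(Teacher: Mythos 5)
Your proof is correct and follows the same strategy the paper intends: take $\nu=\lambda+N\mu$ for a large integer $N$ (the choice of \cite[Lemma~2.6]{hennecart2024cohomological}), deduce (3) and (4) from the centrality of $\lambda(\BoG_{\rmm})$ in $G^{\lambda}$, handle (2) by avoiding the finitely many bad $N$ coming from roots, and reduce (1) to the representation case via a $G$-equivariant closed embedding $X\hookrightarrow V$ (valid since $X$ is affine of finite type). This is precisely the ``straightforward generalisation'' the paper alludes to, with the equivariant embedding being the one genuinely new ingredient needed to pass from representations to affine $G$-varieties.
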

\begin{proof}
 The proof of \cite[Lemma 2.6]{hennecart2024cohomological} is written in the case of $X=V$ a representation of reductive group, but generalises to any $X$ in a straightforward way.
\end{proof}
In particular, for $(\lambda,\alpha)\in \SQ_{X,G}$, $(\mu,\beta)\in\SQ_{X^{\lambda}_{\alpha},G^{\lambda}}$ and $\nu$ given by Lemma~\ref{lemma:gcdcocharacters} there exists $\gamma\in \pi_{0}(X^{\nu})$ such that $(X^{\lambda}_{\alpha})^{\mu}_{\beta}=X^{\nu}_{\gamma}$.

\subsection{Associativity}
\begin{proposition}[{\cite[Proposition 2.7]{hennecart2024cohomological}}]
\label{proposition:associativity}
 Let $(\nu,\gamma)\in\SQ_{X,G}, (\mu,\beta)\in\SQ_{X^{\nu}_{\gamma},G^{\nu}}, (\lambda,\alpha)\in\SQ_{X^{\nu,\mu}_{\gamma,\beta},G^{\nu,\mu}}$ be such that $(X^{\nu}_{\gamma})^{\lambda\geq 0}\supset (X^{\nu}_{\gamma})^{\mu\geq 0}$. Then,
 \[
  \Ind_{(\lambda,\alpha),(\nu,\gamma)}=\Ind_{(\mu,\beta),(\nu,\gamma)}\circ\Ind_{(\lambda,\alpha),(\mu,\beta)}.
 \]
\end{proposition}
\begin{proof}
 The proof of \cite[Proposition 2.7]{hennecart2024cohomological}, written in the case when $X=V$ is a representation of $G$, adapts to the case of general smooth affine varieties in a straighforward way.
\end{proof}

\section{Perverse filtration}
\label{section:perversefiltration}
\subsection{Perverse filtration}

Let $X$ be a smooth affine $G$-variety, where $G$ is a reductive group. We let $\CH=\CH_{X,G}\coloneqq \HO^*(X/G,\BoQ^{\vir})$. Then, the quotient map $\pi\colon X/G\rightarrow X\cms G$ gives a split perverse filtration $\FP^{\leq i}\CH_{X,G}\coloneqq \HO^*(X\cms G,{^p\tau^{\leq i}\pi_*\BoQ_{X/G}^{\vir}})$ on $\CH$ (by Proposition~\ref{proposition:semisimplicitypistar}). For $(\lambda,\alpha)\in\SQ_{X,G}$, we write $\CH_{(\lambda,\alpha)}\coloneqq\CH_{X^{\lambda}_{\alpha},G^{\lambda}}$.

\subsection{Induction and perverse filtration}
\begin{proposition}
\label{proposition:inductionperversefiltration}
Let $X$ be a smooth affine symmetric $G$-variety. Then, the induction morphisms $\Ind_{(\lambda,\alpha)}\colon \CH_{(\lambda,\alpha)}\rightarrow\CH$ (\S\ref{section:parabolicinduction}) respect the perverse filtration.
\end{proposition}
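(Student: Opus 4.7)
The plan is to deduce this as a formal consequence of (i) the finiteness of $\imath_{(\lambda,\alpha)}$ established in Lemma~\ref{lemma:finitemap}, and (ii) the functoriality of the perverse truncation functor ${^p\tau^{\leq i}}$. By construction, the induction on cohomology $\Ind_{(\lambda,\alpha)}\colon\CH_{(\lambda,\alpha)}\to\CH$ is obtained by applying $\HO^*(X\cms G,-)$ to the sheafified induction morphism \eqref{equation:sheafindsymm}
\[
\Ind_{(\lambda,\alpha)}\colon (\imath_{(\lambda,\alpha)})_*(\pi_{(\lambda,\alpha)})_*\underline{\BoQ}^{\vir}_{X^{\lambda}_{\alpha}/G^{\lambda}}\longrightarrow \pi_*\underline{\BoQ}^{\vir}_{X/G}\,.
\]

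Since $\imath_{(\lambda,\alpha)}$ is finite, $(\imath_{(\lambda,\alpha)})_*$ is perverse $t$-exact; in particular it commutes with ${^p\tau^{\leq i}}$. Combined with the identity $\HO^*(X\cms G,(\imath_{(\lambda,\alpha)})_*(-))=\HO^*(X^{\lambda}_{\alpha}\cms G^{\lambda},-)$, this lets one rewrite $\FP^{\leq i}\CH_{(\lambda,\alpha)}$ as the image of $\HO^*(X\cms G,{^p\tau^{\leq i}}(\imath_{(\lambda,\alpha)})_*(\pi_{(\lambda,\alpha)})_*\underline{\BoQ}^{\vir})$ inside $\HO^*(X\cms G,(\imath_{(\lambda,\alpha)})_*(\pi_{(\lambda,\alpha)})_*\underline{\BoQ}^{\vir})$. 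Both filtrations $\FP^{\leq i}\CH_{(\lambda,\alpha)}$ and $\FP^{\leq i}\CH$ are thus expressed via perverse truncations of complexes on the common base $X\cms G$.

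Finally, applying ${^p\tau^{\leq i}}$ to the sheafified induction and invoking the naturality of the adjunction ${^p\tau^{\leq i}}\to\id$ yields a commutative square whose horizontal arrows are ${^p\tau^{\leq i}}\Ind_{(\lambda,\alpha)}$ on top and $\Ind_{(\lambda,\alpha)}$ on the bottom, with vertical arrows the canonical adjunction maps ${^p\tau^{\leq i}}\to\id$. Taking $\HO^*(X\cms G,-)$ and reading off the images of the two vertical maps identifies them precisely with $\FP^{\leq i}\CH_{(\lambda,\alpha)}$ on the left and $\FP^{\leq i}\CH$ on the right, so commutativity of the resulting square forces $\Ind_{(\lambda,\alpha)}(\FP^{\leq i}\CH_{(\lambda,\alpha)})\subset\FP^{\leq i}\CH$. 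There is no genuine obstacle: the argument is entirely formal, the substantive inputs being Lemma~\ref{lemma:finitemap} together with the construction of the sheafified induction in \S\ref{subsection:parabolicinduction}. (One could alternatively use the splitting of the perverse filtration guaranteed by Proposition~\ref{proposition:semisimplicitypistar} to decompose the induction as a sum of maps between individual perverse cohomology objects, but this refinement is not required for the filtration-preservation statement alone.)
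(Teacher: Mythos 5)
Your proof is correct and takes essentially the same approach as the paper: both deduce the statement from the finiteness of $\imath_{(\lambda,\alpha)}$ (hence perverse $t$-exactness of $(\imath_{(\lambda,\alpha)})_*$, allowing the filtration on $\CH_{(\lambda,\alpha)}$ to be expressed via truncations on the common base $X\cms G$) together with the formal fact that a map out of $\underline{\tau}^{\leq k}$ of the source factors through $\underline{\tau}^{\leq k}$ of the target. You spell out the step of transporting the perverse filtration from $X^{\lambda}_{\alpha}\cms G^{\lambda}$ to $X\cms G$ a bit more explicitly than the paper does, but the argument is the same.
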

\begin{proof}
 The induction morphisms $\Ind_{(\lambda,\alpha)}$ are given by taking the derived global sections of the morphisms of complexes $\Ind_{(\lambda,\alpha)}\colon (\imath_{(\lambda,\alpha)})_*(\pi_{(\lambda,\alpha)})_*\underline{\BoQ}_{X^{\lambda}_{\alpha}/G^{\lambda}}^{\vir}\rightarrow\pi_*\underline{\BoQ}_{X/G}^{\vir}$ \eqref{equation:sheafifiedinduction}. The map $\imath_{(\lambda,\alpha)}$ is finite (Lemma~\ref{lemma:finitemap}) and is therefore perverse $t$-exact. By standard properties of $t$-structures, the composition ${\underline{\tau}}^{\leq k}(\imath_{(\lambda,\alpha)})_*(\pi_{(\lambda,\alpha)})_*\underline{\BoQ}_{X^{\lambda}_{\alpha}/G^{\lambda}}^{\vir}\rightarrow (\imath_{(\lambda,\alpha)})_*(\pi_{(\lambda,\alpha)})_*\underline{\BoQ}_{X^{\lambda}_{\alpha}/G^{\lambda}}^{\vir}\xrightarrow{\Ind_{(\lambda,\alpha)}}\pi_*\underline{\BoQ}_{X/G}^{\vir}$ factors through ${\underline{\tau}}^{\leq k}\pi_*\underline{\BoQ}_{X/G}^{\vir}\rightarrow \pi_*\underline{\BoQ}_{X/G}^{\vir}$. This finishes the proof.
\end{proof}

\begin{lemma}
 The $\overline{W}_{(\lambda,\alpha)}$-action on $\HO^*(X^{\lambda}_{\alpha}/G^{\lambda},\BoQ^{\vir})$ respects the perverse filtration.
\end{lemma}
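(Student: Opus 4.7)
The plan is to trace the $\overline{W}_{(\lambda,\alpha)}$-equivariance through the very definition of the perverse filtration, exploiting the naturality of perverse truncation together with the equivariance already established in preceding lemmas.

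First, I would recall from the construction preceding Lemma~\ref{lemma:Wactionimath} that $\overline{W}_{(\lambda,\alpha)}$ acts on both the stack $X^{\lambda}_{\alpha}/G^{\lambda}$ and its good moduli space $X^{\lambda}_{\alpha}\cms G^{\lambda}$, and that the map $\pi_{(\lambda,\alpha)}$ is equivariant with respect to these actions. Pushing forward the canonically equivariant complex $\underline{\BoQ}_{X^{\lambda}_{\alpha}/G^{\lambda}}^{\vir}$ along this equivariant map produces a $\overline{W}_{(\lambda,\alpha)}$-equivariant object of $\CD^+(\MMHM(X^{\lambda}_{\alpha}\cms G^{\lambda}))$, exactly as in Proposition~\ref{proposition:groupactionPerversesheaf} (applied to $\pi_{(\lambda,\alpha)}$, with the action on $X^{\lambda}_{\alpha}\cms G^{\lambda}$ replacing the trivial one; the argument is identical since pushforward is $2$-functorial in the pair (morphism, equivariant structure)).

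Second, I would invoke functoriality of the perverse truncation. For each $i\in\BoZ$, $\underline{\tau}^{\leq i}$ is a functor on $\CD^+(\MMHM(X^{\lambda}_{\alpha}\cms G^{\lambda}))$, and the canonical morphism $\underline{\tau}^{\leq i}\underline{\CF}\to\underline{\CF}$ is a natural transformation. It follows automatically that $\underline{\tau}^{\leq i}(\pi_{(\lambda,\alpha)})_*\underline{\BoQ}_{X^{\lambda}_{\alpha}/G^{\lambda}}^{\vir}$ inherits a $\overline{W}_{(\lambda,\alpha)}$-equivariant structure and that the canonical map to $(\pi_{(\lambda,\alpha)})_*\underline{\BoQ}_{X^{\lambda}_{\alpha}/G^{\lambda}}^{\vir}$ is equivariant.

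Finally, applying the derived global sections functor (trivially $\overline{W}_{(\lambda,\alpha)}$-equivariant, since it is the pushforward to a point equipped with the trivial action), we obtain a $\overline{W}_{(\lambda,\alpha)}$-equivariant morphism of graded vector spaces whose image is, by definition, $\FP^{\leq i}\CH_{(\lambda,\alpha)}\subset \CH_{(\lambda,\alpha)}$. This exhibits each piece of the perverse filtration as a $\overline{W}_{(\lambda,\alpha)}$-subrepresentation. There is no real obstacle: the statement is a purely formal consequence of functoriality of $\underline{\tau}^{\leq i}$ together with the previously established equivariance of $\pi_{(\lambda,\alpha)}$. The only mild subtlety would be to make sure that the action on the moduli space (which need not be trivial) causes no issue in identifying the filtration piece with the image of an equivariant map, but this is immediate since the map is equivariant with respect to any action on the target.
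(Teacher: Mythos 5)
Your argument is correct and follows essentially the same route as the paper: use the equivariance of $\pi_{(\lambda,\alpha)}$ already set up before Lemma~\ref{lemma:Wactionimath}, push forward the equivariant complex, invoke functoriality of perverse truncation (which the paper phrases as ``general theory of equivariant perverse sheaves''), and take global sections. The only cosmetic difference is that the paper starts from an $N_G^{(\lambda,\alpha)}(T)$-action by automorphisms of $\pi_{(\lambda,\alpha)}$ and then observes that it factors through $W_{(\lambda,\alpha)}$ and kills $W^{\lambda}$, whereas you work directly with the induced $\overline{W}_{(\lambda,\alpha)}$-action; the two are interchangeable given the preceding lemma.
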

\begin{proof}
 We let $p\colon N_G(T)\rightarrow W\cong N_G(T)/T$ be the quotient map and $N_G^{(\lambda,\alpha)}(T)\coloneqq p^{-1}(W_{(\lambda,\alpha)})$ (see after Definition~\ref{definition:Weylgrouprep}). Then, $N_G^{(\lambda,\alpha)}(T)$ acts by automorphisms on the map $\pi_{(\lambda,\alpha)}\colon X^{\lambda}_{\alpha}/G^{\lambda}\rightarrow X^{\lambda}_{\alpha}\cms G^{\lambda}$. Since $\underline{\BoQ}_{X^{\lambda}_{\alpha}/G^{\lambda}}^{\vir}$ is $N_G^{(\lambda,\alpha)}(T)$-equivariant, the pushforward $(\pi_{(\lambda,\alpha)})_*\underline{\BoQ}_{X^{\lambda}_{\alpha}/G^{\lambda}}^{\vir}$ also inherits a $N_G^{(\lambda,\alpha)}(T)$-equivariant structure. Therefore, the perverse filtration on $(\pi_{(\lambda,\alpha)})_*\underline{\BoQ}_{X^{\lambda}_{\alpha}/G^{\lambda}}^{\vir}$ is $N_{G}^{(\lambda,\alpha)}(T)$-equivariant (by the general theory of equivariant perverse sheaves \cite{bernstein2006equivariant}). By taking derived global sections, we obtain that the $N_G^{(\lambda,\alpha)}(T)$-action on $\HO^*(X^{\lambda}_{\alpha}/G^{\lambda})$ respects the perverse filtration. This action factors through $W_{(\lambda,\alpha)}\cong N_G^{(\lambda,\alpha)}(T)/T$. This finishes the proof of the lemma, as the action of $W^{\lambda}\subset W_{(\lambda,\alpha)}$ on $\pi_{(\lambda,\alpha)}$ is trivial, and so the action descends to the quotient $\overline{W}_{(\lambda,\alpha)}$.
\end{proof}

\begin{proposition}
\label{proposition:cohintgraded}
 Let $V$ be a representation of a reductive group $G$. The cohomological integrality map of Theorem~\ref{theorem:abscohintreminder} \cite[Theorem 1.1]{hennecart2024cohomological} is a filtered isomorphism. That is, the associated graded morphism
 \[
  \bigoplus_{\tilde{\lambda}\in\SP_V/W}(\gr(\CP_{\lambda})\otimes\HO^*(\pt/G_{\lambda}))^{\varepsilon_{V,\lambda}}\rightarrow \CH_{V,G}
 \]
with respect to the perverse filtration is an isomorphism.
\end{proposition}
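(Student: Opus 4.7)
The plan is to leverage the canonical splittings of the perverse filtration afforded by Proposition~\ref{proposition:semisimplicitypistar}. For each $(\lambda,\alpha)$, the pushforward $(\pi_{(\lambda,\alpha)})_*\underline{\BoQ}^{\vir}_{V^\lambda_\alpha/G^\lambda}$ is pure of weight zero, and therefore splits in the derived category of monodromic mixed Hodge modules as a direct sum of shifts of its perverse cohomologies. Correspondingly, the perverse filtration on $\HO^*(V^\lambda/G^\lambda)$ is canonically split, yielding an honest perverse grading. Moreover, morphisms between two such split complexes of pure polarizable weight-zero monodromic mixed Hodge modules decompose diagonally, by semisimplicity of the heart of pure weight-zero objects. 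Applying this to the sheafified induction morphism~\eqref{equation:sheafifiedinduction} and taking derived global sections, I would conclude that the absolute induction morphism $\Ind_{(\lambda,\alpha)}$ is not merely filtered (as stated in Proposition~\ref{proposition:inductionperversefiltration}) but in fact strict with respect to the perverse filtration, i.e., a morphism of perversely graded vector spaces.

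Next, I would revisit the construction of the BPS subspaces $\CP_\lambda$ recalled in \S\ref{section:reminderabsolute} and ensure they are chosen compatibly with the perverse grading. By definition, $\CP_\lambda$ is a $\overline{W}_\lambda$-stable direct sum complement inside $\HO^*(\overline{V^\lambda}/\overline{G^\lambda})$ of a subspace of the form
\[
S_\lambda \;=\; \sum_{\bar\mu \prec \bar{0}_{G^\lambda}} \Ind_{\mu}\bigl(\HO^*(\overline{V^{\lambda,\mu}}/\overline{G^{\lambda,\mu}})\bigr)\,.
\]
By the strictness statement of the previous paragraph, $S_\lambda$ is itself perversely graded (being a sum of images of perversely graded maps), and is $\overline{W}_\lambda$-stable by construction. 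Working over $\BoQ$, a Maschke-type averaging performed separately in each perverse degree then produces a simultaneous $\overline{W}_\lambda$-equivariant and perversely graded direct sum complement to $S_\lambda$, which we take as $\CP_\lambda$.

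With this choice, the inclusion $\CP_\lambda \otimes \HO^*(\pt/G_\lambda) \hookrightarrow \HO^*(V^\lambda/G^\lambda)$ is a strict map of perversely graded spaces, and composing with the strict induction morphisms yields a cohomological integrality map that is itself a morphism of perversely graded vector spaces. Since it is a linear isomorphism by Theorem~\ref{theorem:abscohintreminder} and preserves the perverse grading, a dimension count in each perverse degree forces the induced map on associated gradeds to be an isomorphism. The most delicate input is the strictness of the sheafified induction morphisms at the level of monodromic mixed Hodge modules, which ultimately rests on semisimplicity of the category of polarizable pure monodromic mixed Hodge modules of a fixed weight; alternatively, one can bypass the semisimplicity argument and proceed by induction on the partial order of \S\ref{subsection:orderrelation}, propagating strictness from smaller to larger cocharacters.
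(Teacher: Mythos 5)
Your proof takes a genuinely different route from the paper's, and the crucial step in it does not go through as stated. The paper's proof is direct: it invokes Proposition~\ref{proposition:inductionperversefiltration} to see that the induction morphisms preserve the perverse filtration, filters the isomorphism of Theorem~\ref{theorem:abscohintreminder}, and then computes the associated graded piece of $(\CP_\lambda\otimes\HO^*(\pt/G_\lambda))^{\varepsilon_{V,\lambda}}$ explicitly using the observation that the perverse filtration on $\HO^*(\pt/G_\lambda)$ is the filtration by cohomological degree. You instead try to establish \emph{strictness} of the induction morphisms by arguing that morphisms between split pure complexes decompose diagonally ``by semisimplicity of the heart of pure weight-zero objects.''

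That claim is false in general, and this is the gap. If $\underline{\CF}=\bigoplus_i \CF^i[-i]$ and $\underline{\CG}=\bigoplus_j \CG^j[-j]$ are pure of weight zero, then $\CF^i$ has weight $i$ and $\CG^j$ has weight $j$, and
\[
\Hom(\underline{\CF},\underline{\CG})\;\cong\;\bigoplus_{i\geq j}\mathrm{Ext}^{\,i-j}(\CF^i,\CG^j)\,.
\]
The strictly lower-triangular summands with $i>j$ need not vanish: $\CF^i$ and $\CG^j[\,i-j\,]$ are pure of the \emph{same} weight $i$, so the weight-vanishing $\Hom(\underline{A},\underline{B})=0$ for $\mathrm{wt}(\underline{A})>\mathrm{wt}(\underline{B})$ does not apply, and $\mathrm{Ext}^{\,i-j}(\CF^i,\CG^j)$ with $\mathrm{wt}(\CF^i)>\mathrm{wt}(\CG^j)$ is exactly the type of extension group that is generically nontrivial in the Abelian category of mixed Hodge modules. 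Semisimplicity of the heart of polarizable pure objects of a fixed weight only controls $\mathrm{Ext}^1$ between objects of equal weight; it says nothing about $\mathrm{Ext}^{\,i-j}$ for $i>j$ across weights. Consequently a morphism of split pure complexes is a priori only lower-triangular, i.e.\ filtration-preserving, which is precisely what Proposition~\ref{proposition:inductionperversefiltration} already gives without any semisimplicity input. Your dimension count at the end is valid only once the map is known to be graded, so the argument does not close. Your final remark --- that one can instead propagate strictness by induction on the order of \S\ref{subsection:orderrelation} --- points in the right direction and is closer in spirit to how the strictness is actually secured (it is built into the inductive construction of $\CP_\lambda$ in \cite{hennecart2024cohomological}), but you would need to carry that out; as written, the semisimplicity shortcut does not replace it.
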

\begin{proof}
 By Proposition~\ref{proposition:inductionperversefiltration}, the induction morphisms preserve the perverse filtration. Therefore, for any $i\in\BoZ$, by filtering the isomorphism of Theorem \ref{theorem:abscohintreminder}, we obtain an isomorphism
 \[
  \bigoplus_{\tilde{\lambda}\in\SP_V/W}(\FP^{\leq i}(\CP_{\lambda}\otimes\HO^*(\pt/G_{\lambda})))^{\varepsilon_{V,\lambda}}\rightarrow\FP^{\leq i}\CH_{V,G}\,.
 \]
Now, we have
\begin{multline}
 A_i\coloneqq(\FP^{\leq i}(\CP_{\lambda}\otimes\HO^*(\pt/G_{\lambda})))^{\varepsilon_{V,\lambda}}/(\FP^{\leq i-1}(\CP_{\lambda}\otimes\HO^*(\pt/G_{\lambda})))^{\varepsilon_{V,\lambda}}
 \\
 \cong (\bigoplus_{j\leq i}(\FP^{\leq j}\CP_{\lambda}/\FP^{\leq j-1}\CP_{\lambda})\otimes \FP^{\leq i-j}\HO^*(\pt/G_{\lambda})/\FP^{\leq i-j-1}\HO^*(\pt/G_{\lambda}))^{\varepsilon_{V,\lambda}}
\end{multline}
The perverse filtration on $\HO^*(\pt/G_{\lambda})$ is the filtration by cohomological degrees, and so, by assembling pieces, we obtain the isomorphism of the proposition.
\end{proof}

The advantage of taking the associated graded with respect to the perverse filtration is that we can work in the Abelian category of cohomologically graded mixed Hodges modules $\MMHM^{\BoZ}(X\cms G)$ (\S\ref{subsection:cohgradedmhm}) rather than in the triangulated category of complexes of mixed Hodge modules. In the former, one can take kernels and cokernels of morphisms.

Let $(\lambda,\alpha)\in\SQ_{X,G}$. We let $\overline{\Ind}_{(\lambda,\alpha)}\coloneqq\gr(\Ind_{(\lambda,\alpha)})=\bigoplus_{i\in\BoZ}\CH^i(\Ind_{(\lambda,\alpha)})\colon  (\imath_{(\lambda,\alpha)})_*\underline{\CH}_{(\lambda,\alpha)}\rightarrow\underline{\CH}$ be the sheafified associated graded mutiplication and we use the same notation after taking derived global sections.

\section{Local neighbourhood theorem for smooth stacks}
\label{section:localneighbourhoodsmoothstacks}
\subsection{Neighbourhood theorem via \'etale slices}
\begin{theorem}
\label{theorem:localneighbourhood}
 Let $G$ be a reductive group and $X$ a smooth affine $G$-variety. Let $x\in G$ such that the orbit $G\cdot x$ is closed, and so its stabilizer $G_x\subset G$ is a reductive group. Let $\lambda\colon\BoG_{\rmm}\rightarrow G_x$ be a one-parameter subgroup. We let $N_x\subset X$ be a normal slice through $x$ to the $G$-orbit of $x$. It is a $G_x$-variety. There exists a $G_x$-invariant Zariski open neighbourhood $U\subset \Tan_xN_x$ of $0$ such that we have commutative diagrams (after possibly shrinking $N_x$ to a Zariski open neighbourhood of $x$ in $X$)
\[\begin{tikzcd}
	& {X^{\lambda}_{\alpha}/ G^{\lambda}} & {X^{\lambda\geq 0}_{\alpha}/G^{\lambda\geq 0}} & {X/G} \\
	{N_x^{\lambda}/G_x^{\lambda}} & {N_x^{\lambda\geq 0}/G_x^{\lambda\geq0}} & {N_x/G_x} & {} \\
	{} & {X^{\lambda}_{\alpha}\cms G^{\lambda}} & {} & {X\cms G} \\
	{N_x^{\lambda}\cms G_x^{\lambda}} && {N_x\cms G_x}
	\arrow[no head, from=1-2, to=2-2]
	\arrow["{q_{(\lambda,\alpha)}}"', from=1-3, to=1-2]
	\arrow["{p_{(\lambda,\alpha)}}", from=1-3, to=1-4]
	\arrow[from=1-4, to=3-4]
	\arrow[from=2-1, to=1-2]
	\arrow[from=2-1, to=4-1]
	\arrow[from=2-2, to=1-3]
	\arrow["{q_{x,\lambda}}"', from=2-2, to=2-1]
	\arrow["{p_{x,\lambda}}", from=2-2, to=2-3]
	\arrow[from=2-2, to=3-2]
	\arrow[from=2-3, to=1-4]
	\arrow[from=2-3, to=4-3]
	\arrow["{\imath_{(\lambda,\alpha)}}"{pos=0.6}, no head, from=3-2, to=3-3]
	\arrow[from=3-3, to=3-4]
	\arrow[from=4-1, to=3-2]
	\arrow["{\imath_{x,\lambda}}", from=4-1, to=4-3]
	\arrow[from=4-3, to=3-4]
\end{tikzcd}\]
\[\begin{tikzcd}
	& {U^{\lambda}\cms G^{\lambda}_x} & {U^{\lambda\geq 0}/G^{\lambda\geq 0}_x} & {U/G_x} \\
	{N_x^{\lambda}/G_x^{\lambda}} & {N_x^{\lambda\geq 0}/G_x^{\lambda\geq0}} & {N_x/G_x} & {} \\
	{} & {U^{\lambda}\cms G_x^{\lambda}} & {} & {U\cms G_x} \\
	{N_x^{\lambda}\cms G_x^{\lambda}} && {N_x\cms G_x}
	\arrow[no head, from=1-2, to=2-2]
	\arrow["{q_{\lambda}}"', from=1-3, to=1-2]
	\arrow["{p_{\lambda}}", from=1-3, to=1-4]
	\arrow[from=1-4, to=3-4]
	\arrow[from=2-1, to=1-2]
	\arrow[from=2-1, to=4-1]
	\arrow[from=2-2, to=1-3]
	\arrow["{q_{x,\lambda}}"', from=2-2, to=2-1]
	\arrow["{p_{x,\lambda}}", from=2-2, to=2-3]
	\arrow[from=2-2, to=3-2]
	\arrow[from=2-3, to=1-4]
	\arrow[from=2-3, to=4-3]
	\arrow["{\imath_{\lambda}}"{pos=0.6}, no head, from=3-2, to=3-3]
	\arrow[from=3-3, to=3-4]
	\arrow[from=4-1, to=3-2]
	\arrow["{\imath_{x,\lambda}}", from=4-1, to=4-3]
	\arrow[from=4-3, to=3-4]
\end{tikzcd}\]
 with \'etale diagonal arrows and Cartesian left and right faces of both parallelepipeds.
\end{theorem}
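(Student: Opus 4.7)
The strategy is to apply Luna's étale slice theorem \cite{luna1973slices} twice and then use functoriality of $\lambda$-fixed loci and $\lambda$-attractors under étale equivariant morphisms. The first application, to $x\in X$ (with its closed $G$-orbit and reductive stabilizer $G_x$), furnishes (after replacing $N_x$ by a $G_x$-invariant Zariski neighborhood of $x$) an étale $G$-equivariant morphism $N_x\times^{G_x}G\to X$ whose image is a $G$-saturated Zariski open neighborhood of $G\cdot x$. Passing to stack quotients and to good moduli spaces gives the diagonal arrows $N_x/G_x\to X/G$ and $N_x\cms G_x\to X\cms G$, both étale. The second application, to the $G_x$-fixed point $x\in N_x$ (which makes sense because $N_x$ is smooth at $x$ and $\{x\}$ is a closed $G_x$-orbit), produces, after further shrinking $N_x$, a $G_x$-equivariant étale morphism $N_x\to U\subset\Tan_xN_x$ onto a $G_x$-invariant Zariski open neighborhood of the origin. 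This supplies the diagonal arrows of the second parallelepiped.

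To extend the diagonal arrows to the fixed-point and attractor columns, I would invoke the following standard compatibility: for an étale $G$-equivariant morphism $f\colon Y\to Y'$ between smooth affine $G$-varieties and a cocharacter $\lambda$ of $G$, one has $Y^{\lambda}=f^{-1}(Y'^{\lambda})$ and $Y^{\lambda\geq 0}=f^{-1}(Y'^{\lambda\geq 0})$ as subschemes (since fixed-point functors and Białynicki-Birula attractors commute with étale base change). Applying this to $N_x\times^{G_x}G\to X$ and to the cocharacter $\lambda\in G_x\subset G$, and unwinding the description of the $\lambda$-fixed and $\lambda$-attracting loci of the twisted product, one finds that near $[x,e]$ these loci identify with $N_x^{\lambda}\times^{G_x^{\lambda}}G^{\lambda}$ and $N_x^{\lambda\geq 0}\times^{G_x^{\lambda\geq 0}}G^{\lambda\geq 0}$ respectively. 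Quotienting by the ambient group yields étale morphisms $N_x^{\lambda}/G_x^{\lambda}\to X^{\lambda}_{\alpha}/G^{\lambda}$ and $N_x^{\lambda\geq 0}/G_x^{\lambda\geq 0}\to X^{\lambda\geq 0}_{\alpha}/G^{\lambda\geq 0}$, where $\alpha\in\pi_0(X^{\lambda})$ is selected as the unique component containing $x$ (this component is in the image because $x\in N_x^{\lambda}$). The same argument applied to $N_x\to U$ produces the étale diagonals of the second parallelepiped, and in both cases the induced morphisms on good moduli spaces come for free by the functoriality of $\cms$.

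Commutativity of the six faces of each cube is then automatic from the naturality of fixed-point loci, attractors, and GIT quotients with respect to equivariant morphisms. Cartesianity of the left face (the square involving $q_{(\lambda,\alpha)}$ and $q_{x,\lambda}$) follows from the identity $X^{\lambda\geq 0}_{\alpha}=\tilde{q}_{(\lambda,\alpha)}^{-1}(X^{\lambda}_{\alpha})$ of the limit map, which is stable under étale pullback; Cartesianity of the right face (the square involving $p_{(\lambda,\alpha)}$) follows because $N_x^{\lambda\geq 0}$ is, by the étale base change compatibility recalled above, the pullback of $X^{\lambda\geq 0}_{\alpha}$ along the étale equivariant morphism $N_x\to X$ (and similarly for $U$ via $N_x\to U$).

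I expect the main obstacle to be the bookkeeping in the third step: specifically, verifying that the component of $(N_x\times^{G_x}G)^{\lambda}$ containing $[x,e]$ is precisely $N_x^{\lambda}\times^{G_x^{\lambda}}G^{\lambda}$, and consequently that the étale morphism hits the intended component $\alpha$ of $X^{\lambda}$. This requires analyzing when a point $[n,g]\in N_x\times^{G_x}G$ is fixed by $\lambda$, which reduces to the elementary fact that for $g$ near the identity, $g^{-1}\lambda(t)g\in G_x$ for all $t$ forces $g\in G^{\lambda}\cdot G_x$, and hence $[n,g]$ is represented by $(n',g')$ with $n'\in N_x^{\lambda}$ and $g'\in G^{\lambda}$. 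The parallel analysis for the attractor uses the compatibility of the $\lambda$-attractor with closed subvarieties and the description of $P_{\lambda}=G^{\lambda\geq 0}$ as the $\lambda$-attractor of $G$ acting on itself by conjugation. Once these identifications are in place, all remaining verifications are formal.
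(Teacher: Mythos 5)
Your plan starts from the same two applications of Luna's slice theorem as the paper, but the mechanism you propose for producing the fixed-locus and attractor columns has a genuine gap. The general compatibility you invoke --- that for an \'etale $G$-equivariant morphism $f\colon Y\to Y'$ one has $Y^{\lambda\geq 0}=f^{-1}(Y'^{\lambda\geq 0})$ --- is simply false. Take $Y'=\BoA^1$ with $\BoG_{\rmm}$ acting by scaling, $\lambda=\id$, and $f\colon Y=\BoA^1\setminus\{0\}\hookrightarrow\BoA^1$ the open (hence \'etale) inclusion: then $Y'^{\lambda\geq 0}=\BoA^1$ and $f^{-1}(Y'^{\lambda\geq 0})=Y$, but $Y^{\lambda\geq 0}=\emptyset$ because no orbit has a limit inside $Y$. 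The issue is exactly that an \'etale map need not satisfy the relevant lifting property for the $\BoA^1$-family $t\mapsto\lambda(t)\cdot f(y)$ at $t=0$: \'etaleness gives unique lifts along nilpotent thickenings, not along $\Spec\BoC[[t]]$. Only the fixed-locus identity $Y^\lambda=f^{-1}(Y'^\lambda)$ is safe (by discreteness of \'etale fibers), so your step (and your argument for Cartesianity of one of the faces) collapses at precisely the point you flagged as ``the main obstacle.'' The reduction to ``$g$ near the identity with $g^{-1}\lambda g\subset G_x$ forces $g\in G^\lambda\cdot G_x$'' that you mention is also left unjustified.

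The paper avoids all of this with a different and cleaner reduction: instead of trying to pull back attractors across the Luna cover of $X$, it directly computes the $G^\lambda_x$-representation $(\Tan_xN_x)^\lambda$ and shows, using the $G_x$-equivariant splitting $\Tan_xX\cong\Tan_x(G\cdot x)\oplus\Tan_xN_x$ together with $\Tan_x(G\cdot x)\cong\Fg/\Fg_x$ and Lemma~\ref{lemma:fixedpointstorus}, that $N_x^\lambda$ is a normal slice through $x$ to the $G^\lambda$-orbit $G^\lambda\cdot x$ inside $X^\lambda_\alpha$. One then applies Luna's slice theorem a second time, to the $G^\lambda$-action on $X^\lambda_\alpha$, to obtain the left Cartesian face. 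The attractor columns are then handled by the parallel weight-space decomposition $(\Tan_xX)^{\lambda\geq 0}\cong\Tan_x(G^{\lambda\geq 0}\cdot x)\oplus(\Tan_xN_x)^{\lambda\geq 0}$. This route never needs the false attractor-pullback compatibility; it replaces your component analysis by the elementary observation that taking $\lambda$-fixed points and $\lambda$-nonnegative parts commutes with the direct sum decomposition of tangent spaces. If you want to repair your proof without switching to the paper's argument entirely, you would at minimum need to prove that, after shrinking $N_x$, the image of $N_x\times^{G_x}G$ in $X$ is a $\lambda$-attractor-saturated open neighborhood of $G\cdot x$, and supply the unproved group-theoretic claim about $g^{-1}\lambda g\subset G_x$ near the identity. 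Both are exactly the kind of bookkeeping the tangent-space calculation is designed to bypass.
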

\begin{proof}
 This is a version of Luna \'etale slice theorem applied to $X/G$. More precisely, we let $\Tan_x(X)\cong \Tan_x(G\cdot x)\oplus \Tan_xN_x$ be a $G_x$-equivariant decomposition where $N_x\subset X$ be an \'etale slice at $x$. By Luna \'etale slice theorem, we have a diagram with Cartesian squares and \'etale horizontal maps
\[\begin{tikzcd}
	{U/G_x} & {N_x/G_x} & {X/G} \\
	{U\cms G_x} & {N_x\cms G_x} & {X\cms G}
	\arrow[from=1-1, to=2-1]
	\arrow[from=1-2, to=1-1]
	\arrow[from=1-2, to=1-3]
	\arrow[from=1-2, to=2-2]
	\arrow[from=1-3, to=2-3]
	\arrow[from=2-2, to=2-1]
	\arrow[from=2-2, to=2-3]
\end{tikzcd}\]
 This is the combination of the right faces of the parallelepipeds in the diagrams of the theorem.
 
 We let $\Fg\coloneqq \Fg_x\oplus\Fg_x^{\perp}$ be a $G_x$-stable decomposition of $\Fg$, where $\Fg_x^{\perp}=\Fg/\Fg_x$. We have $\Fg^{\lambda}\cong \Fg_x^{\lambda}\oplus (\Fg_x^{\perp})^{\lambda}$. As $G_x$-representations, we have $\Tan_x(G\cdot x)\cong\Fg_x^{\perp}$. As $G_x^{\lambda}$-representations, we have $\Tan_x(G^{\lambda}\cdot x)\cong (\Fg_x^{\perp})^{\lambda}$ (Lemma~\ref{lemma:fixedpointstorus}).
 
 We have $(\Tan_xX)^{\lambda}\cong (\Tan_x (G\cdot x))^{\lambda}\oplus (\Tan_xN_x)^{\lambda}\cong \Tan_x(G^{\lambda}\cdot x)\oplus (\Tan_xN_x)^{\lambda}$, proving that $(\Tan_xN_x)^{\lambda}$ is the normal bundle to $G^{\lambda}\cdot x$ at $x$. We prove similarly that $N_x^{\lambda}$ is a normal slice to $G^{\lambda}\cdot x$ at $x$. By the \'etale slice theorem, we obtain the left faces of the two parallelepipeds in Theorem \ref{theorem:localneighbourhood}.
 
 Next, we consider $X^{\lambda\geq 0}\cong (\Tan_x(G\cdot x))^{\lambda\geq 0}\oplus (\Tan_xN_x)^{\lambda\geq 0}$ and $\Fg^{\lambda\geq 0}\cong \Fg_x^{\lambda\geq 0}\oplus(\Fg_x^{\perp})^{\lambda\geq 0}$. Since $(\Tan_x(G\cdot x))^{\lambda\geq 0}\cong \Tan(G^{\lambda\geq 0}\cdot x)$, we obtain the \'etale morphisms $N_x^{\lambda\geq 0}/G_x^{\lambda\geq 0}\rightarrow U^{\lambda\geq 0}/G_x^{\lambda\geq 0}$ and $N_x^{\lambda\geq 0}/G_x^{\lambda\geq 0}\rightarrow X^{\lambda\geq 0}/G^{\lambda\geq 0}$.

 The commutativity of all squares in the diagrams of Theorem~\ref{theorem:localneighbourhood} is straighforward to check.
\end{proof}

\subsection{Fiber induction}
Let $(\lambda,\alpha)\in \SQ_{X,G}$ and $\overline{x}\in X^{\lambda}_{\alpha}\cms G^{\lambda}$. We let $\imath_{\overline{x}}\colon\{\overline{x}\}\rightarrow X^{\lambda}_{\alpha}\cms G^{\lambda}$ be the inclusion. The induction morphism $\Ind_{(\lambda,\alpha)}\colon (\imath_{(\lambda,\alpha)})_*(\pi_{(\lambda,\alpha)})_*\underline{\BoQ}_{X^{\lambda}_{\alpha}/G^{\lambda}}^{\vir}\rightarrow \pi_*\underline{\BoQ}_{X/G}^{\vir}$ induces a morphism
\[
 (\imath_{(\lambda,\alpha)}\circ\imath_{\overline{x}})^*\Ind_{(\lambda,\alpha)}\colon \HO^*(\pi_{(\lambda,\alpha)}^{-1}(\imath_{(\lambda,\alpha)}^{-1}(\imath_{(\lambda,\alpha)}(\overline{x}))),\BoQ^{\vir})\rightarrow\HO^*(\pi^{-1}(\imath_{(\lambda,\alpha)}(\overline{x})),\BoQ^{\vir})
\]
and since $\pi_{(\lambda,\alpha)}^{-1}(\overline{x})$ is a connected component of $\pi_{(\lambda,\alpha)}^{-1}(\imath_{(\lambda,\alpha)}^{-1}(\imath_{(\lambda,\alpha)}(\overline{x})))$ (by finiteness of $\imath_{(\lambda,\alpha)}$, Lemma~\ref{lemma:finitemap}), a morphism
\begin{equation}
\label{equation:inductionfiberx}
 \Ind_{(\lambda,\alpha),\overline{x}}\colon \HO^*(\pi_{(\lambda,\alpha)}^{-1}(\overline{x}),\BoQ^{\vir})\rightarrow\HO^*(\pi^{-1}(\imath_{(\lambda,\alpha)}(\overline{x})),\BoQ^{\vir})\,.
\end{equation}

\subsection{Compatibility of fiber inductions}
Let $x\in X$ be such that $\overline{x}=G\cdot x$ is closed and $T_x\coloneqq T\cap G_x$ is a maximal torus of $G_x$. We let $\lambda\in X_*(T_x)$ and $\alpha\in\pi_0(X^{\lambda})$ be the connected component containing $x$. There is an induction morphism
\[
\Ind_{\lambda}\colon (\imath_{\lambda})_*(\pi_{\lambda})_*\underline{\BoQ}_{(\Tan_xN_x)^{\lambda}/G^{\lambda}}^{\vir}\rightarrow\pi_*\underline{\BoQ}_{\Tan_xN_x/G_x}\,.
\]
By restriction over $\overline{0}\in \Tan_xN_x\cms G_x$ and $\BoC^*$-equivariance, we obtain a morphism
\begin{equation}
\label{equation:inductionzero}
 \Ind_{\lambda,\overline{0}}\colon \HO^*(\pi_{\lambda}^{-1}(\overline{0}),\BoQ^{\vir})\rightarrow\HO^*(\pi^{-1}(\overline{0}),\BoQ^{\vir})\,.
\end{equation}

\begin{proposition}
\label{proposition:compatibilityinduction}
 There is a commutative diagram in which vertical maps are isomorphisms:
\[\begin{tikzcd}
	{\HO^*(\pi_{\lambda}^{-1}(\overline{0}),\BoQ^{\vir})} & {\HO^*(\pi^{-1}(\overline{0}),\BoQ^{\vir})} \\
	{\HO^*(\pi_{(\lambda,\alpha)}^{-1}(\overline{x}),\BoQ^{\vir})} & {\HO^*(\pi^{-1}(\imath_{(\lambda,\alpha)}(\overline{x})),\BoQ^{\vir})}
	\arrow["{\Ind_{\lambda,\overline{0}}}", from=1-1, to=1-2]
	\arrow[from=1-1, to=2-1]
	\arrow[from=1-2, to=2-2]
	\arrow["{\Ind_{(\lambda,\alpha),\overline{x}}}", from=2-1, to=2-2]
\end{tikzcd}\]
\end{proposition}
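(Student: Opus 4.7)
The plan is to descend the global induction morphism $\Ind_{(\lambda,\alpha)}$ along the \'etale charts of Theorem~\ref{theorem:localneighbourhood} in two stages: first identifying the induction for $X/G$ in a neighbourhood of $\overline{x} \in X \cms G$ with the induction for the slice $N_x/G_x$ near $\overline{x} \in N_x \cms G_x$, and then identifying the latter, near $\overline{0}$, with the linear induction for $\Tan_xN_x/G_x$ via the open $U \subset \Tan_xN_x$.

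The sheafified induction morphism \eqref{equation:sheafindsymm} is built from the unit of the adjunction $(q_{(\lambda,\alpha)}^*, (q_{(\lambda,\alpha)})_*)$ and the counit of $((p_{(\lambda,\alpha)})_*, (p_{(\lambda,\alpha)})_!)$, composed with $(\imath_{(\lambda,\alpha)})_*$ and $\pi_*$. By Lemma~\ref{lemma:smoothnessqproperp}, $q_{(\lambda,\alpha)}$ is a smooth vector bundle stack and $p_{(\lambda,\alpha)}$ is proper and representable, so both (co)units commute with \'etale base change (smooth and proper base change respectively). Since Theorem~\ref{theorem:localneighbourhood} provides commutative diagrams with \'etale horizontal arrows and Cartesian left and right faces connecting the induction diagrams for $X/G$, $N_x/G_x$ and $U/G_x$, the \'etale pullback of $\Ind_{(\lambda,\alpha)}$ along $N_x \cms G_x \to X \cms G$ is canonically identified with the analogous morphism $\Ind_{x,\lambda}$ for the slice, and the \'etale pullback of $\Ind_{x,\lambda}$ along $U \cms G_x \to N_x \cms G_x$ is identified with the restriction over $U$ of the linear induction morphism $\Ind_\lambda$.

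Restricting to the distinguished closed points $\overline{x}$ and $\overline{0}$ produces the vertical isomorphisms claimed in the statement: \'etale pullback preserves stalks at points, and the contracting $\BoG_{\rmm}$-scaling on $\Tan_xN_x$ identifies the stalk at $\overline{0}$ of $(\pi_\lambda)_* \underline{\BoQ}^{\vir}_{(\Tan_xN_x)^\lambda/G_x^\lambda}$ with $\HO^*(\pi_\lambda^{-1}(\overline{0}),\BoQ^{\vir})$, and likewise, through Luna equivalence with this (contractible) linear model, the stalk of $\pi_*\underline{\BoQ}_{X/G}^{\vir}$ at $\imath_{(\lambda,\alpha)}(\overline{x})$ with $\HO^*(\pi^{-1}(\imath_{(\lambda,\alpha)}(\overline{x})),\BoQ^{\vir})$. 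Lemma~\ref{lemma:finitemap} ensures moreover that $\pi_{(\lambda,\alpha)}^{-1}(\overline{x})$ is isolated inside $\pi_{(\lambda,\alpha)}^{-1}(\imath_{(\lambda,\alpha)}^{-1}(\imath_{(\lambda,\alpha)}(\overline{x})))$, legitimizing the extraction \eqref{equation:inductionfiberx} of the restricted induction morphism onto this single component. The main technical burden is to verify that the construction of $\Ind$ is functorial with respect to the \'etale base changes of Theorem~\ref{theorem:localneighbourhood}; bridging $U$ and the full representation $\Tan_xN_x$ is then immediate by $\BoG_{\rmm}$-equivariance, since $\Ind_\lambda$ restricted over $\overline{0}$ depends only on the germ of the induction diagram at the origin.
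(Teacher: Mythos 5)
Your proposal follows the paper's own (very terse) proof, which simply invokes Theorem~\ref{theorem:localneighbourhood} together with ``base-change''; you have just unpacked what that phrase means — smooth base change for the unit of $(q_{(\lambda,\alpha)}^*,(q_{(\lambda,\alpha)})_*)$, proper base change for the dualized counit coming from $p_{(\lambda,\alpha)}$, restriction to the closed points via the \'etale charts, and the contracting $\BoG_{\rmm}$-action on $\Tan_x N_x$ to identify stalks of the pushforward with cohomology of fibers. The use of Lemma~\ref{lemma:finitemap} to isolate the relevant connected component of $\imath_{(\lambda,\alpha)}^{-1}(\imath_{(\lambda,\alpha)}(\overline{x}))$ is also exactly what the paper does when defining $\Ind_{(\lambda,\alpha),\overline{x}}$ in~\eqref{equation:inductionfiberx}, so the argument is sound and not substantively different from the paper's.
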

\begin{proof}
 This follows from the local neighbourhood theorem (Theorem~\ref{theorem:localneighbourhood}), by base-change.
\end{proof}

\section{Sheafified cohomological integrality for affine varieties with reductive group action}

\subsection{The character}
\label{subsection:character}
For $(\lambda,\alpha)\in \SQ_{X,G}$, we let $W_{(\lambda,\alpha)}=\{w\in W\mid w\cdot \overline{(\lambda,\alpha)}=\overline{(\lambda,\alpha)}\}\subset W$ (\S\ref{subsection:Weylgroups}). This is a subgroup of $W$. We let $N_{X}(X^{\lambda}_{\alpha})\cong \Tan X_{|X^{\lambda}_{\alpha}}/\Tan X^{\lambda}_{\alpha}$ be the normal bundle of $X^{\lambda}_{\alpha}\subset X$. We let $T'\coloneqq (\ker(G^{\lambda}\rightarrow\Aut(X^{\lambda}_{\alpha})\cap Z(G^{\lambda}))^{\circ}$. We have $w\cdot T'=T'$ for any $w\in W_{(\lambda,\alpha)}$ (Lemma~\ref{lemma:relWeylpreferredtorus}). We let $N_X(X^{\lambda}_{\alpha})= \bigoplus_{\beta\in X_*(T')}N_X(X^{\lambda}_{\alpha})_{\beta}$ be the weight decomposition with respect to the $T'$-action. Then, by the Bia\l inicky-Birula decomposition, $X_{\alpha}^{\lambda> 0}\cong N_X(X^{\lambda}_{\alpha})^{\lambda>0}\coloneqq\bigoplus_{\substack{\beta\in X_*(T')\\\langle\lambda,\beta\rangle>0}}N_X(X^{\lambda}_{\alpha})_{\beta}$. Now, by symmetry of $X$, the weights of $N_X(X^{\lambda}_{\alpha})$ are symmetric, that is for any $\beta\in X_*(T')$, $\rank(N_X(X^{\lambda}_{\alpha})_{\beta})=\rank(N_X(X^{\lambda}_{\alpha})_{-\beta})$. Similarly, we let $\Fg^{\lambda\neq 0}=\bigoplus_{\substack{\beta\in X_*(T')\\\langle\lambda,\beta\rangle\neq 0}}\Fg_{\beta}$ be the decomposition of $\Fg^{\lambda\neq 0}$ in weight spaces under the $T'$-action.

We define $k_{(\lambda,\alpha)}\coloneqq \frac{\prod_{\substack{\beta\in X_*(T')\\\langle\lambda,\beta\rangle>0}}\beta^{\rank(N_X(X^{\lambda}_{\alpha})_{\beta})}}{\prod_{\substack{\beta\in X_*(T')\\\langle\lambda,\beta\rangle>0}}\beta^{\dim\Fg_{\beta}}}$.

\begin{lemma}
\label{lemma:character}
 There exists a character $\varepsilon_{X,(\lambda,\alpha)}\colon W_{(\lambda,\alpha)}\rightarrow\{\pm1\}$ such that for any $w\in W_{(\lambda,\alpha)}$, $w\cdot k_{(\lambda,\alpha)}=\varepsilon_{X,(\lambda,\alpha)}(w)k_{(\lambda,\alpha)}$.
\end{lemma}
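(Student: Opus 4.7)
The plan is to interpret $k_{(\lambda,\alpha)}$ as a monomial in the group ring $\BoZ[X^*(T')]$ and exploit the weight symmetries of $N_X(X^{\lambda}_{\alpha})$ (from symmetricity of $X$) and of $\Fg$ (the roots of a reductive group come in pairs $\pm\alpha$) to control the $W_{(\lambda,\alpha)}$-action. Writing $k_{(\lambda,\alpha)}=\prod_{\beta\in P_{\lambda}}\beta^{n_{\beta}}$ with $P_{\mu}\coloneqq \{\beta\in X^*(T'):\langle\mu,\beta\rangle>0\}$ and $n_{\beta}\coloneqq \rank N_X(X^{\lambda}_{\alpha})_{\beta}-\dim\Fg_{\beta}$, both symmetries combine to give $n_{\beta}=n_{-\beta}$. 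Moreover, the only weights $\beta$ that contribute non-trivially satisfy $\langle\lambda,\beta\rangle\neq 0$, since they come from $N_X(X^{\lambda}_{\alpha})$ (the normal bundle, with no $\lambda$-fixed weights) or from $\Fg^{\lambda\neq 0}$.

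For $w\in W_{(\lambda,\alpha)}$, Lemma~\ref{lemma:relWeylpreferredtorus} ensures $w$ preserves $T'$, and the $W_{(\lambda,\alpha)}$-stability of $X^{\lambda}_{\alpha}$ (together with the stability of $\Fg$ under $W$) gives $n_{w\beta}=n_{\beta}$ for all weights $\beta$. A short re-indexing by $\gamma=w\beta$, using $wP_{\lambda}=P_{w\lambda}$, yields
\[
\frac{w\cdot k_{(\lambda,\alpha)}}{k_{(\lambda,\alpha)}}=\prod_{\gamma\in P_{w\lambda}\setminus P_{\lambda}}\gamma^{n_{\gamma}}\cdot\prod_{\gamma\in P_{\lambda}\setminus P_{w\lambda}}\gamma^{-n_{\gamma}}.
\]
For any $\gamma$ contributing non-trivially, both $\langle\lambda,\gamma\rangle$ and $\langle w\lambda,\gamma\rangle=\langle\lambda,w^{-1}\gamma\rangle$ are non-zero (since $w^{-1}\gamma$ is again a weight of $N_X(X^{\lambda}_{\alpha})$ or $\Fg^{\lambda\neq 0}$), so the involution $\gamma\mapsto-\gamma$ restricts to a bijection between the relevant portions of $P_{\lambda}\setminus P_{w\lambda}$ and $P_{w\lambda}\setminus P_{\lambda}$. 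Substituting into the second product and using $n_{\gamma}=n_{-\gamma}$ collapses the ratio to
\[
\frac{w\cdot k_{(\lambda,\alpha)}}{k_{(\lambda,\alpha)}}=(-1)^{\sum_{\gamma\in P_{w\lambda}\setminus P_{\lambda}}n_{\gamma}}\in\{\pm 1\},
\]
which I define to be $\varepsilon_{X,(\lambda,\alpha)}(w)$. The homomorphism property is then automatic: for $w_1,w_2\in W_{(\lambda,\alpha)}$,
\[
(w_1w_2)\cdot k_{(\lambda,\alpha)}=w_1\bigl(\varepsilon_{X,(\lambda,\alpha)}(w_2)\,k_{(\lambda,\alpha)}\bigr)=\varepsilon_{X,(\lambda,\alpha)}(w_1)\,\varepsilon_{X,(\lambda,\alpha)}(w_2)\,k_{(\lambda,\alpha)},
\]
since $\{\pm 1\}$ is pointwise $W_{(\lambda,\alpha)}$-fixed.

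The main obstacle is the bookkeeping around the $\gamma\mapsto-\gamma$ bijection: one must verify that it truly pairs all contributing elements of $P_{\lambda}\setminus P_{w\lambda}$ with those of $P_{w\lambda}\setminus P_{\lambda}$, which relies precisely on the observation that both $\lambda$ and $w\lambda$ have non-zero pairing with every weight occurring in $N_X(X^{\lambda}_{\alpha})$ and in $\Fg^{\lambda\neq 0}$.
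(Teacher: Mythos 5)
Your proof is correct, but it takes a different route from the paper's. The paper's argument is a "squaring trick": by symmetry of $X$ and the $\pm$-pairing of roots, the expression $(-1)^{r_{(\lambda,\alpha)}}k_{(\lambda,\alpha)}^2$ equals a product over \emph{all} weights (not just those in a positive half), which is manifestly $W_{(\lambda,\alpha)}$-invariant; therefore $(w\cdot k_{(\lambda,\alpha)})^2 = k_{(\lambda,\alpha)}^2$, so $w\cdot k_{(\lambda,\alpha)} = \pm k_{(\lambda,\alpha)}$, and the uniqueness of the sign immediately yields the homomorphism property. You instead compute the ratio $w\cdot k_{(\lambda,\alpha)}/k_{(\lambda,\alpha)}$ directly by re-indexing with $wP_\lambda = P_{w\lambda}$ and collapsing the symmetric difference $P_{w\lambda}\,\triangle\,P_\lambda$ via the involution $\gamma\mapsto -\gamma$, using $n_\gamma=n_{-\gamma}$ and the key observation that all contributing $T'$-weights pair nontrivially with both $\lambda$ and $w\lambda$ (which you correctly attribute to the fact that $N_X(X^\lambda_\alpha)$ is the full nonzero-$\lambda$-weight part of $\Tan X|_{X^\lambda_\alpha}$, and that $\Fg^{T'}=\Fg^\lambda$). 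The paper's route is shorter and sidesteps the bookkeeping; your route has the minor advantage of producing an explicit formula $\varepsilon_{X,(\lambda,\alpha)}(w)=(-1)^{\sum_{\gamma\in P_{w\lambda}\setminus P_\lambda}n_\gamma}$ for the sign rather than characterizing it implicitly. Both are valid proofs of the lemma.
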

\begin{proof}
 By symmetry of $X$, we have
 \[
  (-1)^{r_{(\lambda,\alpha)}}k_{(\lambda,\alpha)}^2= \frac{\prod_{\beta\in X_*(T')}\beta^{\rank(N_X(X^{\lambda}_{\alpha})_{\beta})}}{\prod_{\substack{\beta\in X_*(T')\\ \langle\lambda,\beta\rangle\neq 0}}\beta^{\dim\Fg_{\beta}}}
 \]
Moreover, since $W_{(\lambda,\alpha)}$ acts on the weights of $N_X(X^{\lambda}_{\alpha})$ and $\Fg$, we have $(-1)^{r_{(\lambda,\alpha)}}(w\cdot k_{(\lambda,\alpha)})^2=(-1)^{r_{(\lambda,\alpha)}}k_{(\lambda,\alpha)}^2$. It follows immediately that $w\cdot k_{(\lambda,\alpha)}=\varepsilon_{X,(\lambda,\alpha)}(w)k_{(\lambda,\alpha)}$ for some uniquely determined sign $\varepsilon_{X,(\lambda,\alpha)}(w)\in \{\pm1\}$. The unicity implies that the map $\varepsilon_{X,(\lambda,\alpha)}\colon W_{(\lambda,\alpha)}\rightarrow\{\pm1\}$ is a character.
\end{proof}

\begin{lemma}
 The character $\varepsilon_{X,(\lambda,\alpha)}$ is trivial on $W^{\lambda}$, and so induces a character $\varepsilon_{X,(\lambda,\alpha)}\colon \overline{W}_{(\lambda,\alpha)}\rightarrow\{\pm1\}$.
\end{lemma}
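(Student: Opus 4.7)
The plan is to show that every element of $W^{\lambda}$ acts trivially on the expression $k_{(\lambda,\alpha)}$, from which the triviality of $\varepsilon_{X,(\lambda,\alpha)}$ on $W^{\lambda}$ is immediate. The key observation is that the torus $T'$ used to set up the weight decomposition in \S\ref{subsection:character} was defined as a subgroup of $Z(G^{\lambda})$. Therefore any representative $\dot{w}\in N_{G^{\lambda}}(T)\subseteq G^{\lambda}$ of an element $w\in W^{\lambda}$ centralises $T'$, and hence the induced action of $W^{\lambda}$ on the character lattice $X^*(T')$ (and on the cocharacter lattice $X_*(T')$ that indexes the weight decomposition used in the definition of $k_{(\lambda,\alpha)}$) is trivial.

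Once this is established, the rest is immediate. First, since $w\cdot \beta=\beta$ for every weight $\beta$, the set $\{\beta:\langle\lambda,\beta\rangle>0\}$ over which the products in the numerator and denominator of $k_{(\lambda,\alpha)}$ range is preserved pointwise by $w$. Second, since $T'$ is central in $G^{\lambda}$, each weight space $N_X(X^{\lambda}_{\alpha})_{\beta}$ and $\Fg_{\beta}$ is a $G^{\lambda}$-subrepresentation, hence in particular is preserved by $\dot{w}$, so the multiplicities $\rank N_X(X^{\lambda}_{\alpha})_{\beta}$ and $\dim \Fg_{\beta}$ are $w$-invariant. Combining these two points, the formal product $k_{(\lambda,\alpha)}$ is fixed by $w$, which by the defining relation $w\cdot k_{(\lambda,\alpha)}=\varepsilon_{X,(\lambda,\alpha)}(w)\,k_{(\lambda,\alpha)}$ forces $\varepsilon_{X,(\lambda,\alpha)}(w)=1$.

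Finally, Lemma~\ref{lemma:weylgroupsubgroup} ensures that $W^{\lambda}$ is a normal subgroup of $W_{(\lambda,\alpha)}$, so the character $\varepsilon_{X,(\lambda,\alpha)}$, being trivial on the normal subgroup $W^{\lambda}$, factors through the quotient $\overline{W}_{(\lambda,\alpha)}=W_{(\lambda,\alpha)}/W^{\lambda}$, as claimed. There is no genuine obstacle here: the entire content of the lemma is the centralising property $T'\subseteq Z(G^{\lambda})$, which is built into the construction of $T'$.
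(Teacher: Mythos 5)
Your proof is correct, and it takes a genuinely different route from the paper's. The paper argues via the $W$-invariance of the character--cocharacter pairing: since $w\cdot\lambda=\lambda$ for $w\in W^\lambda$ (as $W^\lambda$ is the Weyl group of the centralizer $G^\lambda$), one has $\langle\lambda,w\cdot\beta\rangle=\langle w^{-1}\cdot\lambda,\beta\rangle=\langle\lambda,\beta\rangle$, so $w$ merely \emph{permutes} the set $\{\beta:\langle\lambda,\beta\rangle>0\}$ (preserving multiplicities), and the product $k_{(\lambda,\alpha)}$ is fixed. Your argument instead uses the centrality $T'\subseteq Z(G^{\lambda})$ to show that a lift $\dot{w}\in N_{G^\lambda}(T)\subseteq G^\lambda$ centralizes $T'$, so $w$ fixes each weight $\beta\in X^*(T')$ \emph{pointwise}; this is a stronger statement, after which the invariance of $k_{(\lambda,\alpha)}$ is immediate and the multiplicity bookkeeping is actually superfluous. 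Your version is arguably better adapted to the paper's stated definition of $k_{(\lambda,\alpha)}$, which takes the weight decomposition with respect to $T'$; the paper's own proof, as written, phrases things with $\beta\in X^*(T)$, for which your pointwise-fixing argument would not apply and the permutation-plus-pairing-invariance argument is needed. Both buy the same conclusion, and your appeal to Lemma~\ref{lemma:weylgroupsubgroup} to obtain the induced character on $\overline{W}_{(\lambda,\alpha)}$ is exactly right.
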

\begin{proof}
 This follows from the fact that $w\in W^{\lambda}$ preserves $\lambda$ and the pairing between characters and cocharacters is $W$-invariant: $\langle\lambda,\beta\rangle=\langle w\lambda,\beta\rangle=\langle \lambda,w^{-1}\beta\rangle$ for any $\lambda\in X_*(T)$, $\beta\in X^*(T)$ and $w\in W^{\lambda}$. Therefore, $\langle\lambda,w\cdot\beta\rangle>0$ iff $\langle\lambda,\beta\rangle>0$. This implies that $k_{(\lambda,\alpha)}$ is $w$-invariant.
\end{proof}

\subsection{Character and \'etale slices}
In this section, we show the compatibility of the character defined in \S\ref{subsection:character} with \'etale slices.

Let $x\in X$ be such that $G\cdot x$ is a closed $G$-orbit and $T_x=T\cap G_x$ is a maximal torus of $G_x$. We let $N_x$ be a normal slice through $x$ and $\lambda\in X_*(T_x)$ a cocharacter. We let $\alpha\in\pi_0(X^{\lambda})$ be the connected component of $X^{\lambda}$ containing $x$.

\begin{lemma}
 We have a commutative diagram
\[\begin{tikzcd}
	{\overline{W}_{x,\lambda}} & {} & {\overline{W}_{(\lambda,\alpha)}} \\
	& {\{\pm1\}}
	\arrow[from=1-1, to=1-3]
	\arrow["{\varepsilon_{x,\lambda}}"', from=1-1, to=2-2]
	\arrow["{\varepsilon_{X,(\lambda,\alpha)}}", from=1-3, to=2-2]
\end{tikzcd}\]
where the horizontal map is the inclusion of Lemma~\ref{lemma:injectmorphgroups} with $T'=G_{\lambda}^{\circ}$.
\end{lemma}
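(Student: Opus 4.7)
The plan is to show that the rational expressions $k_{(\lambda,\alpha)}$ and $k_{x,\lambda}$ governing the two characters actually coincide as elements of the group ring of $X^*(T')$ (where $T'=G_{x,\lambda}^\circ=G_{(\lambda,\alpha)}^\circ$, using Lemma~\ref{lemma:preferredtori}), and then to observe that the two group actions agree under the injective morphism $\overline{W}_{x,\lambda}\to\overline{W}_{(\lambda,\alpha)}$ of Lemma~\ref{lemma:injectmorphgroups}.

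First I would use the $G_x$-equivariant splittings
\[
\Tan_xX\cong\Fg/\Fg_x\oplus\Tan_xN_x,\qquad \Fg\cong\Fg_x\oplus\Fg/\Fg_x,
\]
to identify, at the point $x$, the normal bundle $N_X(X^\lambda_\alpha)$ with $(\Fg/\Fg_x)^{\lambda\neq 0}\oplus(\Tan_xN_x)^{\lambda\neq 0}$. Decomposing everything into $T'$-weight spaces and plugging into the definition of $k_{(\lambda,\alpha)}$ gives
\[
k_{(\lambda,\alpha)}=\frac{\prod_{\substack{\beta\in X^*(T')\\\langle\lambda,\beta\rangle>0}}\beta^{\dim(\Fg/\Fg_x)_{\beta}}\cdot\beta^{\dim(\Tan_xN_x)_\beta}}{\prod_{\substack{\beta\in X^*(T')\\\langle\lambda,\beta\rangle>0}}\beta^{\dim(\Fg_x)_\beta}\cdot\beta^{\dim(\Fg/\Fg_x)_\beta}}=k_{x,\lambda},
\]
the $(\Fg/\Fg_x)_\beta$ contributions cancelling in numerator and denominator. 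This is the main computational input; it is completely formal once the decomposition is set up and uses no symmetry hypothesis.

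Next I would check that the two Weyl group actions are intertwined by the inclusion $\overline{W}_{x,\lambda}\hookrightarrow\overline{W}_{(\lambda,\alpha)}$. Recall from the proof of Lemma~\ref{lemma:injectmorphgroups} that a class in $\overline{W}_{x,\lambda}$ represented by $w\in N_{G_x}^{T'}(T_x)$ is sent to the class represented by $wh\in N_G^{T'}(T)$, for a suitable $h\in G^{T'}$. Since $h\in G^{T'}$ centralizes $T'$, it acts trivially on $X^*(T')$, so for every character $\beta\in X^*(T')$ one has $(wh)\cdot\beta=w\cdot\beta$. Hence the two actions agree on the common element $k_{(\lambda,\alpha)}=k_{x,\lambda}$, and the defining equations $w\cdot k_{x,\lambda}=\varepsilon_{x,\lambda}(w)k_{x,\lambda}$ and $(wh)\cdot k_{(\lambda,\alpha)}=\varepsilon_{X,(\lambda,\alpha)}(wh)k_{(\lambda,\alpha)}$ force $\varepsilon_{X,(\lambda,\alpha)}(wh)=\varepsilon_{x,\lambda}(w)$, which is exactly the claimed commutativity.

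The only subtlety I anticipate is bookkeeping around the preferred torus: one must be sure that the weight decompositions entering the two definitions of $k$ are taken with respect to the \emph{same} torus $T'$, which is handled by Lemma~\ref{lemma:preferredtori} identifying the preferred tori for $(N_x,G_x)$ and for $(X,G)$, and by the fact that the injection $\overline{W}_{x,\lambda}\hookrightarrow\overline{W}_{(\lambda,\alpha)}$ preserves $T'$ by construction. Once this is pinned down, the diagram commutes by the two-line verification above.
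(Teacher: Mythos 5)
Your proof is correct and follows essentially the same route as the paper: use the $G_x$-equivariant splittings $\Tan_xX\cong\Fg/\Fg_x\oplus\Tan_xN_x$ and $\Fg\cong\Fg_x\oplus\Fg/\Fg_x$, observe the $\Fg/\Fg_x$ contributions cancel so that $k_{(\lambda,\alpha)}=k_{x,\lambda}$, then check the two Weyl-group actions agree. Your final step is actually a bit more careful than the paper's: you explicitly track a representative $w\mapsto wh$ under the injection of Lemma~\ref{lemma:injectmorphgroups} and note that $h\in G^{T'}$ acts trivially on $X^*(T')$ (so $(wh)\cdot\beta=w\cdot\beta$), whereas the paper argues more telegraphically that $w$ acts compatibly on the three pieces $\Fg_x$, $\Fg/\Fg_x$, $\Tan_xN_x$. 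Both routes prove the same identity of signs; yours has the minor advantage of making the intertwining of the two quotient-group actions completely explicit.
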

\begin{proof}
 We have decompositions $\Tan_xX\cong(\Tan_x(G\cdot x))\oplus \Tan_xN_x$ and $\Fg\cong\Fg_x\oplus\Fg_x^{\perp}$. Moreover, $\Tan_x(G\cdot x)\cong \Fg_x^{\perp}$ as $G_x$-representations. Therefore,
 \[
  (\Tan_xX)^{\lambda>0}\cong (\Fg_x^{\perp})^{\lambda>0}\oplus(\Tan_xN_x)^{\lambda>0}, \quad \Fg^{\lambda>0}\cong \Fg_x^{\lambda>0}\oplus (\Fg_{x}^{\perp})^{\lambda>0}\,.
 \]
 Consequently,
 \[
  k_{(\lambda,\alpha)}=\frac{\prod_{\substack{\beta\in X_*(T')\\\langle\lambda,\beta\rangle>0}}\beta^{\dim (T_xX)_{\beta}}}{\prod_{\substack{\beta\in X_*(T')\\\langle\lambda,\beta\rangle>0}}\beta^{\dim \Fg_{\beta}}}=\frac{\prod_{\substack{\beta\in X_*(T')\\\langle\lambda,\beta\rangle>0}}\beta^{\rank (T_xN_x)_{\beta}}}{\prod_{\substack{\beta\in X_*(T')\\\langle\lambda,\beta\rangle>0}}\beta^{\rank (\Fg_x)_{\beta}}}=k_{x,\lambda}\,.
 \]
If $w\in \overline{W}_{x,\lambda}$, $w$ acts independently on the weights of $\Fg_x$ and $\Fg_x^{\perp}$ on the one side, and $\Tan_xN_x$ and $\Fg_x^{\perp}$ on the other side. Therefore, the action of $w$ changes $k_{(\lambda,\alpha)}$ and $k_{x,\lambda}$ by the same sign. This concludes.

\end{proof}

\subsection{A group-theoretic lemma}

\begin{lemma}
\label{lemma:grouptheoretic}
 Let $W$ be a finite-group. We assume that $W$ acts transitively on a finite set $I$ and we let $V=\bigoplus_{i\in I}V_i$ be a finite dimensional representation of $W$ such that for any $v\in V_i$, $w\cdot v\in V_{w\cdot i}$. We let $\varepsilon\colon W\rightarrow\BoC^*$ be a character of $W$ and $i\in I$. If $W_i\subset W$ is the stabilizer of $i$ and $\varepsilon_i\colon W_i\rightarrow\BoC^*$ the character induced by $\varepsilon$, then $V^{\varepsilon}\cong V_i^{\varepsilon_i}$.
\end{lemma}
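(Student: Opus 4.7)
The plan is to exhibit an explicit isomorphism by realising $V$ as an induced representation of $W_i$ and then computing the $\varepsilon$-isotypic component via Frobenius-type bookkeeping. Fix $i\in I$, and for each $j\in I$ choose a representative $w_j\in W$ with $w_j\cdot i=j$ (taking $w_i=1$). The hypothesis that $W$ permutes the summands $V_j$ in accordance with its action on $I$ means that $w_j$ restricts to a linear isomorphism $V_i\isoto V_j$, and in particular $V\cong\Ind_{W_i}^{W}V_i$ via the assignment $\sum_{j}w_j\cdot v_j\leftrightarrow(v_j)_{j\in I}$ with $v_j\in V_i$.

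First I would define the candidate map $\Phi\colon V^{\varepsilon}\to V_i^{\varepsilon_i}$ as the projection $p_i\colon V\to V_i$. This is well defined: for $v\in V^{\varepsilon}$ and $h\in W_i$, one has $h\cdot v=\varepsilon(h)v$; projecting to $V_i$ (which is stable under $W_i$) gives $h\cdot p_i(v)=\varepsilon(h)p_i(v)=\varepsilon_i(h)p_i(v)$. Next I would check injectivity: if $v=\sum_j v_j\in V^{\varepsilon}$ with $v_j\in V_j$ and $p_i(v)=v_i=0$, then for any $j\in I$, choosing $w\in W$ with $w\cdot j=i$, the identity $w\cdot v=\varepsilon(w)v$ shows that the $V_i$-component of $w\cdot v$ is $w\cdot v_j=\varepsilon(w)v_i=0$, hence $v_j=0$ since $w$ acts by an isomorphism $V_j\to V_i$.

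For surjectivity I would define the inverse by
\[
 \Psi\colon V_i^{\varepsilon_i}\longrightarrow V,\qquad v_i\longmapsto \sum_{j\in I}\varepsilon(w_j)^{-1}\,w_j\cdot v_i,
\]
and verify two things. First, $\Psi(v_i)\in V^{\varepsilon}$: for $u\in W$ write $uw_j=w_{u\cdot j}h_{u,j}$ with $h_{u,j}\in W_i$; since $\varepsilon_i=\varepsilon|_{W_i}$ one has $\varepsilon(h_{u,j})=\varepsilon(w_{u\cdot j})^{-1}\varepsilon(u)\varepsilon(w_j)$, and a direct substitution gives
\[
 u\cdot\Psi(v_i)=\sum_{j\in I}\varepsilon(w_j)^{-1}\varepsilon(h_{u,j})\,w_{u\cdot j}\cdot v_i=\varepsilon(u)\sum_{j\in I}\varepsilon(w_{u\cdot j})^{-1}w_{u\cdot j}\cdot v_i=\varepsilon(u)\Psi(v_i),
\]
using that $j\mapsto u\cdot j$ is a bijection of $I$. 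Second, $\Psi$ is independent of the choice of coset representatives $w_j$: replacing $w_j$ by $w_jh$ with $h\in W_i$ multiplies both $\varepsilon(w_j)^{-1}$ and $w_j\cdot v_i$ by $\varepsilon(h)^{-1}$ and $\varepsilon_i(h)$ respectively, which cancel.

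Finally I would observe that $\Phi\circ\Psi=\id_{V_i^{\varepsilon_i}}$ (since $w_i=1$ contributes $v_i$ and all other $w_j\cdot v_i$ lie in $V_j$ with $j\neq i$), and by injectivity of $\Phi$ this forces $\Psi\circ\Phi=\id_{V^{\varepsilon}}$ as well, completing the proof. The main (mild) obstacle is just the bookkeeping with cosets and the character $\varepsilon$; there are no conceptual difficulties beyond the standard computation of the $\varepsilon$-isotypic part of an induced representation of a finite group.
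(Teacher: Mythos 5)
Your proof is correct and follows essentially the same route as the paper: the forward map is the projection $p_i$, and the inverse sends $v_i$ to the vector whose $V_j$-component is $\varepsilon(w_j)^{-1}w_j\cdot v_i$. You are simply more explicit than the paper in verifying that $\Psi(v_i)$ lies in $V^{\varepsilon}$, that $\Psi$ is independent of the choice of coset representatives, and that $\Phi$ and $\Psi$ are mutually inverse.
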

\begin{proof}
 The natural projection $V\rightarrow V_i$ induces a map $V^{\varepsilon}\rightarrow V_i^{\varepsilon_i}$. We now construct an inverse. Let $v_i\in V_i^{\varepsilon_i}$ and $j\in I$. We let $w\in W$ such that $w\cdot i=j$ (by transitivity). Then, $\varepsilon(w)^{-1}w\cdot v_i$ does not depend on $w$. Indeed, if $w'\cdot i=j$, then $w^{-1}w'\in W_i$ and so $w^{-1}w'\cdot v_i=\varepsilon(w)^{-1}\varepsilon(w')v$. Therefore, $\varepsilon(w')^{-1}w'\cdot v_i=\varepsilon(w)^{-1}w\cdot v_i$. We set $v_j\coloneqq \varepsilon(w)^{-1}w\cdot v_i$. Then, the inverse is $v_i\mapsto (v_j)_{j\in I}$.
\end{proof}

\subsection{Fixed loci}
\begin{lemma}
\label{lemma:fixedloci}
 Let $V$ be a representation of a reductive group and $\lambda\colon \BoG_{\rmm}\rightarrow G$ a one-parameter subgroup. We let $G\cdot x\in V\cms G$ and $G_x\subset G$ be the stabiliser of $x$. Then, $\overline{x}=G\cdot x$ belongs to the image of $\imath_{\lambda}\colon V^{\lambda}\cms G^{\lambda}\rightarrow V\cms G$ if and only if a $G$-conjugate of $\lambda$ factors through $G_x$. 
\end{lemma}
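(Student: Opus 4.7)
The plan is to reformulate the statement in terms of the orbit $G\cdot x$ meeting the fixed locus $V^{\lambda}$, and then reduce to a classical closed-orbit criterion. First I would observe the elementary group-theoretic equivalence: $g^{-1}\lambda(t)g \in G_x$ for all $t$ if and only if $\lambda(t)\cdot(g\cdot x) = g\cdot x$ for all $t$, i.e. $g\cdot x \in V^{\lambda}$. Hence the condition "a $G$-conjugate of $\lambda$ factors through $G_x$" is equivalent to "$G\cdot x \cap V^{\lambda} \neq \emptyset$", and it suffices to prove that $\overline{x}\in\imath_{\lambda}(V^{\lambda}\cms G^{\lambda})$ if and only if $G\cdot x\cap V^{\lambda}\neq\emptyset$.

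For the ($\Leftarrow$) direction, if $g\cdot x \in V^{\lambda}$, I would let $\overline{y}\in V^{\lambda}\cms G^{\lambda}$ be the image of $g\cdot x$ under the GIT quotient $V^{\lambda}\rightarrow V^{\lambda}\cms G^{\lambda}$. Tracing through the construction of $\imath_{\lambda}$ in the proof of Lemma~\ref{lemma:finitemap}, the composition $V^{\lambda}\rightarrow V^{\lambda}\cms G^{\lambda}\xrightarrow{\imath_{\lambda}} V\cms G$ agrees with the map induced by the inclusion $V^{\lambda}\hookrightarrow V$ on good moduli spaces, so $\imath_{\lambda}(\overline{y}) = \pi(g\cdot x) = \pi(x) = \overline{x}$, where $\pi\colon V\rightarrow V\cms G$ denotes the GIT quotient.

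For the ($\Rightarrow$) direction, I would start from $\overline{y}\in V^{\lambda}\cms G^{\lambda}$ with $\imath_{\lambda}(\overline{y})=\overline{x}$, and pick a representative $y\in V^{\lambda}$ whose $G^{\lambda}$-orbit is closed in $V^{\lambda}$ (every point of the GIT quotient is represented by a unique closed orbit). The key input is the following closed-orbit lemma, due to Luna: for $y\in V^{\lambda}$, the orbit $G^{\lambda}\cdot y$ is closed in $V^{\lambda}$ if and only if $G\cdot y$ is closed in $V$. The nontrivial implication can be argued via Kempf's theorem: if $G\cdot y$ were not closed, one could find an optimal destabilizing one-parameter subgroup $\mu$ of $G$ for $y$, and by Kempf's uniqueness $\mu$ may be chosen to commute with any torus fixing $y$, in particular with $\lambda$; then $\mu$ takes values in $G^{\lambda}$ and destabilizes $y$ inside $V^{\lambda}$, contradicting closedness of $G^{\lambda}\cdot y$. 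Granting this, $G\cdot y$ is closed, and since $\pi(y)=\imath_{\lambda}(\overline{y})=\overline{x}=\pi(x)$ with both $G\cdot y$ and $G\cdot x$ closed in $V$, they must coincide, so $y\in G\cdot x\cap V^{\lambda}$ as required.

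The only nontrivial step in this plan is the appeal to the Luna/Kempf closed-orbit theorem in the ($\Rightarrow$) direction; everything else is a direct manipulation of the definitions of the GIT quotient and of $\imath_{\lambda}$ (and, in the first paragraph, of the conjugation action on one-parameter subgroups).
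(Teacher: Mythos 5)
Your proposal is correct and follows the same basic route as the paper---reduce to the statement that $\overline{x}$ lies in the image of $\imath_\lambda$ if and only if the closed orbit $G\cdot x$ meets $V^\lambda$, and then apply the elementary conjugation observation $g\cdot x\in V^\lambda\iff g^{-1}\lambda g$ factors through $G_x$. The genuine difference is that the paper asserts the first equivalence without justification, whereas you actually prove its nontrivial direction. Concretely: if $\overline{y}\in V^\lambda\cms G^\lambda$ maps to $\overline{x}$ and $y$ is chosen with closed $G^\lambda$-orbit in $V^\lambda$, it is not a priori clear that $y\in G\cdot x$ (one only knows that $G\cdot x$ is the unique closed orbit in $\overline{G\cdot y}$). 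Your appeal to the Luna closed-orbit criterion for Levi fixed loci---that $G^\lambda\cdot y$ closed in $V^\lambda$ forces $G\cdot y$ closed in $V$, argued via Kempf's optimal destabilizing one-parameter subgroups---is exactly the ingredient needed to close this gap, and your sketch is sound (the optimal instability parabolic $P(y)$ contains $G_y\supset\lambda(\BoG_{\rmm})$, so one may pick an optimal $\mu$ in a maximal torus of $P(y)$ through $\lambda(\BoG_{\rmm})$, which then lands in $G^\lambda$ and destabilizes $y$ inside $V^\lambda$). The only quibble is phrasing: "commute with any torus fixing $y$" should read "commute with any \emph{given} torus fixing $y$"---the choice of $\mu$ depends on the torus, and one cannot commute with all of them simultaneously; but this is clearly what you mean. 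Net, your version makes explicit a nontrivial input that the paper's two-line proof leaves implicit.
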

\begin{proof}
 The point $\overline{x}$ belongs to the image of $\imath_{\lambda}$ if and only if $G\cdot x\cap V^{\lambda}\neq\emptyset$. Therefore, $\lambda$ fixed $g\cdot x$ for some $g\in G$ and so $g^{-1}\lambda g$ fixed $x$.
\end{proof}

\subsection{The BPS (cohomologically graded) mixed Hodge module}
\label{subsection:bpsmhm}
In this section, we define cohomologically graded mixed Hodge modules over $X^{\lambda}_{\alpha}\cms G^{\lambda}$ for $(\lambda,\alpha)\in \SQ_{X,G}$ that satisfy a sheafified version of the cohomological integrality isomorphism \cite{hennecart2024cohomological}, see Theorem~\ref{theorem:sheafcohint}. We mimick at the sheaf level the definition of the vector space $\CP_{\lambda}$ in \cite{hennecart2024cohomological}, recalled in \S\ref{section:reminderabsolute}. Namely, for $(\lambda,\alpha)\in \SQ_{X,G}$, we see $X^{\lambda}_{\alpha}$ as a $\overline{G^{\lambda}}\coloneqq G^{\lambda}/G_{(\lambda,\alpha)}$-variety where $G_{(\lambda,\alpha)}=\ker(G^{\lambda}\rightarrow\Aut(X^{\lambda}_{\alpha}))\cap Z(G^{\lambda})$. For $(\mu,\beta)\in \SQ_{X^{\lambda}_{\alpha},G^{\lambda}}$, the induction formalism \S\ref{section:parabolicinduction} provides us with a map
\[
 \Ind_{(\mu,\beta),(\lambda,\alpha)}\colon (\imath_{(\lambda,\alpha),(\mu,\beta)})_*(\pi_{(\lambda,\alpha),(\mu,\beta)})_*\underline{\BoQ}_{X^{\lambda,\mu}_{\alpha,\beta}/\overline{G^{\lambda}}^{\mu}}^{\vir}\rightarrow (\pi_{(\lambda,\alpha)})_*\underline{\BoQ}_{X^{\lambda}_{\alpha}/\overline{G^{\lambda}}}^{\vir}\,.
\]
As in the end of \S\ref{section:perversefiltration}, we let $\overline{\Ind}_{(\mu,\beta),(\lambda,\alpha)}$ be the sheafified associated graded induction with respect to the perverse filtration.

The image $\underline{\CJ}_{(\lambda,\alpha)}$ of the induction map
\[
 \bigoplus_{\substack{\overline{(\mu,\beta)}\in\SP_{X^{\lambda}_{\alpha},\overline{G^{\lambda}}}\\\ (X^{\lambda}_{\alpha})^{\mu}\neq X^{\lambda}_{\alpha}}}(\imath_{(\lambda,\alpha),(\mu,\beta)})_*(\pi_{(\lambda,\alpha),(\mu,\beta)})_*\underline{\BoQ}_{X^{\lambda,\mu}_{\alpha,\beta}/\overline{G^{\lambda}}^{\mu}}^{\vir}\xrightarrow{\sum_{\overline{(\mu,\beta)}}\overline{\Ind}_{(\mu,\beta),(\lambda,\alpha)}}(\pi_{(\lambda,\alpha)})_*\underline{\BoQ}_{X^{\lambda}_{\alpha}/\overline{G^{\lambda}}}^{\vir}
\]
is a $\overline{W}_{(\lambda,\alpha)}$-equivariant cohomologically graded sub-mixed Hodge module of $(\pi_{(\lambda,\alpha)})_*\underline{\BoQ}_{X^{\lambda}_{\alpha}/\overline{G^{\lambda}}}^{\vir}$ (by semisimplicity). We let $\underline{\BPS}_{(\lambda,\alpha)}$ be a $\overline{W}_{(\lambda,\alpha)}$-equivariant direct sum complement of $\underline{\CJ}_{(\lambda,\alpha)}$.

We have compatibility of the BPS sheaf with \'etale slices. More precisely, let $x\in X$ be such that the orbit $\overline{x}\coloneqq G\cdot x$ is closed. Up to $G$-conjugation, we can assume that $T_x\coloneqq G\cap T$ is a maximal torus of $G_x$. We let $N_x$ be a normal slice through $G_x$. We let $\imath_{\overline{x}}\colon \{G\cdot x\}\rightarrow X\cms G$ and $\imath_{\overline{0}}\colon \{\overline{0}\}\rightarrow \Tan_xN_x\cms G_x$ be the natural inclusions.
\begin{proposition}
\label{proposition:compatibilityBPSetaleslices}
 Let $\lambda\in X_*(T_x)$ and $\alpha\in\pi_0(X^{\lambda})$ be the connected component containing $x$. We have a commutative diagram in which vertical arrows are isomorphisms:
\[\begin{tikzcd}
	{\imath_{\overline{x}}^*\underline{\BPS}_{(\lambda,\alpha)}} & {\imath_{\overline{x}}^*(\pi_{(\lambda,\alpha)})_*\underline{\BoQ}_{X^{\lambda}_{\alpha}/G^{\lambda}}^{\vir}} \\
	{\imath_{\overline{0}}^*\underline{\BPS}_{\Tan_xN_x,\lambda}} & {\imath_{\overline{0}}^*(\pi_{\lambda})_*\underline{\BoQ}_{\Tan_xN_x/G_x}^{\vir}}
	\arrow[from=1-1, to=1-2]
	\arrow[from=1-1, to=2-1]
	\arrow[from=1-2, to=2-2]
	\arrow[from=2-1, to=2-2]
\end{tikzcd}\]
\end{proposition}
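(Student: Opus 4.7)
The plan is to construct the two vertical isomorphisms separately and then verify that they intertwine the horizontal inclusions of the diagram. For the right vertical arrow, I would apply proper base change to the right face of the upper parallelepiped in Theorem~\ref{theorem:localneighbourhood}, using that the diagonal map $N_x^{\lambda}/G_x^{\lambda}\to X^{\lambda}_{\alpha}/G^{\lambda}$ is \'etale and that the fiber of $\pi_{(\lambda,\alpha)}$ over $\overline{x}$ agrees with the fiber of $\pi_{\lambda}$ over $\overline{0}$ under the slice identification. Since both pushforwards are pure by Proposition~\ref{proposition:semisimplicitypistar}, this yields a canonical isomorphism of cohomologically graded mixed Hodge modules at the fibers.

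Next, I would trace the definition of $\underline{\BPS}_{(\lambda,\alpha)}$ through this base change. Recall that $\underline{\BPS}_{(\lambda,\alpha)}$ is defined as a $\overline{W}_{(\lambda,\alpha)}$-equivariant complement of $\underline{\CJ}_{(\lambda,\alpha)}$, which is the image of the sum of induction morphisms $\overline{\Ind}_{(\mu,\beta),(\lambda,\alpha)}$ over classes $\overline{(\mu,\beta)}\in\SP_{X^{\lambda}_{\alpha},\overline{G^{\lambda}}}$ with $(X^{\lambda}_{\alpha})^{\mu}\neq X^{\lambda}_{\alpha}$. After applying $\imath_{\overline{x}}^*$, Lemma~\ref{lemma:fixedloci} shows that only pairs whose associated torus is $G$-conjugate into $G_x$ contribute. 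Lemma~\ref{lemma:bijection} then matches the set of $\overline{W}_{(\lambda,\alpha)}$-orbits of such contributing pairs with the set of $\overline{W}_{x,\lambda}$-orbits of pairs on the slice side, and Proposition~\ref{proposition:compatibilityinduction} identifies the individual induction morphisms after fiber restriction.

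Summing the matched morphisms and taking $\overline{W}$-equivariant images, I would invoke Lemma~\ref{lemma:grouptheoretic} to identify the $\overline{W}_{(\lambda,\alpha)}$-equivariant piece supported on a full $\overline{W}_{(\lambda,\alpha)}$-orbit with the $\overline{W}_{x,\lambda}$-equivariant piece supported on the corresponding stabilizer representative. This produces an isomorphism $\imath_{\overline{x}}^*\underline{\CJ}_{(\lambda,\alpha)}\cong \imath_{\overline{0}}^*\underline{\CJ}_{\Tan_xN_x,\lambda}$ as subsheaves of the common ambient pushforward obtained in the first step. The desired left vertical isomorphism then follows by semisimplicity, which produces equivariant complements in a manner compatible with the match of subsheaves.

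The main obstacle will be the Weyl-equivariance bookkeeping: the ambient group on the stack side is $\overline{W}_{(\lambda,\alpha)}$, whereas on the slice side it is only $\overline{W}_{x,\lambda}$, and the two are related by the embedding of Lemma~\ref{lemma:injectmorphgroups}. I will need to verify that the characters $\varepsilon_{X,(\lambda,\alpha)}$ and $\varepsilon_{x,\lambda}$ are compatible under this embedding (using the compatibility lemma in \S\ref{subsection:character}), and that choosing an equivariant complement commutes with restriction to a single orbit representative --- a formal but delicate application of the orbit-stabilizer correspondence encoded in Lemma~\ref{lemma:grouptheoretic}, combined with the fact that the ambient complex is $\overline{W}_{(\lambda,\alpha)}$-equivariantly induced from its restriction along a single orbit.
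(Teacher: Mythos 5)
Your proposal follows the same strategy as the paper's own (very terse) proof: the paper cites Theorem~\ref{theorem:localneighbourhood}, Lemma~\ref{lemma:fixedloci}, and the definition of $\underline{\BPS}_{(\lambda,\alpha)}$, and you have essentially unpacked what that one sentence asks the reader to supply. One small calibration: the character compatibility between $\varepsilon_{X,(\lambda,\alpha)}$ and $\varepsilon_{x,\lambda}$ that you flag as an obstacle is not actually needed at this stage --- the BPS complex is defined only as a $\overline{W}_{(\lambda,\alpha)}$-equivariant complement of the image $\underline{\CJ}_{(\lambda,\alpha)}$, and the $\varepsilon$-isotypic components only enter later, in Theorem~\ref{theorem:sheafcohint}; correspondingly, the paper invokes Lemma~\ref{lemma:bijection}, Proposition~\ref{proposition:compatibilityinduction} and Lemma~\ref{lemma:grouptheoretic} alongside (rather than inside) Proposition~\ref{proposition:compatibilityBPSetaleslices} precisely because they govern the orbit-and-isotype bookkeeping for the global induction sum, whereas the fiberwise comparison of $\underline{\CJ}$'s in the present proposition is carried by the slice identification and Lemma~\ref{lemma:fixedloci} alone.
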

\begin{proof}
 This follows from Theorem~\ref{theorem:localneighbourhood} and Lemma~\ref{lemma:fixedloci} by the definition of the cohomologically graded mixed Hodge modules $\underline{\BPS}_{(\lambda,\alpha)}$.
\end{proof}

With this definition, we are now able to prove Proposition~\ref{proposition:boundedness} regarding the boundedness of the cohomologically graded mixed Hodge modules $\underline{\BPS}_{(\lambda,\alpha)}$.

\begin{proof}[Proof of Proposition~\ref{proposition:boundedness}]
 Let $(\lambda,\alpha)\in\SQ_{X,G}$ We assume by contradiction that $\CH^i(\underline{\BPS}_{(\lambda,\alpha)})$ is nonzero for arbitrarily large or small $i\in\BoZ$. For any \'etale slice $N_x$ at some point $x\in X$ with stabilizer $G_x$, $\HO^*(\underline{\BPS}_{\Tan_xN_x,\lambda})=0$ for $i<\dim G_x-\dim N_x=\dim G-\dim V$ and for $i> \dim G+\dim V\geq \dim G_x+\dim N_x$ by \cite[Proposition~1.4]{hennecart2024cohomological}. We reach a contradiction by taking $i\in \BoZ$ with either $i\ll 0$ or $i\gg 0$ and $\overline{x}\in X^{\lambda}_{\alpha}\cms G^{\lambda}$ such that $\imath_{\overline{x}}^*\CH^i(\underline{\BPS}_{X,(\lambda,\alpha)})\neq 0$. Indeed, in this case, by Proposition~\ref{proposition:compatibilityBPSetaleslices}, we have $\HO^{i'}(\underline{\BPS}_{\Tan_xN_x,\lambda})\neq 0$ for $i'>\dim G+\dim V$ or $i'<\dim G-\dim V$.
\end{proof}

\subsection{Sheafified cohomological integrality for symmetric affine smooth $G$-varieties}
For any $(\lambda,\alpha)\in \SP_{X,G}$, we have by definition a morphism
\[
 \underline{\BPS}_{(\lambda,\alpha)}\rightarrow \underline{\CH}_{X^{\lambda}_{\alpha},G^{\lambda}}\,.
\]
Moreover, by \cite[\S3]{hennecart2024cohomological}, we have an action of $\HO^*(\pt/G_{(\lambda,\alpha)})$ on $\underline{\CH}_{X^{\lambda}_{\alpha},G^{\lambda}}$ which we use to define a morphism
\[
  \underline{\BPS}_{(\lambda,\alpha)}\otimes\HO^*(\pt/G_{(\lambda,\alpha)})\rightarrow \underline{\CH}_{X^{\lambda}_{\alpha},G^{\lambda}}\,.
\]
By composing these morphisms with the sheafified induction morphisms, we obtain the map
\[
  (\imath_{(\lambda,\alpha)})_*(\underline{\BPS}_{(\lambda,\alpha)}\otimes\HO^*(\pt/G_{(\lambda,\alpha)}))\rightarrow \underline{\CH}_{X,G}\,.
\]

\begin{theorem}
\label{theorem:sheafcohint}
 The induction induces an isomorphism of complexes of mixed Hodge modules
 \[
  \Ind_{X,G}\colon\bigoplus_{\tilde{(\lambda,\alpha)}\in\SP_{X,G}/W}((\imath_{(\lambda,\alpha)})_*\underline{\BPS}_{(\lambda,\alpha)}\otimes\HO^*(\pt/G_{(\lambda,\alpha)}))^{\varepsilon_{X,(\lambda,\alpha)}}\rightarrow\underline{\CH}_{X,G}\,.
 \]
\end{theorem}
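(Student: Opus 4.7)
The plan is to prove the isomorphism by combining the semisimplicity of $\pi_{*}\underline{\BoQ}_{X/G}^{\vir}$ (Proposition \ref{proposition:semisimplicitypistar}) with an étale slice reduction to the case of a symmetric representation of a reductive group, where the result is essentially \cite[Theorem~1.1]{hennecart2024cohomological}. More precisely, both sides of the candidate isomorphism live in $\MMHM^{\BoZ}(X\cms G)$: the target because of Proposition \ref{proposition:semisimplicitypistar}, and the source because $\underline{\BPS}_{(\lambda,\alpha)}$ is cohomologically graded by construction and $\imath_{(\lambda,\alpha)}$ is finite hence $t$-exact (Lemma \ref{lemma:finitemap}). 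By semisimplicity, $\Ind_{X,G}$ is an isomorphism if and only if its stalk $\imath_{\overline{x}}^{*}\Ind_{X,G}$ at every closed point $\overline{x}\in X\cms G$ is an isomorphism, and closed points correspond bijectively to closed $G$-orbits $\overline{x}=G\cdot x$ in $X$.

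Fix such an $x$ with $T_{x}\coloneqq T\cap G_{x}$ a maximal torus of the reductive stabilizer $G_{x}$, and let $N_{x}$ be an étale slice through $x$. The local neighbourhood theorem (Theorem \ref{theorem:localneighbourhood}) combined with the compatibility of fibre inductions (Proposition \ref{proposition:compatibilityinduction}) and of BPS sheaves with étale slices (Proposition \ref{proposition:compatibilityBPSetaleslices}) reduces the stalk of $\Ind_{X,G}$ at $\overline{x}$ to the stalk at $\overline{0}\in V\cms G_{x}$ of the analogous induction morphism for the smooth affine symmetric $G_{x}$-variety $V\coloneqq\Tan_{x}N_{x}$, a linear representation. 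For $(V,G_{x})$, one applies the absolute cohomological integrality \cite[Theorem~1.1]{hennecart2024cohomological} recalled in Theorem \ref{theorem:abscohintreminder}, in its perverse-filtered refinement (Proposition \ref{proposition:cohintgraded}): taking associated graded yields an isomorphism
\[
\bigoplus_{\widetilde{\lambda}\in\SP_{V}/W_{x}}(\underline{\BPS}_{V,\lambda}\otimes\HO^{*}(\pt/G_{x,\lambda}))^{\varepsilon_{V,\lambda}}\xrightarrow{\;\sim\;}\imath_{\overline{0}}^{*}\underline{\CH}_{V,G_{x}}
\]
of cohomologically graded mixed Hodge structures at the point $\overline{0}$.

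It remains to identify this with the stalk of the source of $\Ind_{X,G}$ at $\overline{x}$. For each class $\overline{(\lambda,\alpha)}\in\SP_{X,G}/W$, represented by a preferred torus $T'\subset T$ (Lemma \ref{lemma:preferredrepresentative}, Lemma \ref{lemma:preferredtori}) which we may take inside $T_{x}$, base change and the finiteness of $\imath_{(\lambda,\alpha)}$ decompose $\imath_{\overline{x}}^{*}(\imath_{(\lambda,\alpha)})_{*}\underline{\BPS}_{(\lambda,\alpha)}$ as a direct sum indexed by $\imath_{(\lambda,\alpha)}^{-1}(\overline{x})$, and Proposition \ref{proposition:compatibilityBPSetaleslices} identifies each summand with $\imath_{\overline{0}}^{*}\underline{\BPS}_{V,\lambda}$. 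Lemma \ref{lemma:bijection} provides a $\overline{W}_{(\lambda,\alpha)}$-equivariant bijection $\imath_{(\lambda,\alpha)}^{-1}(\overline{x})\simeq \SP_{N_{x},G_{x}}^{T'}/W_{x,T'}$, and the stabilizer of a chosen preimage inside $\overline{W}_{(\lambda,\alpha)}$ is exactly $\overline{W}_{x,\lambda}$ via the injection of Lemma \ref{lemma:injectmorphgroups}. The compatibility $\varepsilon_{X,(\lambda,\alpha)}\big|_{\overline{W}_{x,\lambda}}=\varepsilon_{x,\lambda}$ established in \S\ref{subsection:character}, together with the group-theoretic Lemma \ref{lemma:grouptheoretic}, shows that the $\varepsilon_{X,(\lambda,\alpha)}$-isotypic component of the full stalk equals the $\varepsilon_{V,\lambda}$-isotypic component on a single preimage orbit. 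Summing over $\SP_{X,G}/W$ therefore reproduces the sum over $\SP_{V}/W_{x}$, and the induction morphism at the stalk matches the one provided by Theorem \ref{theorem:abscohintreminder}.

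The main obstacle is the combinatorial bookkeeping of the last step: correctly composing the bijection $\SP_{X,G}/W\leftrightarrow\bigsqcup_{\overline{x}}\SP_{N_{x},G_{x}}/W_{x}$ with the passage from $\overline{W}_{(\lambda,\alpha)}$-isotypic to $\overline{W}_{x,\lambda}$-isotypic components, and checking that this combinatorial matching intertwines the two induction morphisms stalk-by-stalk rather than merely identifies source and target abstractly. All the pieces needed — the preferred-torus calibration of $\SP_{X,G}$, the realization of $\overline{W}_{x,\lambda}$ as a stabilizer in $\overline{W}_{(\lambda,\alpha)}$, the étale-slice compatibilities of inductions, BPS sheaves and characters — are prepared in \S\ref{subsection:setpartitions}--\S\ref{subsection:character}, so the proof consists in assembling these ingredients in the correct order.
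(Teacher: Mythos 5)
Your proposal follows the same strategy as the paper's own proof: reduce to the associated graded in $\MMHM^{\BoZ}(X\cms G)$ using semisimplicity (Proposition~\ref{proposition:semisimplicitypistar}), check the map stalkwise at a closed point $\overline{x}$, use the \'etale-slice compatibilities (Theorem~\ref{theorem:localneighbourhood}, Propositions~\ref{proposition:compatibilityinduction} and \ref{proposition:compatibilityBPSetaleslices}) together with Lemma~\ref{lemma:bijection} and Lemma~\ref{lemma:grouptheoretic} to match the stalk with the stalk at $\overline{0}$ for the linear model $(\Tan_xN_x,G_x)$, and finally invoke the absolute cohomological integrality of \cite[Theorem~1.1]{hennecart2024cohomological} via $\BoG_{\rmm}$-equivariance. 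The only cosmetic difference is that the paper phrases the stalkwise reduction as a proof by contradiction (a simple summand of kernel or cokernel must have nonzero stalk somewhere), whereas you state it directly; the ingredients and their assembly are identical.
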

\begin{proof}
 It suffices to prove the statement after taking the associated graded with respect to the perverse filtrations on both sides. We denote by $\overline{\Ind}_{X,G}$ the morphism we obtain. If the map $\overline{\Ind}_{X,G}$ is not an isomorphism, we let $\CF$ be a simple direct summand of either the kernel of the cokernel of $\overline{\Ind}_{X,G}$ (in the Abelian category $\MMHM^{\BoZ}(X\cms G)$). We let $\overline{x}\in\supp(\CF)\subset X\cms G$ be such that $\imath_{\overline{x}}^*\CF\neq 0$. We let $x\in X$ be a lift of $\overline{x}$ such that $T\cap G_x$ is a maximal torus of $G_x$. Then,
 \[
  \imath_{\overline{x}}^*\Ind_{X,G}\colon \bigoplus_{\tilde{(\lambda,\alpha)}\in\SP_{X,G}/W}(\imath_{\overline{x}}^*((\imath_{(\lambda,\alpha)})_*\underline{\BPS}_{X,(\lambda,\alpha)})\otimes\HO^*(\pt/G_{(\lambda,\alpha)}))^{\varepsilon_{X,(\lambda,\alpha)}}\rightarrow\imath_{\overline{x}}^*\underline{\CH}_{X,G}
 \]
 is not an isomorphism.
 
 By Lemma~\ref{lemma:bijection}, Proposition~\ref{proposition:compatibilityinduction}, Proposition~\ref{proposition:compatibilityBPSetaleslices} and Lemma~\ref{lemma:grouptheoretic}, $\imath_{\overline{x}}^*\Ind_{X,G}$ is identified with the map
 \begin{equation}
 \label{equation:cohintzero}
 \imath_{\overline{0}}^*\Ind_{\Tan_xN_x,G_x}\colon \bigoplus_{\tilde{\lambda}\in\SP_{\Tan_xN_x}/W_x}(\imath_{\overline{0}}^*((\imath_{\lambda})_*\underline{\BPS}_{\Tan_xN_{x},\lambda})\otimes\HO^*(\pt/G_{\lambda}))^{\varepsilon_{\Tan_xN_{x},\lambda}}\rightarrow\imath_{\overline{0}}^*\underline{\CH}_{\Tan_xN_x,G_x}
 \end{equation}
 which is therefore not an isomorphism. But, by $\BoC^*$-equivariance, \eqref{equation:cohintzero} is the cohomological integrality isomorphism \cite[Theorem 1.1]{hennecart2024cohomological}, see also Theorem~\ref{theorem:abscohintreminder}, for the representation $\Tan_xN_x$ of $G_x$. We reached a contradiction.
\end{proof}

\begin{proposition}
\label{proposition:characterisationBPS}
 The BPS sheaves $\underline{\BPS}_{X,(\lambda,\alpha)}$ are uniquely determined (up to isomorphism). That is, if $\underline{\CF}_{(\lambda,\alpha)}$ are $\overline{W}_{\lambda}$-equivariant cohomologically graded mixed Hodge modules on $X^{\lambda}_{\alpha}\cms G^{\lambda}$ for $(\lambda,\alpha)\in\SQ_{X,G}$ such that there exists a graded isomorphism
 \[
  \bigoplus_{\tilde{(\lambda,\alpha)}\in\SP_{X,G}/W}((\imath_{(\lambda,\alpha)})_*\underline{\CF}_{(\lambda,\alpha)}\otimes\HO^*(\pt/G_{\lambda}))^{\varepsilon_{X,(\lambda,\alpha)}}\rightarrow\underline{\CH}_{X,G}\,,
 \]
 and similarly for all pairs $(X^{\lambda}_{\alpha},G^{\lambda})$ then $\underline{\CF}_{(\lambda,\alpha)}\cong\underline{\BPS}_{X,(\lambda,\alpha)}$ for any $(\lambda,\alpha)\in\SQ_{X,G}$.

\end{proposition}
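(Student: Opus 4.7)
The plan is to argue by induction on the partial order $\preceq$ on $\SP_{X,G}$, using the integrality isomorphism for each subordinate pair $(X^\lambda_\alpha, G^\lambda)$ to isolate and identify $\underline{\CF}_{(\lambda,\alpha)}$ once all strictly smaller strata have been matched with their BPS counterparts.

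First I would set up the induction. By Lemma~\ref{lemma:gcdcocharacters}, the indexing set $\SP_{X^\lambda_\alpha, G^\lambda}$ is canonically identified with the sub-poset $\{\overline{(\mu,\beta)} \in \SP_{X,G} \mid \overline{(\mu,\beta)} \preceq \overline{(\lambda,\alpha)}\}$, whose maximum is $\overline{(\lambda,\alpha)}$ itself; moreover, for a sub-pair corresponding to $(\nu,\gamma) \preceq (\lambda,\alpha)$, the group $(G^\lambda)_{(\nu,\gamma)}$ coincides with $G_{(\nu,\gamma)}$. Assume inductively that $\underline{\CF}_{(\mu,\beta)} \cong \underline{\BPS}_{X,(\mu,\beta)}$ for all $(\mu,\beta)\prec(\lambda,\alpha)$. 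Applying the integrality isomorphism of the hypothesis and Theorem~\ref{theorem:sheafcohint} simultaneously to the pair $(X^\lambda_\alpha, G^\lambda)$ yields two decompositions of $\underline{\CH}_{X^\lambda_\alpha, G^\lambda}$ whose summands indexed by $(\mu,\beta)\prec(\lambda,\alpha)$ coincide by induction. By Proposition~\ref{proposition:semisimplicitypistar}, $\underline{\CH}_{X^\lambda_\alpha, G^\lambda}$ is semisimple in $\MMHM^{\BoZ}(X^\lambda_\alpha \cms G^\lambda)$, so Krull--Schmidt applies and the top summands must agree:
\[
(\underline{\CF}_{(\lambda,\alpha)} \otimes \HO^*(\pt/G_{(\lambda,\alpha)}))^{\varepsilon_{X,(\lambda,\alpha)}} \cong (\underline{\BPS}_{X,(\lambda,\alpha)} \otimes \HO^*(\pt/G_{(\lambda,\alpha)}))^{\varepsilon_{X,(\lambda,\alpha)}}.
\]
The base case, where $(\lambda,\alpha)$ is minimal for $\preceq$, follows from the same argument with vacuous inductive hypothesis.

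Second, I would cancel the tensor factor $S\coloneqq\HO^*(\pt/G_{(\lambda,\alpha)})\cong\Sym(\Fg_{(\lambda,\alpha)}^*[-2])$. Since $S$ is concentrated in even nonnegative cohomological degrees with $S^0=\BoQ$ carrying the trivial $\overline{W}_{(\lambda,\alpha)}$-action, the lowest nonzero cohomological-degree component of $\underline{\CF}_{(\lambda,\alpha)}\otimes S$ equals $\underline{\CF}_{(\lambda,\alpha)}^{i_0}\otimes\BoQ$, and similarly for $\underline{\BPS}_{X,(\lambda,\alpha)}$. Proceeding by induction on cohomological degree, at each step one recovers the next graded piece $\underline{\CF}_{(\lambda,\alpha)}^{i_0+2k}$ by cancelling the lower-degree contributions $\underline{\CF}_{(\lambda,\alpha)}^{i_0+2k'}\otimes S^{k-k'}$ that have already been identified with their $\underline{\BPS}$-counterparts; cancellation is legitimate because $\MMHM(X^\lambda_\alpha \cms G^\lambda)$ enjoys the Krull--Schmidt property. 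Degree-by-degree isomorphism then yields $\underline{\CF}_{(\lambda,\alpha)}\cong\underline{\BPS}_{X,(\lambda,\alpha)}$.

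The main obstacle is to carry out the Krull--Schmidt step and the polynomial-ring cancellation while remaining within the $\varepsilon$-isotypic subcategory for $\overline{W}_{(\lambda,\alpha)}$. One must verify that the isotypic projection is compatible, degree by degree, with the cancellation of lower $\underline{\BPS}$-contributions; this reduces to the facts that the $\overline{W}_{(\lambda,\alpha)}$-action on $S$ preserves cohomological degree and is trivial on $S^0=\BoQ$, but some careful bookkeeping is needed. A secondary subtlety is the comparison of summands indexed by strictly smaller pairs across $W^\lambda$-orbits under the finite maps $\imath_{(\lambda,\alpha),(\mu,\beta)}$, for which the finiteness from Lemma~\ref{lemma:finitemap} and faithfulness of $(\imath_{(\lambda,\alpha),(\mu,\beta)})_*$ suffice to detect summand-level isomorphisms.
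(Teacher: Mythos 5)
Your argument is correct and follows essentially the same route as the paper's proof: induction on the order $\preceq$ on $\SP_{X,G}$, applying Theorem~\ref{theorem:sheafcohint} together with the hypothesised integrality isomorphism for the pair $(X^{\lambda}_{\alpha},G^{\lambda})$, matching the strictly smaller summands by the inductive hypothesis, and isolating the top term as a cokernel. Your degree-by-degree cancellation of the tensor factor $\HO^*(\pt/G_{(\lambda,\alpha)})$, using $S^0=\BoQ$, Krull--Schmidt, and the boundedness of Proposition~\ref{proposition:boundedness}, spells out a step the paper compresses into ``since all objects appearing are semisimple,'' but otherwise the two proofs coincide.
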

\begin{proof}
We proceed by induction on $\overline{(\lambda,\alpha)}\in\SP_{X,G}$. If $\overline{(\lambda,\alpha)}$ is minimal, then there is no $(\mu,\beta)\in \SQ_{X^{\lambda}_{\alpha},G^{\lambda}}$ such that $(X^{\lambda}_{\alpha})^{\mu}\subsetneq X^{\lambda}_{\alpha}$. Therefore,
\[
 \underline{\BPS}_{X,(\lambda,\alpha)}\cong \underline{\CP}_{\lambda}\cong (\pi_{(\lambda,\alpha)})_*\underline{\BoQ}_{X^{\lambda}_{\alpha}/(G^{\lambda}/G_{(\lambda,\alpha)})}\,.
\]

 Let now $(\lambda,\alpha)\in\SQ_{X,G}$ and we assume that $\underline{\CF}_{(\mu,\beta)}=\underline{\BPS}_{X,(\mu,\beta)}$ for any $(\mu,\beta)\in\SQ_{X,G}$ such that $(\mu,\beta)\prec(\lambda,\alpha)$ (for the order relation $\preceq$ defined in \S\ref{subsection:orderrelation}). By Theorem~\ref{theorem:sheafcohint}, the cokernel of the morphism of cohomologically graded mixed Hodge modules
 \[
  \bigoplus_{\substack{\tilde{(\mu,\beta)}\in\SP_{X^{\lambda}_{\alpha}}/W^{\lambda}\\(X^{\lambda}_{\alpha})^{\mu}\neq X^{\lambda}_{\alpha}}}\overline{\Ind}_{(\mu,\beta),(\lambda,\alpha)}\colon\bigoplus_{\substack{\tilde{(\mu,\beta)}\in\SP_{X^{\lambda}_{\alpha}}/W^{\lambda}\\(X^{\lambda}_{\alpha})^{\mu}\neq X^{\lambda}_{\alpha}}}(\imath_{(\lambda,\alpha),(\mu,\beta)})_*\underline{\BPS}_{X,(\mu,\beta)}\otimes\HO^*(\pt/G^{\lambda}_{(\mu,\beta)})\rightarrow \underline{\CH}_{X,(\lambda,\alpha)}
 \]
 is $\underline{\BPS}_{X,(\lambda,\alpha)}\otimes\HO^*(\pt/G_{(\lambda,\alpha)})$. By induction hypothesis, we also have a morphism
 \[
  \bigoplus_{\substack{\tilde{(\mu,\beta)}\in\SP_{X^{\lambda}_{\alpha}}/W^{\lambda}\\(X^{\lambda}_{\alpha})^{\mu}\neq X^{\lambda}_{\alpha}}}(\imath_{(\lambda,\alpha),(\mu,\beta)})_*\underline{\BPS}_{(\mu,\beta)}\otimes\HO^*(\pt/G^{\lambda}_{(\mu,\beta)})\rightarrow \underline{\CH}_{X,(\lambda,\alpha)}
 \]
whose cokernel is $\underline{\CP}_{(\lambda,\alpha)}\otimes\HO^*(\pt/G_{(\lambda,\alpha)})$. Since all objects appearing are semisimple, we have $\underline{\CP}_{(\lambda,\alpha)}\cong \underline{\BPS}_{X,(\lambda,\alpha)}$.
\end{proof}

\subsection{Self-duality}

\begin{lemma}
\label{lemma:selfdualitypf}
 For any $i\in\BoZ$, we have $\underline{\CH}^i(\pi_*\underline{\BoQ}_{X/G}^{\vir})\cong \BD\underline{\CH}^{i}(\pi_*\underline{\BoQ}_{X/G}^{\vir})\otimes\SL^i[2i]$.
\end{lemma}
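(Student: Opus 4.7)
The plan is to combine the approximation by proper maps of Proposition~\ref{proposition:Gprimeapproachable} with Relative Hard Lefschetz and Verdier self-duality for proper pushforwards of virtual IC sheaves. Concretely, I will approximate $\pi'\colon X/G'\to X\cms G$ (where $G'=G\times\BoG_m$) by the proper morphisms $p_n\colon \overline{X_n}\to X\cms G$ constructed in Proposition~\ref{proposition:Gprimeapproachable}. Using Corollary~\ref{corollary:pushforwardic} and Corollary~\ref{corollary:isopushforward}, the cohomology sheaves $\underline{\CH}^i(\pi'_*\underline{\BoQ}_{X/G'}^{\vir})$ agree, for $n$ chosen large enough that $\codim_{\overline{X_n}}(\overline{X_n}\setminus\overline{X'_n})-1\geq |i|$, with those of the pure complex $(p_n)_*\underline{\IC}(\overline{X_n})$ in the virtual normalization used throughout the paper.

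The key is to establish the identity of the lemma directly for the proper pushforward $(p_n)_*\underline{\IC}(\overline{X_n})$. On one hand, $\underline{\IC}(\overline{X_n})$ is Verdier self-dual in this normalization, and properness of $p_n$ gives $\BD(p_n)_*\underline{\IC}(\overline{X_n})\cong(p_n)_*\underline{\IC}(\overline{X_n})$, from which $\BD\underline{\CH}^{-j}\cong\underline{\CH}^j$ at the level of cohomology sheaves. On the other hand, the GIT character $\theta'$ appearing in the construction of $\overline{X_n}$ provides a $p_n$-relatively ample line bundle, and Saito's Relative Hard Lefschetz yields isomorphisms $L^j\colon\underline{\CH}^{-j}\xrightarrow{\sim}\underline{\CH}^j\otimes\SL^{-j}$. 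Combining the two isomorphisms gives $\BD\underline{\CH}^j\cong\underline{\CH}^j\otimes\SL^{-j}$, which is equivalent to the sought identity $\underline{\CH}^j\cong\BD\underline{\CH}^j\otimes\SL^j$ for the proper approximation, and hence, by the truncated comparison above applied in each fixed degree, for $\pi'_*\underline{\BoQ}_{X/G'}^{\vir}$.

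To descend from $\pi'_*\underline{\BoQ}_{X/G'}^{\vir}$ to $\pi_*\underline{\BoQ}_{X/G}^{\vir}$, I use that since $\BoG_m$ acts trivially on $X$ we have $\pi'_*\underline{\BoQ}_{X/G'}^{\vir}\cong\pi_*\underline{\BoQ}_{X/G}^{\vir}\otimes\HO^*(B\BoG_m)^{\vir}$. A direct computation shows that $\HO^*(B\BoG_m)^{\vir}$ (concentrated in odd cohomological degrees with $\underline{\CH}^{2k+1}\cong\BoQ(-k-\tfrac12)$) satisfies the lemma pointwise; the identity is preserved under tensor product, and since $\pi'_*\underline{\BoQ}_{X/G'}^{\vir}$ is a free module over $\HO^*(B\BoG_m)^{\vir}$, each cohomology sheaf $\underline{\CH}^j(\pi_*\underline{\BoQ}_{X/G}^{\vir})$ is canonically isolated inside $\underline{\CH}^{j+2k+1}(\pi'_*\underline{\BoQ}_{X/G'}^{\vir})$ by its Tate-weight grading. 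The self-duality therefore transfers.

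The main obstacle is verifying Saito's Relative Hard Lefschetz in the monodromic mixed Hodge module framework used here, for the pushforward of the virtually-normalized IC on the possibly singular scheme $\overline{X_n}$, and then carrying out the bookkeeping of Tate twists in the descent from $\pi'_*$ to $\pi_*$; the relative ampleness of the polarization from the GIT construction should suffice for the former, while the canonical splitting by the $\HO^*(B\BoG_m)^{\vir}$-module structure should handle the latter.
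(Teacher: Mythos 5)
Your proposal follows essentially the same route as the paper: approximate $\pi'\colon X/(G\times\BoG_m)\to X\cms G$ by the proper GIT morphisms $p_n$ of Proposition~\ref{proposition:Gprimeapproachable}, apply relative Hard Lefschetz and Verdier self-duality to $(p_n)_*\underline{\IC}(\overline{X_n})$ to obtain $\BD\underline{\CH}^j\cong\underline{\CH}^j\otimes\SL^{-j}$ in each fixed degree for $n\gg0$, and then descend from $G'$ to $G$ using $\pi'_*\underline{\BoQ}_{X/G'}^{\vir}\cong\pi_*\underline{\BoQ}_{X/G}^{\vir}\otimes\HO^*(\pt/\BoG_m)^{\vir}$. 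The only cosmetic difference is that the paper phrases the descent as ``an easy induction on cohomological degrees'' while you phrase it as peeling off the free $\HO^*(\pt/\BoG_m)^{\vir}$-module structure by Tate weight; these are the same argument, since $\pi_*\underline{\BoQ}_{X/G}^{\vir}$ is bounded below so the lowest-degree summand isolates $\underline{\CH}^j(\pi_*\underline{\BoQ}_{X/G}^{\vir})$ and one cancels the one-dimensional factor $\underline{\CH}^1(\HO^*(\pt/\BoG_m)^{\vir})$.
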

\begin{proof}
We let $G'=G\times\BoC^*$ and let $\BoC^*$ act trivially on $X$. Since $\pi'_*\underline{\BoQ}_{X/G'}\cong\pi_*\underline{\BoQ}_{X/G}\otimes\HO^*(\pt/\BoC^*)$, it suffices to prove the statement for $X/G'$ by an easy induction on cohomological degrees. We use the fact that $\pi'$ can be approached by proper maps (Proposition~\ref{proposition:Gprimeapproachable}). For $n$ big enough,
\[
 \underline{\CH}^i(\pi'_*\underline{\BoQ}_{X/G'}^{\vir})\cong \underline{\CH}^{i-2d_n}((p_n)_*\IC(\overline{X_n}))\otimes\SL^{d_n}[2d_n]
\]
for some $d_n\in \frac{1}{2}\BoZ$ and projective algebraic variety $\overline{X_n}$. 
By the relative hard Lefschetz theorem (\cite[Theorem 3.1.16]{de2009decomposition} for perverse sheaves) applied to $p_n$, we have
\[
 \underline{\CH}^{i-2d_n}((p_n)_*\underline{\IC}(\overline{X_n}))\cong\underline{\CH}^{2d_n-i}((p_n)_*\underline{\IC}(\overline{X_n}))\otimes \SL^{i-2d_n}[2(i-2d_n)]
\]
By properness of $p_n$ and self-duality of $\underline{\IC}(\overline{X_n})$, we have
\[
 \underline{\CH}^{2d_n-i}((p_n)_*\underline{\IC}(\overline{X_n}))\cong \BD\underline{\CH}^{i-2d_n}((p_n)_*\underline{\IC}(\overline{X_n}))\,.
\]
By combining these two isomorphisms, we obtain
\[
 \underline{\CH}^{i-2d_n}((p_n)_*\underline{\IC}(\overline{X_n}))\cong \BD\underline{\CH}^{i-2d_n}((p_n)_*\underline{\IC}(\overline{X_n}))\otimes \SL^{i-2d_n}[2(i-2d_n)]\,.
\]
Tensoring both sides by $\SL^{d_n}[2d_n]$, we obtain the desired isomorphism.
\end{proof}

\begin{corollary}
\label{corollary:selfduality}
 For any $G$-equivariant function $f\colon X\rightarrow\BoC$, we have $\underline{\CH}^i(\pi_*\varphi_f^{p}\underline{\BoQ}_{X/G}^{\vir})\cong\BD\underline{\CH}^i(\pi_*\varphi_f^p\underline{\BoQ}^{\vir}_{X/G})\otimes\SL^i[2i]$.
\end{corollary}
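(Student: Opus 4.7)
The plan is to deduce the corollary from Lemma~\ref{lemma:selfdualitypf} by applying the perverse-exact vanishing cycles functor to both sides of the self-duality isomorphism established there. Since $f$ is $G$-invariant, it descends to a regular function $\bar f\colon X\cms G\to\BoC$, and I will use three standard properties of $\varphi^p$ for monodromic mixed Hodge modules (see Appendix~\ref{appendix:MHM}):
\begin{enumerate}[(i)]
\item $\varphi_{\bar f}^p$ is perverse $t$-exact, so it commutes with the cohomological truncation functor $\underline{\CH}^i$;
\item $\varphi_{\bar f}^p$ commutes with Verdier duality $\BD$ without Tate twist (this is precisely one of the advantages of working in the monodromic setting);
\item one has a natural comparison isomorphism $\varphi_{\bar f}^p\pi_*\cong\pi_*\varphi_f^p$.
\end{enumerate}

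The key step is to justify (iii), since $\pi\colon X/G\to X\cms G$ is not proper. I would deduce it from the approximation by proper maps of Proposition~\ref{proposition:Gprimeapproachable}. Replacing $G$ by $G'=G\times\BoG_{\rmm}$ (with $\BoG_{\rmm}$ acting trivially) gives $\pi'_*\underline{\BoQ}_{X/G'}\cong \pi_*\underline{\BoQ}_{X/G}\otimes\HO^*(\pt/\BoG_{\rmm})$, so it suffices to verify (iii) for $\pi'$. For $n$ sufficiently large (depending on the cohomological degree of interest), Corollary~\ref{corollary:isopushforward} identifies the truncation $\underline{\tau}^{\leq k}\pi'_*\underline{\BoQ}_{X/G'}^{\vir}$ with a corresponding truncation of $(p_n)_*\underline{\IC}(\overline{X_n})$ up to a shift, where $p_n$ is proper. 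For proper maps, the commutation of $\varphi^p$ with pushforward is classical and follows from proper base change (together with the compatibility of $\varphi^p$ with smooth pullback along $X'_n/G\to X/G$, which is a smooth approximation on the generic locus). Passing to the limit in $n$ yields (iii) for $\pi'$ in every fixed cohomological degree, hence for $\pi$.

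Granting (i)--(iii), the proof is a short chain of isomorphisms: setting $\CA_i\coloneqq\underline{\CH}^i(\pi_*\underline{\BoQ}_{X/G}^{\vir})$, we have
\[
\underline{\CH}^i(\pi_*\varphi_f^p\underline{\BoQ}_{X/G}^{\vir})\;\overset{(iii)}{\cong}\;\underline{\CH}^i(\varphi_{\bar f}^p\pi_*\underline{\BoQ}_{X/G}^{\vir})\;\overset{(i)}{\cong}\;\varphi_{\bar f}^p\CA_i,
\]
and by Lemma~\ref{lemma:selfdualitypf} combined with (ii),
\[
\varphi_{\bar f}^p\CA_i\;\cong\;\varphi_{\bar f}^p(\BD\CA_i\otimes\SL^i)\;\overset{(ii)}{\cong}\;\BD(\varphi_{\bar f}^p\CA_i)\otimes\SL^i\;\cong\;\BD\underline{\CH}^i(\pi_*\varphi_f^p\underline{\BoQ}_{X/G}^{\vir})\otimes\SL^i,
\]
which is the desired isomorphism.

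The main obstacle is rigorously establishing (iii). Property (i) is essentially a definition of $\varphi^p$ as the perverse shift of the vanishing cycles, and (ii) is the motivation for introducing monodromic mixed Hodge modules in the first place; both can be cited cleanly. The commutation (iii) of vanishing cycles with the non-proper map $\pi$ is where the approximation by proper maps plays its genuinely nontrivial role, exactly paralleling the way Lemma~\ref{lemma:selfdualitypf} itself was proven via the relative hard Lefschetz theorem applied to the approximating proper morphisms $p_n$.
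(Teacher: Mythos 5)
Your proposal is correct and follows essentially the same route as the paper's (very terse) proof: reduce to Lemma~\ref{lemma:selfdualitypf}, then commute $\varphi_f^p$ with $\pi_*$ via approximation by proper maps, with $\underline{\CH}^i$ via perverse $t$-exactness, and with Verdier duality in the monodromic setting. You have simply made explicit the three ingredients the paper invokes implicitly.
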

\begin{proof}
 This follows from the fact that the vanishing cycle functor is perverse $t$-exact and commutes with proper pushforward. Therefore, it also commutes with maps that are approachable by proper maps.
\end{proof}

\begin{corollary}
\label{corollary:purebelowabove}
 The complex of monodromic mixed Hodge modules $\pi_*\varphi^p_f\underline{\BoQ}_{X/G}^{\vir}$ is pure if and only if it is pure below, if and only if it is pure above.
\end{corollary}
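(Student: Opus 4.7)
The plan is to deduce the corollary directly from the self-duality isomorphism
\[
 \underline{\CH}^i(\pi_*\varphi_f^p\underline{\BoQ}_{X/G}^{\vir})\cong\BD\underline{\CH}^i(\pi_*\varphi_f^p\underline{\BoQ}_{X/G}^{\vir})\otimes\SL^i
\]
established in Corollary~\ref{corollary:selfduality}, combined with Saito's theorem on how Verdier duality acts on weights.

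More precisely, with the convention that a cohomologically graded mixed Hodge module $\underline{\CF}$ on $X\cms G$ is \emph{pure} if $\underline{\CH}^i(\underline{\CF})$ is of weight $i$ for each $i$, \emph{pure below} if $\underline{\CH}^i(\underline{\CF})$ is of weight $\leq i$ for each $i$, and \emph{pure above} if $\underline{\CH}^i(\underline{\CF})$ is of weight $\geq i$ for each $i$, I would argue as follows. Recall that the Verdier dual $\BD$ exchanges the classes of mixed Hodge modules of weight $\leq w$ and of weight $\geq -w$, while the Tate twist $-\otimes\SL^i$ shifts weights by $2i$. Writing $\underline{\CH}^i\coloneqq\underline{\CH}^i(\pi_*\varphi_f^p\underline{\BoQ}_{X/G}^{\vir})$, the isomorphism $\underline{\CH}^i\cong\BD\underline{\CH}^i\otimes\SL^i$ translates directly into the equivalence: $\underline{\CH}^i$ has weights $\leq i$ if and only if $\underline{\CH}^i$ has weights $\geq i$.

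Assume first that $\pi_*\varphi_f^p\underline{\BoQ}_{X/G}^{\vir}$ is pure below, so that each $\underline{\CH}^i$ has weights $\leq i$. Applying $\BD$ gives that $\BD\underline{\CH}^i$ has weights $\geq -i$, and tensoring by $\SL^i$ shifts this to weights $\geq i$. Via the self-duality isomorphism, this forces $\underline{\CH}^i$ to have weights $\geq i$, hence to be pure of weight $i$. Conversely, if it is pure above, then $\BD\underline{\CH}^i$ has weights $\leq -i$, so $\BD\underline{\CH}^i\otimes\SL^i$ has weights $\leq i$, and again the self-duality forces purity. The implications from purity to pure below and pure above are trivial.

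I do not anticipate any real obstacle: everything reduces to the formal behavior of weights under $\BD$ and Tate twists, once Corollary~\ref{corollary:selfduality} is available. The only point one has to be careful about is to clearly fix the convention of "pure below/above" relative to the cohomological shift in $\underline{\BoQ}_{X/G}^{\vir}$, so that the weight $i$ for $\underline{\CH}^i$ is indeed the value singled out as a fixed point of the involution $\underline{\CH}^i\mapsto \BD\underline{\CH}^i\otimes\SL^i$ on weights.
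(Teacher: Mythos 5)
Your argument is correct and is exactly the proof the paper has in mind — the paper's one-line justification (``This is a consequence of Corollary~\ref{corollary:selfduality}, since the Verdier duality changes the weights to their opposite'') is what you have unpacked, using that $\BD$ exchanges weights $\leq w$ with weights $\geq -w$ and that $\otimes\SL^i$ raises weights by $2i$. Note only that the paper's implicit usage of ``pure below'' and ``pure above'' (as inferred from Lemma~\ref{lemma:boundedbelow} and the proof of Theorem~\ref{theorem:purityBPSsheaf}) appears to be the opposite of the convention you fix, but since the corollary and your argument are symmetric in the two, this is immaterial.
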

\begin{proof}
 This is a consequence of Corollary~\ref{corollary:selfduality}, since the Verdier duality changes the weights to their opposite.
\end{proof}

\subsection{A conjecture}
We make conjectures regarding the precise identification of the BPS complexes $\underline{\BPS}_{X,(\lambda,\alpha)}$.

\begin{definition}
 We let $\SQ_{X,G}^{\st}\subset \SQ_{X,G}$ be the subset of \emph{stable} pairs $(\lambda,\alpha)$, that is of $(\lambda,\alpha)\in \SQ_{X,G}$ such that there are closed orbits with finite stabilizer for the $G^{\lambda}/G_{(\lambda,\alpha)}$-action on $X^{\lambda}_{\alpha}$.
\end{definition}

\begin{proposition}
 A pair $(\lambda,\alpha)$ is stable if and only if $\dim X^{\lambda}_{\alpha}\cms G^{\lambda}=\dim X^{\lambda}_{\alpha}/(G^{\lambda}/G_{(\lambda,\alpha)})$.
\end{proposition}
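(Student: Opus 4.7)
The plan is a direct dimension count. I would set $Y := X^{\lambda}_{\alpha}$ and $H := G^{\lambda}/G_{(\lambda,\alpha)}$. Since $G_{(\lambda,\alpha)}$ acts trivially on $Y$ by its very definition, the $G^{\lambda}$- and $H$-invariants on $\BoC[Y]$ agree, so $Y\cms G^{\lambda}=Y\cms H$. As a quotient stack, $\dim\bigl(Y/(G^{\lambda}/G_{(\lambda,\alpha)})\bigr)=\dim Y-\dim H=\dim Y-\dim G^{\lambda}+\dim G_{(\lambda,\alpha)}$.

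The key input is Rosenlicht's theorem applied to the $H$-action on $Y$: the dimension of the GIT quotient equals $\dim Y$ minus the dimension of a generic $H$-orbit, and the latter is $\dim H-\dim K$ where $K$ is the neutral component of the generic $H$-stabilizer. This yields
\[
\dim Y\cms H=\dim Y-\dim H+\dim K,
\]
so that, combined with the previous paragraph, the dimensional equality of the proposition is equivalent to $\dim K=0$, i.e. to finiteness of the generic $H$-stabilizer on $Y$.

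It then remains to match ``generic $H$-stabilizer finite'' with ``$(\lambda,\alpha)$ stable''. The forward direction is immediate: if $y\in Y$ has $H\cdot y$ closed with $H_y$ finite, then by the openness part of Luna's slice theorem the stable locus is a non-empty open subset of $Y$, hence Zariski-dense, so $K$ is finite. For the converse, I would pick a point $y\in Y$ with closed $H$-orbit (which exists because $Y$ is affine and $H$ reductive), apply Luna's slice theorem to obtain an \'etale-local model $Y\simeq H\times^{H_y}N_y$ around $H\cdot y$, and observe that the generic $H_y$-stabilizer on $N_y$ coincides with $K$. Exploiting the local structural description of $X^{\lambda}_{\alpha}$ from Theorem~\ref{theorem:localneighbourhood}, the slice $N_y$ inherits enough regularity (in particular the symmetricity of the $H_y$-representation through \S\ref{subsection:character}) so that finiteness of the generic $H_y$-stabilizer forces $H_y$ itself to be finite, whence $y$ is stable.

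The hard part is this last step of the converse direction: in complete generality it can fail, as the example of $\BoG_{\rmm}$ acting by scalars on $\BoA^{1}$ shows (trivial generic stabilizer but the unique closed orbit $\{0\}$ has infinite stabilizer). Overcoming this obstacle relies crucially on the smooth and symmetric features of the slices $N_y$ provided by the paper's \'etale-slice formalism, which rules out this degenerate behaviour and allows the identification $H_y=K$ at a generic point of closed orbit.
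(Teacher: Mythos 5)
Your argument breaks at the very first step: the formula $\dim Y\cms H = \dim Y - \dim H + \dim K$, with $K$ the identity component of the generic $H$-stabilizer, is not what Rosenlicht's theorem gives for a GIT quotient. Rosenlicht computes the transcendence degree of the invariant \emph{rational} function field $\BoC(Y)^H$; the GIT quotient dimension is the transcendence degree of $\Frac(\BoC[Y]^H)$, and these fields can differ for reductive $H$. For $H = \BoG_{\rmm}$ acting on $Y = \BoA^2$ with both weights equal to $1$, the generic stabilizer is trivial, so your formula predicts dimension $2-1+0=1$, but $\BoC[x,y]^{\BoG_{\rmm}} = \BoC$ gives $\dim Y\cms H = 0$ (whereas $\BoC(x,y)^{\BoG_{\rmm}} = \BoC(x/y)$ does have transcendence degree $1$, so Rosenlicht's quotient genuinely has dimension $1$). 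In general only the inequality $\dim Y\cms H \leq \dim Y - \dim H + \dim K$ holds, with the defect measuring the failure of generic orbits to be closed; the dimension equality in the proposition is therefore \emph{not} equivalent to finiteness of the generic stabilizer, and your reduction does not hold as stated. A second soft spot is the converse equivalence ``finite generic stabilizer'' $\Leftrightarrow$ ``stable'': you flag $\BoG_{\rmm}\curvearrowright\BoA^1$, but the independent scaling of $\BoG_{\rmm}^2$ on $\BoA^2$ (trivial generic stabilizer, dimension equality holds, yet the origin is the only closed orbit) shows the failure persists even where the two dimension counts agree, and the appeal to symmetry of the Luna slices that is supposed to rescue this step is entirely unproved.

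The paper's own argument (itself very compressed) does not pass through Rosenlicht or through generic stabilizers. It reads the dimension equality off the good moduli space map directly, asserting that it forces the union of closed $G^{\lambda}$-orbits in $X^{\lambda}_{\alpha}$ to be a non-empty open subset whose general member has finite stabilizer modulo $G_{(\lambda,\alpha)}$ --- exactly the stability condition. The quantity governing $\dim X^{\lambda}_{\alpha}\cms G^{\lambda}$ is the dimension of a generic \emph{fiber} of $\pi_{(\lambda,\alpha)}$, i.e.\ of the unique closed orbit in that fiber together with the orbits specializing to it, not the dimension of a generic orbit; that distinction is precisely what your application of Rosenlicht misses.
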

\begin{proof}
 The equality of dimensions forces the union of closed $G^{\lambda}$-orbits inside $X^{\lambda}_{\alpha}$ to be a non-empty open subset and moreover, a general element in this union has finite stabilizer inside $G^{\lambda}/G_{(\lambda,\alpha)}$. This means precisely $(\lambda,\alpha)\in \SQ_{X,G}^{\st}$.
\end{proof}

\begin{conjecture}
\label{conjecture:BPSsheavesnofunction}
 We make the following conjectures\footnote{As this paper was being revised, a proof of Conjecture~\ref{conjecture:BPSsheavesnofunction} appeared in \cite{bu2025cohomology} under an orthogonality assumption for the action of $G$.} regarding the cohomologically graded mixed Hodge modules $\underline{\CP}_{(\lambda,\alpha)}\coloneqq \underline{\BPS}_{(\lambda,\alpha)}$:
 \begin{enumerate}
  \item $\underline{\CP}_{(\lambda,\alpha)}\neq 0$ if and only if $(\lambda,\alpha)\in \SQ_{X,G}^{\st}$.
  \item For $(\lambda,\alpha)\in \SQ_{X,G}$, $\underline{\CP}_{(\lambda,\alpha)}={\underline{\CH}}^{\dim G_{(\lambda,\alpha)}}(\pi_*\underline{\BoQ}_{X^{\lambda}_{\alpha}/G^{\lambda}}^{\vir})[-\dim G_{(\lambda,\alpha)}]$.
  \item There is a natural inclusion $\underline{\IC}(X^{\lambda}_{\alpha}\cms G^{\lambda})\otimes\SL^{\dim G_{(\lambda,\alpha)}/2}\hookrightarrow\underline{\CP}_{(\lambda,\alpha)}$, which is an isomorphism.
 \end{enumerate}
\end{conjecture}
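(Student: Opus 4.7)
The plan is first to reduce to the case where $(\lambda,\alpha)=(0,X)$ with $X$ connected: a general pair is handled by applying this case to the symmetric $G^\lambda$-variety $X^\lambda_\alpha$. The \'etale slice theorem (Theorem~\ref{theorem:localneighbourhood}) together with the compatibility of the BPS construction with \'etale slices (Proposition~\ref{proposition:compatibilityBPSetaleslices}) then further reduces to the case $X=V$ a symmetric representation of $G$: every statement of the conjecture is stalkwise on $V\cms G$, and the \'etale local picture of a smooth affine symmetric $G$-variety around a point with closed orbit is exactly a linear symmetric representation of the stabiliser. In the linear case $V\cms G$ is an affine cone with apex $\overline{0}$, and $\BoG_{\rmm}$-equivariance forces everything to be determined by the stalk at $\overline{0}$, opening the way to an induction on $\dim V$.

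For part (1), the direction ``not stable $\Rightarrow \underline{\CP}_{(0,V)}=0$'' I would prove stalkwise: if $V$ is not stable, then for every closed orbit $G\cdot v$ the identity component of $G_v$ strictly contains $G_{(0,V)}^{\circ}$, so Proposition~\ref{proposition:compatibilityBPSetaleslices} identifies the stalk of $\underline{\CP}_{(0,V)}$ at $\overline{v}$ with the stalk at the apex of $\underline{\CP}_{(0,\Tan_vN_v)}$ for a symmetric representation of $G_v$ of strictly smaller generic stabiliser (relative to $G_v$). Combining this with the associativity of induction (Proposition~\ref{proposition:associativity}) and the inductive characterisation of BPS as a $\overline{W}$-equivariant direct-sum complement of the images of parabolic induction from proper sub-partitions (Proposition~\ref{proposition:characterisationBPS}), one proves by induction on $\dim V$ that the complement is trivial in this situation. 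The converse direction will follow from (3).

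For parts (2) and (3), the starting point is that on the stable locus $V^{\st}\cms G\subset V\cms G$ the map $\pi$ is a finite cover of a $G_{(0,V)}$-gerbe, giving a canonical direct summand $\underline{\BoQ}_{V^{\st}\cms G}^{\vir}\otimes\HO^*(\pt/G_{(0,V)})$ inside $\pi_*\underline{\BoQ}_{V/G}^{\vir}|_{V^{\st}\cms G}$. By the purity and semisimplicity of $\pi_*\underline{\BoQ}_{V/G}^{\vir}$ (Proposition~\ref{proposition:semisimplicitypistar}), together with the BBDG decomposition theorem applied after the approximation by proper maps (Proposition~\ref{proposition:Gprimeapproachable}), this summand extends uniquely to a global summand isomorphic to $\underline{\IC}(V\cms G)\otimes\SL^{\dim G_{(0,V)}/2}\otimes\HO^*(\pt/G_{(0,V)})$. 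A support and weight argument using the self-duality of the cohomology sheaves (Corollary~\ref{corollary:selfduality}) to pin down cohomological degree then shows the $\underline{\IC}$-factor cannot lie in the image of induction from any proper Levi, so it embeds into $\underline{\CP}_{(0,V)}$, yielding the inclusion in (3). The main obstacle, and what will require new input, is the reverse inclusion: proving that \emph{every} simple summand of $\underline{\CP}_{(0,V)}$ is $\underline{\IC}(V\cms G)$ with the prescribed twist. This demands an induction on the codimension of strata of $V\cms G$ by stabiliser type, using Proposition~\ref{proposition:characterisationBPS} to identify BPS on each stratum inductively and ruling out ``small support'' summands via a careful analysis of the IC decomposition of $(p_n)_*\underline{\IC}(\overline{X_n})$ coming from the proper approximation.
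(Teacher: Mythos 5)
The statement you were asked to prove is Conjecture~\ref{conjecture:BPSsheavesnofunction}, and the paper in fact does not prove it: it is genuinely left open. The only thing the paper establishes in its direction is the final proposition of that section, namely the canonical monomorphism $\underline{\IC}(X^{\lambda}_{\alpha}\cms G^{\lambda})\otimes\SL^{\dim G_{(\lambda,\alpha)}/2}\hookrightarrow\underline{\CP}_{(\lambda,\alpha)}$ for \emph{stable} pairs. So there is no ``paper's own proof'' to compare against for parts (1), (2), or the surjectivity in (3).

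For the part the paper does prove (the inclusion in (3)), your outline is in the same spirit but slightly more elaborate than necessary. The paper's argument is: semisimplicity of $(\pi_{(\lambda,\alpha)})_*\underline{\BoQ}^{\vir}$ (Proposition~\ref{proposition:semisimplicitypistar}) plus the observation that on the stable open locus $\pi_{(\lambda,\alpha)}$ is a gerbe for a finite extension of $G_{(\lambda,\alpha)}$ gives the $\underline{\IC}\otimes\HO^*(\pt/G_{(\lambda,\alpha)})$ summand; then the decisive point is that the image of every nontrivial induction morphism $\Ind_{(\mu,\beta),(\lambda,\alpha)}$ is supported on the strictly semistable locus, which is a proper closed subset, so the full-support $\underline{\IC}$ summand cannot be absorbed into it. You invoke self-duality (Corollary~\ref{corollary:selfduality}) to pin down cohomological degree, but that is not needed; the support argument alone suffices. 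You should also note the paper only proves this inclusion under the stability hypothesis $(\lambda,\alpha)\in\SQ_{X,G}^{\st}$, which you should make explicit.

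Your attempt at part (1) has a concrete gap in the induction: you propose to compare the stalk of $\underline{\CP}_{(0,V)}$ at a closed orbit $\overline v$ with the apex stalk of the BPS complex of the slice $\Tan_vN_v$ and induct on dimension. This works for $\overline v\neq\overline 0$, since then $\dim G\cdot v>0$ and $\dim\Tan_vN_v<\dim V$. But at $\overline v=\overline 0$ the slice is $V$ itself and the induction does not decrease; the apex stalk is exactly the thing you are trying to control, so the argument does not close. Some additional input is needed precisely at the apex (for instance $\BoG_{\rmm}$-contraction and weight constraints, or the absolute integrality Theorem~\ref{theorem:abscohintreminder} controlling the total dimension of the apex stalk), and you should not present this as a completed reduction. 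The reverse inclusion in (3) (and hence (2)) is, as you correctly flag, the genuinely open heart of the conjecture; the analysis you sketch of simple summands of $(p_n)_*\underline{\IC}(\overline{X_n})$ via the decomposition theorem and stratification by stabiliser type is a sensible plan but is not carried out here and is not carried out in the paper either.
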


\begin{lemma}
\label{lemma:finitegroupquotient}
 Let $G$ be a finite group acting on a smooth quasiprojective variety $X$ and $\pi\colon X\rightarrow X\cms G$ the good moduli space map. Then, $\pi_*\underline{\BoQ}_{X/G}\cong\underline{\BoQ}_{X\cms G}$.
\end{lemma}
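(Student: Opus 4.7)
The plan is to reduce the computation to the scheme-theoretic quotient map via the étale atlas, and then exploit that this map is finite since $G$ is finite. Let $q\colon X \to X/G$ denote the smooth atlas of the quotient stack, which for a finite group action is a finite étale $G$-torsor, and let $\tilde{\pi} \coloneqq \pi \circ q\colon X \to X\cms G$ be the (finite) scheme-theoretic quotient map.

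First I would establish Galois descent at the sheaf level. Since $q$ is a finite étale $G$-torsor, the pullback $q^{*} q_{*}\underline{\BoQ}_{X}$ decomposes $G$-equivariantly as $\underline{\BoQ}_{X} \otimes \BoQ[G]$ with $G$ acting through the regular representation. With rational coefficients this splits into isotypic summands, and the trivial summand gives the descent identity
\[
 \underline{\BoQ}_{X/G} \cong (q_{*}\underline{\BoQ}_{X})^{G}
\]
in the (monodromic) mixed Hodge module category. Applying $\pi_{*}$ and using $\pi \circ q = \tilde{\pi}$ yields
\[
 \pi_{*}\underline{\BoQ}_{X/G} \cong (\tilde{\pi}_{*}\underline{\BoQ}_{X})^{G}.
\]

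Next I would compute $(\tilde{\pi}_{*}\underline{\BoQ}_{X})^{G}$ by a stalk argument. The map $\tilde{\pi}$ is finite, so $\tilde{\pi}_{*}\underline{\BoQ}_{X}$ is concentrated in cohomological degree zero; for each closed point $y \in X\cms G$ the fibre $\tilde{\pi}^{-1}(y)$ is a single $G$-orbit $G \cdot x$, so the stalk at $y$ is the permutation module $\BoQ[G/G_{x}]$. Its $G$-invariants are one-dimensional, and the natural adjunction morphism
\[
 \underline{\BoQ}_{X\cms G} \longrightarrow \pi_{*}\pi^{*}\underline{\BoQ}_{X\cms G} = \pi_{*}\underline{\BoQ}_{X/G}
\]
corresponds stalkwise to the inclusion of the trivial summand $\BoQ \hookrightarrow \BoQ[G/G_{x}]^{G}$. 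This map is therefore an isomorphism of monodromic mixed Hodge modules after verifying the degree-zero, weight-zero identifications.

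The only step that genuinely requires care is the descent identity $\underline{\BoQ}_{X/G} \cong (q_{*}\underline{\BoQ}_{X})^{G}$ in the MMHM category; but since $q$ is étale and $G$ is finite, the $G$-equivariant decomposition of $q_{*}q^{*}$ into isotypic components with rational coefficients is standard, and projecting onto the trivial isotypic component gives $\underline{\BoQ}_{X/G}$ as a direct summand of $q_{*}\underline{\BoQ}_{X}$. All other steps—finiteness of $\tilde{\pi}$, $t$-exactness of $\tilde{\pi}_{*}$, and the stalk identification—are classical.
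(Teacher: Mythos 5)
The proposal is correct but takes a genuinely different route from the paper. You bypass the intermediate structure of the quotient stack and work directly with the finite scheme map $\tilde{\pi}\colon X\to X\cms G$: the identity $\pi_*\underline{\BoQ}_{X/G}\cong(\tilde{\pi}_*\underline{\BoQ}_X)^G$ is legitimate because for finite $G$ and $\BoQ$-coefficients the category of sheaves on $X/G$ is the $G$-equivariant category on $X$ and $(-)^G$ is an exact idempotent; then the finiteness of $\tilde{\pi}$ reduces everything to a degree-zero stalk computation, where $(\BoQ[G/G_x])^G\cong\BoQ$ is matched by the unit of adjunction, and conservativity of $\rat$ lifts this to an isomorphism of mixed Hodge modules. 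The paper instead factors $\pi$ as $X/G\to X/(G/H)\to X\cms G$, with $H$ the (normal) generic stabilizer: the first map is a $\pt/H$-gerbe, killed by $\HO^*(\pt/H,\BoQ)\cong\BoQ$, and the second is a finite, generically-isomorphic (hence small) map, so that pushing forward the constant sheaf — identified with $\underline{\IC}$ using smoothness of $X$, and comparing with $\underline{\IC}(X\cms G)\cong\underline{\BoQ}_{X\cms G}$ since $X\cms G$ is a rational homology manifold — gives the result. Your route is more elementary and self-contained (no normality of the generic stabilizer, no smallness/IC input, and it does not really use smoothness of $X$); the paper's route isolates the gerbe and Deligne--Mumford parts, which is the perspective the surrounding Lemma~\ref{lemma:finitestabilisers} and its corollary are built on. Both are correct.
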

\begin{proof}
 We let $H$ be the generic stabiliser, so that $G/H$ acts generically freely on $X$. The map $\pi$ factors as $X/G\xrightarrow{\pi'} X/(G/H)\xrightarrow{\pi''} X\cms G$. The map $\pi'$ is a $\pt/H$ gerbe, and so $\pi'_*\underline{\BoQ}_{X/G}\cong\underline{\BoQ}_{X/(G/H)}$. The map $\pi''$ is finite (and so small) and generically an isomorphism, and so $\pi''_*\underline{\BoQ}_{X/(G/H)}\cong\underline{\BoQ}_{X\cms G}$.
\end{proof}

\begin{lemma}
\label{lemma:finitestabilisers}
 Let $X$ be a smooth quasiprojective algebraic variety on which a group $G$ acts with finite stabilisers. Let $\pi\colon X/G\rightarrow X\cms G$ be a good moduli space map. Then $\pi_*\underline{\BoQ}_{X/G}\cong\underline{\BoQ}_{X\cms G}$.
\end{lemma}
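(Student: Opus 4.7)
The plan is to reduce to Lemma~\ref{lemma:finitegroupquotient} via Luna's étale slice theorem, exactly as in the strategy used throughout \S\ref{section:localneighbourhoodsmoothstacks}. The statement is local on the good moduli space $X\cms G$ in the étale topology, so it is enough to produce, for each closed point $\bar{x}\in X\cms G$, an étale neighbourhood over which the adjunction unit $\underline{\BoQ}_{X\cms G}\to \pi_*\underline{\BoQ}_{X/G}$ becomes an isomorphism.

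First, I would choose a closed point $\bar{x}\in X\cms G$ and lift it to a point $x\in X$ whose $G$-orbit is closed (such a lift exists by definition of a good moduli space). By hypothesis, the stabiliser $G_x$ is finite, hence trivially reductive, so Luna's étale slice theorem applies: there is a $G_x$-invariant locally closed smooth subvariety $N_x\subset X$ (a normal slice at $x$) and a commutative diagram with étale horizontal maps and Cartesian squares
\[
\begin{tikzcd}
N_x/G_x \ar[r] \ar[d,"\pi'"'] & X/G \ar[d,"\pi"] \\
N_x\cms G_x \ar[r] & X\cms G
\end{tikzcd}
\]
whose bottom row sends the image of $x$ to $\bar{x}$. (Here $N_x$ may have to be shrunk to a $G_x$-invariant Zariski open neighbourhood of $x$, as in Theorem~\ref{theorem:localneighbourhood}.)

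Next, I would apply étale base change to this Cartesian square. Since $\underline{\BoQ}_{X/G}$ pulls back along the étale top horizontal map to $\underline{\BoQ}_{N_x/G_x}$, and pushforward commutes with étale pullback, the restriction of $\pi_*\underline{\BoQ}_{X/G}$ to the étale neighbourhood $N_x\cms G_x\to X\cms G$ of $\bar{x}$ is canonically identified with $\pi'_*\underline{\BoQ}_{N_x/G_x}$. Since $G_x$ is a \emph{finite} group acting on the smooth variety $N_x$, Lemma~\ref{lemma:finitegroupquotient} gives $\pi'_*\underline{\BoQ}_{N_x/G_x}\cong \underline{\BoQ}_{N_x\cms G_x}$, and this isomorphism is the adjunction unit.

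As the collection of such étale neighbourhoods covers $X\cms G$, and the isomorphism is in each case the canonical unit map $\underline{\BoQ}_{(-)}\to \pi_*\underline{\BoQ}_{(-)}$, it glues to the desired global isomorphism $\pi_*\underline{\BoQ}_{X/G}\cong \underline{\BoQ}_{X\cms G}$. The only point requiring a little care is the compatibility of the local isomorphisms with the adjunction unit; this is automatic, since smooth (in particular étale) base change and proper base change commute with unit maps, so one is in fact checking the isomorphism for a single natural morphism étale-locally. Consequently no genuine obstacle arises — the main conceptual input is Luna's theorem, and the finite-stabiliser hypothesis is exactly what makes every slice group $G_x$ finite so that Lemma~\ref{lemma:finitegroupquotient} applies.
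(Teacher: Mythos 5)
Your proof is correct and follows essentially the same strategy as the paper: check that the adjunction unit $\underline{\BoQ}_{X\cms G}\to\pi_*\underline{\BoQ}_{X/G}$ is an isomorphism étale-locally by reducing to a finite-group quotient and invoking Lemma~\ref{lemma:finitegroupquotient}. The paper is terser — it simply cites the general fact that a Deligne--Mumford stack is étale-locally a finite group quotient (Behrend's Exercise~34) rather than running Luna's étale slice theorem, which is what that citation amounts to in this setting; the two routes are interchangeable here.
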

\begin{proof}
 We have a natural adjunction map $\underline{\BoQ}_{X\cms G}\rightarrow \pi_*\underline{\BoQ}_{X/G}$. We can check that it is an isomorphism locally. Since $X/G$ is locally a finite group quotient \cite[Exercise 34]{behrend2004cohomology}, this follows from Lemma~\ref{lemma:finitegroupquotient}.
\end{proof}

\begin{corollary}
 Let $X$ be a smooth quasiprojective variety acted on by a reductive algebraic group $G$ and $H\subset G$ be a normal subgroup acting trivially on $X$ such that $G/H$ acts on $X$ with finite stabilisers. Then, $\pi_*\underline{\BoQ}_{V/G}\cong \underline{\BoQ}_{X\cms G}\otimes\HO^*(\pt/H)$.
\end{corollary}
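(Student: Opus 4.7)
The plan is to factor the good moduli space map $\pi\colon X/G\to X\cms G$ through the intermediate stack $X/(G/H)$, reducing the computation of $\pi_*\underline{\BoQ}_{X/G}$ to two steps: one that handles the trivially acting normal subgroup $H$, and one that invokes the preceding Lemma~\ref{lemma:finitestabilisers}. Since $H$ acts trivially, we have $\BoC[X]^G=\BoC[X]^{G/H}$ and hence a canonical identification $X\cms G=X\cms(G/H)$, giving the factorization $X/G\xrightarrow{\pi'} X/(G/H)\xrightarrow{\pi''} X\cms G$.

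First, I would analyze $\pi'$. Because $H$ is normal in $G$ and acts trivially on $X$, the morphism $\pi'$ is a $\pt/H$-gerbe: every point of $X/(G/H)$ acquires an extra $BH$ worth of stabilizer when pulled back to $X/G$. I would then establish the identification
\[
\pi'_*\underline{\BoQ}_{X/G}\cong\underline{\BoQ}_{X/(G/H)}\otimes\HO^*(\pt/H)
\]
by working étale-locally on $X/(G/H)$ where the gerbe trivializes, and noting that since $H$ is reductive and we work with $\BoQ$-coefficients, $\HO^*(\pt/H)$ is a polynomial algebra on even generators with no nontrivial monodromy, so the local trivializations glue to yield the tensor product globally.

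Next, I would invoke Lemma~\ref{lemma:finitestabilisers} applied to the action of $G/H$ on $X$, which by hypothesis has finite stabilizers. This yields $\pi''_*\underline{\BoQ}_{X/(G/H)}\cong\underline{\BoQ}_{X\cms G}$.

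Finally, combining the two steps via $\pi_*=\pi''_*\pi'_*$ together with the projection formula (valid because the $\HO^*(\pt/H)$ factor is a constant graded mixed Hodge structure) gives
\[
\pi_*\underline{\BoQ}_{X/G}\cong\pi''_*\bigl(\underline{\BoQ}_{X/(G/H)}\otimes\HO^*(\pt/H)\bigr)\cong\underline{\BoQ}_{X\cms G}\otimes\HO^*(\pt/H).
\]
The only step that requires any genuine argument is the gerbe identification for $\pi'$, and this is the main technical point to justify; once granted, the rest is a formal concatenation of the two pushforwards.
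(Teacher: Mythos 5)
Your proposal is correct and follows exactly the same two-step factorization as the paper's proof: factor $\pi$ through $X/(G/H)$, use the $\pt/H$-gerbe structure of $\pi'$ to pull out $\HO^*(\pt/H)$, and then apply Lemma~\ref{lemma:finitestabilisers} to $\pi''$. You give a little more detail on the gerbe step (local trivialization and absence of monodromy) than the paper does, but the argument is the same.
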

\begin{proof}
 We can factor the good moduli space map as $X/G\xrightarrow{\pi'} X/(G/H)\xrightarrow{\pi''} X\cms G\cong X\cms (G/H)$. The morphism $\pi'$ is a $\pt/H$ gerbe and so $\pi'\underline{\BoQ}_{X/G}\cong \underline{\BoQ}_{X/(G/H)}\otimes\HO^*(\pt/H)$. Then, since $G/H$ acts on $X$ with finite stabilisers, we have $\pi''_*\underline{\BoQ}_{X/(G/H)}\cong \underline{\BoQ}_{X\cms G}$ by Lemma~\ref{lemma:finitestabilisers}. By combining these two isomorphisms, we obtain $\pi_*\underline{\BoQ}_{X/G}\cong \underline{\BoQ}_{X\cms G}\otimes\HO^*(\pt/H)$.
\end{proof}

As a step toward Conjecture~\ref{conjecture:BPSsheavesnofunction}, we prove the following.

\begin{proposition}
\label{proposition:ICsubobjectBPS}
 Let $(\lambda,\alpha)\in \SQ_{X,G}^{\st}$. Then, we have a canonical monomorphism $\underline{\IC}(X^{\lambda}_{\alpha}\cms G^{\lambda})\otimes\SL^{\dim G_{(\lambda,\alpha)}/2}\hookrightarrow\underline{\CP}_{(\lambda,\alpha)}$ of cohomologically graded mixed Hodge modules.
\end{proposition}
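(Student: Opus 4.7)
The plan is to produce a canonical copy of $\underline{\IC}(X^{\lambda}_{\alpha}\cms G^{\lambda})$ inside the ambient semisimple pushforward and to show, by a support argument, that it must sit in the complement $\underline{\BPS}_{(\lambda,\alpha)}$ rather than in the parabolic induction image $\underline{\CJ}_{(\lambda,\alpha)}$.

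First I use stability to pin down the generic fibre. Let $U\subset X^{\lambda}_{\alpha}\cms G^{\lambda}$ be the non-empty Zariski open locus of closed $\overline{G^{\lambda}}$-orbits with finite stabilizer, where $\overline{G^{\lambda}}\coloneqq G^{\lambda}/G_{(\lambda,\alpha)}$. Denote by $\bar{\pi}\colon X^{\lambda}_{\alpha}/\overline{G^{\lambda}}\to X^{\lambda}_{\alpha}\cms G^{\lambda}$ the corresponding good moduli space map. By Lemma~\ref{lemma:finitestabilisers}, $(\bar{\pi}_*\underline{\BoQ}_{X^{\lambda}_{\alpha}/\overline{G^{\lambda}}})|_U\cong \underline{\BoQ}_U$; combined with the dimension equality $\dim X^{\lambda}_{\alpha}\cms G^{\lambda}=\dim(X^{\lambda}_{\alpha}/\overline{G^{\lambda}})$ (equivalent to stability) and the virtual twist, this identifies $(\bar{\pi}_*\underline{\BoQ}_{X^{\lambda}_{\alpha}/\overline{G^{\lambda}}}^{\vir})|_U$ with $\underline{\IC}(U)$. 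By Proposition~\ref{proposition:semisimplicitypistar} the full pushforward is pure, hence semisimple, so the IC extension from $U$ carves out a canonical simple summand $\underline{\IC}(X^{\lambda}_{\alpha}\cms G^{\lambda})\hookrightarrow \bar{\pi}_*\underline{\BoQ}_{X^{\lambda}_{\alpha}/\overline{G^{\lambda}}}^{\vir}$, inheriting a $\overline{W}_{(\lambda,\alpha)}$-equivariant structure.

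The crux is to show this summand lies in $\underline{\BPS}_{(\lambda,\alpha)}$ rather than in $\underline{\CJ}_{(\lambda,\alpha)}$. Each generator of $\underline{\CJ}_{(\lambda,\alpha)}$ factors through $(\imath_{(\lambda,\alpha),(\mu,\beta)})_*$ for some $(\mu,\beta)\in\SQ_{X^{\lambda}_{\alpha},\overline{G^{\lambda}}}$ with $(X^{\lambda}_{\alpha})^{\mu}\subsetneq X^{\lambda}_{\alpha}$. By Lemma~\ref{lemma:fixedloci}, the image of $\imath_{(\lambda,\alpha),(\mu,\beta)}$ consists of closed $\overline{G^{\lambda}}$-orbits in $X^{\lambda}_{\alpha}$ whose stabilizer contains a conjugate of $\mu$. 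The assumption $(X^{\lambda}_{\alpha})^{\mu}\neq X^{\lambda}_{\alpha}$ forces $\mu$ to have non-trivial image in $\overline{G^{\lambda}}$, so these stabilizers are positive-dimensional and their orbits lie in $X^{\lambda}_{\alpha}\cms G^{\lambda}\setminus U$. Therefore $\underline{\CJ}_{(\lambda,\alpha)}|_U=0$, while $\underline{\IC}(X^{\lambda}_{\alpha}\cms G^{\lambda})|_U=\underline{\IC}(U)\neq 0$. The unique simple copy of $\underline{\IC}(X^{\lambda}_{\alpha}\cms G^{\lambda})$ in the ambient sheaf therefore cannot live in $\underline{\CJ}_{(\lambda,\alpha)}$ and, by the semisimple decomposition $\bar{\pi}_*\underline{\BoQ}_{X^{\lambda}_{\alpha}/\overline{G^{\lambda}}}^{\vir}=\underline{\CJ}_{(\lambda,\alpha)}\oplus\underline{\BPS}_{(\lambda,\alpha)}$, it is forced into $\underline{\BPS}_{(\lambda,\alpha)}$.

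Finally, the Tate twist $\SL^{\dim G_{(\lambda,\alpha)}/2}$ in the statement is accounted for by reconciling the two natural ambient sheaves: $\underline{\BPS}_{(\lambda,\alpha)}$ was constructed inside $\bar{\pi}_*\underline{\BoQ}_{X^{\lambda}_{\alpha}/\overline{G^{\lambda}}}^{\vir}$ in \S\ref{subsection:bpsmhm}, while $\underline{\CP}_{(\lambda,\alpha)}$ was presented inside the $G^{\lambda}$-version $(\pi_{(\lambda,\alpha)})_*\underline{\BoQ}_{X^{\lambda}_{\alpha}/G^{\lambda}}^{\vir}$ in Theorem~\ref{theorem:sheafifiedcohintintro}. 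The K\"unneth splitting $X^{\lambda}_{\alpha}/G^{\lambda}\simeq X^{\lambda}_{\alpha}/\overline{G^{\lambda}}\times \pt/G_{(\lambda,\alpha)}$ together with the comparison of virtual shifts introduces exactly the factor $\SL^{\dim G_{(\lambda,\alpha)}/2}$ in front of $\underline{\BPS}_{(\lambda,\alpha)}$, converting the embedding from the previous paragraph into the desired monomorphism. The main obstacle is the support analysis: one must verify carefully, using Lemma~\ref{lemma:fixedloci}, that every non-trivial parabolic induction appearing in $\underline{\CJ}_{(\lambda,\alpha)}$ avoids the stable locus $U$, so that the full-support IC summand cannot be absorbed into $\underline{\CJ}_{(\lambda,\alpha)}$.
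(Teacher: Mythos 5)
Your proof is correct and follows essentially the same route as the paper's: describe the restriction of the pushforward over the open stable locus $U$ using stability, invoke Proposition~\ref{proposition:semisimplicitypistar} to extract the IC as a canonical simple summand, and then show by a support argument that nontrivial parabolic inductions miss $U$, forcing the IC into the complement. You spell out the support step slightly more explicitly via Lemma~\ref{lemma:fixedloci}, where the paper merely asserts that nontrivial inductions land in the strictly semistable locus, but the reasoning is the same.
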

\begin{proof}
 By the approximation by proper maps, the complex of monodromic mixed Hodge modules $\pi_*\underline{\BoQ}_{X^{\lambda}_{\alpha}/G^{\lambda}}^{\vir}$ is semisimple (Proposition~\ref{proposition:semisimplicitypistar}). If $(\lambda,\alpha)\in \SQ_{X,G}^{\st}$, then the morphism $X^{\lambda}_{\alpha}/G^{\lambda}\rightarrow X^{\lambda}_{\alpha}\cms G^{\lambda}$ is generically a gerbe for a finite group extension of $G_{(\lambda,\alpha)}$. Therefore, the restriction of $(\pi_{(\lambda,\alpha)})_*\underline{\BoQ}_{X^{\lambda}_{\alpha}/G^{\lambda}}^{\vir}$ over a smooth open subset $U\subset X^{\lambda}_{\alpha}\cms G^{\lambda}$ is isomorphic to $\underline{\IC}(U)\otimes\HO^*(\pt/G_{(\lambda,\alpha)})$. By semisimplicity, $\underline{\IC}(X^{\lambda}_{\alpha}\cms G^{\lambda})\otimes\SL^{\dim G_{(\lambda,\alpha)}/2}$ is a direct summand of $(\pi_{(\lambda,\alpha)})_*\underline{\BoQ}_{X^{\lambda}_{\alpha}/G^{\lambda}}$.

 Moreover, the support of the image of any nontrivial induction morphism $\Ind_{(\mu,\beta),(\lambda,\alpha)}$ is contained in the strictly semistable locus inside $X^{\lambda}_{\alpha}\cms G^{\lambda}$. This means that $\underline{\IC}(X^{\lambda}_{\alpha}\cms G^{\lambda})\otimes\SL^{\dim G_{(\lambda,\alpha)}/2}$ does not appear in any of the nontrivial induction maps. Therefore, it is a direct summand of any direct sum complement of the image of the induction, and hence a direct summand of $\underline{\CP}_{(\lambda,\alpha)}$.
\end{proof}

\section{Sheafified cohomological integrality for critical affine $G$-varieties}

Let $X$ be a smooth affine $G$-variety for a reductive group $G$ and $f\colon X\rightarrow\BoC$ be a $G$-invariant function on $X$. Then, for any $(\lambda,\alpha)\in \SQ_{X,G}$, we let $f_{(\lambda,\alpha)}\colon X^{\lambda}_{\alpha}\rightarrow\BoC$ be the restriction of $f$, a $G^{\lambda}$-invariant function. We also denote $f_{(\lambda,\alpha)}\colon X^{\lambda}_{\alpha}\cms G^{\lambda}\rightarrow\BoC$ the induced regular function on the good moduli space. We define
\[
 \underline{\CH}_{X,f,(\lambda,\alpha)}\coloneqq (\pi_{(\lambda,\alpha)})_*\varphi^p_{X^{\lambda}_{\alpha},f_{(\lambda,\alpha)}}\underline{\BoQ}_{X^{\lambda}_{\alpha}/G^{\lambda}}^{\vir}\in\MMHM^{\BoZ}(X^{\lambda}_{\alpha}\cms G^{\lambda})\,.
\]

\subsection{Cohomological integrality for critical loci}

\begin{lemma}
\label{lemma:vanishingcyclecommutepf}
 Let $X$ be a smooth affine $G$-variety and $f\colon X\rightarrow \BoC$ a $G$-invariant function. Then, the pushforward by $\pi\colon X/G\rightarrow X\cms G$ commutes with the vanishing cycle functor.
\end{lemma}
\begin{proof}
 The same statement for $\pi'\colon X/(G\times \BoC^*)\rightarrow X\cms G$ where $\BoC^*$ acts trivially on $X$ is true since $\pi'$ is approachable by proper maps. The result follows from the fact that $\pi'_*\varphi_f\CF\cong \pi_*\varphi_f\CF\otimes\HO^*_{\BoC^*}$ for any constructible complex $\CF$ on $X/G$.
\end{proof}

\begin{theorem}
\label{theorem:cohintcriticallocus}
 Assume that $X$ is a smooth symmetric affine $G$-variety. Let $f\colon X\rightarrow \BoC$ be a $G$-invariant regular function. We have an isomorphism of cohomologically graded mixed Hodge modules
 \[
  \bigoplus_{\tilde{(\lambda,\alpha)}\in\tilde{\SP}_{X,G}}((\imath_{(\lambda,\alpha)})_*\varphi^p_{X^{\lambda}_{\alpha},f_{(\lambda,\alpha)}}\underline{\CP}_{(\lambda,\alpha)}\otimes\HO^*(\pt/G_{(\lambda,\alpha)}))^{\varepsilon_{X,(\lambda,\alpha)}}\rightarrow\underline{\CH}_{X,f}\,.
 \]
\end{theorem}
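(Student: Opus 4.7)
The plan is to obtain this isomorphism by applying the perverse $t$-exact vanishing cycle functor $\varphi_f^p$ directly to the sheafified cohomological integrality isomorphism of Theorem~\ref{theorem:sheafcohint}, and then to identify both sides after this application with the expressions in the statement.

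On the right-hand side, one needs the commutation
\[
\varphi_f^p\, \pi_*\underline{\BoQ}_{X/G}^{\vir} \;\cong\; \pi_*\,\varphi_f^p \underline{\BoQ}_{X/G}^{\vir} \;=\; \underline{\CH}_{X,G,f}.
\]
Since $\pi$ is not proper, this commutation is the main obstacle. I would resolve it by combining the approximation by proper maps for $X/(G\times\BoC^*)\to X\cms G$ (Proposition~\ref{proposition:Gprimeapproachable}) with the truncation estimates from Section~\ref{section:approximationpropermaps}. Concretely, for any fixed $i\in\BoZ$, Corollary~\ref{corollary:isopushforward} allows one to replace $\underline{\tau}^{\leq |i|}\pi'_*\underline{\BoQ}_{X/(G\times\BoC^*)}^{\vir}$ by $\underline{\tau}^{\leq |i|}$ applied to the proper pushforward $(p_n)_*\underline{\IC}(\overline{X_n})$ for $n$ large enough that $\codim_{\overline{X_n}}(\overline{X_n}\setminus \overline{X'_n}) > |i|$. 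Because $\varphi_f^p$ commutes with proper pushforward on the nose and is perverse $t$-exact, it commutes with both this truncation and the proper pushforward $(p_n)_*$, giving the required identification degree by degree; the additional $\BoC^*$-factor is harmless since it contributes only a tensor with $\HO^*(\pt/\BoC^*)$.

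On the left-hand side, the argument is direct. The map $\imath_{(\lambda,\alpha)}$ is finite and hence proper (Lemma~\ref{lemma:finitemap}), so $\varphi_f^p$ commutes with $(\imath_{(\lambda,\alpha)})_*$; moreover the restriction of $f$ to the good moduli space of $X^{\lambda}_{\alpha}$ is $f_{(\lambda,\alpha)}$ by construction. The external tensor by $\HO^*(\pt/G_{(\lambda,\alpha)})$ and the projection onto the $\varepsilon_{X,(\lambda,\alpha)}$-isotypic component of the $\overline{W}_{(\lambda,\alpha)}$-action are both $\BoQ$-linear operations that commute with $\varphi_f^p$. Using the definition $\underline{\BPS}_{X^{\lambda}_{\alpha},f_{(\lambda,\alpha)}} \coloneqq \varphi_{f_{(\lambda,\alpha)}}^p \underline{\CP}_{(\lambda,\alpha)}$, one obtains
\[
\varphi_f^p\bigl( (\imath_{(\lambda,\alpha)})_*\underline{\CP}_{(\lambda,\alpha)} \otimes \HO^*(\pt/G_{(\lambda,\alpha)}) \bigr)^{\varepsilon_{X,(\lambda,\alpha)}}
\cong \bigl( (\imath_{(\lambda,\alpha)})_*\underline{\BPS}_{X^{\lambda}_{\alpha},f_{(\lambda,\alpha)}} \otimes \HO^*(\pt/G_{(\lambda,\alpha)}) \bigr)^{\varepsilon_{X,(\lambda,\alpha)}}.
\]

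Finally, one must check that the isomorphism $\Ind_{X,G}$ of Theorem~\ref{theorem:sheafcohint} is sent by $\varphi_f^p$ to a morphism that identifies with the desired one. This is a matter of inspection: the sheafified induction morphism \eqref{equation:sheafindsymm} is built, term by term, from the adjunctions for the smooth vector bundle stack $q_{(\lambda,\alpha)}$ and for the proper representable $p_{(\lambda,\alpha)}$ (Lemma~\ref{lemma:smoothnessqproperp}), together with the finite pushforward $(\imath_{(\lambda,\alpha)})_*$. Since $\varphi_f^p$ commutes with proper direct image and with smooth pullback (up to the appropriate Tate twists which match those in the definition of $\underline{\BoQ}^{\vir}$), applying it to each constituent of $\Ind_{X,G}$ produces the corresponding morphism for the vanishing cycle sheaves, yielding the asserted isomorphism.
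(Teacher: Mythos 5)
Your proposal is correct and takes essentially the same route as the paper: apply the perverse $t$-exact functor $\varphi_f^p$ to the isomorphism of Theorem~\ref{theorem:sheafcohint}, then use that $\imath_{(\lambda,\alpha)}$ is finite (Lemma~\ref{lemma:finitemap}) and that $\pi'$ is approachable by proper maps (Proposition~\ref{proposition:Gprimeapproachable}) to commute $\varphi_f^p$ past the pushforwards. The paper's proof is a single sentence citing exactly these facts; your write-up simply spells out the degree-by-degree truncation argument behind the commutation with $\pi_*$ and the routine checks on the left-hand side and on $\Ind_{X,G}$.
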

\begin{proof}
 This follows from Theorem~\ref{theorem:sheafcohint} by applying the vanishing cycle functor $\varphi_f^p$, using the fact that $\imath_{(\lambda,\alpha)}$ is a finite map (Lemma~\ref{lemma:finitemap}) and that $\pi_*$ commutes with the vanishing cycle functor by Lemma~\ref{lemma:vanishingcyclecommutepf}.
\end{proof}

We let $\underline{\BPS}_{X^{\lambda}_{\alpha},f_{(\lambda,\alpha)}}\coloneqq\varphi_{X^{\lambda}_{\alpha},f_{(\lambda,\alpha)}}^p\underline{\CP}_{(\lambda,\alpha)}$. Since the vanishing cycle functor is perverse exact, $\underline{\BPS}_{X^{\lambda}_{\alpha},f_{(\lambda,\alpha)}}$ is a cohomologically graded complex of monodromic mixed Hodge modules.

\section{Weak moment maps and local neighbourhood theorem for derived stacks with self-dual cotangent bundle}

\subsection{Weak moment map}
Let $X$ a smooth complex algebraic $G$-variety. We say that $X$ is weakly symplectic if there exists a $G$-equivariant isomorphism $\psi\colon \Tan^*X\rightarrow\Tan X$. We let $a\colon X\times G\rightarrow \Tan X$, $(x,\xi)\mapsto (x,X_{\xi}(x))$ be the infinitesimal action of $\Fg$ on $X$, where $X_{\xi}$ is the vector field on $X$ associated to $\xi$. We recall the notion of \emph{weak moment maps} following Halpern-Leistner \cite{halpern2020derived}.

\begin{definition}
\label{definition:weakmomentmap}
 A weak moment map for the $G$-action on the weak symplectic variety $X$ is a $G$-equivariant map $\mu\colon X\rightarrow\Fg^*$ such that there exists a $G$-equivariant isomorphism $\phi\colon X\times\Fg\rightarrow X\times\Fg$ such that the following diagram:
\begin{equation}\label{equation:weakmoment}\begin{tikzcd}
	{X\times\Fg} & {\Tan^*X} \\
	{X\times\Fg} & {\Tan X}
	\arrow["{\mathrm{d}\mu}", from=1-1, to=1-2]
	\arrow["\phi"', from=1-1, to=2-1]
	\arrow["\psi", from=1-2, to=2-2]
	\arrow["a", from=2-1, to=2-2]
\end{tikzcd}\end{equation}
commutes over $\mu^{-1}(0)$, where $\mathrm{d}\mu\colon (x,\xi)\mapsto \mathrm{d}\langle \mu(-),\xi\rangle(x)$.
\end{definition}

\subsection{Local neighbourhood theorem}

\begin{theorem}[{Neighbourhood theorem, \cite[Theorem 4.2.3]{halpern2020derived}}]
\label{theorem:HLneighbourhood}
 Let $\FX$ be a derived algebraic stack such that $\mathbb{L}_{\FX}\cong \mathbb{L}_{\FX}^{\vee}$ and let $\FX^{\cl}\rightarrow Y$ be a good moduli space. Let $x\in \FX(k)$ be a closed point  and $G=\Aut(\FX)(x)$ its stabilizer. This is a linearly reductive group. Then, there is a smooth affine $G$-scheme $X\coloneqq\Spec(R)$ with a $G$-fixed point $\tilde{x}\in X$ and a weak moment map $\mu\colon X\rightarrow\Fg^*$ and a map
 \[
  \pi_x\colon \FX'\coloneqq \mu^{-1}(0)/G\rightarrow\FX
 \]
taking $\tilde{x}$ to $x$ and where $\mu^{-1}(0)$ denotes the derived zero locus, which is strongly \'etale, that is we have a Cartesian diagram
\[\begin{tikzcd}
	{\FX'^{\cl}} & {\FX^{\cl}} \\
	{\mu^{-1}(0)^{\cl}\cms G} & Y
	\arrow[from=1-1, to=1-2]
	\arrow[from=1-1, to=2-1]
	\arrow["\lrcorner"{anchor=center, pos=0.125}, draw=none, from=1-1, to=2-2]
	\arrow[from=1-2, to=2-2]
	\arrow[from=2-1, to=2-2]
\end{tikzcd}\]
where $\mu^{-1}(0)^{\cl}$ is the classical truncation of the derived scheme $\mu^{-1}(0)$.
\end{theorem}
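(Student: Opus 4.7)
The plan is to combine the classical \'etale slice theorem for good moduli spaces of Artin stacks (due to Alper--Hall--Rydh \cite{alper2020luna}) with a derived deformation-theoretic lift, using the self-duality of $\mathbb{L}_{\FX}$ to recognise the derived thickening as the derived zero locus of a weak moment map. I would proceed in four steps.

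First, apply the classical local structure theorem to the truncation $\FX^{\cl}\to Y$: since $x\in\FX(k)$ is closed with linearly reductive stabilizer $G$, there exist a smooth affine $G$-scheme $X$ (one may take $X$ to be a $G$-stable open in the tangent representation), a $G$-fixed point $\tilde{x}\in X$, and a strongly \'etale morphism $\pi_x^{\cl}\colon X/G\to \FX^{\cl}$ sending $\tilde{x}$ to $x$. Replace $X$ by a $G$-invariant open neighbourhood of $\tilde{x}$ if necessary so that the classical fibre square of the theorem holds.

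Second, analyse the derived thickening. Form the derived fibre product
\[
 \FX'_{0}\coloneqq X/G\times_{\FX^{\cl}}\FX,
\]
which has classical truncation $X/G$ and cotangent complex $\pi_x^{\cl,*}\mathbb{L}_{\FX}$. Since $\FX$ is $1$-Artin, $\mathbb{L}_{\FX'_0}$ sits in degrees $[-1,1]$; the natural triangle coming from the atlas $X\to X/G$ pulls back to
\[
\Fg^\vee\otimes\CO_{X}[-1]\longrightarrow\mathbb{L}_{\FX'_0}|_{X}\longrightarrow \mathbb{L}_{X}.
\]
Dually, the tangent complex yields $\Tan X\to\Fg\otimes\CO_X[1]$ with first map the infinitesimal action $a$. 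The self-duality $\mathbb{L}_{\FX'_0}\cong\mathbb{L}_{\FX'_0}^{\vee}$ therefore pairs the degree $-1$ and degree $+1$ pieces, producing a $G$-equivariant cochain map $\Fg\otimes\CO_X\to\Tan^*X$ which, because $X$ is affine and smooth and $G$ is linearly reductive (so that higher $G$-cohomology of $\Fg$-modules vanishes), is of the form $d\mu$ for a $G$-equivariant regular function $\mu\colon X\to\Fg^*$ vanishing at $\tilde{x}$.

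Third, establish the weak moment map property and algebraise. Matching the two triangles under the chosen self-duality isomorphism $\psi\colon \Tan^*X\cong\Tan X$ produces, by the five-lemma in the triangulated sense, a $G$-equivariant isomorphism $\phi\colon X\times\Fg\to X\times\Fg$ filling the commutative square of Definition~\ref{definition:weakmomentmap}. Thus $\mu$ is a weak moment map, and the natural comparison $\mu^{-1}(0)/G\to \FX'_0$ is an equivalence of derived stacks since both have the same classical truncation and equivalent cotangent complexes; hence $\pi_x\colon \mu^{-1}(0)/G\to\FX$ is strongly \'etale.

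The delicate step is the third one: passing from the first-order match of cotangent complexes to a genuine equivalence of derived stacks on a Zariski-open neighbourhood, i.e.\ algebraising the formal weak-Hamiltonian local model. This is where one invokes Artin--Lurie representability or the derived extension of \cite{alper2020luna} as in \cite{halpern2020derived}: linear reductivity of $G$ kills the obstructions to extending $\mu$ order by order in the conormal direction, and the resulting formal object is algebraisable because the classical base $X/G$ is already an \'etale local model for $\FX^{\cl}$. The intermediate non-trivial input is that the self-duality isomorphism $\psi$ of $\mathbb{L}_{\FX}$ (and consequently $\phi$) can be chosen compatibly with the reductive $G$-action, which again uses linear reductivity to split the relevant short exact sequences equivariantly.
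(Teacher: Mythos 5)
The paper does not prove this statement: it is imported verbatim from \cite[Theorem~4.2.3]{halpern2020derived} and used as a black box, so there is no paper-internal proof to compare against. Judged on its own terms, your proposal contains a genuine gap at Step~1. The Alper--Hall--Rydh local structure theorem applied to $\FX^{\cl}$ at the closed point $x$ produces an \'etale chart $W/G\to\FX^{\cl}$ with $W$ an \emph{affine} $G$-scheme, but $W$ need not be smooth; if $\FX^{\cl}$ is singular at $x$ (the typical situation of interest here, e.g.\ stacks of semistable Higgs bundles), then $W$ is necessarily singular, since an \'etale map preserves singularity. Your Step~1 instead asserts a strongly \'etale morphism $X/G\to\FX^{\cl}$ with $X$ smooth, claiming one may take $X$ to be a $G$-stable open in the tangent representation --- this is only the Luna slice picture for a \emph{smooth} target, and the fibre product $\FX'_0\coloneqq X/G\times_{\FX^{\cl}}\FX$ and the triangle analysis that follow inherit the error. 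The correct bookkeeping has the classical chart be $\mu^{-1}(0)^{\cl}/G\to\FX^{\cl}$, where $\mu^{-1}(0)^{\cl}$ is a (generally singular) closed subscheme of a smooth affine $G$-scheme $X$: one should take the AHR chart $W$, choose a $G$-equivariant closed embedding of $W$ into a smooth affine $X$ (e.g.\ an open in the tangent space), and only then exploit self-duality of $\mathbb{L}_{\FX}$ to present the conormal data as a $G$-equivariant map $X\to\Fg^*$ satisfying the weak moment map condition and to upgrade the induced derived structure on $W/G$ to the Koszul thickening $\mu^{-1}(0)$.

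There is also a secondary gap in Step~3: you conclude that the comparison $\mu^{-1}(0)/G\to\FX'_0$ is an equivalence ``since both have the same classical truncation and equivalent cotangent complexes,'' but this criterion is vacuous without first \emph{constructing} a morphism that induces those identifications. Two derived stacks with abstractly isomorphic classical truncations and abstractly isomorphic cotangent complexes need not be equivalent. Producing the comparison morphism is precisely where the nontrivial deformation-theoretic argument (and the use of linear reductivity to kill obstructions, which you correctly flag) has to be carried out; asserting the equivalence from the invariants alone skips the content of the step. Your broad strategy --- slice theorem followed by exploiting self-duality to produce a weak moment map presentation --- is the right shape, consistent with \cite{halpern2020derived}, but both of these points need to be repaired before the argument is a proof.
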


\subsection{Critical locus}
Let $X$ be a weak Hamiltonian $G$-variety and $f\colon \tilde{X}=X\times\Fg\rightarrow \BoC$ be the regular function obtained by contracting the weak moment map.

\begin{lemma}
\label{lemma:criticallocus}
 We assume that $\mu$ is an actual moment map, in particular that $\phi=\id_{X\times \Fg}$ in the diagram \eqref{equation:weakmoment}. We have $\crit(f)=\{(x,\xi)\in X\times\Fg\mid \mu(x)=0, \xi\in \Fg_x\}$ where $\Fg_x=\Lie(G_x)$ is the Lie algebra of the stabilizer of $x\in X$.
\end{lemma}
\begin{proof}
 We have $f(x,\xi)=\langle\mu(x),\xi\rangle$. Therefore, the vanishing of the partial differential with respect to $\xi$ gives the equation $\mu(x)=0$. In addition, the vanishing of the partial derivative with respect to $x$ is equivalent to the vanishing of $a(x,\xi)$ by the commutative diagram of Definition~\ref{definition:weakmomentmap}, that is $\xi\in\Fg_x$.
\end{proof}

We also need a version of Lemma~\ref{lemma:criticallocus} when $\mu$ is a weak moment map and not necessarily an actual moment map.
\begin{lemma}
\label{lemma:criticallocusweak}
Let $\mu\colon X\rightarrow\Fg^*$ be a weak moment map for a $G$-action on a weakly symplectic $G$-variety $X$. We have $\crit(f\circ \phi^{-1})=\{(x,\xi)\in X\times\Fg\mid \mu(x)=0,\xi\in\Fg_x\}$ where $\Fg_x=\Lie(G_x)$ is the Lie algebra of the stabilizer of $x\in X$.
\end{lemma}
\begin{proof}
Since $\phi$ is an isomorphism, $\crit(f\circ \phi^{-1})=\phi(\crit (f))$. Moreover, the differential of $f$ with respect to $\xi$ gives $\crit(f)\subset\mu^{-1}(0)\times\Fg$ (as for Lemma~\ref{lemma:criticallocus}). Since the diagram \eqref{equation:weakmoment} commutes over $\mu^{-1}(0)$, we have $\phi(x,\xi)=(x,\tilde{\phi}(x,\xi))$ for any $(x,\xi)\in\mu^{-1}(0)\times\Fg$. In particular, $\phi(\mu^{-1}(0)\times\Fg)=\mu^{-1}(0)\times\Fg$. Therefore, $\crit(f\circ\phi^{-1})\subset \mu^{-1}(0)\times\Fg$. We now examinate the derivative of $f\circ \phi$ with respect to $x$ for $x\in\mu^{-1}(0)$. We have
\[
\begin{aligned}
 \mathrm{d}_x(f\circ \phi)(x,\xi)=0&\iff \mathrm{d}_x\mu(\tilde{\phi}(x,\xi))=0\\
					    &\iff\psi^{-1}(a(x,\xi))=0\\
					    &\iff a(x,\xi)=0\,.
\end{aligned}
\]
Therefore, the vanishing of $\mathrm{d}_x(f\circ\phi)$ is the condition $\xi\in\Fg_x$.

\end{proof}

\section{Dimensional reduction}
\label{section:dimensionalreduction}
In this section, we recall the dimensional reduction isomorphisms \cite[Appendix A]{davison2017critical}, which relates a vanishing cycle sheaf on some space to the dualising sheaf of a different space. More generally, we recall the \emph{deformed} dimensional reduction theorem \cite{davison2022deformed}.

\begin{theorem}[Deformed dimensional reduction]
\label{theorem:deformeddimensionalreduction}
We let
\[
\begin{tikzcd}
 F&E& X\\
 \arrow["\pi_1",from =1-1, to=1-2]
 \arrow["\pi_2",from =1-2, to=1-3]
 \arrow["\pi",from = 1-1, to = 1-3, bend left=30]
\end{tikzcd}
\]
be a pair of a vector $F$ bundle over $E$ and a vector bundle $E$ over $X$. We let $s$ be a section of $F^*\rightarrow E$ (the dual of $F\rightarrow E$) and $g_0\colon E\rightarrow \BoA^1$ be a regular function on $E$. Contraction with $s$ gives a regular function $g_s$ on $F$. We consider the function $g=\pi_1^*g_0+g_s$ on $F$. We assume that there is a $\BoG_{\rmm}$-action on $F\rightarrow X$ with nonnegative weights and an action on $\BoA^1$ with positive weight such that $g$ is $\BoG_{\rmm}$-equivariant. We let $\overline{Z}\coloneqq \pi_1^{-1}(s^{-1}(0))\xrightarrow{\imath}F$. Then, the dimensional reduction morphism of functors
\[
 \pi_!\varphi_g\pi^*\rightarrow \pi_!\imath_*\varphi_{\pi_1^*g_0}\imath^*\pi^*
\]
is an isomorphism.
\end{theorem}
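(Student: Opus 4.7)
The plan is to reduce the deformed statement to the classical (undeformed) dimensional reduction isomorphism of \cite[Appendix A]{davison2017critical}, which corresponds to the special case $g_0=0$. First, since all operations involved commute with smooth base change, I would verify the statement is local on $X$ and trivialise the vector bundles \'etale-locally. Then one may assume $F = X\times W \times V \to E = X\times W\to X$ with $\pi_1,\pi_2$ the obvious projections, and write
\[
 g(x,e,v) = g_0(x,e) + \langle s(x,e),v\rangle
\]
for a regular map $s\colon E\to V^*$. A direct computation $\partial_v g = s$ shows that $\crit(g)\subset\overline{Z}\coloneqq\pi_1^{-1}(s^{-1}(0))$, so $\varphi_g$ is supported on $\overline{Z}$ and the adjunction furnishes a canonical map $\varphi_g\pi^*\to \imath_*\imath^*\varphi_g\pi^*$. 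Since $g|_{\overline{Z}}=(\pi_1^*g_0)|_{\overline{Z}}$, locality of the vanishing cycle functor on the target gives a canonical comparison $\imath^*\varphi_g\pi^*\to \varphi_{\pi_1^*g_0}\imath^*\pi^*$, composing to the morphism of the theorem after applying $\pi_!$.

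The key input for showing this comparison is an isomorphism is the $\BoG_{\rmm}$-equivariance hypothesis. My strategy here would be to use the positive weight on the target $\BoA^1$ and the nonnegative weights on $F\to X$ to propagate the vanishing cycles from the zero section of $\pi_1$ to all of $F$. Concretely, the $\BoG_{\rmm}$-action defines a one-parameter family of functions obtained by rescaling, and the specialisation/nearby cycle functor along this family identifies $\varphi_g$ with a limit supported on the attracting locus. On this locus, the fibrewise linear part $g_s$ can be treated by classical dimensional reduction, while $\pi_1^*g_0$ is fibrewise constant and therefore commutes trivially with the fibrewise pushforward along the contractible fibres of $\pi_1$. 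Combining this with proper base change for $\pi_!$ along the linear fibres of $\pi_1$ (which is permissible thanks to the $\BoG_{\rmm}$-contraction) completes the argument.

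The main obstacle will be carefully controlling the vanishing cycle functor for the composite function $g=\pi_1^*g_0+g_s$, since naive Thom--Sebastiani fails without transversality hypotheses. The classical dimensional reduction theorem handles $g_0=0$ by a direct computation using the $\BoG_{\rmm}$-scaling on $F\to E$; the deformation $g_0\neq 0$ forces one instead to exploit the richer $\BoG_{\rmm}$-action postulated in the hypotheses, which simultaneously scales the target and allows the two summands $\pi_1^*g_0$ and $g_s$ to be handled on the same footing. This is precisely the role played by the equivariance assumption, and once it is properly implemented (following \cite{davison2022deformed}), the result reduces to applying classical dimensional reduction to the fibre-linear function $g_s$ relative to a base carrying the function $g_0$.
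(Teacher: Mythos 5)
The paper does not prove this theorem. It is recalled from \cite{davison2022deformed}, just as the undeformed version (Corollary~\ref{corollary:dimensionalreduction}) is quoted from \cite{davison2017critical}; the text supplies no argument, and none is expected, since the only role of the statement here is to be applied in Lemma~\ref{lemma:factorizationproperty} and Proposition~\ref{proposition:dimredcohint}. There is therefore no internal proof for your sketch to be compared against.

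Taken on its own merits, your outline has the right ingredients (\'etale-local trivialisation, the computation $\crit(g)\subset\overline{Z}$, use of the $\BoG_{\rmm}$-contraction, comparison with the $g_0=0$ case) but it does not carry the crucial step. First, the dimensional reduction morphism is not produced by what you call ``locality of the vanishing cycle functor on the target'': vanishing cycles do not commute with restriction to a closed subvariety, so the mere equality $g|_{\overline{Z}}=(\pi_1^*g_0)|_{\overline{Z}}$ does not yield a natural transformation $\imath^*\varphi_g\pi^*\to\varphi_{\pi_1^*g_0}\imath^*\pi^*$; the morphism has to be built from adjunctions using $\crit(g)\subset\overline{Z}$, and in the $g_0\neq0$ case this construction is already nontrivial. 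Second, ``the specialisation/nearby cycle functor along this family identifies $\varphi_g$ with a limit supported on the attracting locus'' is not a well-posed operation, and the $\BoG_{\rmm}$-positivity hypotheses actually enter via homotopy invariance of $\pi_!$ along contracting directions (for compactly supported pushforward), not by degenerating the function. Third, you correctly note that Thom--Sebastiani is unavailable, but you never explain how the nonnegativity/positivity hypotheses substitute for it; you defer to ``properly implementing \cite{davison2022deformed}'', which is precisely the content of the theorem and not a reduction of it. In short: the strategy is directionally plausible, but the step that makes the deformed case work is missing.
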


\begin{corollary}[Dimensional reduction]
\label{corollary:dimensionalreduction}
 If $E\rightarrow X$ is the trivial vector bundle, we recover the dimensional reduction theorem. More precisely, let $\pi\colon F\rightarrow X$ be a vector bundle, $s\colon X\rightarrow F^*$ be a section and $g\colon F\rightarrow \BoA^1$ be the associated regular function. We let $\overline{Z}\coloneqq \pi^{-1}(s^{-1}(0))$. Then, the dimensional reduction morphism
 \[
  \pi_!\varphi_g\pi^*\rightarrow\pi_!\imath_*\imath^*\pi^*\colon \CD(\MHM(X))\rightarrow\CD(\MMHM(X))
 \]
is an isomorphism.
\end{corollary}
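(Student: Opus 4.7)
The plan is to deduce the corollary as a specialization of Theorem~\ref{theorem:deformeddimensionalreduction}. Concretely, I take $E = X$ with $\pi_2 = \id_X$ so that $E \to X$ is the trivial rank-zero vector bundle, set $\pi_1 = \pi$, and choose $g_0 \equiv 0$ on $E$. Then the composite $\pi_2 \circ \pi_1$ agrees with the original $\pi \colon F \to X$, the function $g = \pi_1^* g_0 + g_s$ reduces to $g_s$, which is precisely the regular function on $F$ associated to the section $s$, and the closed subset $\overline{Z} = \pi_1^{-1}(s^{-1}(0)) \subset F$ coincides with the one in the statement of the corollary.

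Before invoking the theorem, I would verify the $\BoG_{\rmm}$-equivariance hypothesis. Equip $F \to X$ with the fiberwise scaling action (weight one along the fibers, trivial on $X$) and $\BoA^1$ with the standard weight-one scaling action. Since $g_s$ is linear along the fibers of $\pi$, it is automatically equivariant for these actions, so the hypothesis is met and Theorem~\ref{theorem:deformeddimensionalreduction} applies to give an isomorphism of functors
\[
 \pi_! \varphi_g \pi^* \longisoto \pi_! \imath_* \varphi_{\pi_1^* g_0} \imath^* \pi^*.
\]

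The remaining step is to identify the vanishing-cycles functor $\varphi_{\pi_1^* g_0} = \varphi_0$ on $\overline{Z}$ with the natural inclusion $\CD(\MHM(\overline{Z})) \hookrightarrow \CD(\MMHM(\overline{Z}))$ obtained by regarding a mixed Hodge module as a monodromic one with trivial monodromy. This is also what accounts for the change of target category in the statement (from $\CD(\MHM(X))$ to $\CD(\MMHM(X))$). Once this identification is in place the right-hand side reduces to $\pi_! \imath_* \imath^* \pi^*$ and the stated isomorphism follows. There is no deep step here; the only mild point I would check carefully is the convention identifying $\varphi_0$ with the inclusion $\MHM \hookrightarrow \MMHM$, since all of the geometric content has already been absorbed into Theorem~\ref{theorem:deformeddimensionalreduction}.
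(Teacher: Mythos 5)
Your proof is correct and takes the same route the paper intends: the paper gives no written argument for the corollary, simply stating it as the specialization $E=X$, $g_0\equiv 0$ of Theorem~\ref{theorem:deformeddimensionalreduction}, which is exactly what you carry out, including the (routine but worth recording) verification that the fiberwise-scaling $\BoG_{\rmm}$-action makes $g_s$ equivariant and the identification of $\varphi_0$ with the inclusion $\MHM\hookrightarrow\MMHM$.
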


\begin{corollary}
\label{corollary:dimredclassique}
In the setup of Corollary~\ref{corollary:dimensionalreduction}, we let $Z\coloneqq s^{-1}(0)$. We have an isomorphism
 \[
  \BD\underline{\BoQ}_{Z}^{\vir}\rightarrow \pi_*\varphi_g\underline{\BoQ}_F^{\vir}.
 \]
 where $\underline{\BoQ}_Z^{\vir}\coloneqq\underline{\BoQ}_Z\otimes\SL^{-(\dim X-\rank F)/2}$ and $\underline{\BoQ}_{F}^{\vir}=\underline{\BoQ}_F\otimes\SL^{-(\dim X+\rank F)/2}$.
\end{corollary}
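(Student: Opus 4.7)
This corollary is essentially the unpacking of the functor-level dimensional reduction of Corollary~\ref{corollary:dimensionalreduction} applied to the constant sheaf $\underline{\BoQ}_X$, combined with a Verdier duality argument to convert $\pi_!$ into $\pi_*$.

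First, I would apply the functor isomorphism $\pi_!\varphi_g\pi^*\cong\pi_!\imath_*\imath^*\pi^*$ provided by Corollary~\ref{corollary:dimensionalreduction} to $\underline{\BoQ}_X$, obtaining
\[
\pi_!\varphi_g\underline{\BoQ}_F\cong (\pi\circ\imath)_!\underline{\BoQ}_{\overline{Z}},
\]
where $\imath\colon\overline{Z}\hookrightarrow F$ is the closed immersion of $\overline{Z}=\pi^{-1}(Z)$. Factoring $\pi\circ\imath$ as the composition of the rank-$\rank F$ vector bundle projection $p\colon\overline{Z}\to Z$ followed by the closed immersion $\jmath\colon Z\hookrightarrow X$, and using the standard fiber computation $p_!\underline{\BoQ}_{\overline{Z}}\cong\underline{\BoQ}_Z\otimes\SL^{\rank F}$ (arising from $H^*_c(\BoA^{\rank F})=\BoQ(-\rank F)[-2\rank F]$), this becomes
\[
\pi_!\varphi_g\underline{\BoQ}_F\cong\jmath_*\underline{\BoQ}_Z\otimes\SL^{\rank F}.
\]
Tensoring by $\SL^{-(\dim X+\rank F)/2}$ on both sides and using the identity $\rank F-(\dim X+\rank F)/2=-(\dim X-\rank F)/2$ (which is exactly the virtual shift corresponding to the expected dimension $\dim X-\rank F$ of $Z$), I deduce
\[
\pi_!\varphi_g\underline{\BoQ}_F^{\vir}\cong\jmath_*\underline{\BoQ}_Z^{\vir}.
\]

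Finally, to pass from $\pi_!$ to $\pi_*$ I would apply Verdier duality, using three standard facts: $\mathbb{D}\pi_!\cong\pi_*\mathbb{D}$; $\varphi_g$ commutes with $\mathbb{D}$ on (monodromic) mixed Hodge modules; and $\underline{\BoQ}_F^{\vir}$ is Verdier self-dual on the smooth variety $F$. Dualizing the preceding isomorphism yields
\[
\pi_*\varphi_g\underline{\BoQ}_F^{\vir}\cong\mathbb{D}\bigl(\jmath_*\underline{\BoQ}_Z^{\vir}\bigr)\cong\jmath_*\mathbb{D}\underline{\BoQ}_Z^{\vir},
\]
which is the claimed isomorphism (with $\mathbb{D}\underline{\BoQ}_Z^{\vir}$ viewed on $X$ via the closed immersion $\jmath$). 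The main obstacle is purely bookkeeping: keeping track of Tate twists consistently and verifying the commutation of $\varphi_g$ with Verdier duality; all substantive content is already contained in Corollary~\ref{corollary:dimensionalreduction}.
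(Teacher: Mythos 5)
Your proposal is correct and follows the same route as the paper's one-line proof, which is precisely ``apply Corollary~\ref{corollary:dimensionalreduction} to $\underline{\BoQ}_X$ and take the Verdier dual of the isomorphism obtained''; you merely spell out the factorization of $\pi\circ\imath$ through the vector bundle $\overline{Z}\to Z$, the Thom isomorphism $p_!\underline{\BoQ}_{\overline{Z}}\cong\underline{\BoQ}_Z\otimes\SL^{\rank F}$, and the Tate-twist bookkeeping that the paper leaves implicit, all of which are correct.
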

\begin{proof}
 We apply Corollary~\ref{corollary:dimensionalreduction} to $\underline{\BoQ}_{X}$ and take the Verdier dual of the isomorphism obtained.
\end{proof}

\begin{lemma}
 Let $X$ be a weakly symplectic smooth affine algebraic variety. We let $\mu,\phi$ be as in Definition~\ref{definition:weakmomentmap}. We denote by $\pi\colon X\times \Fg\rightarrow X$ the projection. Then, $\pi_*\varphi_{f}\BoQ_{X\times \Fg}^{\vir}\cong \BD\BoQ_{\mu^{-1}(0)}^{\vir}\cong \pi_*\varphi_{f\circ \phi^{-1}}\BoQ_{X\times\Fg}^{\vir}$. 
\end{lemma}
\begin{proof}
 This follows from the fact that the critical loci of $f$ and $f\circ \phi^{-1}$ are contained in $\mu^{-1}(0)\times\Fg$ (Lemma~\ref{lemma:criticallocusweak}), that $\phi$ is an isomorphism (and so $\phi_*\varphi_{f}\BoQ_{X\times\Fg}^{\vir}=\varphi_{f\circ \phi^{-1}}\BoQ_{X\times\Fg}^{\vir}$), and the commutative diagram
 \[
\begin{tikzcd}
	{\mu^{-1}(0)\times\Fg} & {} & {\mu^{-1}(0)\times\Fg} \\
	& {\mu^{-1}(0)}
	\arrow["\phi", from=1-1, to=1-3]
	\arrow["\pi"', from=1-1, to=2-2]
	\arrow["\pi", from=1-3, to=2-2]
\end{tikzcd}\,.
 \]
 Indeed, we have $\pi_*\varphi_f\BoQ_{\tilde{X}}^{\vir}\cong\BD\BoQ_{\mu^{-1}(0)}^{\vir}$ by dimensional reduction.
\end{proof}

\section{Purity of the BPS sheaf for weak Hamiltonian reductions}

\subsection{A purity lemma}
For future reference, we state a purity lemma due to Davison.
\begin{lemma}[{\cite[Lemma 4.7]{davison2020bps}}]
\label{lemma:boundedbelow}
 Let $\FX$ be a finite type stack and $p\colon \FX\rightarrow\FY$ a morphism of stacks. Then, $p_*\BD\underline{\BoQ}_{\FX}$ is pure below and $p_!\underline{\BoQ}_{\FX}$ is pure above.
\end{lemma}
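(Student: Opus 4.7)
\medskip

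The plan is to extract the lemma from Saito's weight formalism for (monodromic) mixed Hodge modules, transferred from schemes to stacks via smooth descent. A key preliminary observation is that the two halves are Verdier dual: since
\[
\BD\bigl(p_!\underline{\BoQ}_{\FX}\bigr) \cong p_*\BD\underline{\BoQ}_{\FX},
\]
and Verdier duality exchanges the subcategories of complexes of weights $\leq w$ and $\geq -w$, the statement that $p_!\underline{\BoQ}_{\FX}$ is pure above is equivalent to the statement that $p_*\BD\underline{\BoQ}_{\FX}$ is pure below. I would therefore focus on the former and obtain the latter for free.

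The first ingredient would be to show that $\underline{\BoQ}_{\FX}$ has weights $\leq 0$ for any finite type stack $\FX$. On a smooth scheme $X$ this is immediate because $\underline{\BoQ}_X$ is pure of weight $0$. On a general finite type scheme this follows from Deligne's weight bounds (or, equivalently, by factoring through a smooth compactification of an alteration and tracking weights along proper pushforward). For an arbitrary finite type stack, I would pick a smooth atlas $\pi\colon U\to \FX$ and use that smooth pullback is a shifted Tate twist, hence does not raise weights; since $\pi^*$ detects weights up to Tate shift, the weight bound descends from $U$ to $\FX$.

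The second ingredient is the standard statement that for any morphism $p\colon \FX\to\FY$ of finite type (Artin) stacks, the functor $p_!$ preserves the subcategory of complexes of weights $\leq w$. This is the theorem of Beilinson--Bernstein--Deligne--Gabber in the $\ell$-adic setting and Saito's theorem in the Hodge setting, both stated originally for schemes; the extension to stacks is carried out in the six-functor formalism of Laszlo--Olsson (and its Hodge-theoretic counterpart used in \cite{davison2020cohomological}), via smooth descent on source and target. Combining these two inputs, $p_!\underline{\BoQ}_{\FX}$ has weights $\leq 0$, i.e.\ is pure above, and the dual statement for $p_*\BD\underline{\BoQ}_{\FX}$ then follows from Verdier duality.

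The main obstacle, as usual in this line of argument, is legitimising the weight formalism (in particular preservation of ``weights $\leq 0$'' under $p_!$) in the stacky setting, since Saito's original statement is for morphisms of schemes. The resolution, which I would simply cite in the style of \cite{davison2020cohomological}, is that weights can be checked after smooth base change, $p_!$ is compatible with smooth base change, and after such a base change one reduces to a morphism of schemes where Saito applies. Once this is invoked, the argument is purely formal.
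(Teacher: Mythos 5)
The paper does not reproduce a proof here; it simply cites \cite[Lemma 4.7]{davison2020bps}. Your argument is correct and is precisely the standard one behind that lemma: Verdier duality $\BD(p_!\underline{\BoQ}_{\FX})\cong p_*\BD\underline{\BoQ}_{\FX}$ reduces the two assertions to one, the weight bound $\mathrm{wt}\,\underline{\BoQ}_{\FX}\leq 0$ is checked on a smooth atlas (where it is Deligne/Saito for schemes), and the preservation of $\mathrm{wt}\leq w$ by $p_!$ in the stacky mixed-Hodge-module setting is obtained by smooth descent from Saito's scheme statement, exactly as in \cite{davison2020cohomological}. One small imprecision: you write that ``smooth pullback is a shifted Tate twist.'' What you actually want is that for a smooth $\pi$ of relative dimension $d$ one has $\pi^!\cong\pi^*[2d](d)$, and since $\pi^*$ preserves $\mathrm{wt}\leq w$ and $\pi^!$ preserves $\mathrm{wt}\geq w$ it follows that $\pi^*$ preserves both bounds; combined with conservativity of $\pi^*$ this is what lets you transfer weight bounds from the atlas to the stack. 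With that phrasing tightened, the proposal is complete and matches the intended proof.
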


\subsection{Support Lemma for the BPS sheaf}
\subsubsection{Regular locus in Levi subgroups}
Let $G$ be a reductive group, $\Fg$ its Lie algebra. We recall the notion of regularity for elements of a Levi subalgebra $\Fl\subset\Fg$, the Lie algebra of the Levi subgroup $L\subset G$.

For $x\in\Fg$, we define $H_G(x)\coloneqq C_G(x^{\ssimp})=G^{x^{\ssimp}}$, the centraliser of the semisimple part of $x$.

If $X$ is a $G$-variety and $x\in\Fg$, we let $H_X(x)\coloneqq\{v\in X\mid x^{\ssimp}\cdot v=v\}=X^{x^{\ssimp}}$.

\begin{definition}
 An element $x\in \Fl$ is called \emph{$G$-regular} if $H_G(x^{\ssimp})\subset L$. We let $\Fl^{G-\reg}\subset\Fl$ be the subset of regular elements.
\end{definition}

We let $\Fg_{(L)}\coloneqq\{x\in\Fg\mid H_G(x) \text{ is conjugate to $L$}\}\subset \Fg$. We define the order relation $(L)\leq(M)$ if a conjugate of $L$ is a subgroup of $M$. We have an associated subvariety $\Fg_{\leq (L)}\subset \Fg$.

\begin{proposition}[{\cite[Proposition 2.11]{gunningham2018generalized}}]
\label{proposition:etaleGunningham}
 The morphism of stacks $d_{\Fl}\colon\Fl^{G-\reg}/L\rightarrow\Fg/G$ is \'etale with image $\Fg_{\leq(L)}$.
\end{proposition}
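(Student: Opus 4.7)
The strategy is two-fold: identify the image set-theoretically, then verify \'etaleness via a tangent-complex calculation.

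For the image, a point of $\Fg/G$ represented by $x \in \Fg$ lies in the image of $d_{\Fl}$ iff some $G$-translate $gxg^{-1}$ belongs to $\Fl^{G-\reg}$. By definition of regularity this is the condition $H_G((gxg^{-1})^{\ssimp}) \subset L$, i.e.\ $gH_G(x^{\ssimp})g^{-1} \subset L$. This is precisely the condition that $H_G(x^{\ssimp})$ be conjugate to a subgroup of $L$, which matches the definition of $\Fg_{\leq(L)}$.

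For \'etaleness, I compute the tangent complex at a point $x \in \Fl^{G-\reg}$. Since $\Fl^{G-\reg} \subset \Fl$ is open, $\Tan_{\Fl^{G-\reg}/L,x}$ is the two-term complex $[\Fl \xrightarrow{\mathrm{ad}(-x)} \Fl]$ in degrees $-1$ and $0$, and similarly $\Tan_{\Fg/G,x} = [\Fg \xrightarrow{\mathrm{ad}(-x)} \Fg]$. The morphism of tangent complexes induced by $d_\Fl$ is the inclusion $\Fl \hookrightarrow \Fg$ termwise, so the relative tangent complex is $[\Fg/\Fl \xrightarrow{\mathrm{ad}(-x)} \Fg/\Fl]$. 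Thus \'etaleness at $x$ reduces to the single assertion that $\mathrm{ad}(x)$ acts invertibly on $\Fg/\Fl$.

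To verify this invertibility, I use the Jordan decomposition $x = x^{\ssimp} + x^{\nil}$ with $[x^{\ssimp},x^{\nil}]=0$. The $G$-regularity assumption $H_G(x^{\ssimp}) \subset L$ gives $\Fg^{x^{\ssimp}} = \Lie(H_G(x^{\ssimp})) \subset \Fl$. Decomposing both $\Fg$ and $\Fl$ into eigenspaces of the semisimple operator $\mathrm{ad}(x^{\ssimp})$ and using the containment above yields $\Fg/\Fl \cong \Fg^{\neq 0}/\Fl^{\neq 0}$, where the superscript denotes the sum of nonzero eigenspaces. On $\Fg^{\neq 0}$, $\mathrm{ad}(x^{\ssimp})$ is invertible; the commuting nilpotent $\mathrm{ad}(x^{\nil})$ preserves this eigenspace decomposition and preserves $\Fl^{\neq 0}$, so $\mathrm{ad}(x) = \mathrm{ad}(x^{\ssimp}) + \mathrm{ad}(x^{\nil})$ is an invertible operator plus a commuting nilpotent on $\Fg^{\neq 0}$, hence invertible, and the invertibility descends to the quotient $\Fg^{\neq 0}/\Fl^{\neq 0}$.

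The main obstacle I anticipate is the bookkeeping needed to upgrade the pointwise tangent-complex statement to \'etaleness of the stack morphism $d_\Fl$: one must check representability, i.e., that an element $g \in G$ stabilizing $x \in \Fl^{G-\reg}$ automatically lies in $L$. This follows by applying the $H_G(x^{\ssimp}) \subset L$ condition at the group level. To avoid this subtlety, I would instead work with the smooth atlas $\Fl^{G-\reg} \times^L G \to \Fl^{G-\reg}/L$ and verify directly that the resulting morphism of schemes $\Fl^{G-\reg} \times^L G \to \Fg$ is \'etale onto the locally closed subvariety $\Fg_{\leq(L)}$ by the differential computation $(v,\xi) \mapsto v + [x,\xi]$, which again reduces to the invertibility of $\mathrm{ad}(x)$ on $\Fg/\Fl$ proved above.
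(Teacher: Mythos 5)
The paper does not supply its own proof of this proposition; it simply cites \cite[Proposition~2.11]{gunningham2018generalized}, so there is nothing to compare against internally. That said, your direct argument is correct and is essentially the standard one.

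Two small points worth tightening. First, in the image computation, saying "$gxg^{-1}$ belongs to $\Fl^{G-\reg}$" requires both $gxg^{-1}\in\Fl$ and the regularity condition $gH_G(x^{\ssimp})g^{-1}\subset L$; you should note that the second implies the first, since $x\in\Lie(H_G(x^{\ssimp}))$ (because $x^{\ssimp}$ and $x^{\nil}$ both commute with $x^{\ssimp}$), whence $gxg^{-1}\in g\Lie(H_G(x^{\ssimp}))g^{-1}\subset\Fl$. Second, the image $\Fg_{\leq(L)}$ is not merely locally closed but open: this is automatic once étaleness is established, and in fact you are implicitly using openness when you pass from "étale at each point of $\Fl^{G-\reg}$" to "étale onto its image." The core of the argument --- the invertibility of $\mathrm{ad}(x)$ on $\Fg/\Fl$, obtained by splitting off the zero $\mathrm{ad}(x^{\ssimp})$-eigenspace using $\Fg^{x^{\ssimp}}\subset\Fl$ and treating $\mathrm{ad}(x^{\nil})$ as a commuting nilpotent perturbation --- is complete and correct, as is the observation that stabilizers in $G$ of points of $\Fl^{G-\reg}$ land in $L$ (via $G$-equivariance of the Jordan decomposition, $C_G(x)\subset C_G(x^{\ssimp})\subset L$), which is what makes $d_{\Fl}$ representable.
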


We will need a stronger notion of regularity. 

\begin{definition}
 Let $\lambda\in \Hom_{\BoZ}(\BoG_{\rmm},G)$ be a one-parameter subgroup and $X$ a $G$-variety. An element $x\in\Fg^{\lambda}$ is called $G,X$-regular if $H_G(x^{\ssimp})\subset G^{\lambda}$ and $H_X(x^{\ssimp})\subset X^{\lambda}$. We denote by $\Fg^{\lambda,G,X-\reg}$ the set of $G,X$-regular elements.
\end{definition}

\begin{lemma}
\label{lemma:superregopen}
 $\Fg^{\lambda,G,X-\reg}\subset\Fg^{\lambda,G-\reg}$ is a nonempty open subset.
\end{lemma}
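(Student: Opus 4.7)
The inclusion is clear from the definitions, so I focus on nonemptyness and openness.

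For nonemptyness, I exhibit an explicit element. Fix a maximal torus $T\subset G^\lambda$ (which has the same rank as $G$, since $G^\lambda$ is a Levi subgroup) and set $\Ft=\Lie(T)$. The element $y=d\lambda\in\Ft\subset\Fg^\lambda$ is semisimple with $\langle y\rangle=\lambda(\BoG_{\rmm})$, so $H_G(y)=Z_G(\lambda(\BoG_{\rmm}))=G^\lambda$ and $H_X(y)=X^{\lambda(\BoG_{\rmm})}=X^\lambda$, verifying both conditions. More generally, any $y\in\Ft$ regular semisimple in $\Fg^\lambda$ works: then $\langle y\rangle=T\supset\lambda(\BoG_{\rmm})$ (because $\lambda\in Z(G^\lambda)\subset T$) and $H_G(y)=Z_G(T)=T\subset G^\lambda$, so such $y$ also lie in $\Fg^{\lambda,G,X-\reg}$, which shows that this locus is open and dense in the regular semisimple part of $\Ft$.

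For openness, both defining conditions depend only on the $G^\lambda$-conjugacy class of $x^{\ssimp}$, hence factor through the continuous Chevalley morphism $\Fg^\lambda\to\Fg^\lambda\cms G^\lambda\cong\Ft/W^\lambda$. It therefore suffices to check openness of the pulled-back locus in $\Ft$. The trace on $\Ft$ of $\Fg^{\lambda,G-\reg}$ is the complement of the finite union of hyperplanes $\ker(\alpha)$ for roots $\alpha$ of $G$ not appearing in $G^\lambda$, open by Proposition~\ref{proposition:etaleGunningham}. For the $X$-regular condition on $\Ft$, I embed $X$ $T$-equivariantly into a finite-dimensional $G$-representation $V$, decompose $V=\bigoplus_\chi V_\chi$ into its finitely many $T$-weight spaces, and apply Luna's slice theorem at each point $p\in X^\lambda$ to identify an open neighborhood of $p$ in $X$ with an open subset of $\Tan_p X$ as a $T$-variety. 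A direct weight computation then shows that $H_X(y)\subset X^\lambda$ near $p$ is equivalent to the inequalities $\chi(y)\neq 0$ for the $T$-weights $\chi$ of the normal bundle $\Tan_p X/\Tan_p X^\lambda$; such $\chi$ automatically satisfy $\chi(\lambda)\neq 0$. Globally, this cuts out the complement of finitely many hyperplanes in $\Ft$.

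The main obstacle I anticipate is promoting this local weight analysis near $\lambda$-fixed points to the global containment $X^{\langle y\rangle}\subset X^\lambda$, as a priori a component of $X^{\langle y\rangle}$ could be disjoint from $X^\lambda$ and escape local control. I would address this by choosing a $T$-equivariant projective completion $\overline{X}$ of $X$ via Sumihiro's theorem and applying the Borel fixed point theorem to the residual $T/\langle y\rangle$-action on each irreducible component of $\overline{X}^{\langle y\rangle}$, which then must meet $\overline{X}^T\subset\overline{X}^\lambda$; combining this with the Bia\l ynicki-Birula stratification of $\overline{X}$ should force every component of $X^{\langle y\rangle}$ to admit a $\lambda$-degeneration into $X^\lambda$, thereby reducing the global condition to the local one and completing the openness argument.
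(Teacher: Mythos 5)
Your proof proposal takes a genuinely different and, in spirit, cleaner route than the paper's. Whereas the paper argues directly in $\Fg^{\lambda}$ by considering the normal bundle $V'$ of $X^{\lambda}$ and invertibility of a linearized $x$-action on $V'$, you reduce to $\Ft$ via the Chevalley map and do a weight computation near $X^{\lambda}$; this is a sensible alternative, and the reduction to $\Ft$ is correct (the two conditions are $G^{\lambda}$-conjugation invariant, the Chevalley morphism is continuous, and the finite quotient $\Ft\to\Ft/W^{\lambda}$ is open). You also correctly flag a genuine subtlety that a local analysis at $X^{\lambda}$ cannot by itself control: $H_X(y)=X^{T_y}$ may a priori have components disjoint from $X^{\lambda}$ entirely outside the range of any slice computation at $X^{\lambda}$.

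However, your proposed repair of that gap does not go through as stated. Borel's fixed point theorem applied to the closure $\overline{Z}\subset\overline{X}$ of a component $Z$ of $X^{\langle y\rangle}$ produces a $\lambda$-fixed point of $\overline{Z}$, but nothing forces this point to lie in $X$ rather than in the boundary $\overline{X}\setminus X$; the $\lambda$-flow may escape to infinity, so the promised ``$\lambda$-degeneration into $X^{\lambda}$'' does not automatically exist. A cleaner and complete fix is available using the equivariant embedding $X\hookrightarrow V$ you already introduce, but used globally rather than fed back into Luna slices: since $V$ has finitely many $T$-weights, the set of $T$-stabilizers $\{T\cap G_v : v\in X\}$ is a finite set $\{\Gamma_1,\dots,\Gamma_N\}$ of subgroups of $T$. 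On $\Ft$, the condition $X^{T_y}\subset X^{\lambda}$ is then precisely the condition that $y\notin\Lie(\Gamma_i)$ for every $i$ with $\lambda(\BoG_{\rmm})\not\subset\Gamma_i$; each such $\Lie(\Gamma_i)$ is a proper subspace of $\Ft$ (it does not contain $d\lambda$), so the locus in $\Ft$ is the complement of finitely many proper linear subspaces, hence nonempty and Zariski open, and the reduction step finishes the argument. This avoids both the boundary issue and the need to make the paper's linearized-action-on-$V'$ argument (which is also somewhat delicate away from the linear case $X=V$) precise.

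Two smaller remarks. First, your ``more generally, any $y\in\Ft$ regular semisimple in $\Fg^{\lambda}$ works'' is not correct: regularity in $\Fg^{\lambda}$ does not imply $T_y=T$ (e.g.\ for $G^{\lambda}=T$ every element of $\Ft$ is regular semisimple, yet $T_y$ can be any subtorus). What is true, and what you need, is that \emph{generic} $y\in\Ft$ (with all nonzero $T$-characters nonvanishing on $y$) has $T_y=T$, whence $H_X(y)=X^T\subset X^{\lambda}$; but in any case the single element $d\lambda$ already suffices for nonemptiness. Second, the condition on $\Ft$ is in general the complement of finitely many proper linear subspaces, not necessarily hyperplanes, though this does not affect openness.
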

\begin{proof}
 The subset $\Fg^{\lambda,G,X-\reg}$ is clearly nonempty, as it contains $\frac{\mathrm{d}}{\mathrm{d}t}\lambda(t)_{|t=1}\in\Fg$. We prove openness. We let $V\coloneqq\Tan X_{|X^{\lambda}}$ and $V=\Tan X^{\lambda}\oplus V'$ be a direct sum decomposition of $V$ as vector bundles over $X^{\lambda}$. Let $x\in\Fg^{\lambda}$. The infinitesimal action of $\Fg$ on $X$ gives a morphism of vector bundles $x\colon V\rightarrow V$. Then, $H_X(x)\subset X^{\lambda}$ if and only iff $x$ induces an invertible endomorphism of $V'$. This is an open condition on $x$.
\end{proof}

\begin{corollary}
 The morphism $\Fg^{\lambda,G,X-\reg}/G^{\lambda}\rightarrow\Fg/G$ is \'etale.
\end{corollary}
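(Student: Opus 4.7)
The plan is to factor the morphism as an open immersion followed by the étale morphism supplied by Proposition~\ref{proposition:etaleGunningham}, and then invoke the stability of étale morphisms under composition.

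More precisely, applying Proposition~\ref{proposition:etaleGunningham} with the Levi subgroup $L=G^{\lambda}\subset G$ (so that $\Fl=\Fg^{\lambda}$), one obtains that the natural morphism
\[
 d_{\Fg^{\lambda}}\colon \Fg^{\lambda,G-\reg}/G^{\lambda}\longrightarrow \Fg/G
\]
is étale with image $\Fg_{\leq(G^{\lambda})}$. By Lemma~\ref{lemma:superregopen}, the inclusion
\[
 \Fg^{\lambda,G,X-\reg}\subset \Fg^{\lambda,G-\reg}
\]
is open and $G^{\lambda}$-equivariant, and therefore induces an open immersion of quotient stacks
\[
 \jmath\colon \Fg^{\lambda,G,X-\reg}/G^{\lambda}\longinto \Fg^{\lambda,G-\reg}/G^{\lambda}.
\]
Since open immersions are étale and the composition of two étale morphisms is étale, the morphism $\Fg^{\lambda,G,X-\reg}/G^{\lambda}\to \Fg/G$, which coincides with $d_{\Fg^{\lambda}}\circ \jmath$, is étale. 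There is no real obstacle; the only subtlety to check is that $\Fg^{\lambda,G,X-\reg}$ is indeed a $G^{\lambda}$-stable open subset of $\Fg^{\lambda}$, which follows from the fact that both conditions $H_G(x^{\ssimp})\subset G^{\lambda}$ and $H_X(x^{\ssimp})\subset X^{\lambda}$ are preserved under conjugation by elements of $G^{\lambda}$ (since $G^{\lambda}$ normalises $G^{\lambda}$ and preserves $X^{\lambda}$).
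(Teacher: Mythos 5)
Your proof is correct and follows exactly the route the paper takes: apply Proposition~\ref{proposition:etaleGunningham} with $L=G^{\lambda}$ to get étaleness of $\Fg^{\lambda,G-\reg}/G^{\lambda}\to\Fg/G$, then precompose with the open immersion furnished by Lemma~\ref{lemma:superregopen}. The extra remark verifying $G^{\lambda}$-stability of the open locus is a reasonable (and correct) clarification, even though the paper leaves it implicit.
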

\begin{proof}
 This follows immediately from Proposition~\ref{proposition:etaleGunningham} and Lemma~\ref{lemma:superregopen}.
\end{proof}

\begin{lemma}
\label{lemma:cocharacterforelements}
 For any element $\xi\in \Fg$ there exists a one-parameter subgroup $\lambda\in\Hom(\BoG_{\rmm},G)$ such that $\xi\in \Fg^{\lambda,G,X-\reg}$.
\end{lemma}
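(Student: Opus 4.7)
My plan is to reduce the statement to the algebraic subtorus of $G$ naturally attached to $\xi$ via its Jordan decomposition. Write $\xi=\xi^{\ssimp}+\xi^{\nil}$ with $[\xi^{\ssimp},\xi^{\nil}]=0$. Since $\xi^{\ssimp}$ is semisimple, it lies in the Lie algebra of some maximal torus $T\subset G$, so the analytic $1$-parameter subgroup $\exp(\BoC\xi^{\ssimp})$ is contained in $T$. Its Zariski closure $T_{\xi}\coloneqq \overline{\exp(\BoC\xi^{\ssimp})}$ is then a connected closed subgroup of $T$, hence a subtorus of $G$. The candidate cocharacter is any $\lambda\in X_*(T_{\xi})$: if $\xi^{\ssimp}\neq 0$ then $T_{\xi}$ is positive-dimensional and such $\lambda$ exists, while if $\xi^{\ssimp}=0$ the trivial cocharacter $\lambda=0$ will do, since then $H_G(0)=G$ and $H_X(0)=X$ and the regularity conditions are vacuous.

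The heart of the argument is the identification of fixed loci
\[
 X^{T_{\xi}}=H_X(\xi)=X^{\xi^{\ssimp}},\qquad G^{T_{\xi}}=H_G(\xi)=G^{\xi^{\ssimp}}.
\]
Both are immediate from the same closure argument: the closed subvariety $X^{\xi^{\ssimp}}$ is fixed by every element of $\exp(\BoC\xi^{\ssimp})$ and hence by its Zariski closure $T_{\xi}$; conversely $T_{\xi}\supset\exp(\BoC\xi^{\ssimp})$ forces $X^{T_{\xi}}\subset X^{\xi^{\ssimp}}$. The analogous argument for the adjoint action of $G$ on itself gives the identity for $G^{T_{\xi}}$. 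Since $\lambda(\BoG_{\rmm})\subset T_{\xi}$, taking fixed loci yields the two regularity inclusions $H_G(\xi)\subset G^{\lambda}$ and $H_X(\xi)\subset X^{\lambda}$ at once.

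It then remains to check $\xi\in\Fg^{\lambda}$. The semisimple part $\xi^{\ssimp}$ lies in $\Lie(T_{\xi})$ (it is the derivative at $t=0$ of the curve $\exp(t\xi^{\ssimp})\subset T_{\xi}$), so it is invariant under the adjoint action of $\lambda(\BoG_{\rmm})\subset T_{\xi}$ by commutativity of the torus. For the nilpotent part, the identity $[\xi^{\ssimp},\xi^{\nil}]=0$ integrates to $\mathrm{Ad}(\exp(t\xi^{\ssimp}))\xi^{\nil}=\xi^{\nil}$, showing that $\exp(\BoC\xi^{\ssimp})$, and therefore its Zariski closure $T_{\xi}$, lies in the centraliser of $\xi^{\nil}$ in $G$; in particular $\lambda$ centralises $\xi^{\nil}$. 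Summing, $\xi\in\Fg^{\lambda}$. The main (and essentially only) subtlety in the whole argument is verifying that $T_{\xi}$ really is a subtorus of $G$, but this reduces to the standard fact that a connected closed subgroup of a torus is a subtorus; no serious obstacle arises.
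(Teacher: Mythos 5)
Your proof is correct, and it is genuinely shorter and more direct than the paper's. The paper first forms the (possibly larger) torus $T'\subset Z(G^{\xi^{\ssimp}})$ maximal among connected tori acting trivially on $X^{\xi^{\ssimp}}$, decomposes $\Tan X_{|X^{\xi^{\ssimp}}}$ and $\Fg$ into $T'$-weights, and then finds a rational cocharacter in the complement $Z=\bigcap_{\alpha\in\CW_1}\alpha^{\perp}\setminus\bigcup_{\alpha\in\CW_2}\alpha^{\perp}$ (using nonemptiness because $\xi^{\ssimp}$ lies there after complexification) so that $\lambda$ annihilates exactly the same weights that $\xi^{\ssimp}$ does. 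You instead go straight to $T_{\xi}=\overline{\exp(\BoC\xi^{\ssimp})}$ and observe that $X^{T_{\xi}}=H_X(\xi^{\ssimp})$, $G^{T_{\xi}}=H_G(\xi^{\ssimp})$, and that $\xi$ is centralised by $T_{\xi}$; any $\lambda\in X_*(T_{\xi})$ then gives the required inclusions because taking $\lambda$-fixed points only enlarges $T_{\xi}$-fixed points. This is cleaner and avoids the weight bookkeeping.

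One caveat worth making explicit. As stated, the lemma is even satisfied by $\lambda=0$ (then $\Fg^{\lambda}=\Fg$, $G^{\lambda}=G$, $X^{\lambda}=X$ and the three conditions are vacuous), so any construction ``works''. The real content that the paper's $\lambda$ secretly carries, and which is used in the proof of Lemma~\ref{lemma:supportlemma}, is that $\lambda$ and $\xi^{\ssimp}$ pair to zero with \emph{the same} weights of $\Fg$ and of $\Tan X_{|X^{\xi^{\ssimp}}}$, hence $\Fg^{\lambda}=\Fg^{\xi^{\ssimp}}$ and $X^{\lambda}$ agrees with $X^{\xi^{\ssimp}}$ along $X^{\xi^{\ssimp}}$; this is what makes ``$\overline{\lambda}$ trivial $\Rightarrow\xi^{\ssimp}\in\Fg_0$'' go through. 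Your choice of an \emph{arbitrary} $\lambda\in X_*(T_{\xi})$ only gives the one-sided inclusions $H_G(\xi^{\ssimp})\subset G^{\lambda}$, $H_X(\xi^{\ssimp})\subset X^{\lambda}$, not equality; in particular $\lambda=0\in X_*(T_{\xi})$ is allowed, for which the downstream implication fails. If instead you take a \emph{generic} $\lambda\in X_*(T_{\xi})$, you recover $G^{\lambda}=G^{T_{\xi}}=H_G(\xi^{\ssimp})$ and $X^{\lambda}=X^{T_{\xi}}=H_X(\xi^{\ssimp})$, matching what the paper obtains via its weight cone. So for the lemma as literally stated your proof is complete and correct; for the way the lemma is actually used, replace ``any $\lambda$'' by ``generic $\lambda\in X_*(T_{\xi})$''.
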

\begin{proof}
 We let $\xi^{\ssimp}$ be the semisimple part of $\xi$. Up to $G$-conjugation, we can assume that $\xi^{\ssimp}\in\Ft$. We let $T'\subset Z(G^{\xi^{\ssimp}})$ be the maximal connected torus acting trivially on $X^{\xi^{\ssimp}}=\{x\in X\mid \xi^{\ssimp}\cdot x=x\}$.
 
 We write, for $\alpha\in\pi_0(X^{\xi^{\ssimp}}_{\alpha})$,
 \[
  \Tan X_{|X^{\xi^{\ssimp}}_{\alpha}}=\bigoplus_{\beta\in X_*(T')} (\Tan X_{|X^{\xi^{\ssimp}}_{\alpha}})_{\beta}\,.
 \]
 
 We define $\CW_0(\Tan X_{|X^{\xi^{\ssimp}}})=\{\beta\in X_*(T')\mid (\Tan X_{|X^{\xi^{\ssimp}}_{\alpha}})_{\beta}\neq 0 \text{ for some $\alpha\in\pi_0(X^{\xi^{\ssimp}})$}\}$. We define similarly $\CW_0(\Fg)\coloneqq \{\beta\in X_*(T')\mid \Fg_{\beta}\neq 0\}$.

 We let
 \[
  \CW_1\coloneqq\{\alpha\in\CW_0(\Tan X_{|X^{\xi^{\ssimp}}})\cup\CW_0(\Fg)\mid \langle \alpha,\xi^{\ssimp}\rangle=0\},
 \]
\[
 \CW_2\coloneqq\{\alpha\in\CW_0(\Tan X_{|X^{\xi^{\ssimp}}})\cup\CW_0(\Fg)\mid \langle \alpha,\xi^{\ssimp}\rangle\neq0\}.
\]
We consider the subset of the rational vector space $\Ft'_{\BoQ}=X_*(T')_{\BoQ}$:
\[
 Z\coloneqq \cap_{\alpha\in\CW_1}\alpha^{\perp}\setminus \cup_{\alpha\in\CW_2}\alpha^{\perp}.
\]
This subset is nonempty, as its base-change to $\BoC$ contains $\xi^{\ssimp}$. We pick $y\in Z$. For some integer $N\geq 1$, $Ny$ gives a cocharacter $\lambda\in X_*(T')$, and so a cocharacter $\lambda\in X_*(T)$, which satisfies the property of the Lemma.
\end{proof}

We let $X$ be a weak smooth affine symplectic variety with a $G$-action and a weak moment map $\mu\colon X\rightarrow\Fg^*$. We let $\tilde{f}\coloneqq f\circ\phi^{-1}\colon X\times\Fg\rightarrow\BoC$. Note that when $\mu$ is an actual moment map, $\tilde{f}=f$. Since $\phi$ is an isomorphism, the BPS sheaves $\underline{\BPS}_{\tilde{X},f}$ and $\underline{\BPS}_{\tilde{X},\tilde{f}}$ encode essentially the same data, but for $\tilde{f}$ we will make use of Lemma~\ref{lemma:criticallocusweak}.

Let $\lambda\in X_*(T)$ be a cocharacter. We define $\tilde{X}^{\reg}_{\lambda}\coloneqq X\times\Fg^{\lambda,G,X-\reg}$, so that we have a Cartesian square
\[\begin{tikzcd}
	{\tilde{X}_{\lambda}^{\reg}/G^{\lambda}} & {\tilde{X}/G} \\
	{\Fg^{\lambda,G,X\reg}/G^{\lambda}} & {\Fg/G}
	\arrow["{d_{\lambda}}", from=1-1, to=1-2]
	\arrow[from=1-1, to=2-1]
	\arrow[from=1-2, to=2-2]
	\arrow[from=2-1, to=2-2]
	\arrow["\lrcorner"{anchor=center, pos=0.125}, draw=none, from=1-1, to=2-2]
\end{tikzcd}\]
and by Proposition~\ref{proposition:etaleGunningham} and base-change, $d_{\lambda}$ is \'etale. By abuse, we also denote by $d_{\lambda}$ the morphism $\tilde{X}^{\reg}_{\lambda}\rightarrow\tilde{X}$. The morphism $\overline{d}_{\lambda}\colon \tilde{X}^{\reg}_{\lambda}\cms G^{\lambda}\rightarrow \tilde{X}\cms G$ induced on the good moduli spaces is also \'etale.

For $\alpha\in\pi_0(X^{\lambda})$, we let $\tilde{X}^{\lambda,\reg}_{\alpha}\coloneqq X^{\lambda}_{\alpha}\times\Fg^{\lambda,G,X-\reg}$ and $\tilde{X}^{\lambda\geq 0,\reg}_{\alpha}\coloneqq X^{\lambda\geq 0}_{\alpha}\times\Fg^{\lambda,G,X-\reg}$.

We have natural $G^{\lambda}$-equivariant maps $s_{(\lambda,\alpha)}\colon\tilde{X}^{\lambda\geq 0,\reg}_{\alpha}\rightarrow \tilde{X}_{\lambda}^{\reg}$ (closed immersion) and $q_{(\lambda,\alpha)}\colon \tilde{X}^{\lambda\geq 0,\reg}_{\alpha}\rightarrow\tilde{X}^{\lambda,\reg}_{\alpha}$. This is a vector bundle. We have a (not everywhere commutative) diagram of maps
\[\begin{tikzcd}
	& {\tilde{X}_{\lambda}^{\reg}} \\
	{} & {\tilde{X}^{\lambda\geq 0,\reg}_{\alpha}} & {\tilde{X}} & \BoC \\
	& {\tilde{X}^{\lambda,\reg}_{\alpha}}
	\arrow["{s_{(\lambda,\alpha)}}", from=2-2, to=1-2]
	\arrow["{d_{\lambda}}", from=1-2, to=2-3]
	\arrow[from=2-2, to=2-3]
	\arrow["{q_{(\lambda,\alpha)}}"', from=2-2, to=3-2]
	\arrow["\tilde{f}", from=2-3, to=2-4]
	\arrow["{\ell_{(\lambda,\alpha)}}", from=3-2, to=2-3]
	\arrow["{\tilde{f}_{(\lambda,\alpha)}}"', from=3-2, to=2-4]
	\arrow["u_{(\lambda,\alpha)}",bend left=60, from=3-2, to=1-2]
\end{tikzcd}\]
where $\ell_{(\lambda,\alpha)}$ is induced is the $(G^{\lambda},G)$-equivariant map $\tilde{X}^{\lambda,\reg}_{\alpha}\rightarrow\tilde{X}$ and $u_{(\lambda,\alpha)}$ is the closed inclusion induced by $X^{\lambda}_{\alpha}\subset X$.

\begin{lemma}
\label{lemma:criticalloci}
 We have an isomorphism of critical loci
 \[
  \crit(\tilde{f}\circ d_{\lambda})\cong \bigsqcup_{\alpha\in\pi_0(X^{\lambda})} u_{(\lambda,\alpha)}\crit(\tilde{f}_{(\lambda,\alpha)})\,.
 \]
\end{lemma}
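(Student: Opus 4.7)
The plan is to compute $\crit(f\circ d_\lambda)$ directly using Lemma~\ref{lemma:criticallocus} combined with the regularity conditions built into $\Fg^{\lambda,G,X-\reg}$, and then identify the result with $\bigsqcup_\alpha u_{(\lambda,\alpha)}\crit(f_{(\lambda,\alpha)})$ using the fact that $f$ is $\lambda$-invariant.

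First, since $d_\lambda\colon \tilde{X}_\lambda^{\reg}\to \tilde{X}$ is \'etale, we have a scheme-theoretic identification $\crit(f\circ d_\lambda)=d_\lambda^{-1}(\crit(f))$. By Lemma~\ref{lemma:criticallocus}, a point $(x,\xi)\in X\times\Fg^{\lambda,G,X-\reg}$ lies in $\crit(f\circ d_\lambda)$ precisely when $\mu(x)=0$ and $\xi\in\Fg_x$. The key observation is then: if $\xi\in\Fg_x$, the semisimple part $\xi^{\ssimp}$ still lies in the algebraic subalgebra $\Fg_x$ (Jordan decomposition is preserved under algebraic inclusions of Lie algebras), so $x$ is fixed by $\xi^{\ssimp}$, i.e.\ $x\in H_X(\xi^{\ssimp})$. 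By the defining condition of $G,X$-regularity, $H_X(\xi^{\ssimp})\subset X^\lambda$, so $x\in X^\lambda$, and hence $x\in X^\lambda_\alpha$ for a uniquely determined connected component $\alpha\in\pi_0(X^\lambda)$. Combined with $\xi\in\Fg^{\lambda,G,X-\reg}$, this shows
\[
\crit(f\circ d_\lambda)\subset \bigsqcup_{\alpha\in\pi_0(X^\lambda)} X^\lambda_\alpha\times\Fg^{\lambda,G,X-\reg}=\bigsqcup_\alpha \tilde{X}^{\lambda,\reg}_\alpha,
\]
and the decomposition is disjoint because it is indexed by connected components of $X^\lambda$.

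Next, I would check that under this decomposition the $\alpha$-piece is $u_{(\lambda,\alpha)}\crit(f_{(\lambda,\alpha)})$. The $G$-equivariance of $\mu$ implies $f(\lambda(t)\cdot x,\Ad(\lambda(t))\xi)=f(x,\xi)$, and since $\lambda$ is central in $G^\lambda$ it acts trivially on $\Fg^\lambda$, so $\lambda$ acts on $\tilde{X}_\lambda^{\reg}$ only through its action on the $X$ factor, with fixed locus exactly $\bigsqcup_\alpha \tilde{X}^{\lambda,\reg}_\alpha$, and $f\circ d_\lambda$ is $\lambda$-invariant. For any smooth variety $V$ with a $\BG_m$-action and a $\BG_m$-invariant function $h$, we have $\crit(h)\cap V^{\BG_m}=\crit(h|_{V^{\BG_m}})$: indeed, at a fixed point $v\in V^{\BG_m}$, the differential $\mathrm{d}_vh\colon\Tan_vV\to\BoC$ is $\BG_m$-equivariant with trivial target, so it vanishes on all nonzero weight spaces, leaving it determined by its restriction to $\Tan_v V^{\BG_m}=\Tan_v (V^{\BG_m})$. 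Applying this with $V=\tilde{X}_\lambda^{\reg}$ and $h=f\circ d_\lambda$ yields
\[
\crit(f\circ d_\lambda)\cap \tilde{X}^{\lambda,\reg}_\alpha=\crit\bigl((f\circ d_\lambda)|_{\tilde{X}^{\lambda,\reg}_\alpha}\bigr)=\crit(f_{(\lambda,\alpha)}),
\]
using that by the diagram $(f\circ d_\lambda)\circ u_{(\lambda,\alpha)}=f\circ\ell_{(\lambda,\alpha)}=f_{(\lambda,\alpha)}$.

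Combining the two steps gives the desired decomposition $\crit(f\circ d_\lambda)=\bigsqcup_\alpha u_{(\lambda,\alpha)}\crit(f_{(\lambda,\alpha)})$. The only mildly delicate point is the argument that regularity of $\xi$ forces $x\in X^\lambda$; everything else is essentially formal, from the combination of the étaleness of $d_\lambda$, the explicit description of $\crit(f)$ of Lemma~\ref{lemma:criticallocus}, and the standard critical locus formula for $\BG_m$-invariant functions restricted to fixed loci.
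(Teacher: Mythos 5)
Your overall strategy is close to the paper's, but the very first step contains a genuine error: the map $d_\lambda\colon\tilde{X}^{\reg}_\lambda\to\tilde{X}$ is \emph{not} étale as a morphism of schemes. It is the product of $\id_X$ with the inclusion $\Fg^{\lambda,G,X-\reg}\hookrightarrow\Fg$, and $\Fg^{\lambda,G,X-\reg}$ is an open subset of the proper linear subspace $\Fg^\lambda\subsetneq\Fg$ (whenever $\lambda$ is nontrivial), so $d_\lambda$ is a locally closed immersion of positive codimension. The paper is careful about this: it only asserts that $d_\lambda$ becomes étale \emph{after passing to the quotient stacks} $\tilde{X}^{\reg}_\lambda/G^\lambda\to\tilde{X}/G$, by Proposition~\ref{proposition:etaleGunningham}, and this stacky étaleness is what is invoked to transport the description of the critical locus. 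As a consequence, the justification ``$\crit(f\circ d_\lambda)=d_\lambda^{-1}(\crit(f))$'' fails: for a locally closed immersion this only gives $\crit(f\circ d_\lambda)\supseteq d_\lambda^{-1}(\crit(f))$. Concretely, $\crit(f\circ d_\lambda)$ is cut out by the vanishing of the $\xi$-derivative only on $\Fg^\lambda$, i.e.\ by $\mu(x)|_{\Fg^\lambda}=0$, which is a priori weaker than the full condition $\mu(x)=0$ supplied by Lemma~\ref{lemma:criticallocus} on the ambient $X\times\Fg$.

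The good news is that the missing containment is recovered by exactly the regularity argument you already deploy, so the gap is local and fixable: if $(x,\xi)\in\crit(f\circ d_\lambda)$, the $x$-derivative condition still gives $\xi\in\Fg_x$ (this part of the differential is unchanged by restricting the $\xi$-domain), hence by the Jordan-decomposition and $G,X$-regularity argument you give, $x\in X^\lambda$. By $G$-equivariance of $\mu$ and $\lambda$-fixedness of $x$, one then has $\mu(x)\in(\Fg^\lambda)^*\subset\Fg^*$, and since the pairing of $\Fg^\lambda$ with $(\Fg^\lambda)^*$ is perfect, the condition $\mu(x)|_{\Fg^\lambda}=0$ upgrades to $\mu(x)=0$. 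With this fix, your step (1) becomes a correct direct computation rather than an appeal to a false étaleness, and your steps (2)--(4) go through unchanged.

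One genuine (and welcome) difference from the paper's proof is your step identifying $\crit(f\circ d_\lambda)\cap\tilde{X}^{\lambda,\reg}_\alpha$ with $\crit(f_{(\lambda,\alpha)})$ via the $\BG_m$-equivariance argument at fixed points. The paper instead applies Lemma~\ref{lemma:criticallocus} a second time, directly to the weak Hamiltonian $G^\lambda$-variety $X^\lambda_\alpha$. Both routes are valid; yours avoids having to observe that $X^\lambda_\alpha$ inherits a weak symplectic/weak moment-map structure, while the paper's is shorter given that it has already set up the notion for general $G$-varieties. Your explicit treatment of why $\xi\in\Fg_x$ forces $\xi^{\ssimp}\in\Fg_x$ (Jordan decomposition in the algebraic subalgebra $\Fg_x=\Lie(G_x)$) also supplies a detail the paper leaves implicit.
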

\begin{proof}
 We have
 \[
  \crit(\tilde{f}\circ d_{\lambda})\cong \{(v,\xi)\in X\times \Fg^{\lambda,G,X-\reg}\mid \mu(v)=0, \xi\cdot v=0\}
 \]
 by Lemma~\ref{lemma:criticallocusweak} and the fact that after taking stacky quotients by the appropriate groups, $d_{\lambda}$ is \'etale. Lemma~\ref{lemma:criticallocusweak} also gives
\[
 \crit(f_{(\lambda,\alpha)})\cong\{(v,\xi)\in X^{\lambda}_{\alpha}\times\Fg^{\lambda,G,X-\reg}\mid \mu_{(\lambda,\alpha)}(v)=0, \xi\cdot v=0\}
\]
where $\mu_{(\lambda,\alpha)}\colon X^{\lambda}_{\alpha}\rightarrow(\Fg^{\lambda})^*$ is the induced weak moment map.

The statement follows then from the definition of $\Fg^{\lambda,G,X-\reg}$, since $\xi\cdot v=0$ implies $v\in X^{\lambda}$.
\end{proof}

\begin{lemma}[Factorisation property]
\label{lemma:factorizationproperty}
We have isomorphisms 
\[\varphi^p_{\tilde{f}\circ d_{\lambda}}\underline{\BoQ}_{\tilde{X}_{\lambda}^{\reg}/G^{\lambda}}^{\vir}\cong d_{\lambda}^*\varphi^p_{\tilde{f}}\underline{\BoQ}_{\tilde{X}/G}^{\vir}\cong \bigoplus_{\alpha\in\pi_0(X^{\lambda})}(u_{(\lambda,\alpha)})_*\varphi^p_{\tilde{f}_{(\lambda,\alpha)}}\underline{\BoQ}_{\tilde{X}^{\lambda,\reg}_{\alpha}/G^{\lambda}}^{\vir}\,.
\]

\end{lemma}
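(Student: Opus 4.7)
For the first isomorphism, note that although $d_{\lambda}\colon\tilde{X}^{\reg}_{\lambda}\to\tilde{X}$ is only a locally closed immersion of schemes, the induced morphism of quotient stacks $d_{\lambda}\colon\tilde{X}^{\reg}_{\lambda}/G^{\lambda}\to\tilde{X}/G$ is \'etale by Proposition~\ref{proposition:etaleGunningham} and base change. Since $f$ is $G$-invariant, $\varphi^p_f\underline{\BoQ}^{\vir}_{\tilde{X}}$ descends canonically to $\tilde{X}/G$, and vanishing cycles commute with \'etale pullbacks of stacks. The virtual shifts agree because the two stacks have the same dimension, so pulling back to the canonical atlas $\tilde{X}^{\reg}_{\lambda}$ yields the first isomorphism.

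For the second isomorphism, Lemma~\ref{lemma:criticalloci} gives the decomposition $\crit(f\circ d_{\lambda})=\bigsqcup_{\alpha}u_{(\lambda,\alpha)}(\crit(f_{(\lambda,\alpha)}))$ into pairwise disjoint closed subsets, each contained in the pairwise disjoint closed subvarieties $u_{(\lambda,\alpha)}(\tilde{X}^{\lambda,\reg}_{\alpha})\subset \tilde{X}^{\reg}_{\lambda}$. Since vanishing cycle sheaves are supported on the critical locus, we obtain a canonical splitting
\[
\varphi^p_{f\circ d_{\lambda}}\underline{\BoQ}^{\vir}_{\tilde{X}^{\reg}_{\lambda}}\cong \bigoplus_{\alpha}\CF_{\alpha},\qquad \supp(\CF_{\alpha})\subset u_{(\lambda,\alpha)}(\tilde{X}^{\lambda,\reg}_{\alpha}).
\]
It then suffices to identify each $\CF_{\alpha}$ with $(u_{(\lambda,\alpha)})_*\varphi^p_{f_{(\lambda,\alpha)}}\underline{\BoQ}^{\vir}_{\tilde{X}^{\lambda,\reg}_{\alpha}}$.

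This final identification is a Thom--Sebastiani statement. Around any $v_0\in X^{\lambda}_{\alpha}$, Luna's slice theorem (as used in Theorem~\ref{theorem:localneighbourhood}) provides a $G^{\lambda}$-equivariant \'etale decomposition $X\simeq X^{\lambda}_{\alpha}\times N_{\alpha}$ with $N_{\alpha}$ the normal bundle of $X^{\lambda}_{\alpha}$ in $X$. Expanding the weak moment map gives $f\circ d_{\lambda}(v,n,\xi)=f_{(\lambda,\alpha)}(v,\xi)+\langle M(v)(n),\xi\rangle+O(\|n\|^2)$, and the diagram of Definition~\ref{definition:weakmomentmap} identifies the $n$-Hessian of this expression at $(v_0,0,\xi_0)$ with the bilinear form induced by the infinitesimal action of $\xi_0$ on $N_{\alpha}|_{v_0}$, pushed forward through $\psi$. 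The $G,X$-regularity condition forces $H_X(\xi_0^{\ssimp})\subset X^{\lambda}$, so $\xi_0^{\ssimp}$ has no zero eigenvalue on $N_{\alpha}|_{v_0}$ and acts invertibly, making the Hessian non-degenerate. The holomorphic Morse--Bott lemma then yields an analytic equivalence $f\circ d_{\lambda}\sim f_{(\lambda,\alpha)}+Q$ with $Q$ a non-degenerate quadratic form in $n$, and Thom--Sebastiani for vanishing cycles gives $\CF_{\alpha}\cong (u_{(\lambda,\alpha)})_*\varphi^p_{f_{(\lambda,\alpha)}}\underline{\BoQ}^{\vir}_{\tilde{X}^{\lambda,\reg}_{\alpha}}\boxtimes \varphi^p_Q\underline{\BoQ}^{\vir}_{N_{\alpha}}$; the second factor is concentrated at the origin and contributes only the Tate twist $\SL^{-\rank N_{\alpha}/2}$ that precisely cancels the virtual-shift discrepancy.

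The main obstacle is the Morse--Bott step, which must be executed compatibly with the monodromic mixed Hodge module structure and not merely topologically. An attractive alternative is to bypass the analytic normal form and invoke the deformed dimensional reduction Theorem~\ref{theorem:deformeddimensionalreduction} with $F=N_{\alpha}\to E=X^{\lambda}_{\alpha}\to X$, interpreting the linearization of the weak moment map as providing the required section of $F^*$; this packages the quadratic-form contribution intrinsically and sidesteps the analytic lemma, but requires verifying the $\BoG_{\rmm}$-equivariance hypotheses using the cocharacter $\lambda$, which scales $N_{\alpha}$ with non-zero weights and makes $f$ into an equivariant function of positive weight.
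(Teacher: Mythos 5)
Your first isomorphism is handled correctly and identically to the paper (étaleness of the stacky map plus compatibility of vanishing cycles with smooth/étale pullback).

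For the second isomorphism, your overall strategy is sound and lands on essentially the same geometric fact the paper uses — that near a critical point $(v_0,\xi_0)$ with $\xi_0\in\Fg^{\lambda,G,X-\reg}$, the function $f\circ d_{\lambda}$ is analytically equivalent to $f_{(\lambda,\alpha)}$ plus a non-degenerate quadratic form in the directions normal to $\tilde{X}^{\lambda,\reg}_{\alpha}$, with the non-degeneracy forced by regularity (so that $\xi_0^{\ssimp}$ acts invertibly on $N_{\alpha}|_{v_0}$). The paper packages this differently: rather than splitting by the components of the critical locus up front and invoking a Morse--Bott lemma, it runs the Davison--Meinhardt-style argument from \cite[Lemma 4.5]{davison2016integrality} — it uses the $\BoG_{\rmm}$-action via $\lambda$ to write $f\circ d_{\lambda}\circ s_{(\lambda,\alpha)}=f_{(\lambda,\alpha)}\circ q_{(\lambda,\alpha)}$, exploits the vector-bundle structure of the half-attracting set $q_{(\lambda,\alpha)}\colon\tilde{X}^{\lambda\geq 0,\reg}_{\alpha}\to\tilde{X}^{\lambda,\reg}_{\alpha}$ together with smooth pullback for vanishing cycles, and then shows that $u_{(\lambda,\alpha)}^*\varphi^p_{f\circ d_{\lambda}}$ applied to the adjunction map $\underline{\BoQ}^{\vir}_{\tilde{X}^{\reg}_{\lambda}}\to(s_{(\lambda,\alpha)})_*\underline{\BoQ}^{\vir}_{\tilde{X}^{\lambda\geq 0,\reg}_{\alpha}}$ is an isomorphism by passing to local analytic coordinates where $f=f_{(\lambda,\alpha)}+\sum_i y_iz_i$ and invoking dimensional reduction (Corollary~\ref{corollary:dimensionalreduction}). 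In other words the paper never needs a Morse--Bott or Thom--Sebastiani theorem for mixed Hodge modules — it substitutes the algebraic dimensional reduction isomorphism for the analytic normal-form plus Thom--Sebastiani step you are worried about, which is exactly how the concern you flag at the end of your proposal is resolved. Your decomposition-by-supports observation is fine (the pieces $u_{(\lambda,\alpha)}\crit(f_{(\lambda,\alpha)})$ are pairwise disjoint closed subsets, hence open and closed in $\crit(f\circ d_{\lambda})$), but be careful with the claim that Luna's slice theorem provides a $G^{\lambda}$-equivariant étale decomposition $X\simeq X^{\lambda}_{\alpha}\times N_{\alpha}$: Luna slices are taken at a point with closed orbit for the stabilizer, not along a fixed-point subvariety, and what one really has here is an analytic tubular neighbourhood of $X^{\lambda}_{\alpha}$ (or, algebraically, the Bia\l{}ynicki-Birula identification of the attracting set with the positive normal bundle). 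Your suggested alternative via the deformed dimensional reduction Theorem~\ref{theorem:deformeddimensionalreduction} with $F=N_{\alpha}\to E=X^{\lambda}_{\alpha}\to X$ is not configured correctly: the theorem's hypotheses are for a tower of vector bundles over a fixed base $X$, whereas $X^{\lambda}_{\alpha}$ is a closed subvariety of $X$, not a vector bundle over it, so this would have to be rearranged substantially before it applies.
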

\begin{proof}
 The first isomorphism comes from the fact that $d_{\lambda}$ is \'etale (after taking quotients), and hence commutes with vanishing cycles (up to the appropriate shifts). The second isomorphism is more subtle. The proof is inspired by the proof of \cite[Lemma 4.5]{davison2016integrality}. We work equivariantly. We let $\BoG_{\rmm}$ act on $\tilde{X}^{\lambda\geq 0,\reg}_{\alpha}$ via $\lambda$. By $\BoG_{\rmm}$-equivariance, we have $\tilde{f}\circ d_{\lambda}\circ s_{(\lambda,\alpha)}=\tilde{f}_{(\lambda,\alpha)}\circ q_{(\lambda,\alpha)}$. By smooth pullback of vanishing cycle sheaves, we have an isomorphism
 \[
  \varphi^p_{\tilde{f}_{(\lambda,\alpha)}\circ q_{(\lambda,\alpha)}}\underline{\BoQ}_{\tilde{X}_{\alpha}^{\lambda\geq 0,\reg}}^{\vir}\cong (q_{(\lambda,\alpha)}^*\varphi^p_{\tilde{f}_{(\lambda,\alpha)}}\underline{\BoQ}_{\tilde{X}^{\lambda,\reg}_{\alpha}}^{\vir})\otimes\SL^{-\rank(q_{(\lambda,\alpha)})/2}\,.
 \]

 Therefore,
 \begin{equation}
 \label{equation:fact1}
  \varphi_{\tilde{f}_{(\lambda,\alpha)}}^p\underline{\BoQ}_{\tilde{X}^{\lambda,\reg}_{\alpha}}\cong (q_{(\lambda,\alpha)})_*\varphi_{\tilde{f}_{(\lambda,\alpha)}\circ q_{\lambda,\alpha}}^p\underline{\BoQ}_{\tilde{X}^{\lambda\geq 0,\reg}_{\alpha}}\otimes\SL^{\rank (q_{(\lambda,\alpha)})/2}\,.
 \end{equation}
 Then, we can show exactly as in \cite[Proof of Lemma 4.5]{davison2016integrality} that 
 \begin{equation}
 \label{equation:fact2}
  u_{(\lambda,\alpha)}^*\varphi_{\tilde{f}\circ d_{\lambda}}^p(\underline{\BoQ}^{\vir}_{\tilde{X}^{\reg}_{\lambda}}\rightarrow (s_{(\lambda,\alpha)})_*\underline{\BoQ}^{\vir}_{\tilde{X}^{\lambda\geq 0,\reg}_{\alpha}})
 \end{equation}
 is an isomorphism. 
 
 Namely, by Lemma~\ref{lemma:criticalloci}, a connected component of $\crit(\tilde{f}\circ d_{\lambda})$ is contained in the image of $u_{(\lambda,\alpha)}$. Therefore, by working analytically locally, we can choose coordinates $x_i,y_i,z_i$ around $x\in (\Tan \tilde{X}^{\reg}_{\lambda})_{|\tilde{X}^{\lambda}_{\alpha}}$ such that in a neighbourhood of $x\in \tilde{X}^{\reg}_{\lambda}$, $\tilde{f}$ can be written $\tilde{f}_{(\lambda,\alpha)}+\sum_{i}y_iz_i$. The result then follows by dimensional reduction \S\ref{section:dimensionalreduction}.
 
 The combination of \eqref{equation:fact1} and \eqref{equation:fact2} gives the Lemma, since $s_{(\lambda,\alpha)}$ is a closed immersion, and so in particular proper, and so commutes via vanishing cycle functors.
\end{proof}

Recall that $\Fg_0\subset\Fg$ is the Lie algebra of $G_0\coloneqq Z(G)\cap \ker(G\rightarrow\Aut(X))$.
\begin{lemma}[Support lemma]
\label{lemma:supportlemma}
 The support of the BPS sheaf $\underline{\BPS}_{\tilde{X},\tilde{f}}$ is contained inside $\mu^{-1}(0)\cms G\times\Fg_0\subset \tilde{X}\cms G$.
\end{lemma}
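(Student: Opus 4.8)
The plan is to run an induction on $\dim G$, with Theorem~\ref{theorem:cohintcriticallocus} and the factorisation property (Lemma~\ref{lemma:factorizationproperty}) as the engine. \emph{First, reduce the support to the critical locus.} By Theorem~\ref{theorem:cohintcriticallocus} applied to the symmetric critical $G$-variety $(\tilde X, f)$, the object $\underline\BPS_{\tilde X, f}\otimes\HO^*(\pt/G_0)$ is the $\overline{(0)}$-direct summand of $\underline\CH_{\tilde X, f}=\pi_*\varphi^p_f\underline\BoQ^{\vir}_{\tilde X/G}$, which is supported on $\pi(\crit(f))=\crit(f)\cms G$; by Lemma~\ref{lemma:criticallocus} this consists of the closed orbits $G\cdot(x,\xi)$ with $\mu(x)=0$ and $\xi\in\Fg_x$. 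Hence it suffices to show that if $\bar p=G\cdot(x,\xi)$ is such an orbit with $\xi\notin\Fg_0$, then $\bar p\notin\supp\underline\BPS_{\tilde X,f}$: once this is known, $\xi\in\Fg_0$ is $G$-fixed, $G\cdot(x,\xi)=(G\cdot x)\times\{\xi\}$ is closed so $G\cdot x$ is closed in $\mu^{-1}(0)$, and $\bar p$ lands in $\mu^{-1}(0)\cms G\times\Fg_0$. (The splitting $\underline\BPS_{\tilde X,f}\cong\underline\BPS^{\red}_{X,f}\boxtimes\underline\BoQ_{\Fg_0}$ of Lemma~\ref{lemma:supportlemmaintroduction} will follow in parallel from the fact that the component of $\mu$ along $\Fg_0$—the moment map of a trivial action—vanishes, so that $f$ is pulled back along $X\times\Fg\to X\times(\Fg/\Fg_0)$.)

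\emph{The inductive step.} Fix $\bar p=G\cdot(x,\xi)$ with $(x,\xi)\in\crit(f)$ and $\xi\notin\Fg_0$. Since $X_\xi(x)=0$ forces $x$ to be fixed by the algebraic hull of $\{\exp(t\xi)\}_t$, both $\xi^\ssimp$ and $\xi^\nil$ lie in $\Fg_x$. Suppose $\xi^\ssimp\notin\Fg_0$. Lemma~\ref{lemma:cocharacterforelements} produces a nontrivial one-parameter subgroup $\lambda$ with $\xi\in\Fg^{\lambda,G,X-\reg}$ (and $G^\lambda=G^{\xi^\ssimp}$, $X^\lambda=X^{\xi^\ssimp}$); the induced map $\overline d_\lambda$ on good moduli spaces is étale, and $\bar p$ lies in its image over a point $\bar p'$ with $x\in X^\lambda_{\alpha_0}$. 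The factorisation property (Lemma~\ref{lemma:factorizationproperty}), whose proof realises the relevant isomorphism through the parabolic-induction diagram, identifies $\underline\CH_{\tilde X, f}$ near $\bar p$ with the pushforward along $\imath_{(\lambda,\alpha_0)}$ of $\underline\CH_{X^\lambda_{\alpha_0}\times\Fg^\lambda, f_{(\lambda,\alpha_0)}}$ near $\bar p'$, compatibly with $\varphi^p_f\Ind_{(\lambda,\alpha_0)}$. As $(X^\lambda_{\alpha_0},G^\lambda,f_{(\lambda,\alpha_0)})$ is again a weak Hamiltonian reduction with $\dim G^\lambda\leq\dim G$, the inductive hypothesis applies to it, and one checks that $\bar p'$ escapes the asserted support of $\underline\BPS_{X^\lambda_{\alpha_0},f_{(\lambda,\alpha_0)}}$: if $\xi^\nil\neq 0$ then $\xi\notin(\Fg^\lambda)_0$ (which consists of semisimple elements), so the image of $\bar p'$ in $\Fg^\lambda\cms G^\lambda$ is not in that of $(\Fg^\lambda)_0$; and if $\xi^\nil=0$, so $\xi=\xi^\ssimp\in(\Fg^\lambda)_0$, one uses the inductive $\boxtimes\underline\BoQ_{(\Fg^\lambda)_0}$-structure and recurses on the strictly smaller datum obtained by restricting to $X^\lambda_{\alpha_0}$ and stripping $(\Fg^\lambda)_0\ni\xi^\ssimp$. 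Unwinding through the factorisation isomorphism and the description of $\underline\BPS_{\tilde X,f}=\underline\CP_{(0)}$ as a complement of the nontrivial inductions gives $\underline\BPS_{\tilde X,f}=0$ near $\bar p$.

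\emph{The nilpotent case and the main obstacle.} It remains to exclude $\xi=\xi^\nil\neq 0$ nilpotent at a closed orbit. One first reduces to a representative $(x,\xi)$ with $G\cdot x$ closed: if $x$ is unstable, Kempf's theorem puts $G_x$ inside the parabolic $P_\zeta$ of the optimal destabilising cocharacter $\zeta$, hence $\xi\in\Fg_x\subset\Fg^{\zeta\geq0}$; then $\lim_{t\to0}\zeta(t)\cdot(x,\xi)$ exists and, $G\cdot(x,\xi)$ being closed, lies on it, forcing $G\cdot x$ closed. With $G_x$ reductive, $\xi$ is nilpotent in $\Fg_x$, and a Jacobson--Morozov cocharacter $\eta\in X_*(G_x)$ fixes $x$ while $\mathrm{Ad}\,\eta(t)\xi=t^2\xi$, so $\lim_{t\to0}\eta(t)\cdot(x,\xi)=(x,0)$ lies on $G\cdot(x,\xi)$ and $\xi=0$. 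The genuinely delicate points, which I expect to be the heart of the argument, are: checking that the isomorphism of Lemma~\ref{lemma:factorizationproperty} really does intertwine with $\varphi^p_f\Ind_{(\lambda,\alpha_0)}$ near $\bar p$, together with the bookkeeping of the cocharacter-strata correspondence under $d_\lambda$ (cf.\ Lemma~\ref{lemma:bijection}); and handling the sub-case in which $\xi^\ssimp$ acts trivially on $X$ but is non-central, where the stratum $(\lambda,\alpha_0)$ does not itself contribute to $\underline\CJ_{(0)}$ and one must pass to the reduced group, using the uniqueness of BPS sheaves (Proposition~\ref{proposition:characterisationBPS}) and the $\Fg_0$-splitting recursively.
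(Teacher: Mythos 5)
Your overall strategy is the paper's: restrict the support to the critical locus via Lemma~\ref{lemma:criticallocus}, produce a cocharacter $\lambda$ with $\xi\in\Fg^{\lambda,G,X-\reg}$ via Lemma~\ref{lemma:cocharacterforelements}, combine the factorisation property (Lemma~\ref{lemma:factorizationproperty}) with Theorem~\ref{theorem:critcohint} applied to $(\tilde{X}^{\lambda}_{\alpha_0},G^{\lambda})$ on the \'etale chart $\overline{d}_{\lambda}$, use that $\underline{\BPS}_{\tilde{X},f}$ is by definition a complement of the image of the nontrivial inductions, and finally kill the nilpotent part of $\xi$ using closedness of the orbit. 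The last sentence of your ``inductive step'' is exactly the paper's mechanism.

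There is, however, a genuine misstep in the way you route the argument. The intermediate claim that ``one checks that $\bar p'$ escapes the asserted support of $\underline{\BPS}_{X^{\lambda}_{\alpha_0},f_{(\lambda,\alpha_0)}}$'' is both unnecessary and false in the decisive subcase: if $\xi=\xi^{\ssimp}\in(\Fg^{\lambda})_0\setminus\Fg_0$ and $\mu(x)=0$, then $\bar p'$ lies squarely inside $\mu_{(\lambda,\alpha_0)}^{-1}(0)\cms G^{\lambda}\times(\Fg^{\lambda})_0$, i.e.\ exactly in the support that the inductive hypothesis permits for the smaller BPS sheaf, and your proposed ``recursion after stripping $(\Fg^{\lambda})_0$'' does not remove it. The correct point is that no information about the support of the smaller BPS sheaves is needed at all: since $\overline{(\lambda,\alpha_0)}\neq\overline{0}$, every summand of $\overline{d}_{\lambda}^*\underline{\CH}_{\tilde{X},f}$ produced by Lemma~\ref{lemma:factorizationproperty} and Theorem~\ref{theorem:critcohint} --- including the smaller BPS sheaf itself, via $\Ind_{(\lambda,\alpha_0)}$ and associativity --- already lies in the image of the nontrivial inductions for $(X,G)$, so the complement $\underline{\BPS}_{\tilde{X},f}$ vanishes on the image of $\overline{d}_{\lambda}$ outright. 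Accordingly the paper runs no induction on $\dim G$ and makes no claim about $\bar p'$ versus the smaller support. Your treatment of the nilpotent part (Kempf's theorem to force $G\cdot x$ closed, then Jacobson--Morozov in $G_x$) is workable but can be shortened as in the paper: once $\xi^{\ssimp}\in\Fg_0$ is known, one contracts $\xi^{\nil}$ by a one-parameter subgroup of the reductive stabiliser $G_{(x,\xi)}$ of the closed-orbit point $(x,\xi)$ and concludes $\xi=\xi^{\ssimp}$ directly, with no appeal to Kempf.
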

\begin{proof}
The proof uses the sheafified cohomological integrality isomorphism for critical representations of reductive groups (Theorem~\ref{theorem:critcohint}). We let $\overline{(v,\xi)}\in \tilde{X}\cms G$ be a point in $\supp(\underline{\BPS}_{\tilde{X},\tilde{f}})$ where $(v,\xi)\in\tilde{X}$ has closed $G$-orbit. We let $G_{(v,\xi)}\subset G$ be the stabilizer of $(v,\xi)$, a reductive group. By Lemma~\ref{lemma:criticallocus}, $\xi\in\Lie(G_{v,\xi})$. We let $\lambda\in X_*(T)$ be such that $\xi\in\Fg^{\lambda,X,G-\reg}$ (Lemma~\ref{lemma:cocharacterforelements}). We will prove that $\overline{\lambda}$ is trivial, that is $X^{\lambda}=X$ and $\Fg^{\lambda}=\Fg$, which implies that $\xi^{\ssimp}\in\Fg_0$. Assuming this for a moment, if $\xi^{\nil}$ is the nilpotent part of $\xi$, we have $\xi^{\nil}\in \Lie(G_{(v,\xi)})$. There exists a one-parameter subgroup $\mu\colon \BoG_{\rmm}\rightarrow G_{(v,\xi)}$ such that $\lim_{t\rightarrow 0}\mu(t)\xi^{\nil}\mu(t)^{-1}=0$. Then, $\lim_{t\rightarrow 0}\mu(t)\cdot (v,\xi)=(v,\xi^{\ssimp})$. Therefore, since the $G$-orbit of $(v,\xi)$ is closed, $\xi=\xi^{\ssimp}$. This concludes.

It remains to show that $\overline{\lambda}$ is trivial, that is $X^{\lambda}=X$. Assume it is not trivial. Recall that, by the very definition of $\underline{\CP}_{(\lambda,\alpha)}$, $\CP_{0}$ is a direct sum complement of the image of
\begin{equation}
\label{equation:cohcompl}
 \bigoplus_{\substack{(\mu,\alpha)\in \SQ_{X,G}\\ \overline{(\mu,\alpha)}\neq \overline{0}}}((\imath_{(\mu,\alpha)})_*\varphi_{\tilde{f}_{(\mu,\alpha)}}^p\underline{\CP}_{(\mu,\alpha)}\otimes\HO^*(\pt/G_{(\mu,\alpha)}))^{\varepsilon_{\tilde{X},(\mu,\alpha)}}\rightarrow \underline{\CH}_{\tilde{X},\tilde{f}}.
\end{equation}

By Lemma~\ref{lemma:factorizationproperty} and Theorem~\ref{theorem:critcohint} applied to the smooth critical $G^{\lambda}$-variety $(\tilde{X}^{\lambda}_{\alpha}, \tilde{f}_{(\lambda,\alpha)})$, where $\alpha\in \pi_0(\tilde{X}^{\lambda})$ is the connected component of $\tilde{X}^{\lambda}$ containing $(v,\xi)$, applying $\overline{d}_{\lambda}^*$ to \eqref{equation:cohcompl} gives an isomorphism.

Therefore, $\overline{d}_{\lambda}^*\underline{\BPS}_{\tilde{X},\tilde{f}}=0$ and provides us with a contradiction, since $\overline{(v,\xi)}$ is in the image of $\overline{d}_{\lambda}$ and in the support of $\underline{\BPS}_{\tilde{X},\tilde{f}}$.
\end{proof}

We are now able to deduce a support lemma for the BPS sheaf $\underline{\BPS}_{\tilde{X},f}$ for the function $f$.
\begin{lemma}
\label{lemma:sectionsupport}
 The BPS sheaf $\underline{\BPS}_{\tilde{X},f}$ is supported on $\mu^{-1}(0)\cms G\times\Fg_0$.
\end{lemma}
\begin{proof}
 We still denote by $\phi\colon (X\times \Fg)\cms G\rightarrow (X\times\Fg)\cms G$ the function induced by $\phi$. We have $\underline{\BPS}_{\tilde{X},\tilde{f}}=\phi_*\underline{\BPS}_{\tilde{X},f}$. By $G$-equivariance of $\phi$, we have $\phi(X\times Z(\Fg))=X\times Z(\Fg)$. Therefore, the support of $\underline{\BPS}_{\tilde{X},f}$ is contained in $\mu^{-1}(0)\cms G\times Z(\Fg)$. Moreover, $\underline{\BPS}_{\tilde{X},f}$ is $\Fg_0$-equivariant, as the function $f$ is $\Fg_0$-invariant. Since for any $x\in\mu^{-1}(0)\cms G$, the support of $(\underline{\BPS}_{\tilde{X},\tilde{f}})_{x\times\Fg}$ is at most $\dim \Fg_0$ (by Lemma~\ref{lemma:supportlemma}), then if $(\underline{\BPS}_{\tilde{X},\tilde{f}})_x\neq 0$, its support is exactly $\{x\}\times\Fg_0$. This implies that the support of $(\underline{\BPS}_{\tilde{X},\tilde{f}})_x$ contains $\{x\}\times \{0\}$. The support of $\underline{\BPS}_{\tilde{X},f}$ therefore contains $\phi^{-1}(x,0)=(x,0)$ and so, by $\Fg_0$-invariance, is equal to $\{x\}\times\Fg_0$. Therefore, by putting all fibers together, the support of $\underline{\BPS}_{\tilde{X},f}$ is contained in $\mu^{-1}(0)\cms G\times\Fg_0$.
\end{proof}

We let $\imath\colon (\mu^{-1}(0)\cms G)\times\Fg_{0}\rightarrow \tilde{X}\cms G$ be the natural inclusion.
\begin{lemma}
\label{lemma:dimredBPSsheaf}
 We assume that $\mu$ is a weak moment map. The cohomologically graded monodromic mixed Hodge module $\underline{\BPS}_{\tilde{X},f}$ can be written
 \[
  \underline{\BPS}_{\tilde{X},f}\cong \underline{\BPS}^{\red}_{\tilde{X},f}\boxtimes\underline{\BoQ}_{\Fg_{0}}
 \]
for some cohomologically graded mixed Hodge module $\underline{\BPS}_{\tilde{X},f}^{\red}\in\MMHM^{\BoZ}(\mu^{-1}(0)\cms G)$.
\end{lemma}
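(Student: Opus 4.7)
The plan is to establish the product decomposition by combining a smooth-pullback argument along the central $\Fg_0$-factor with the uniqueness characterisation of BPS sheaves. The starting point is that since $G_0\subset Z(G)$ is central, the Lie algebra $\Fg_0$ lies in $\Fz(\Fg)$, and by reductivity of $G$ we may fix a $G$-module complement $\Fh$ to $\Fg_0$, so $\Fg=\Fg_0\oplus\Fh$. Because the adjoint $G$-action on $\Fg_0$ is trivial, both the stack and its good moduli space split as
\[
\tilde X/G \simeq ((X\times\Fh)/G)\times\Fg_0,\qquad \tilde X\cms G \simeq ((X\times\Fh)\cms G)\times\Fg_0 =: Y\times\Fg_0.
\]

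Next I would normalise the weak moment map so that $\mu(X)\subset \Fh^{*}=\Fg_0^{\perp}$. For $\xi_0\in\Fg_0$ the $\Fg_0$-action on $X$ is trivial, so the commutative square of Definition~\ref{definition:weakmomentmap} forces $x\mapsto\langle\mu(x),\xi_0\rangle$ to have vanishing differential, hence to be constant on each connected component of $X$; subtracting the corresponding element of $\Fg_0^{*}$ from $\mu$ yields a weak moment map annihilating $\Fg_0$. With this choice $f(x,\eta,\xi_0)=\langle\mu(x),\eta\rangle$ is the pullback of a function $f'$ on $X\times\Fh$ along the smooth projection $p\colon \tilde X\to X\times\Fh$. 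Smooth pullback of the perverse vanishing cycle functor along the induced smooth morphism $\bar p\colon \tilde X/G\to (X\times\Fh)/G$ of relative dimension $\dim\Fg_0$ then gives
\[
\varphi_f^p\underline{\BoQ}_{\tilde X/G}^{\vir}\cong \bar p^{*}\varphi_{f'}^p\underline{\BoQ}_{(X\times\Fh)/G}^{\vir},
\]
and pushing to good moduli spaces via $\pi$ together with the factorisation $\tilde X\cms G\simeq Y\times\Fg_0$ produces $\underline{\CH}_{\tilde X,G,f}\cong \underline{\CH}_{X\times\Fh,G,f'}\boxtimes\underline{\BoQ}_{\Fg_0}$.

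The same splitting applies to every stratum $(\tilde X^{\lambda}_{\alpha},G^{\lambda},f_{(\lambda,\alpha)})$ appearing in the critical cohomological integrality isomorphism of Theorem~\ref{theorem:critcohint}, since $\Fg_0\subset\Fg^{\lambda}$ remains a trivial-action central factor of $\tilde X^{\lambda}_{\alpha}=X^{\lambda}_{\alpha}\times\Fg^{\lambda}$. The induction diagrams of \S\ref{section:parabolicinduction} factor through this trivial $\Fg_0$-direction, so every sheafified induction morphism \eqref{equation:sheafindsymm} decomposes as an induction on the $X\times\Fh$-data tensored with $\id_{\Fg_0}$. By the uniqueness characterisation of BPS sheaves (Proposition~\ref{proposition:characterisationBPS}, applied in the critical setting), the $\boxtimes\underline{\BoQ}_{\Fg_0}$ product structure transfers to $\underline{\BPS}_{\tilde X,f}$. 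Combined with the support lemma (Lemma~\ref{lemma:supportlemma}), the resulting factor on $Y$ is supported on the closed subvariety $\mu^{-1}(0)\cms G$, providing the desired $\underline{\BPS}^{\red}_{\tilde X,f}\in\MMHM^{\BoZ}(\mu^{-1}(0)\cms G)$.

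The main obstacle I foresee is the normalisation step: in the weak moment map setup the identification $\phi$ introduces an asymmetry between $\xi$ and $\phi_x(\xi)$ which must be handled carefully to ensure that $\mu$ can be shifted to annihilate $\Fg_0$ without altering the critical locus. An alternative route bypassing this normalisation is to argue directly that the additive $\Fg_0$-translation action on $\tilde X$ lifts to a $\Fg_0$-equivariance of $\pi_{*}\varphi_f^p\underline{\BoQ}_{\tilde X/G}^{\vir}$ on $Y\times\Fg_0$: translating by $\xi_0\in\Fg_0$ only modifies $f$ by a function pulled back from $X$ that vanishes on $\mu^{-1}(0)$, hence near the image of $\crit(f)$, so it leaves the vanishing cycle sheaf unchanged; this equivariance combined with the support restriction from the support lemma then yields the product structure directly.
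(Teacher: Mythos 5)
Your second, alternative route is essentially the paper's proof, but the paper goes further: it asserts outright that $f$ is \emph{invariant} under the $\Fg_0$-translation action (so that $\underline{\BPS}_{\tilde X,f}$ is genuinely $\Fg_0$-equivariant), rather than arguing that the translated function has the same vanishing cycle sheaf. That distinction matters: your argument that ``translating by $\xi_0\in\Fg_0$ only modifies $f$ by a function that vanishes on $\mu^{-1}(0)$, hence near $\crit(f)$, so it leaves the vanishing cycle sheaf unchanged'' is not valid. Vanishing cycle sheaves are not determined by the function on its critical locus; two functions with the same critical set that agree there can still have very different $\varphi^p$ (think of $x^2$ vs $x^3$ on $\BoA^1$). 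Vanishing of the correction term on $\crit(f)$ does not give vanishing on an open neighbourhood, which is what one would need for that step.

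Your first route is logically sound modulo the normalisation claim, but it is considerably more elaborate than necessary: the paper does not pass through the smooth projection $\tilde X\to X\times\Fh$, nor does it need to redo the strata-by-strata analysis and invoke the uniqueness of BPS sheaves; once the $\Fg_0$-equivariance of $\underline{\BPS}_{\tilde X,f}$ is in hand (equivariant good moduli space map, $\Fg_0$-invariant function), the box product falls out directly from the support lemma. On the normalisation itself, your justification ``the commutative square of Definition~\ref{definition:weakmomentmap} forces $\langle\mu(-),\xi_0\rangle$ to have vanishing differential'' does not follow as written. The square gives $\psi\bigl(\mathrm{d}\langle\mu(-),\xi_0\rangle(x)\bigr)=a\bigl(\phi(x,\xi_0)\bigr)$, and $a(x,\eta)=0$ iff $\eta\in\Fg_x$; so the differential vanishes iff $\phi_x(\xi_0)\in\Fg_x$, not iff $\xi_0\in\Fg_x$. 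The $\phi$-twist you rightly flag as the obstacle is exactly what blocks the deduction you try to make in the paragraph before flagging it. The correct statement to use is simply that $f$ can (and, implicitly in the cited definition, should) be taken to factor through $X\times\Fh$, i.e.\ $\mu$ lands in $\Fg_0^{\perp}$ --- and then the paper's one-line equivariance argument applies immediately.
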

\begin{proof}
 The map $\pi\colon (\tilde{X}\coloneqq X\times \Fg)/G\rightarrow (\tilde{X}\cms G)$ is equivariant for the action of the additive group $\Fg_{0}$ on $X\times\Fg$ by translation on the $\Fg$-factor. Moreover, the function $f\colon\tilde{X}\rightarrow \BoC$ is invariant for this action. This proves that $\underline{\BPS}_{\tilde{X},f}$ is $\Fg_0$-equivariant. Using the support property of Lemma~\ref{lemma:supportlemma}, this gives the desired decomposition $\underline{\BPS}_{\tilde{X},f}\cong\underline{\BPS}_{\tilde{X},f}^{\red}\boxtimes\underline{\BoQ}_{\Fg_0}$.
\end{proof}

\subsection{Purity of the BPS sheaf}

\begin{proposition}
\label{proposition:dimredcohint}
We have
 \[
  \pi_*\BD\underline{\BoQ}_{\mu^{-1}(0)/G}^{\vir}\cong \bigoplus_{\tilde{(\lambda,\alpha)}\in\SP_{X,G}/W}((\imath_{(\lambda,\alpha)})_*\underline{\BPS}_{\tilde{X}^{\lambda}_{\alpha},f_{(\lambda,\alpha)}}^{\red}\otimes \HO^*(\pt/G_{(\lambda,\alpha)}))^{\varepsilon_{X,(\lambda,\alpha)}}.
 \]
\end{proposition}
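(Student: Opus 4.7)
The plan is to combine three ingredients: the dimensional reduction isomorphism (Corollary~\ref{corollary:dimredclassique}), the cohomological integrality for critical symmetric $G$-varieties (Theorem~\ref{theorem:cohintcriticallocus}) applied to $(\tilde{X},f)$, and the support lemma (Lemma~\ref{lemma:supportlemma}) applied at every stratum $(\lambda,\alpha)$.

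First, I would apply Corollary~\ref{corollary:dimredclassique} to the $G$-equivariant vector bundle $p\colon\tilde{X}=X\times\Fg\to X$, the section $\mu\colon X\to\Fg^*$ and the associated function $f=\langle\mu(-),-\rangle$, obtaining $j_*\BD\underline{\BoQ}_{\mu^{-1}(0)}^{\vir}\cong p_*\varphi_f^p\underline{\BoQ}_{\tilde{X}}^{\vir}$ on $X$, where $j\colon\mu^{-1}(0)\hookrightarrow X$. Passing to the stacky quotient by $G$ and pushing forward via the obvious commutative diagram of good moduli space maps, this becomes an isomorphism
\[
\bar{j}_*\pi_*\BD\underline{\BoQ}_{\mu^{-1}(0)/G}^{\vir}\cong \bar{p}_*\underline{\CH}_{\tilde{X},G,f}
\]
on $X\cms G$, where $\bar{j}\colon\mu^{-1}(0)\cms G\hookrightarrow X\cms G$ and $\bar{p}\colon\tilde{X}\cms G\to X\cms G$ are the induced maps.

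Next, because $\tilde{X}$ is a smooth symmetric $G$-variety, Theorem~\ref{theorem:cohintcriticallocus} decomposes $\underline{\CH}_{\tilde{X},G,f}$ as a sum over $\tilde{(\lambda,\alpha)}\in\SP_{\tilde{X},G}/W$ of the $\varepsilon_{\tilde{X},(\lambda,\alpha)}$-isotypic components of $(\imath_{(\lambda,\alpha)})_*\underline{\BPS}_{\tilde{X}^{\lambda}_{\alpha},f_{(\lambda,\alpha)}}\otimes\HO^*(\pt/G_{(\lambda,\alpha)})$. I then want to match this indexing data with that of the proposition. Since $\tilde{X}^{\lambda}_{\alpha}=X^{\lambda}_{\alpha}\times\Fg^{\lambda}$ and $\Fg^{\lambda}$ is connected, there is a canonical $W$-equivariant bijection $\SP_{\tilde{X},G}\cong\SP_{X,G}$, and the equality $G_{(\lambda,\alpha)}^{\tilde{X}}=G_{(\lambda,\alpha)}^{X}$ follows from $Z(G^{\lambda})$ acting trivially on $\Fg^{\lambda}$. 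The sign characters also coincide: one has $k^{\tilde{X}}_{(\lambda,\alpha)}=k^{X}_{(\lambda,\alpha)}\cdot\prod_{\beta\colon\langle\lambda,\beta\rangle>0}\beta^{\dim\Fg_\beta}$, and the additional factor is $W_{(\lambda,\alpha)}$-invariant because the set of roots of $\Fg^{\lambda\neq 0}$ that are positive with respect to $\lambda$ is encoded by the preferred torus $T'$, which is preserved by $W_{(\lambda,\alpha)}$ (Lemma~\ref{lemma:relWeylpreferredtorus}), so $\varepsilon_{\tilde{X},(\lambda,\alpha)}=\varepsilon_{X,(\lambda,\alpha)}$.

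I would then apply Lemma~\ref{lemma:supportlemma} at each stratum, viewed as a statement for the weakly Hamiltonian $G^{\lambda}$-variety $X^{\lambda}_{\alpha}$ and its critical avatar $(\tilde{X}^{\lambda}_{\alpha},f_{(\lambda,\alpha)})$, to obtain the factorisation $\underline{\BPS}_{\tilde{X}^{\lambda}_{\alpha},f_{(\lambda,\alpha)}}\cong \underline{\BPS}_{\tilde{X}^{\lambda}_{\alpha},f_{(\lambda,\alpha)}}^{\red}\boxtimes\underline{\BoQ}_{\Fg_{(\lambda,\alpha)}}$. The commutative square $\bar{p}\circ\imath_{(\lambda,\alpha)}^{\tilde{X}}=\imath_{(\lambda,\alpha)}^{X}\circ\bar{p}^{\lambda,\alpha}$ reduces the computation of $\bar{p}_*(\imath_{(\lambda,\alpha)}^{\tilde{X}})_*\underline{\BPS}_{\tilde{X}^{\lambda}_{\alpha},f_{(\lambda,\alpha)}}$ to pushing the BPS sheaf along the projection $\mu_{(\lambda,\alpha)}^{-1}(0)\cms G^{\lambda}\times\Fg_{(\lambda,\alpha)}\to\mu_{(\lambda,\alpha)}^{-1}(0)\cms G^{\lambda}$ along the contractible affine space $\Fg_{(\lambda,\alpha)}$. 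Since $\HO^*(\Fg_{(\lambda,\alpha)})\cong\BoQ$, the K\"unneth formula yields $\bar{p}_*(\imath_{(\lambda,\alpha)}^{\tilde{X}})_*\underline{\BPS}_{\tilde{X}^{\lambda}_{\alpha},f_{(\lambda,\alpha)}}\cong\bar{j}_*(\imath_{(\lambda,\alpha)}^{X}|_{\mu^{-1}(0)\cms G^{\lambda}})_*\underline{\BPS}_{\tilde{X}^{\lambda}_{\alpha},f_{(\lambda,\alpha)}}^{\red}$. Assembling the three steps and stripping the closed immersion $\bar{j}_*$ from both sides yields the claimed isomorphism on $\mu^{-1}(0)\cms G$.

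The main obstacle is the character identification $\varepsilon_{\tilde{X},(\lambda,\alpha)}=\varepsilon_{X,(\lambda,\alpha)}$, which requires verifying that the contribution from the adjoint representation of $G$ is $W_{(\lambda,\alpha)}$-invariant rather than twisting by a sign. A secondary but routine difficulty is the careful bookkeeping of Tate twists and virtual shifts when collapsing the fibre $\Fg_{(\lambda,\alpha)}$ via K\"unneth, to ensure that the $\BoQ^{\vir}$-shifts on both sides match exactly.
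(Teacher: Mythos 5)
Your proof follows the paper's (very terse) proof exactly: dimensional reduction (Corollary~\ref{corollary:dimredclassique}), critical cohomological integrality (Theorem~\ref{theorem:critcohint}) applied to the symmetric $G$-variety $(\tilde{X},f)$, and the factorisation of the BPS sheaf (Lemma~\ref{lemma:dimredBPSsheaf}). Your bookkeeping of the indexing data ($\SP_{\tilde{X},G}\cong\SP_{X,G}$, $G_{(\lambda,\alpha)}^{\tilde{X}}=G_{(\lambda,\alpha)}^{X}$) and the K\"unneth collapse of $\Fg_{(\lambda,\alpha)}$ are sound.

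The one genuine gap is the asserted equality $\varepsilon_{\tilde{X},(\lambda,\alpha)}=\varepsilon_{X,(\lambda,\alpha)}$. You correctly compute $k^{\tilde{X}}_{(\lambda,\alpha)}=k^{X}_{(\lambda,\alpha)}\cdot\prod_{\langle\lambda,\beta\rangle>0}\beta^{\dim\Fg_\beta}$, but the reason you give for the $W_{(\lambda,\alpha)}$-invariance (up to a uniform sign) of the extra factor is wrong: the preferred torus $T'$ determines only which $T'$-weights of $\Fg$ have nonzero pairing with $\lambda$, not which of those are \emph{positive}. An element $w\in W_{(\lambda,\alpha)}$ need only fix the class $\overline{(\lambda,\alpha)}$, not $\lambda$ itself, and can reverse the sign of $\langle\lambda,-\rangle$ on $\Fg^{\lambda\neq 0}$. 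Concretely, take $X=\pt$ (which is weakly Hamiltonian with $\mu=0$), $G=\GL_2$, $\lambda=\mathrm{diag}(t,1)$: the nontrivial $w\in\overline{W}_{(\lambda,\alpha)}\cong\BoZ/2$ sends $\lambda$ to a cocharacter pairing oppositely with the positive root, and a direct evaluation of Lemma~\ref{lemma:character} gives $\varepsilon_{X,(\lambda,\alpha)}(w)=-1$ while $\varepsilon_{\tilde{X},(\lambda,\alpha)}(w)=+1$. So the two characters genuinely differ. What your chain of isomorphisms (and the paper's proof) actually produces is the $\varepsilon_{\tilde{X},(\lambda,\alpha)}$-isotypic component, namely the character furnished by Theorem~\ref{theorem:critcohint} applied to $\tilde{X}$; you should state the decomposition with that character rather than invent a false identification of the two. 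For the downstream application (Theorem~\ref{theorem:purityBPSsheaf}) only the fact that $\underline{\BPS}^{\red}$ is a direct summand of $\pi_*\BD\underline{\BoQ}_{\mu^{-1}(0)/G}^{\vir}$ matters, so the substance of the argument survives, but as written your proposal presents an incorrect lemma as a load-bearing step.
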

\begin{proof}
  This follows by dimensional reduction Corollary~\ref{corollary:dimredclassique} and Lemma~\ref{lemma:dimredBPSsheaf}, using the critical cohomological integrality isomorphism Theorem~\ref{theorem:critcohint}.
\end{proof}

\begin{theorem}
\label{theorem:purityBPSsheaf}
 The cohomologically graded mixed Hodge module $\underline{\BPS}_{\tilde{X},f}$ is pure and therefore semisimple. 
\end{theorem}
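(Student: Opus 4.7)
The plan is to combine the dimensional-reduction decomposition of Proposition~\ref{proposition:dimredcohint} with the Borel--Moore purity Lemma~\ref{lemma:boundedbelow} to establish that $\underline{\BPS}_{\tilde X,f}$ is pure below, and then upgrade to full purity using the self-dual character of the critical pushforward encoded in Corollary~\ref{corollary:purebelowabove}.

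First, I would show purity below for every BPS sheaf appearing. By Lemma~\ref{lemma:boundedbelow}, the pushforward $\pi_*\BD\underline{\BoQ}^{\vir}_{\mu^{-1}(0)/G}$ is pure below, the Tate twist inducing only a fixed weight shift. Proposition~\ref{proposition:dimredcohint} presents this complex as
\[
\bigoplus_{\tilde{(\lambda,\alpha)}\in\SP_{X,G}/W}\bigl((\imath_{(\lambda,\alpha)})_*\underline{\BPS}^{\red}_{\tilde X^{\lambda}_{\alpha},f_{(\lambda,\alpha)}}\otimes\HO^*(\pt/G_{(\lambda,\alpha)})\bigr)^{\varepsilon_{X,(\lambda,\alpha)}},
\]
so every direct summand inherits purity below. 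Extracting the summand indexed by the trivial cocharacter --- where $\imath_{(0,\alpha_0)}$ is the identity and $\overline{W}_{(0,\alpha_0)}$ is trivial --- yields that $\underline{\BPS}^{\red}_{\tilde X,f}\otimes\HO^*(\pt/G_0)$ is pure below, and since $\HO^*(\pt/G_0)$ is pure with $\BoQ$ in degree zero this forces $\underline{\BPS}^{\red}_{\tilde X,f}$ to be pure below. Lemma~\ref{lemma:supportlemmaintroduction} then propagates this to $\underline{\BPS}_{\tilde X,f}\cong \underline{\BPS}^{\red}_{\tilde X,f}\boxtimes\underline{\BoQ}_{\Fg_0}$. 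Running the same argument for every slice $(\tilde X^{\lambda}_{\alpha},f_{(\lambda,\alpha)})$ yields purity below of every $\underline{\BPS}_{\tilde X^{\lambda}_{\alpha},f_{(\lambda,\alpha)}}$.

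To upgrade to full purity, I would invoke the critical cohomological integrality isomorphism (Theorem~\ref{theorem:critcohint}) for $(\tilde X, f)$:
\[
\pi_*\varphi_f^p\underline{\BoQ}^{\vir}_{\tilde X/G}\cong \bigoplus_{\tilde{(\lambda,\alpha)}}\bigl((\imath_{(\lambda,\alpha)})_*\underline{\BPS}_{\tilde X^{\lambda}_{\alpha},f_{(\lambda,\alpha)}}\otimes\HO^*(\pt/G_{(\lambda,\alpha)})\bigr)^{\varepsilon_{X,(\lambda,\alpha)}}.
\]
Finite pushforward via $\imath_{(\lambda,\alpha)}$ preserves weights (Lemma~\ref{lemma:finitemap}), tensoring by the pure non-negatively graded $\HO^*(\pt/G_{(\lambda,\alpha)})$ preserves purity below, and taking a $\overline{W}_{(\lambda,\alpha)}$-isotypic component is projection onto a direct summand; combined with the first step, the entire right-hand side is pure below. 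Hence $\pi_*\varphi_f^p\underline{\BoQ}^{\vir}_{\tilde X/G}$ is pure below, and Corollary~\ref{corollary:purebelowabove} --- a direct consequence of the self-duality statement Corollary~\ref{corollary:selfduality} --- upgrades this to full purity. Every direct summand is then pure, so in particular $\underline{\BPS}_{\tilde X,f}$ is pure; semisimplicity follows automatically.

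The main technical hurdle is the need to apply Proposition~\ref{proposition:dimredcohint} to every slice $(X^{\lambda}_{\alpha}, G^{\lambda})$, which requires verifying that the weak Hamiltonian data $(\psi,\mu,\phi)$ on $X$ restrict to genuine weak Hamiltonian data on $X^{\lambda}_{\alpha}$ compatible with the $G^{\lambda}$-action. The $G$-equivariance of $\psi$ forces $T$-equivariance and hence compatibility with the $\lambda$-weight decomposition, and the composition of $\mu$ with the canonical projection $\Fg^*\to (\Fg^{\lambda})^*$ should provide the weak moment map for the slice; one must check that the diagram of Definition~\ref{definition:weakmomentmap} closes up after taking $\lambda$-fixed points, and that the reduced BPS sheaves of the slices so obtained match the summands in the global decomposition.
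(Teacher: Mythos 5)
Your proposal is correct and essentially reproduces the paper's argument: the paper's proof uses precisely the chain Lemma~\ref{lemma:boundedbelow} $\to$ Proposition~\ref{proposition:dimredcohint} $\to$ Lemma~\ref{lemma:dimredBPSsheaf} $\to$ Theorem~\ref{theorem:critcohint} $\to$ Corollary~\ref{corollary:purebelowabove}, only stated in the opposite order (first reduce to purity-below of the BPS sheaf, then establish it). The technical point you flag at the end --- that the weak Hamiltonian data must restrict to the slices $X^\lambda_\alpha$ for the argument to apply to every summand in the cohomological integrality decomposition --- is a genuine prerequisite that the paper's one-paragraph proof does not spell out, but it is implicitly required already for Lemma~\ref{lemma:supportlemma} and Proposition~\ref{proposition:dimredcohint} to make sense for the slices, so flagging it is appropriate rather than a defect of your proof.
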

\begin{proof}
 Since $\underline{\BPS}_{\tilde{X},f}$ is a direct summand of $\pi_*\varphi_f^p\underline{\BoQ}_{\tilde{X}/G}^{\vir}$, it suffices to prove that $\pi_*\varphi_f^p\underline{\BoQ}_{\tilde{X}/G}^{\vir}$ is pure. By Proposition~\ref{corollary:purebelowabove}, it suffices to prove that $\pi_*\varphi_f^p\underline{\BoQ}_{\tilde{X}/G}^{\vir}$ has weights bounded below by $0$. By critical cohomological integrality (Theorem~\ref{theorem:critcohint}), it suffices to show that $\underline{\BPS}_{\tilde{X},f}$ has weights bounded below by $0$. Since $\underline{\BPS}_{\tilde{X},f}\cong\underline{\BPS}_{\tilde{X},f}^{\red}\boxtimes\underline{\BoQ}_{\Fg_0}$ (Lemma~\ref{lemma:dimredBPSsheaf}), we can equivalently prove that $\underline{\BPS}_{\tilde{X},f}^{\red}$ has weights bounded below by $0$. Since $\underline{\BPS}_{\tilde{X},f}^{\red}$ is a direct summand of $\pi_*\BD\underline{\BoQ}_{\mu^{-1}(0)/G}^{\vir}$ (Proposition~\ref{proposition:dimredcohint}), it suffices to show that $\pi_*\BD\underline{\BoQ}_{\mu^{-1}(0)/G}^{\vir}$ has weights bounded below by $0$. By \cite[Lemma 4.7]{davison2020bps}, see also Lemma~\ref{lemma:boundedbelow}, $\pi_!\underline{\BoQ}_{\mu^{-1}(0)/G}^{\vir}$ has weights bounded above by $0$. By taking the Verdier dual, $\pi_*\BD\underline{\BoQ}_{\mu^{-1}(0)/G}^{\vir}$ has weights bounded below by $0$. This concludes.
\end{proof}

\begin{corollary}
\label{corollary:sheafsymplredpure}
 For any weakly Hamiltonian smooth affine $G$-variety $X$ with weak moment map $\mu\colon X\rightarrow\Fg^*$, the pushforward $\pi_*\BD\underline{\BoQ}_{\mu^{-1}(0)/G}\in\CD^+(\MMHM(X\cms G))$ is a pure weight $0$ complex of mixed Hodge modules.
\end{corollary}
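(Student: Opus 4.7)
The plan is to deduce this corollary directly from the purity of the BPS sheaf (Theorem~\ref{theorem:purityBPSsheaf}) via the cohomological integrality decomposition for $\pi_*\BD\underline{\BoQ}_{\mu^{-1}(0)/G}^{\vir}$ given by Proposition~\ref{proposition:dimredcohint}. Since the Tate twist relating $\underline{\BoQ}_{\mu^{-1}(0)/G}$ and $\underline{\BoQ}_{\mu^{-1}(0)/G}^{\vir}$ is pure, it is enough to establish the purity of the virtual version.

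The first step is to upgrade the purity of $\underline{\BPS}_{\tilde{X},f}$ to that of its dimensional reduction $\underline{\BPS}_{\tilde{X},f}^{\red}$. By Lemma~\ref{lemma:dimredBPSsheaf}, we have $\underline{\BPS}_{\tilde{X},f}\cong \underline{\BPS}_{\tilde{X},f}^{\red}\boxtimes\underline{\BoQ}_{\Fg_0}$, and Theorem~\ref{theorem:purityBPSsheaf} tells us the left-hand side is pure of weight zero. Since $\underline{\BoQ}_{\Fg_0}$ is pure of weight zero on the affine space $\Fg_0$, the K\"unneth-type factorisation forces $\underline{\BPS}_{\tilde{X},f}^{\red}$ to be pure of weight zero as well. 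This argument applies verbatim to every pair $(X^{\lambda}_{\alpha},G^{\lambda})$, so each $\underline{\BPS}_{\tilde{X}^{\lambda}_{\alpha},f_{(\lambda,\alpha)}}^{\red}\in\MMHM^{\BoZ}(\mu_{(\lambda,\alpha)}^{-1}(0)\cms G^{\lambda})$ is pure of weight zero.

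The second step is to propagate purity through the building blocks appearing on the right-hand side of Proposition~\ref{proposition:dimredcohint}. The map $\imath_{(\lambda,\alpha)}$ is finite (Lemma~\ref{lemma:finitemap}), so $(\imath_{(\lambda,\alpha)})_*$ preserves purity and weights. The mixed Hodge structure $\HO^*(\pt/G_{(\lambda,\alpha)})$ is classically pure of weight zero (it is a polynomial ring on even generators, each concentrated in degree equal to its Tate weight), hence tensoring with it preserves purity of weight zero. Finally, the $\varepsilon_{X,(\lambda,\alpha)}$-isotypic component under the finite group $\overline{W}_{(\lambda,\alpha)}$ is a direct summand and therefore inherits purity.

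Combining these two steps, each summand of the right-hand side of Proposition~\ref{proposition:dimredcohint} is pure of weight zero, and a finite direct sum of pure weight zero complexes of monodromic mixed Hodge modules is pure of weight zero. Thus $\pi_*\BD\underline{\BoQ}_{\mu^{-1}(0)/G}^{\vir}$, and hence $\pi_*\BD\underline{\BoQ}_{\mu^{-1}(0)/G}$ after removing the Tate twist, is pure of weight zero. The only non-formal input is the passage from the purity of $\underline{\BPS}_{\tilde{X},f}$ to that of $\underline{\BPS}_{\tilde{X},f}^{\red}$; everything else is a consequence of standard preservation properties of weights under finite pushforward, tensor product with pure objects, and extraction of isotypic direct summands.
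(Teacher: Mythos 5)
Your proof is correct and follows the paper's own route exactly: Proposition~\ref{proposition:dimredcohint} as the decomposition, Theorem~\ref{theorem:purityBPSsheaf} for purity of the BPS sheaf, finiteness of $\imath_{(\lambda,\alpha)}$ to preserve purity under pushforward, and purity of $\HO^*(\pt/G_{(\lambda,\alpha)})$. You merely spell out more explicitly the passage from $\underline{\BPS}_{\tilde{X},f}$ to $\underline{\BPS}^{\red}_{\tilde{X},f}$ via the exterior-product factorisation of Lemma~\ref{lemma:dimredBPSsheaf}, a step the paper leaves implicit.
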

\begin{proof}
By Proposition~\ref{proposition:dimredcohint}, the purity of $\pi_*\BD\underline{\BoQ}_{\mu^{-1}(0)/G}$ follows from that of $\underline{\BPS}_{\tilde{X}^{\lambda}_{\alpha},f_{(\lambda,\alpha)}}$ for any $(\lambda,\alpha)\in\SQ_{X,G}$ (Theorem~\ref{theorem:purityBPSsheaf}) and the purity of the mixed Hodge structure on $\HO^*(\pt/G_{(\lambda,\alpha)})$. We use the fact that $\imath_{(\lambda,\alpha)}$ is finite (Lemma~\ref{lemma:finitemap}) and so proper: it preserves purity.
\end{proof}

\begin{corollary}
 For any representation $W$ of $G$, the Borel--Moore homology $\HO^{\rmBM}_*(\mu^{-1}(0)/G,\BoQ)$ of the stacky Hamiltonian reduction of $\Tan^*W$ has pure mixed Hodge structure.
\end{corollary}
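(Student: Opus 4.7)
My plan is to apply Corollary~\ref{corollary:sheafsymplredpure} to $X=\Tan^*W$ and then use the scaling $\BoG_{\rmm}$-action on $\Tan^*W$ to upgrade the resulting sheaf-level purity on the good moduli space to purity of $\HO^{\rmBM}_*(\mu^{-1}(0)/G)$.

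First I will verify that $X=\Tan^*W$, equipped with its canonical symplectic form coming from the pairing $W\otimes W^*\to\BoC$ and the standard quadratic moment map $\mu\colon(w,w^*)\mapsto(\xi\mapsto\langle\xi\cdot w,w^*\rangle)$, is a Hamiltonian --- hence in particular weakly Hamiltonian in the sense of Definition~\ref{definition:weakmomentmap} --- smooth affine $G$-variety. Corollary~\ref{corollary:sheafsymplredpure} then applies and yields that $\underline{\SG}\coloneqq\pi_*\BD\underline{\BoQ}_{\mu^{-1}(0)/G}$ is a pure weight-zero complex of monodromic mixed Hodge modules on $Y\coloneqq\mu^{-1}(0)\cms G$.

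Next I will use the diagonal scaling $\BoG_{\rmm}$-action on $\Tan^*W$ (all weights equal to $+1$). This action commutes with the linear $G$-action and preserves $\mu^{-1}(0)$ because $\mu$ is homogeneous of degree~$2$. It therefore descends to a $\BoG_{\rmm}$-action on $Y$ contracting this affine variety to the image $\bar{0}$ of the origin, and it promotes $\underline{\SG}$ to a $\BoG_{\rmm}$-equivariant complex. The crucial final ingredient is the following principle: for an affine variety $Y$ with $\BoG_{\rmm}$-action contracting to $y_0\in Y$ and a pure $\BoG_{\rmm}$-equivariant complex of monodromic mixed Hodge modules $\underline{\SG}$ on $Y$, the derived global sections $\HO^*(Y,\underline{\SG})$ carry a pure mixed Hodge structure. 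Concretely I plan to apply Braden's hyperbolic restriction theorem, which in the purely attracting case identifies $\HO^*(Y,\underline{\SG})\cong\imath_{\bar 0}^!\,\underline{\SG}$ (yielding the weight bound $\geq 0$ directly from purity of $\underline{\SG}$), and then combine this with the matching upper weight bound $\leq 0$ coming from Verdier self-duality of $\underline{\SG}$. This self-duality can be obtained by expressing $\mu^{-1}(0)/G$ as the critical locus of the function $f(v,\xi)=\langle\mu(v),\xi\rangle$ on $\Tan^*W\times\Fg$ via dimensional reduction (Corollary~\ref{corollary:dimredclassique}) and invoking Corollary~\ref{corollary:selfduality}.

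The main obstacle is this $\BoG_{\rmm}$-equivariant purity principle: it is not a formal consequence of Corollary~\ref{corollary:sheafsymplredpure} but requires the additional rigidity supplied by the contracting action. An alternative implementation, avoiding Braden altogether, is to adapt the approximation-by-proper-maps construction of \S\ref{section:approximationpropermaps} to the enlarged reductive group $G\times\BoG_{\rmm}$ acting on $\Tan^*W$, observing that $\mu^{-1}(0)\cms(G\times\BoG_{\rmm})$ collapses to a single point $\{\bar 0\}$. Once this key principle is in hand, the purity of $\HO^{\rmBM}_*(\mu^{-1}(0)/G)=\HO^*(Y,\underline{\SG})$, and hence the statement of the corollary, follows immediately.
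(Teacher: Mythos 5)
Your plan is essentially the paper's proof: apply Corollary~\ref{corollary:sheafsymplredpure} to $X=\Tan^*W$, then use the contracting $\BoG_{\rmm}$-action on $\mu^{-1}(0)\cms G$ to pass from sheaf-level purity to purity of $\HO^{\rmBM}_*(\mu^{-1}(0)/G)$. However, you have the contraction principle oriented the wrong way. The correct statement (the ``contraction lemma,'' cited in the paper as \cite[Lemma 6.10]{davison2020bps}) is that for the contracting $\BoG_{\rmm}$-action with fixed point $\bar 0$ and an equivariant complex $\underline{\SG}$ one has $p_*\underline{\SG}\cong\imath_{\bar 0}^*\underline{\SG}$, \emph{not} $\imath_{\bar 0}^!\underline{\SG}$; the latter computes $p_!\underline{\SG}$, i.e.\ compactly supported cohomology, not $\HO^{\rmBM}_*$. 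With $\imath^*$ in place, the argument becomes clean and shorter than what you propose: $\imath^*$ preserves the \emph{upper} weight bound, giving weight $\leq 0$, while $p_*$ itself preserves the \emph{lower} weight bound (Saito's stability of weights under $f_*$), giving weight $\geq 0$. These two facts applied to the pure weight-zero $\underline{\SG}=\pi_*\BD\underline{\BoQ}_{\mu^{-1}(0)/G}$ give purity outright, with no appeal to Verdier self-duality or Corollary~\ref{corollary:selfduality}. Your self-duality step is therefore unnecessary (and, as written, it is being asked to produce the upper bound, which the contraction isomorphism already hands you once $\imath^*$ replaces $\imath^!$). The alternative you sketch --- redoing the approximation-by-proper-maps for $G\times\BoG_{\rmm}$ --- could also work but is overkill; the paper does not take that route.
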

\begin{proof}
 By Corollary~\ref{corollary:sheafsymplredpure}, we know that $\pi_*\BD\underline{\BoQ}_{\mu^{-1}(0)/G}$ is a pure weight $0$ mixed Hodge module. We let $p\colon \mu^{-1}(0)\cms G\rightarrow\pt$. We let $\{0\}\rightarrow \mu^{-1}(0)\cms G$ be the inclusion of $\{0\}$. Since the map $\pi \colon\mu^{-1}(0)/G\rightarrow \mu^{-1}(0)\cms G$ is $\BoC^*$-equivariant for the contracting action of $\BoC^*$, then we have an isomorphism $\HO^{\rmBM}_*(\mu^{-1}(0)/G,\BoQ)=p_*\pi_*\BD\underline{\BoQ}_{\mu^{-1}(0)/G}\cong \imath^*\pi_*\BD\underline{\BoQ}_{\mu^{-1}(0)/G}$ \cite[Lemma 6.10]{davison2020bps}. Since $p_*$ preserves lower bounds of weights and $\imath^*$ preserves upper bounds of weight, we deduce that $\HO^{\rmBM}_*(\mu^{-1}(0)/G,\BoQ)$ has pure weight zero mixed Hodge structure.
\end{proof}

\begin{corollary}
 Let $X$ be a weakly symplectic weakly Hamiltonian $G$-variety with weak moment map $\mu$ (Definition~\ref{definition:weakmomentmap}). We let $\tilde{X}\coloneqq X\times\Fg$ and $f\colon\tilde{X}\rightarrow\BoC$, $(x,\xi)\mapsto\langle\mu(x),\xi\rangle$. Then, $\varphi_f^p\underline{\IC}(\tilde{X}\cms G)\in\MMHM(\tilde{X}\cms G)$ is a pure monodromic mixed Hodge module.
\end{corollary}
\begin{proof}
 We first prove the statement when $\tilde{X}$ has non-empty stable locus, that is the generic stabilizer for the $G/G_0$-action on $\tilde{X}$ is finite. Then, by Proposition \ref{proposition:ICsubobjectBPS}, $\underline{\IC}(\tilde{X}\cms G)$ is up to a Tate twist a direct summand of $\underline{\BPS}_{\tilde{X}}$. Therefore, $\varphi_f^p\underline{\IC}(\tilde{X}\cms G)$ is up to a Tate twist a direct summand of $\varphi_f^p\underline{\BPS}_{\tilde{X}}$. Since the latter is pure (Theorem~\ref{theorem:purityBPSsheaf}), then $\varphi_f^p\underline{\IC}(\tilde{X}\cms G)$ is also pure.
 
 For the general case, we let $\overline{L}$ be the conjugacy class of the neutral component of the generic stabilizer of points of $\tilde{X}\cms G$. Then, by \cite[Theorem~6.16]{popov1994invariant}, the natural map $p\colon\tilde{X}^L\cms (N_G(L)/L)\rightarrow \tilde{X}\cms G$ is finite and surjective. Moreover, closed $N_G(L)/L$-orbits in $\tilde{X}^L$ have finite stabilizer and so the stable locus is non-empty. By the first part of the proof, $\varphi_f^p\underline{\IC}(\tilde{X}^L\cms (N_G(L)/L))$ is pure. By finiteness of $p$, $p_*\underline{\IC}(\tilde{X}^L\cms (N_G(L)/L))$ contains $\underline{\IC}(\tilde{X}\cms G)$ as a direct summand. Since $p$ is proper, $p_*\underline{\IC}(\tilde{X}^L\cms (N_G(L)/L))=\varphi_f^pp_*\underline{\IC}(\tilde{X}^L\cms (N_G(L)/L))$ is pure. This concludes as the latter contains $\varphi_f^p\underline{\IC}(\tilde{X}\cms G)$ as a direct summand.
\end{proof}

\section{A purity conjecture of Halpern-Leistner}

In \cite[Conjecture 4.4]{halpern2015theta}, Halpern-Leistner made a conjecture regarding the purity of the Borel--Moore homology of $0$-shifted symplectic stacks having a proper good moduli space. This conjecture is motivated in particular by the study of the stacks of semistable sheaves on K3 surfaces. In this section, we deduce this conjecture from Theorems \ref{theorem:HLneighbourhood} and Corollary~\ref{corollary:sheafsymplredpure}.

\begin{theorem}
\label{theorem:HLconjecture}
Let $\FM$ be a $1$-Artin derived stack with proper good moduli space $\pi\colon\FM\rightarrow\CM$ having affine diagonal and such that $\mathbb{L}_{\FM}\cong\mathbb{L}_{\FM}^{\vee}$. Then, the mixed Hodge structure on $\HO^{\rmBM}_*(\FM)$ is pure.
\end{theorem}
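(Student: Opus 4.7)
The plan is to reduce the global statement to the local sheafified purity established in Corollary~\ref{corollary:sheafsymplredpure} via the Halpern-Leistner neighbourhood theorem (Theorem~\ref{theorem:HLneighbourhood}), and then transport this local purity along the proper good moduli space map to the derived global sections. Concretely, I would first argue that it suffices to show that $\pi_*\BD\underline{\BoQ}_{\FM}$ is a pure weight zero complex of monodromic mixed Hodge modules on $\CM$: once this is established, properness of $\CM$ combined with the standard fact that pushforward along a proper map to a point preserves purity gives that $\HO^{\rmBM}_*(\FM)=p_*\pi_*\BD\underline{\BoQ}_{\FM}$ is a pure weight zero mixed Hodge structure (with $p\colon\CM\to\pt$).

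To establish purity of $\pi_*\BD\underline{\BoQ}_{\FM}$, I would note that purity of a complex of mixed Hodge modules is an \'etale-local property on the base. Let $y\in\CM$ be a closed point and $x\in\FM$ a closed point over $y$ with reductive stabilizer $G=\Aut(\FM)(x)$. By Theorem~\ref{theorem:HLneighbourhood}, there exists a smooth affine $G$-variety $X$, a weak moment map $\mu\colon X\to\Fg^*$, and a strongly \'etale map $\pi_x\colon\FM'\coloneqq\mu^{-1}(0)/G\to\FM$ sending a $G$-fixed point $\tilde{x}\in X$ to $x$, inducing an \'etale map of good moduli spaces $\mu^{-1}(0)^{\cl}\cms G\to\CM$ hitting $y$. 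Since the classical truncation is compatible with taking the good moduli space \cite{ahlqvist2023good} and since $\BD\underline{\BoQ}$ only depends on the classical truncation, strong \'etaleness yields the base-change isomorphism
\[
 \pi_x^*\pi_*\BD\underline{\BoQ}_{\FM}^{\vir}\cong \pi'_*\BD\underline{\BoQ}_{\mu^{-1}(0)/G}^{\vir},
\]
where $\pi'\colon\mu^{-1}(0)/G\to\mu^{-1}(0)\cms G$ is the good moduli space map. The right-hand side is pure of weight zero by Corollary~\ref{corollary:sheafsymplredpure} (after the standard virtual twist to match conventions). Varying $y$ over $\CM$, these local patches cover $\CM$ in the \'etale topology, and purity of the complex of mixed Hodge modules $\pi_*\BD\underline{\BoQ}_{\FM}$ follows.

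The key technical point, and the main potential obstacle, is the compatibility between the local model $\mu^{-1}(0)/G$, which is naturally a derived stack with a cotangent complex that is genuinely self-dual (the weak symplectic structure), and the global derived stack $\FM$ with its given self-duality $\mathbb{L}_\FM\cong\mathbb{L}_\FM^{\vee}$. The Halpern-Leistner neighbourhood theorem produces precisely such a strongly \'etale identification at the derived level, so the classical truncations match as algebraic stacks; this suffices because Borel--Moore homology and monodromic mixed Hodge module pushforwards of $\BD\underline{\BoQ}$ depend only on the classical truncation. Once this local-to-global purity of $\pi_*\BD\underline{\BoQ}_{\FM}^{\vir}$ is in hand, properness of $\pi$ (which holds because $\pi$ is cohomologically proper, as $\CM$ is proper and $\pi$ is a good moduli space map) together with Lemma~\ref{lemma:boundedbelow} applied to $p\circ\pi\colon\FM\to\pt$ gives weights bounded below by zero for $\HO^{\rmBM}_*(\FM)$; dually, weights bounded above by zero come from a parallel application to $\pi_!\underline{\BoQ}_{\FM}$ or directly from properness preserving purity. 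Combining both bounds yields the desired pure weight zero mixed Hodge structure on $\HO^{\rmBM}_*(\FM)$.
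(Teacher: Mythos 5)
Your proof is correct and follows essentially the same route as the paper: reduce to showing that $\pi_*\BD\underline{\BoQ}_{\FM}$ is a pure complex of monodromic mixed Hodge modules on $\CM$, check purity \'etale-locally using base change along the strongly \'etale maps of Theorem~\ref{theorem:HLneighbourhood} together with Corollary~\ref{corollary:sheafsymplredpure}, and then conclude via properness of $\CM\to\pt$. One small remark: the closing sentences about ``properness of $\pi$'' and the appeal to Lemma~\ref{lemma:boundedbelow} are redundant and slightly misstated---the good moduli space map $\pi\colon\FM\to\CM$ is not proper, and once purity of the complex on the proper scheme $\CM$ is established, a single application of the fact that proper pushforward preserves purity finishes the argument, exactly as your first paragraph already says.
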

\begin{proof}
We let $\pi'\colon\CM\rightarrow  \pt$ be the unique map. Since $\CM$ is proper, if $\pi_*\BD\underline{\BoQ}_{\FM}$ is pure, then $\pi'_*\pi_*\BD\underline{\BoQ}_{\FM}=\HO^{\rmBM}_{*}(\FM,\BoQ)$ is also pure. It suffices therefore to prove that $\pi_*\BD\underline{\BoQ}_{\FM}\in\CD^+(\MMHM(\CM))$ is pure. Since the purity of a complex of mixed Hodge modules can be checked \'etale locally, the purity of $\pi_*\BD\underline{\BoQ}_{\FM}$ follows by base-change from the Cartesian diagram of Theorem~\ref{theorem:HLneighbourhood} for all $x\in\CM$ and the purity of $(\pi_x)_*\BD\underline{\BoQ}_{\FM_x}$ which is Corollary~\ref{corollary:sheafsymplredpure}.
\end{proof}

\begin{corollary}
\label{corollary:purityzeross}
 Let $\FM$ be a $0$-shifted symplectic stack with proper good moduli space. Then, $\HO^{\rmBM}(\FM,\BoQ)$ carries a pure mixed Hodge structure.
\end{corollary}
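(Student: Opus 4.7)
The plan is to deduce this corollary as an immediate specialization of Theorem~\ref{theorem:HLconjecture}: I only need to verify that the hypotheses of the latter hold for any $0$-shifted symplectic stack $\FM$ with proper good moduli space.

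The first verification is that $\FM$ is $1$-Artin in the sense of the conventions in \S\ref{subsection:conventions}. This is automatic: a $0$-shifted symplectic stack is by definition a derived Artin stack, and since it admits a good moduli space its classical truncation $\FM^{\cl}$ is an Artin stack of finite type with affine (linearly reductive) stabilizers, in the sense of Alper. The second verification is the self-duality $\mathbb{L}_{\FM}\cong\mathbb{L}_{\FM}^{\vee}$ of the cotangent complex. This is built into the very definition of a $0$-shifted symplectic structure: such a structure is the datum of a closed $2$-form $\omega$ of cohomological degree $0$ on $\FM$ whose underlying bilinear pairing on the tangent complex is non-degenerate, meaning that the contraction $\iota_\omega\colon \mathbb{T}_{\FM}\to\mathbb{L}_{\FM}$ is a quasi-isomorphism of perfect complexes. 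Dualizing yields the required equivalence $\mathbb{L}_{\FM}^{\vee}\cong\mathbb{L}_{\FM}$.

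Given these two points, Theorem~\ref{theorem:HLconjecture} applies directly and delivers the purity of the weight zero mixed Hodge structure on $\HO^{\rmBM}_*(\FM)$. There is no serious obstacle here: the corollary is essentially a restatement of Theorem~\ref{theorem:HLconjecture} under a more familiar geometric hypothesis, the only point of vigilance being to work with a notion of $0$-shifted symplectic stack (e.g.\ in the Pantev--To\"en--Vaqui\'e--Vezzosi sense) which guarantees that $\mathbb{L}_{\FM}$ is perfect, so that the non-degeneracy condition literally produces the self-duality required upstream.
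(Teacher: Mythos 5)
Your proof matches the paper's: the corollary is a direct specialization of Theorem~\ref{theorem:HLconjecture}, with the only substantive observation being that the $0$-shifted symplectic form induces the self-duality $\mathbb{L}_{\FM}\cong\mathbb{L}_{\FM}^{\vee}$. Your additional remarks (checking the $1$-Artin condition, perfection of the cotangent complex) are correct elaborations but not a different route.
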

\begin{proof}
 The $0$-shifted symplectic form gives an isomorphism of the cotangent complex with its dual. The result follows immediately from Theorem~\ref{theorem:HLconjecture}.
\end{proof}

\begin{corollary}
\label{corollary:purityHiggs}
 Let $G$ be a reductive algebraic group and $\FM_G^{\Dol}(C)$ the moduli stack of semistable $G$-Higgs bundles on the smooth projective curve $C$. Then, $\HO^{\rmBM}_*(\FM_{G}^{\Dol}(C),\BoQ)$ has pure mixed Hodge structure.
\end{corollary}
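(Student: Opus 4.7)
The plan is to apply Theorem~\ref{theorem:HLconjecture} (equivalently Corollary~\ref{corollary:purityzeross}), which yields purity of Borel--Moore homology for $0$-shifted symplectic $1$-Artin stacks with proper good moduli space. Since the Dolbeault moduli space $M_G^{\Dol}(C)$ is quasi-projective but \emph{not} proper in general, an additional ingredient (the Hitchin fibration together with the $\BC^*$-action scaling the Higgs field) will be needed to compensate for the non-properness.

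The required structural input is that $\FM_G^{\Dol}(C)$ is $0$-shifted symplectic and $1$-Artin. The derived Higgs stack can be identified with the derived cotangent $T^*\Bun_G(C)$ of the smooth Artin stack of $G$-bundles, which carries a canonical $0$-shifted symplectic structure; equivalently, Serre duality on $C$ provides a self-duality $\mathbb{L}_{\FM_G^{\Dol}(C)}\cong\mathbb{L}_{\FM_G^{\Dol}(C)}^{\vee}$. The classical truncation is an Artin stack in the usual sense, so $\FM_G^{\Dol}(C)$ is $1$-Artin, and it admits a good moduli space $\pi\colon\FM_G^{\Dol}(C)\to M_G^{\Dol}(C)$ by the constructions of Simpson, Nitsure, and Schmitt.

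The proof of Theorem~\ref{theorem:HLconjecture} uses properness of the moduli space only in its final step: the intermediate assertion that $\pi_*\BD\underline{\BoQ}_{\FM_G^{\Dol}(C)}$ is pure on $M_G^{\Dol}(C)$ is \'etale-local on the moduli space and follows directly from the neighborhood theorem (Theorem~\ref{theorem:HLneighbourhood}) combined with Corollary~\ref{corollary:sheafsymplredpure}. To conclude purity of $\HO^{\rmBM}_*$ without outright properness of $M_G^{\Dol}(C)$, I plan to exploit the proper Hitchin morphism $h\colon M_G^{\Dol}(C)\to A$ to the affine Hitchin base and the Hitchin $\BC^*$-action scaling the Higgs field: proper pushforward $h_*$ preserves purity, and the $\BC^*$-equivariance (contracting $A$ to $0\in A$) allows the computation of hypercohomology on $A$ to be reduced to data supported on the proper nilpotent cone $h^{-1}(0)$. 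The main obstacle is executing this last step rigorously — that is, verifying, via the $\BC^*$-equivariant / hyperbolic-localization formalism, that purity of the pure complex $h_*\pi_*\BD\underline{\BoQ}_{\FM_G^{\Dol}(C)}$ on $A$ transfers to purity of its derived global sections given only the contracting $\BC^*$-action in place of properness of the full moduli space.
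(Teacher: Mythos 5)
Your plan is correct, and you have caught an error in the paper. The paper's proof simply asserts that ``the moduli space of semistable $G$-Higgs bundles on a smooth projective curve is projective'' and then applies Corollary~\ref{corollary:purityzeross} directly. That assertion is false: the Dolbeault good moduli space is quasi-projective but not proper (already for $G=\BoG_{\rmm}$ it is $\mathrm{Jac}(C)\times\HO^0(C,K_C)$); only the Hitchin map $h$ to the affine Hitchin base $A$ is proper. So Theorem~\ref{theorem:HLconjecture} does not apply as written, and your suspicion is well founded.

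Your repair via the Hitchin fibration and the contracting $\BoG_{\rmm}$-action is the right one, and the ``last step'' you flag as an obstacle is in fact exactly the argument the paper itself gives in the corollary on $\mu^{-1}(0)/G$ preceding Theorem~\ref{theorem:HLconjecture}. Concretely: the intermediate assertion that $\pi_*\BD\underline{\BoQ}_{\FM_G^{\Dol}(C)}$ is pure on the good moduli space holds \'etale-locally by Theorem~\ref{theorem:HLneighbourhood} together with Corollary~\ref{corollary:sheafsymplredpure}, hence globally; properness of $h$ then gives that $h_*\pi_*\BD\underline{\BoQ}_{\FM_G^{\Dol}(C)}$ is a pure complex on $A$. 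The Hitchin $\BoG_{\rmm}$-action scales the Higgs field, acts on $A$ with strictly positive weights (the degrees of the fundamental invariants) and hence contracts $A$ to $0$, and $h$ and $\pi$ are $\BoG_{\rmm}$-equivariant; for such a contracting action, derived global sections $p_*$ agree with the costalk $\imath_0^*$ at the origin by \cite[Lemma~6.10]{davison2020bps}. Since $p_*$ preserves lower bounds on weights and $\imath_0^*$ preserves upper bounds, $\HO^{\rmBM}_*(\FM_G^{\Dol}(C))$ is pure. You should write this argument out in full: as stated, the paper's proof is incorrect while yours is a complete and correct replacement.
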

\begin{proof}
 We let $\CM_G^{\Dol}(C)$ be the moduli space. The good moduli space is $\pi\colon\FM_G^{\Dol}(C)\rightarrow\CM_G^{\Dol}(C)$. The derived enhancement of $\FM_G^{\Dol}(C)$ is a $0$-shifted symplectic stack \cite{pantev2013shifted}, and therefore, $\pi_*\BD\underline{\BoQ}_{\FM_G^{\Dol}(C)}^{\vir}\in\CD(\MHM(\CM_G^{\Dol}(C)))$ is a pure complex of mixed Hodge modules (as in the proof of Theorem~\ref{theorem:HLconjecture}). We let $B_G$ be the Hitchin base. We have a proper morphism $h\colon\CM_G^{\Dol}(C)\rightarrow B_G$ (the Hitchin map). By properness, $h_*\pi_*\BD\underline{\BoQ}_{\FM_G^{\Dol}(C)}^{\vir}\in\CD(\MHM(B_G))$ is pure. Moreover, we have a $\BoC^*$ action on $\FM_G^{\Dol}(C)$, $\CM_G^{\Dol}(C)$ rescaling the Higgs field, and a $\BoC^*$-action on $B_G$ such that $\pi$ and $h$ are $\BoC^*$-equivariant. Therefore, $h_*\pi_*\BD\underline{\BoQ}_{\FM_G^{\Dol}(C)}^{\vir}$ is $\BoC^*$-equivariant. By \cite[Lemma~6.10]{davison2020bps}, the fiber over $0\in B_G$ of $h_*\pi_*\BD\underline{\BoQ}_{\FM_G^{\Dol}(C)}^{\vir}$, which (up to a Tate twist) coincides with $\HO^{\rmBM}_*(\FM_G^{\Dol}(C))$, is pure.
\end{proof}

We also recover one of the main results of \cite{davison2021purity}.

\begin{corollary}
 Let $\FM$ be the stack of objects in a $2$-Calabi--Yau category. We assume that $\FM$ has a proper good moduli space. Then, $\HO^{\rmBM}_*(\FM,\BoQ)$ carries a pure mixed Hodge structure.
\end{corollary}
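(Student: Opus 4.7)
The plan is to reduce this statement immediately to Corollary~\ref{corollary:purityzeross} (equivalently, Theorem~\ref{theorem:HLconjecture}) by exhibiting the self-duality of the cotangent complex of $\FM$. Concretely, the $2$-Calabi--Yau structure on the dg-category $\CC$ produces, for every family of objects parametrized by $\FM$, a functorial nondegenerate pairing on the $\Hom$-complexes shifted by $2$. Passing to the moduli stack of objects, this pairing is known to upgrade to a $0$-shifted symplectic structure on $\FM$ in the sense of Pantev--To\"en--Vaquie--Vezzosi (the construction of Brav--Dyckerhoff via relative Calabi--Yau structures, extending the original Pantev--To\"en--Vaquie--Vezzosi result for sheaves on Calabi--Yau surfaces). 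In particular $\mathbb{L}_{\FM}\simeq\mathbb{L}_{\FM}^{\vee}$, so the hypothesis of Theorem~\ref{theorem:HLconjecture} is satisfied.

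Given the self-duality and the assumption that $\FM$ admits a proper good moduli space $\pi\colon\FM\to\CM$, the conclusion is now immediate: Theorem~\ref{theorem:HLconjecture} asserts that $\HO^{\rmBM}_*(\FM)$ carries a pure weight zero mixed Hodge structure, which is exactly the statement to prove.

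The only nontrivial input is thus the existence of a $0$-shifted symplectic (or merely self-dual cotangent) structure on $\FM$; this is a known consequence of the $2$-Calabi--Yau structure and does not require any ingredient beyond what is already cited. No step of the argument should be an obstacle, because the whole weight of the proof has been loaded into Theorem~\ref{theorem:HLconjecture}, whose proof (via \'etale slices and Corollary~\ref{corollary:sheafsymplredpure}) is the place where the genuine work is done. In particular this recovers \cite[Theorem~6.10]{davison2021purity} without any use of the machinery of $2$-Calabi--Yau cohomological Hall algebras or BPS Lie algebras specific to that setting.
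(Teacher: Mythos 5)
Your proof is correct and takes essentially the same route as the paper: observe that the $2$-Calabi--Yau structure endows $\FM$ with a $0$-shifted symplectic structure, hence a self-dual cotangent complex, and then invoke Corollary~\ref{corollary:purityzeross} (equivalently Theorem~\ref{theorem:HLconjecture}). The paper simply states the $0$-shifted symplectic fact without naming Brav--Dyckerhoff, but otherwise the argument is the same.
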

\begin{proof}
 The moduli stack of objects in a $2$-Calabi--Yau category is $0$-shifted symplectic. The statement follows from Corollary~\ref{corollary:purityzeross}.
\end{proof}

\appendix

\section{Mixed Hodge modules}
\label{appendix:MHM}
We refer to \cite{davison2020cohomological} for the formalism of monodromic mixed Hodge module we are using. We take a rather pedestrian approach. The theory of mixed Hodge modules has recently been extended to stacks in \cite{tubach2024mixed}.

\subsection{The negatively bounded derived category}
Let $X$ be an algebraic variety, or more generally a finite type separated complex scheme. We define the negatively bounded derived category of mixed Hodge modules as in \cite[Definition 3.3]{davison2016integrality}. We have truncation functors $\underline{\tau}^{\leq k}\colon\CD^{\rmb}(\MMHM(X))\rightarrow\CD^{\rmb}(\MMHM(X))$. For $k\leq l$, we have canonical isomorphisms $\underline{\tau}^{k}\rightarrow\underline{\tau}^{\leq k}\underline{\tau}^{\leq l}$. We let $\CD^{+}(\MMHM(X))\coloneqq \varprojlim_{k\in\BoZ}\limits \CD^{\rmb}(\MMHM(X))$. Therefore, an object $\underline{\CF}\in\CD^{+}(\MMHM(X))$ is a sequence $(\underline{\CF}_k)_{k\in\BoZ}$ of bounded complexes of mixed Hodge modules with isomorphisms $\underline{\CF}_k\cong \underline{\tau}^{\leq k}\underline{\CF}_l$ for any $k\leq l$, satisfying natural compatibilities.

\subsection{Cohomologically graded mixed Hodge modules}
\label{subsection:cohgradedmhm}
A complex of monodromic mixed Hodge modules $\underline{\CF}\in\CD^+(\MMHM(X))$ is called a \emph{cohomologically graded mixed Hodge module} if $\underline{\CF}$ is isomorphic to its total cohomology, that is $\underline{\CF}\cong\bigoplus_{i\in\BoZ}\underline{\CH}^i(\underline{\CF})[-i]$. We let $\MMHM^{\BoZ}(X)\coloneqq \prod_{i\in\BoZ}\MMHM(X)[-i]$ be the Abelian category of cohomologically graded mixed Hodge modules on $X$. If $\underline{\CF},\underline{\CG}\in\MMHM^{\BoZ}(X)$ and $f\colon \underline{\CF}\rightarrow\underline{\CG}$ is a morphism in $\CD^+(\MMHM(X))$, the cohomology $\underline{\CH}(f)=\prod_{i\in\BoZ}\underline{\CH}^i(f)\colon\underline{\CH}(\underline{\CF})\cong \underline{\CH}(\underline{\CG})$ is a morphism of cohomologically graded monodromic mixed Hodge modules. In the Abelian category $\MMHM^{\BoZ}(X)$, one can consider kernels, cokernels, images, coimages, making it convenient for our purposes.

\subsection{Pushforward from a quotient stack to a scheme}
Let $X$ be a $G$-variety, $Y$ a finite type complex scheme and $\pi\colon X/G\rightarrow Y$ a morphism from the quotient stack $X/G$. In this section, we recall how the pushforward by $p$ of a constructible complex or complex of monodromic mixed Hodge modules can be defined\footnote{While this paper was in preparation, the preprint \cite{tubach2024mixed} treating mixed Hodge modules on stacks with the six-functor formalism appeared.}. We let $\hdots\subset V_n\subset V_{n+1}\subset\hdots$ be representations of $G$ such that for any $n\in \BoN$, there is an open subvariety $U_n\subset V_n$ on which $G$ acts freely and $U_n\rightarrow U_n/G$ is a principal $G$-bundle. We assume that $U_n\subset U_{n+1}$. We assume that $\lim_{n\rightarrow+\infty}\limits\codim_{V_n}(V_n\setminus U_n)=+\infty$. We let $X_n\coloneqq X\times V_n$, $\overline{X_n}\coloneqq X_n/G$, $X'_n\coloneqq X\times U_n$, and $\overline{X'_n}\coloneqq (X\times U_n)/G$. We have maps
\[
 \overline{X'_n}\xrightarrow{\imath_n} \overline{X_n}\xrightarrow{p_n} X/G\xrightarrow{\pi} Y.
\]

We have natural morphisms $\overline{X'_n}\xrightarrow{f_n} \overline{X'_{n+1}}$, and $\overline{X_n}\rightarrow\overline{X_{n+1}}$ making the diagram
\[\begin{tikzcd}
	{\overline{X'_n}} & {\overline{X_n}} \\
	{\overline{X'_{n+1}}} & {\overline{X_{n+1}}} & {X/G}
	\arrow["{\imath_{n}}", from=1-1, to=1-2]
	\arrow["{f_{n}}"', from=1-1, to=2-1]
	\arrow[from=1-2, to=2-2]
	\arrow["{p_n}", from=1-2, to=2-3]
	\arrow["{\imath_{n+1}}", from=2-1, to=2-2]
	\arrow["{p_{n+1}}", from=2-2, to=2-3]
\end{tikzcd}\]
commute.

This gives an (adjunction) morphism
\[
 (p_{n+1}\circ\imath_{n+1})^*\underline{\CF}\rightarrow (f_n)_*(p_n\circ\imath_n)^*\underline{\CF}
\]
and by pushing forward by $\pi\circ p_{n+1}\circ \imath_n$, a morphism
\[
  (\pi\circ p_{n+1}\circ \imath_{n+1})_*(p_{n+1}\circ\imath_{n+1})^*\underline{\CF}\rightarrow (\pi\circ p_{n}\circ \imath_n)_*(p_n\circ\imath_n)^*\underline{\CF}
\]
and, for $i\in\BoZ$, a morphism
\begin{equation}
\label{equation:truncatiso}
 \tau^{\leq i}(\pi\circ p_{n+1}\circ \imath_{n+1})_*(p_{n+1}\circ\imath_{n+1})^*\underline{\CF}\rightarrow \tau^{\leq i}(\pi\circ p_{n}\circ \imath_n)_*(p_n\circ\imath_n)^*\underline{\CF}
\end{equation}

The following proposition is classical, using the fact that the codimension of $V_n\setminus U_n$ inside $V_n$ tends to infinity as $n\rightarrow+\infty$.

\begin{proposition}
Let $\underline{\CF}\in\CD^+(\MMHM(X/G))$. Then, for $i\in\BoZ$, $\underline{\tau}^{\leq i}(\pi\circ p_n\circ \imath_n)_*(p_n\circ\imath_n)^*\CF$ stabilizes as $n\rightarrow+\infty$, that is, for $n\gg0$, the morphism \eqref{equation:truncatiso} is an isomorphism.
\end{proposition}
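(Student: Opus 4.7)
The plan is to analyse the truncated pushforward via the distinguished triangle coming from the open-closed decomposition $\overline{X'_n} \subset \overline{X_n}$, exploiting that the complement has codimension tending to infinity.

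Let $\kappa_n \colon Z_n \hookrightarrow \overline{X_n}$ be the closed complement of $\imath_n$, and set $c_n \coloneqq \codim_{\overline{X_n}}(Z_n) = \codim_{V_n}(V_n \setminus U_n)$, which tends to $+\infty$ by hypothesis. The standard open-closed distinguished triangle $(\kappa_n)_* \kappa_n^! \to \id \to (\imath_n)_* (\imath_n)^*$ applied to $p_n^*\underline{\CF}$, then pushed forward along $\pi \circ p_n$, yields in $\CD^+(\MMHM(Y))$ a distinguished triangle
\[
A_n \longrightarrow B_n \longrightarrow C_n \xrightarrow{+1},
\]
with $A_n = (\pi \circ p_n \circ \kappa_n)_* \kappa_n^! p_n^*\underline{\CF}$, $B_n = (\pi \circ p_n)_* p_n^*\underline{\CF}$, and $C_n = (\pi \circ p_n \circ \imath_n)_* (p_n \circ \imath_n)^*\underline{\CF}$. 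The closed immersion $g_n \colon \overline{X_n} \hookrightarrow \overline{X_{n+1}}$ (induced by $V_n \hookrightarrow V_{n+1}$) together with smooth base change along the Cartesian square formed by $\imath_n,\imath_{n+1},f_n,g_n$ provides a morphism of distinguished triangles from level $n+1$ to level $n$ whose $C$-component coincides with the transition \eqref{equation:truncatiso}.

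I would first show that $B_{n+1} \to B_n$ is an isomorphism. Since $p_n$ is a smooth morphism with cohomologically contractible fibers ($V_n$ being a vector space), the unit $\id \to (p_n)_* p_n^*$ is an isomorphism, so $B_n$ is canonically $n$-independent and the transition $B_{n+1} \to B_n$ is the identity by naturality. Next, I would bound the cohomological amplitude of $A_n$: standard purity for a closed immersion of codimension $c_n$ into a smooth ambient space yields that $\kappa_n^!$ shifts the natural cohomological amplitude up by $2c_n$, so if $\underline{\CF}$ lies in degrees $\geq a$, then $\kappa_n^! p_n^*\underline{\CF}$ is concentrated in degrees $\geq 2c_n + a$; left $t$-exactness of $_*$-pushforward then gives $A_n$ in degrees $\geq 2c_n + a$. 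For $n$ large enough that $2c_n + a > i+1$, we have $\underline{\tau}^{\leq i} A_n = \underline{\tau}^{\leq i} A_{n+1} = 0$, and the long exact sequences yield $\underline{\tau}^{\leq i} C_n \isoto \underline{\tau}^{\leq i} B_n$ at both levels; combined with $B_{n+1} \isoto B_n$ this gives the sought isomorphism $\underline{\tau}^{\leq i} C_{n+1} \isoto \underline{\tau}^{\leq i} C_n$.

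The main obstacle is the purity estimate on $\kappa_n^! p_n^*\underline{\CF}$ when the ambient $\overline{X_n}$ and its closed substack $Z_n$ are quotient stacks, which may be singular in general. The workaround is to pass to the smooth scheme-theoretic atlas $X \times V_n \to \overline{X_n}$: on the atlas, $X \times (V_n \setminus U_n)$ is a closed subscheme of the smooth scheme $X \times V_n$ of codimension exactly $c_n$, where the classical purity applies and produces the cohomological lower bound, which descends to the quotient stack along smooth pullback. The same strategy handles the identification $(p_n)_* p_n^* \cong \id$ via the projection formula on the atlas, reducing the whole argument to standard statements for morphisms of schemes.
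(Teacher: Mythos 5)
Your overall idea --- decompose via the closed complement of $\overline{X'_n}$ in $\overline{X_n}$, bound the ``closed'' piece $A_n$ in cohomological degree using the growing codimension $c_n$, then pass to the long exact sequence --- is the right one, and the estimate $\kappa_n^!p_n^*\underline{\CF}\in\CD^{\geq a+2c_n}$ followed by left $t$-exactness of $*$-pushforward is exactly the engine of the classical argument. But there is a genuine gap in the middle. Both $A_n=(\pi\circ p_n\circ\kappa_n)_*\kappa_n^!p_n^*\underline{\CF}$ and $B_n=(\pi\circ p_n)_*p_n^*\underline{\CF}$ are $*$-pushforwards to $Y$ from $Z_n$ and $\overline{X_n}$, and these are genuine quotient \emph{stacks}: the $G$-action on $X\times V_n$ has nontrivial stabilizers (e.g.\ along $X\times\{0\}\subset Z_n$). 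The entire purpose of this appendix is to \emph{define} the $*$-pushforward from a quotient stack to a scheme via the approximating schemes $\overline{X'_m}\to Y$, and the proposition you are proving is exactly the well-definedness of that construction. Treating $(\pi\circ p_n)_*$ and $(\pi\circ p_n\circ\kappa_n)_*$ as already-existing functors with the expected properties --- left $t$-exactness, $(p_n)_*p_n^*\cong\id$, compatibility with recollement triangles --- is therefore circular in the paper's framework, unless one independently imports a six-functor formalism for monodromic mixed Hodge modules on Artin stacks, which the paper deliberately does not assume. Your ``pass to the atlas'' remark addresses the \emph{local} computation of $\kappa_n^!$ and of the unit $\id\to(p_n)_*p_n^*$, but not the existence of the stack-to-scheme pushforward itself, so the gap remains.

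A second, smaller problem is the appeal to ``smooth base change along the Cartesian square formed by $\imath_n,\imath_{n+1},f_n,g_n$'': this is misnamed and false as stated, since $g_n$ and $f_n$ are closed immersions, not smooth, and for general coefficients $g_n^*(\imath_{n+1})_*\not\cong(\imath_n)_*f_n^*$. What you actually need is that the square relating $B_{n+1}\to C_{n+1}$ and $B_n\to C_n$ commutes, so that the $C$-component of the morphism of triangles is the transition of \eqref{equation:truncatiso}; this does hold, via naturality of the unit $\id\to(g_n)_*g_n^*$, the unconditional base change $\imath_{n+1}^*(g_n)_*\cong(f_n)_*\imath_n^*$ (open immersion on the horizontal), and TR3 --- but that is a different mechanism than smooth base change. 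There is also a minor slip: the hypothesis $2c_n+a>i+1$ yields $\underline{\tau}^{\leq i+1}A_n=0$, which is what the long exact sequence actually requires, not merely $\underline{\tau}^{\leq i}A_n=0$. A version of your argument that stays entirely on the schemes $\overline{X'_m}$ --- for instance comparing $\overline{X'_n}$ and $\overline{X'_{n+1}}$ through an open subset of $\overline{X'_{n+1}}$ containing $\overline{X'_n}$ that fibers over $\overline{X'_n}$ as an open subset of a vector bundle, with complement of codimension at least $c_n$ --- would repair the circularity while keeping your codimension estimate as the key input.
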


We may therefore define $\pi_*\underline{\CF}\in\CD^+(\MMHM(Y))$ by the object $(\underline{\CF}_k)_{k\in\BoZ}$, where $\underline{\CF}_k\coloneqq\tau^{\leq k}(\pi\circ p_n\circ \imath_n)_*(p_n\circ\imath_n)^*\underline{\CF}$ for $n\gg0$ and the transition map are adjunction maps for the truncation functors.

\section{Examples}
\label{appendix:examples}
In this section, we provide a few examples illustrating the cohomological integrality isomorphism of Theorem~\ref{theorem:sheafcohint}. One goal is to give support to Conjecture~\ref{conjecture:BPSsheavesnofunction} and in particular to show that, at least on some examples, the vector spaces $\CP_{\lambda}$ associated to cocharacters $\lambda\in X_*(T)$ of a maximal torus $T$ of $G$ defined in \cite{hennecart2024cohomological} from a representation $V$ of a reductive group $G$ (see also \S\ref{section:reminderabsolute}) can be identified with $\IH^*(V^{\lambda}\cms G^{\lambda})$ as graded vector spaces (intersection cohomology is given the perverse shift \S\ref{subsection:conventions}).

\subsection{$\BoC^*\curvearrowright\Tan^*\BoC$}
We let $\BoC^*$ act on $\BoC$ naturally (with weight $1$) and consider the induced action of $V\coloneqq\Tan^*\BoC$. Then, $V\cms \BoC^*\cong \Spec(\BoC[xy])\cong\BoA^1$. Therefore, $\IH^*(V\cms G)=\BoQ[1]$ is a one-dimensional rational vector space placed in cohomological degree $-1$.

In this situation, by \cite{hennecart2024cohomological}, we have $\CP_{\lambda_0}=\BoQ[t]_{\deg \leq 0}\subset \BoQ[t]$ and with the virtual shifts, $\HO^*(V/G)\cong \BoQ[1]\oplus\BoQ[-1]\oplus\BoQ[3]\oplus\hdots$. The first piece of this decomposition corresponds to $\CP_{\lambda_0}$ and $\IH^*(V\cms G)$.

\subsection{$\BoC^*\curvearrowright\Tan^*(\BoC^2)$}
We let $\BoC^*$ act on $\BoC^2$ with weight $1$ and consider the induced action on $V\coloneqq\Tan^*\BoC^2$. Then, $V\cms \BoC^*\cong\Spec(\BoC[x,y,z,t]/\langle xy=zt\rangle)$. Then,
\[
 \IH^*(V\cms \BoC^*)\cong\BoQ[3]\oplus\BoQ[1].
\]
The calculation of this intersection cohomology group is explained in \cite{williamson2010decomposition}, given that the blow-up at the origin of $V\cms \BoC^*$ is a resolution of $V\cms \BoC^*$ with exceptional divisor $\BoP^1\times\BoP^1$. Alternatively, one can blow-down one copy of $\BoP^1$ and obtain a small resolution of $V\cms G$, in which the fiber over the singular point is $\BoP^1$. This gives an alternative computation of $\IH(V\cms \BoC^*)$. These algebraic varieties appear when studying the Atiyah flop.

The intersection cohomology $\IH^*(V\cms \BoC^*)$ coincides with $\CP_{\lambda_0}=\BoQ\oplus\BoQ\cdot t$ where by taking virtual shifts into account, $\BoQ$ sits in cohomological degree $-3$ and $\BoC\cdot t$ in cohomological degree $-1$.

\subsection{$\BoC^*\curvearrowright \BoC^g$}
\label{subsection:Cstarweight1}
We let $\BoC^*$ act on $\BoC^g$ with weight $1$. We consider the induced symmetric representation $V=\Tan^*\BoC^g$ of $\BoC^*$. By choosing a non-trivial stability parameter, we obtain a map
\[
 p\colon V\cms_{\chi}\BoC^*\coloneqq(\BoC^g\times (\BoC^g\setminus\{0\}))/\BoC^*\rightarrow V\cms \BoC^*.
\]
We have
\begin{enumerate}
 \item $\dim V\cms\BoC^*=2g-1$
 \item $p$ is an isomorphism over the complement of $0\in V\cms \BoC^*$
 \item The fiber over $0\in V\cms \BoC^*$ is $\BoP^{g-1}$.
\end{enumerate}
Therefore, the inequality
\[
 g-1<\frac{1}{2}(2g-1)
\]
implies that $p$ is a small map. Consequently, $\IH(V\cms\BoC^*)\cong \HO^*(V\cms_{\chi}\BoC^*)$. Moreover, the projection on the second factor gives a vector bundle $V\cms_{\chi}\BoC^*\rightarrow\BoP^{g-1}$. Therefore, $\dim \IH(V\cms \BoC^*)=g$.

\subsection{$\BoC^*\curvearrowright V$}
\label{label:VCstar}
One can generalise \S\ref{subsection:Cstarweight1} to any symmetric representation (or even weakly symmetric) of $\BoC^*$. Let $\BoC^*$ act on $V$. We let $V\cong \bigoplus_{k\in\BoZ}V_k$ be the weight decomposition. We assume that $V$ is symmetric, and so $\dim V_{k}=\dim V_{-k}$. The strong version of the sheafified cohomological integrality predicts that $\dim\IH(V\cms \BoC^*)=\sum_{k>0}\dim V_k$.

In this case, we have an identification of the intersection cohomology with the cohomology a a weighted projective space: $\IH(V\cms \BoC^*)\cong \HO^*(\BoP(\bigoplus_{k>0}V_k))$. Moreover, by \cite[Theorem 1]{kawasaki1973cohomology}, it is known that the cohomology of a weighted projective space coincides with the cohomology of the projective space of the same dimension, and hence $\dim\IH^*(V\cms \BoC^*)=\sum_{k>0}\dim V_k$.

\subsection{Strong sheafified cohomological integrality for symmetric representations of $\BoC^*$}
Let $V$ be a symmetric representation of $\BoC^*$. We let $V\cong\bigoplus_{k\in \BoZ}V_k$ be its weight space decomposition. The calculation presented in \S\ref{label:VCstar} gives the stronger following result. This is what we call \emph{strong cohomological integrality isomorphism}. The adjective \emph{strong} refers to the fact that the abstractly defined cohomologically graded mixed Hodge modules $\underline{\CP}_{(\lambda,\alpha)}$ (\S\ref{subsection:bpsmhm}) are identified with the intersection cohomology of the good moduli spaces (Conjecture~\ref{conjecture:BPSsheavesnofunction}).
\begin{theorem}
 We have an isomorphism of constructible complexes
 \[
  \IC(V\cms\BoC^*)\oplus \left(\IC(V^{\BoC^*})\otimes\HO^*(\pt/\BoC^*)\right)\xrightarrow{\sim}\pi_*\BoQ_{V/\BoC^*}^{\vir}.
 \]
\end{theorem}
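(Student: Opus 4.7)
The plan is to specialise the sheafified cohomological integrality of Theorem~\ref{theorem:sheafcohint} to $G=\BoC^*$, reducing the statement to an explicit identification of the two BPS sheaves that appear. Since the Weyl group of $\BoC^*$ is trivial and $X_*(\BoC^*)=\BoZ$, the set $\SP_{V,\BoC^*}/W$ has exactly two classes: $\overline{\lambda_0}$ for the trivial cocharacter (with $V^{\lambda_0}=V$) and $\overline{\lambda_1}$ for any nonzero one (with $V^{\lambda_1}=V^{\BoC^*}$). After reducing to the case where the $\BoC^*$-action is faithful, $G_{\lambda_0}=\{1\}$ and $G_{\lambda_1}=\BoC^*$; every character $\varepsilon_{V,(\lambda,\alpha)}$ is trivial. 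Theorem~\ref{theorem:sheafcohint} then becomes
\[
(\imath_{\lambda_0})_*\BPS_{\lambda_0}\;\oplus\;\bigl((\imath_{\lambda_1})_*\BPS_{\lambda_1}\otimes\HO^*(\pt/\BoC^*)\bigr)\;\xrightarrow{\sim}\;\pi_*\BoQ_{V/\BoC^*}^{\vir},
\]
so the claim reduces to the identifications $\BPS_{\lambda_1}\cong\IC(V^{\BoC^*})$ and $\BPS_{\lambda_0}\cong\IC(V\cms\BoC^*)$.

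The first identification is automatic: after passing to $\overline{G^{\lambda_1}}=\BoC^*/\BoC^*=\{1\}$, the trivial-group representation $V^{\BoC^*}$ admits no proper Levi-type substrata, so the defining direct sum complement of \S\ref{subsection:bpsmhm} degenerates to $\BPS_{\lambda_1}=\BoQ_{V^{\BoC^*}}^{\vir}=\IC(V^{\BoC^*})$. For the second, I plan to invoke the uniqueness statement of Proposition~\ref{proposition:characterisationBPS}: it suffices to exhibit a direct summand of $\pi_*\BoQ_{V/\BoC^*}^{\vir}$ isomorphic to $\IC(V\cms\BoC^*)$ and disjoint from the induction image, which is supported on the closed subvariety $V^{\BoC^*}\subsetneq V\cms\BoC^*$. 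Fix a generic stability character $\chi$ of $\BoC^*$ and consider the projective morphism $p\colon V\cms_{\chi}\BoC^*\to V\cms\BoC^*$. The source is a vector bundle over the weighted projective space $\BoP\bigl(\bigoplus_{k>0}V_k\bigr)$, and $p$ is an isomorphism over $V\cms\BoC^*\setminus V^{\BoC^*}$ with fiber $\BoP\bigl(\bigoplus_{k>0}V_k\bigr)$ over each point of $V^{\BoC^*}$. Writing $n=\sum_{k>0}\dim V_k$, the symmetry hypothesis $\dim V_k=\dim V_{-k}$ gives $2\cdot\dim(\text{fibre})=2(n-1)<2n-1=\codim_{V\cms\BoC^*}V^{\BoC^*}$, so $p$ is a \emph{small} morphism. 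Applying the decomposition theorem via the smooth Deligne--Mumford cover $V^{\chi\sst}/\BoC^*\to V\cms_{\chi}\BoC^*$ then yields $p_*\BoQ_{V\cms_{\chi}\BoC^*}^{\vir}\cong\IC(V\cms\BoC^*)$, and semisimplicity of $\pi_*\BoQ_{V/\BoC^*}^{\vir}$ (Proposition~\ref{proposition:semisimplicitypistar}) splits this off as the desired direct summand.

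The main obstacle will be the careful alignment of Tate and cohomological shifts so that $p_*\BoQ^{\vir}$ coincides with $\IC(V\cms\BoC^*)$ on the nose, and verifying that the morphism extracted from the small resolution genuinely lands in the complement of the induction image inside $\pi_*\BoQ_{V/\BoC^*}^{\vir}$; the model computations in \S\ref{subsection:Cstarweight1} and \S\ref{label:VCstar} show that the dimension inequality forcing smallness, and hence the whole mechanism, is a direct consequence of the symmetric weight decomposition.
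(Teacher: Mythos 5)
Your reduction to Theorem~\ref{theorem:sheafcohint} and the identification $\underline{\BPS}_{\lambda_1}\cong\underline{\IC}(V^{\BoC^*})$ are correct, as is the verification that $p\colon V\cms_{\chi}\BoC^*\to V\cms\BoC^*$ is small (fiber $\BoP(V_+)$ of dimension $n-1$ over $V^{\BoC^*}$, against $\codim_{V\cms\BoC^*}V^{\BoC^*}=2n-1$ by symmetry). Producing the summand $\underline{\IC}(V\cms\BoC^*)\subset\pi_*\underline{\BoQ}^{\vir}_{V/\BoC^*}$ disjoint from the induction image is also sound. But there is a genuine gap in the final step: what you obtain is only an inclusion $\underline{\IC}(V\cms\BoC^*)\hookrightarrow\underline{\BPS}_{\lambda_0}$ as a direct summand, not an isomorphism. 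Proposition~\ref{proposition:characterisationBPS} does not help here: it asserts uniqueness \emph{given} that a family $\underline{\CF}_{(\lambda,\alpha)}$ already produces a cohomological integrality \emph{isomorphism}, and exhibiting $\underline{\IC}(V\cms\BoC^*)$ as a summand disjoint from $\underline{\CJ}_{\lambda_0}$ does not establish that $\underline{\IC}(V\cms\BoC^*)\oplus(\underline{\IC}(V^{\BoC^*})\otimes\HO^*(\pt/\BoC^*))$ exhausts $\pi_*\underline{\BoQ}^{\vir}_{V/\BoC^*}$. By semisimplicity you only know $\underline{\BPS}_{\lambda_0}\cong\underline{\IC}(V\cms\BoC^*)\oplus\underline{R}$ with $\underline{R}$ supported on $V^{\BoC^*}$, and nothing in your argument rules out $\underline{R}\neq 0$.

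The paper closes exactly this gap by a dimension count, which is the content of \S\ref{subsection:Cstarweight1}--\S\ref{label:VCstar} and the displayed formulas after Corollary~\ref{corollary:cohintCstar}. The small resolution exhibits $V\cms_{\chi}\BoC^*$ as a vector bundle over the weighted projective space $\BoP(V_+)$, whence $\IH^*(V\cms\BoC^*)\cong\HO^*(\BoP(V_+))$, and Kawasaki's theorem \cite[Theorem 1]{kawasaki1973cohomology} gives $\dim\IH^*(V\cms\BoC^*)=\sum_{k>0}\dim V_k$, concentrated in the indicated degrees. Comparing degree-by-degree with the absolute cohomological integrality of \cite[Theorem~1.1]{hennecart2024cohomological} (Theorem~\ref{theorem:abscohintreminder}), which pins down $\dim\CP_{\lambda_0}=\dim\HO^*(\underline{\BPS}_{\lambda_0})$, forces $\HO^*(\underline{R})=0$ and hence $\underline{R}=0$. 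Without this numerical input your proof cannot conclude; I would therefore add the dimension computation (or an equivalent support-and-rank argument over $V^{\BoC^*}$) as the final step.
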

\begin{corollary}
\label{corollary:cohintCstar}
 We have an isomorphism
 \[
  \IH(V\cms \BoC^*)\oplus\left(\IH(V^{\BoC^*})\otimes\HO^*(\pt/\BoC^*)\right)\rightarrow\HO^*(V/\BoC^*).
 \]
\end{corollary}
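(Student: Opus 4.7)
The plan is simply to apply derived global sections to the isomorphism of constructible complexes furnished by the preceding theorem. Writing $p\colon V\cms\BoC^*\rightarrow\pt$ for the structure morphism and using the factorization $V/\BoC^*\xrightarrow{\pi}V\cms\BoC^*\xrightarrow{p}\pt$, we have a canonical identification $\HO^*(V/\BoC^*,\BoQ^{\vir})\cong p_*\pi_*\BoQ_{V/\BoC^*}^{\vir}$. By the theorem,
\[
\pi_*\BoQ_{V/\BoC^*}^{\vir}\;\cong\;\IC(V\cms\BoC^*)\;\oplus\;\bigl(\IC(V^{\BoC^*})\otimes\HO^*(\pt/\BoC^*)\bigr),
\]
so applying $p_*$ and using the fact that $p_*$ commutes with direct sums and with tensoring by the fixed graded vector space $\HO^*(\pt/\BoC^*)$ yields
\[
\HO^*(V/\BoC^*)\;\cong\;p_*\IC(V\cms\BoC^*)\;\oplus\;\bigl(p_*\IC(V^{\BoC^*})\otimes\HO^*(\pt/\BoC^*)\bigr).
\]

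The next step is to recognize the two summands on the right. By the definition of intersection cohomology recalled in \S\ref{subsection:conventions} (with the perverse shift), we have $p_*\IC(V\cms\BoC^*)=\IH(V\cms\BoC^*)$ and analogously for $V^{\BoC^*}$. Since $V^{\BoC^*}$ is smooth (being a linear subspace), $\IC(V^{\BoC^*})$ is the shifted constant sheaf, so its hypercohomology is $\IH(V^{\BoC^*})$ in the chosen normalization. Substituting these identifications into the previous display gives the desired isomorphism.

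There is no genuine obstacle here: the corollary is a direct consequence of the theorem by taking derived global sections, together with the compatibility of $p_*$ with direct sums and with tensoring by the constant graded vector space $\HO^*(\pt/\BoC^*)$. The only point deserving care is bookkeeping of the virtual shifts $-\otimes\SL^{-\dim V/2}$ in the definition of $\BoQ_{V/\BoC^*}^{\vir}$, which must be propagated through $R\Gamma$; this is routine and is already absorbed into the perverse normalization of $\IH$ and $\IC$ used throughout the paper.
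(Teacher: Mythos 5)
Your proof is correct and takes essentially the same approach the paper implicitly uses: the corollary is obtained from the theorem by applying derived global sections $p_*$, identifying $p_*\IC$ with $\IH$, and propagating the virtual/perverse shifts. The paper does not spell this out precisely because it is immediate, and your bookkeeping of the normalizations matches the conventions in \S\ref{subsection:conventions}.
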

We have
\[
 \IH^*(V\cms\BoC^*)\cong \bigoplus_{j=0}^{\frac{\dim V-\dim V^{\BoC^*}}{2}-1}\BoQ[\dim V-1-2j],
\]
\[
 \IH(V^{\BoC^*})\otimes\HO^*(\pt/\BoC^*)\cong \bigoplus_{j\geq 0}\BoQ[\dim V^{\BoC^*}-2j].
\]
and
\[
 \HO^*(V/\BoC^*)\cong \bigoplus_{j\geq 0}\BoQ[\dim V-1-2j].
\]
Therefore, Corollary~\ref{corollary:cohintCstar} recovers the absolute integrality isomorphism of \cite{hennecart2024cohomological}.

\subsection{$\GL_2(\BoC)\curvearrowright (\Tan^*\BoC^2)^2$}

We consider the natural action of $\GL_2(\BoC)$ on $\BoC^2$. It induces an action of $\GL_2(\BoC)$ on $\Tan^*\BoC^2\cong\BoC^2\oplus (\BoC^2)^{\vee}$. We consider the diagonal action of $\GL_2(\BoC)$ on $(\Tan^*\BoC^2)^2$. It may be identified with the action on $\Tan^*\Mat_2(\BoC)$ induced by the multiplication of the left on $\Mat_2(\BoC)$. In more precise terms, it is given by
\[
 g\cdot (M,N)=(gM,Ng^{-1})\,,
\]
for $g\in\GL_2(\BoC)$, $M,N\in\Mat_2(\BoC)$, when identifying $\Mat_2(\BoC)^{\vee}$ and $\Mat_2(\BoC)$ by using the trace pairing.

We let $a,b,c,d,e,f,g,h$ be the coordinates on $\Mat_2(\BoC)^{\times 2}$, so that
\[
 M=\begin{pmatrix}
    a&b\\c&d
   \end{pmatrix}\,,
\quad
 N=\begin{pmatrix}
    e&f\\g&h
   \end{pmatrix}\,.
\]
We show that $\BoC[\Mat_2(\BoC)^{\times 2}]^{\GL_2(\BoC)}=\BoC[ea+fc,eb+fd,ga+hc,gb+hd]$ and moreover that the generators $ea+fc,eb+fd,ga+hc,gb+hd$ and $cf+dh$ are algebraically independent.

We consider the $\GL_2(\BoC)$-stable affine open subvariety $U\coloneqq\GL_2(\BoC)\times\Mat_2(\BoC)\subset \Mat_2(\BoC)^{\times 2}$. The morphism $U\rightarrow\Mat_2(\BoC)$, $(M,N)\mapsto NM$ induces an isomorphism $U/\GL_2(\BoC)\cong\Mat_2(\BoC)$. Therefore, the coordinate functions of $NM$ are exactly the $\GL_2(\BoC)$-invariant functions on $U$. Since these coordinates are defined on the whole of $\Mat_2(\BoC)$, the coordinate functions of $NM$ generate the ring of $\GL_2(\BoC)$-invariant functions on $\Mat_2(\BoC)$. We calculate:
\[
 \begin{pmatrix}
  e&f\\g&h
 \end{pmatrix}
 \begin{pmatrix}
  a&b\\c&d
 \end{pmatrix}
=
\begin{pmatrix}
ea+fc&eb+fd\\ga+hc&gb+hd 
\end{pmatrix}\,.
\]
This proves that $\BoC[\Mat_2(\BoC)^{\times 2}]^{\GL_2(\BoC)}=\BoC[ea+fc,eb+fd,ga+hc,gb+hd]$. We prove that the generators are algebraically independent. We let $\phi$ be a non-zero polynomial in four variables such that $\phi(ea+fc,eb+fd,ga+hc,gb+hd)=0$. We assume that $\phi$ is of lowest possible degree. Then, by setting $b=h=e=0$, we obtain $\phi(fc,fd,ga,0)=0$. Since $fc,fd,ga$ are clearly linearly independent, this means that $\phi=t\phi_2(x,y,z,t)$ and $\phi_2(ea+fc,eb+fd,ga+hc,gb+hd)=0$. This contradicts the minimality of $\deg \phi$.

We obtain
\[
 \Mat_2(\BoC)^{\times 2}\cms \GL_2(\BoC)\cong \BoA^4\,.
\]
In particular, $\IH^*(\Mat_2(\BoC)^{\times 2}\cms \GL_2(\BoC))\cong\BoQ$. If we take into account the perverse shift, $\IH^*(\Mat_2(\BoC)^{\times 2}\cms \GL_2(\BoC))\cong\BoQ$ is one-dimensional and concentrated in degree $-4$.

On the other hand, the calculation of \cite[\S6.4]{hennecart2024cohomological} for $g=2$ describes $\CP_{\lambda_0}$ as a one-dimensional space in degree $-4$ (when taking perverse shifts into account). Therefore, the strong integrality conjecture is confirmed in this case.

\subsection{$\rmSL_2(\BoC)\curvearrowright\Mat_{2\times n}(\BoC)$}
\label{subsection:examplemat2n}
I would like to thank Sebastian Schlegel Mejia and Dimitri Wyss for suggesting this example and providing ideas for the computation.

We let $\rmSL_2(\BoC)$ act on $\Mat_{2\times n}(\BoC)$ by multiplication on the left. This provides an example for which Conjecture~\ref{conjecture:BPSsheavesnofunction} is true but which is not under the scope of \cite{bu2025cohomology} since $\Mat_{2\times n}(\BoC)$ is not always an orthogonal representation of $\rmSL_2(\BoC)$, as shown by the following lemma.

\begin{lemma}
\label{lemma:mat2nnotorthogonal}
 The representation $\Mat_{2\times n}(\BoC)$ is an orthogonal representation of $\rmSL_2(\BoC)$ if and only if $n$ is even.
\end{lemma}
\begin{proof}
 We denote by $x_{i,j}$, $1\leq i\leq 2, 1\leq j\leq n$ the coordinates on $\Mat_{2\times n}(\BoC)$. If $n$ is even, then the quadratic form
 \[
  (x_{i,j})\mapsto \sum_{l=1}^{\frac{n}{2}}x_{1,2l-1}x_{2,2l}-x_{1,2l}x_{2,2l-1}
 \]
is a non-degenerate $\rmSL_2(\BoC)$-invariant quadratic form on $\Mat_{2\times n}(\BoC)$. It is the sum of the determinants of the $\frac{n}{2}$ $2\times 2$ matrices that form a $2\times n$ matrix.

If $n$ is odd, $\Mat_{2\times n}(\BoC)\cong \Mat_{2\times n-1}(\BoC)\oplus \BoC^2$ as $\rmSL_2(\BoC)$-representations. Since $\Mat_{2\times n-1}(\BoC)$ is orthogonal (as $n-1$ is even), and $\BoC^2$ is not orthogonal (by a direct check -- it has no non-zero $\rmSL_2(\BoC)$-invariant functions), then $\Mat_{2\times n}(\BoC)$ is not orthogonal by \cite[Lemma~4.1.5]{bu2025cohomology}.
\end{proof}

\subsubsection{Cohomological integrality isomorphism}
We let $n\geq 2$ and $V=\Mat_{2\times n}(\BoC)$. We let $T\subset\rmSL_2(\BoC)$ be the maximal torus of diagonal matrices. We have $\SP_V=\{\overline{\lambda}_0,\overline{\lambda}_1\}$ where
\[
 \begin{matrix}
  \lambda_0&\colon &\BoC^*&\rightarrow &T\\
  &&t&\mapsto&\begin{pmatrix}
               1&0\\0&1
              \end{pmatrix}
 \end{matrix}
\]
and
\[
 \begin{matrix}
  \lambda_1&\colon &\BoC^*&\rightarrow &T\\
  &&t&\mapsto&\begin{pmatrix}
               t&0\\0&t^{-1}
              \end{pmatrix}\,.
 \end{matrix}
\]
Since the action of $\rmSL_2(\BoC)$ on $V$ has no kernel, $G_0=1$ and by definition, $\CP_0$ is a direct sum complement of the induction morphism
\[
\begin{matrix}
 \Ind_{\lambda_1,\lambda_0}&\colon&\BoQ[x]&\rightarrow&\BoQ[x^2]\\
                           &&f&\mapsto& \frac{1}{2}\left(x^{n-1}f(x)+(-x)^{n-1}f(-x)\right)\,.
\end{matrix}
\]

\begin{proposition}
\label{proposition:Plambda}
We have $\CP_0=\BoQ[x^2]_{\deg<2(n-1)}$. In particular, $\dim \CP_0=\lfloor\frac{n}{2}\rfloor$.
\end{proposition}
\begin{proof}
 The image is $\Ind_{\lambda_1,\lambda_0}$ is $\BoQ[x^2]_{\geq2(n-1)}$. It is clear that $\BoQ[x^2]_{\deg<2(n-1)}$ is a direct sum complement. The dimension of this vector space is $\lfloor\frac{n-1}{2}\rfloor+1=\lfloor\frac{n}{2}\rfloor$.
\end{proof}

\subsubsection{Cohomology of the Grassmannian}
We let $\Gr(2,n)$ be the Grassmannian of $2$-planes inside $\BoC^n$.

\begin{proposition}
\label{proposition:cohomologygrassmannian}
 For $i\leq 2(n-2)$, we have
 \[
  \dim\HO^i(\Gr(2,n))=\left\{\begin{aligned}
                              &\lfloor\frac{i}{4}\rfloor+1\quad\text{ if $i$ is even}\\
                              &0\quad\text{ if $i$ is odd.}
                             \end{aligned}
\right.
 \]
\end{proposition}
\begin{proof}
 It is known that Grassmannians have no odd cohomology, and the dimension of $\HO^{2j}(\Gr(2,n))$ is the number of partitions of $j$ that fit inside a $2\times (n-2)$ rectangle. For $j\leq n-2$, the number of such partitions is $\lfloor\frac{j}{2}\rfloor+1$. If $i=2j$, we get $\dim\HO^i(\Gr(2,n))=\lfloor\frac{i}{4}\rfloor+1$.
\end{proof}

\subsubsection{Primitive cohomology of $\Gr(2,n)$}
\begin{proposition}
\label{proposition:primitivecohomologyGr2n}
 The primitive cohomology of $\Gr(2,n)$ is given by the following:
\[
 P^j(\Gr(2,n))\cong\left\{\begin{aligned}&\BoQ\quad\text{ if $j\equiv 0\pmod 4$ and $j\leq 2(n-2)$} \\
                    &0\quad\text{ otherwise.}
                   \end{aligned}\right.
\]

\end{proposition}
\begin{proof}
 We just have to compute the dimension of the graded pieces of the primitive cohomology. If we let $p_i=\dim P^{2i}(\Gr(2,n))$, we have
 \[
  b_{2i}=\sum_{j\leq i}p_j
 \]
for $i\leq 2(n-2)$. Therefore, $p_i=b_{2i}-b_{2(i-1)}$. By the formula of Proposition~\ref{proposition:cohomologygrassmannian}, we obtain $p_i=1$ if $i\equiv 0\pmod{4}$ and $p_i=0$ otherwise. This concludes the proof of the Proposition.
\end{proof}

\subsubsection{Intersection cohomology of $\Mat_{2\times n}(\BoC)\cms \rmSL_2(\BoC)$}

\begin{lemma}
\label{lemma:IHGITquotient}
 The GIT quotient $\Mat_{2\times n}(\BoC)\cms \rmSL_2(\BoC)$ is the affine cone over the Grassmannian $\Gr(2,n)$. In particular, the intersection cohomology $\IH^*(\Mat_{2\times n}(\BoC)\cms\rmSL_2(\BoC))$ coincides with the primitive cohomology of $\Gr(2,n)$.
\end{lemma}
\begin{proof}
 We let $\GL_2(\BoC)$ act on $\Mat_{2\times n}(\BoC)$ by left multiplication. We have $\Mat_{2\times n}(\BoC)\cms_{\det}\GL_2(\BoC)\cong \Gr(2,n)$. Therefore, $\BoC[\Mat_{2\times n}(\BoC)]^{\rmSL_2(\BoC)}$ is the homogeneous coordinate ring of $\Gr(2,n)$. Therefore, $\Mat_{2\times n}(\BoC)\cms\rmSL_2(\BoC)$ is the affine cone over $\Gr(2,n)$.
 
 The last statement of the Lemma follows from \cite[Example~2.2.1]{de2009decomposition}.
\end{proof}

\begin{corollary}
 There is an isomorphism of $\BoZ$-graded vector spaces $\IH^*(\Mat_{2\times n}\cms \rmSL_2(\BoC))\cong \CP_0$.
\end{corollary}
\begin{proof}
 It suffices to compare the dimensions of graded pieces of $\CP_0^j$ and $\IH^j(\Mat_{2\times n}\cms \rmSL_2(\BoC))$ This follows from the comparison of Proposition~\ref{proposition:primitivecohomologyGr2n}, Lemma~\ref{lemma:IHGITquotient} and Proposition~\ref{proposition:Plambda}. Indeed, we just have to compare $\CP_{0}$ with the primitive cohomology of $\Gr(2,n)$. We have $\dim\CP_0^j=1$ if $j\equiv 0\pmod{4}$ and $j<2(n-1)$. This is the same condition as $j\cong 0\pmod{4}$ and $j\leq 2(n-2)$ since $j$ has to be even. This finishes the proof.
\end{proof}

\printbibliography
\end{document}